\newtheorem{Theorem}{Theorem}[section]
\newtheorem{Proposition}[Theorem]{Proposition} 
\newtheorem{Lemma}[Theorem]{Lemma}
\newtheorem{Corollary}[Theorem]{Corollary}
\theoremstyle{definition}
\newtheorem{Remark}[Theorem]{Remark}
\def\k{{\Bbbk}}
\def\id{{\mathrm{I}}}
\def\l{{\lambda}}
\def\L{{\sf{L}}}
\def\R{{\sf{R}}}
\def\sl{\mathfrak{sl}}
\newcommand{\Z}{\mathbb{Z}}
\newcommand{\C}{\mathbb{C}}
\newcommand{\p}{\mathbb{P}}
\newcommand{\Cone}{\operatorname{Cone}}
\newcommand{\Span}{\operatorname{Span}}
\newcommand{\D}{\mathcal{D}}
\def\O{{\mathcal O}}
\def\sE{{\mathcal{E}}}
\def\sF{{\mathcal{F}}}
\def\sG{{\mathcal{G}}}
\def\sT{{\mathcal{T}}}
\def\bG{\mathbb{G}}
\newcommand{\E}{\mathsf{E}}
\newcommand{\F}{\mathsf{F}}
\newcommand{\bF}{\mathbb{F}}
\newcommand{\G}{\mathsf{G}}
\renewcommand{\H}{\mathsf{H}}
\newcommand{\U}{\mathsf{U}}
\newcommand{\T}{\mathsf{T}}
\newcommand{\la}{\langle}
\newcommand{\ra}{\rangle}
\newcommand{\Ext}{\operatorname{Ext}}
\newcommand{\Hom}{\operatorname{Hom}}
\newcommand{\End}{\operatorname{End}}
\newcommand{\Inf}{\operatorname{Inf}}
\newcommand{\rank}{\operatorname{rank}}
\begin{document}

\title[Derived equivalences via categorical $sl_2$ actions]{Derived equivalences for cotangent bundles of Grassmannians via categorical $sl_2$ actions}

\author{Sabin Cautis}
\email{scautis@math.harvard.edu}
\address{Department of Mathematics\\ Rice University \\ Houston, TX}

\author{Joel Kamnitzer}
\email{jkamnitz@math.toronto.edu}
\address{Department of Mathematics\\ University of Toronto \\ Toronto, ON Canada}

\author{Anthony Licata}
\email{amlicata@math.stanford.edu}
\address{Department of Mathematics\\ Stanford University \\ Palo Alto, CA}

\begin{abstract}
We construct an equivalence of categories from a strong categorical sl(2) action, following the work of Chuang-Rouquier.  As an application, we give an explicit, natural equivalence between the derived categories of coherent sheaves on cotangent bundles to complementary Grassmannians.
\end{abstract}

\date{\today}
\maketitle
\tableofcontents

\section{Introduction}

\subsection{Categorical $ \sl_2 $ actions}

A {\bf categorical $\sl_2$ action} consists of a sequence of $\k$-linear additive categories $\D(-N), \dots, \D(N)$ together with functors, for $ \l \in \Z $,
$$\E(\l): \D(\l-1) \rightarrow \D(\l+1) \text{ and } \F(\l): \D(\l+1) \rightarrow \D(\l-1)$$
satisfying the relations 
\begin{equation}\label{eq:1}
\E(\l-1) \circ \F(\l-1) \cong \id^{\oplus \l}_{\D(\l)} \oplus \F(\l+1) \circ \E(\l+1) \text{ if } \l \ge 0.
\end{equation}
\begin{equation}\label{eq:2}
\F(\l+1) \circ \E(\l+1) \cong \id^{\oplus -\l}_{\D(\l)} \oplus \E(\l-1) \circ \F(\l-1) \text{ if } \l \le 0
\end{equation}

On the complexified (split) Grothendieck groups  $V(\l) := K(\D(\l)) \otimes_\Z \C$, the functors $\E(\l)$ and $\F(\l)$ induce maps of vector spaces $e(\l) := [E(\l)]$ and $f(\l) := [F(\l)]$.  By the above relations, $ V = \oplus V(\l) $ is a locally finite representation of $ \sl_2(\C) $, where $ e = \left[ \begin{smallmatrix} 0 & 1 \\ 0 & 0 \end{smallmatrix} \right] $ acts by $ \oplus_\l e(\l) $ and $ f = \left[ \begin{smallmatrix} 0 & 0 \\ 1 & 0 \end{smallmatrix} \right] $ acts by $ \oplus_\l f(\l) $.

Any such $\sl_2$ action on $\oplus_\l V(\l)$ integrates to an $SL_2$ action.  Hence the reflection element 
$$t = \left[ \begin{matrix} 0 & -1 \\ 1 & 0 \end{matrix} \right] $$
induces an isomorphism of vector spaces $V(\l) \xrightarrow{\sim} V(-\l)$.  An interesting question is whether this isomorphism can be lifted to an equivalence of categories $ \T: \D(\l) \rightarrow \D(-\l) $ coming from a lift of the reflection element.

\subsection{Construction of the equivalence}
In their seminal paper \cite{CR}, Chuang and Rouquier constructed such an equivalence under the assumption that the underlying weight space categories are abelian, and the categorical $\sl_2$ action is by exact functors.  More precisely, the constructed an equivalence between the corresponding derived categories.

In our motivating geometric examples, however, the natural functors are not exact, and as a result the weight space categories $ \D(\l) $ are triangulated instead of abelian.  
Nevertheless, it is still natural to ask for an equivalence between $ \D(\l) $ and $ \D(-\l) $.

The setup we use in order to modify the Chuang-Rouquier construction is that of \emph{strong categorical $\sl_2$ actions} (section \ref{sec:strongcat}). In particular, this means that categories $\D(\l)$ are graded and the 
$\E$s and $\F$s satisfy a graded version of relations (\ref{eq:1}) and (\ref{eq:2}). The grading implies that on the level of the Grothendieck group there is an action of the quantum group $U_q(\sl_2)$.  

As an additional data, we demand functors 
$$\E^{(r)}(\l): \D(\l-r) \rightarrow \E^{(r)}(\l+r) \text{ and } \F^{(r)}(\l): \D(\l+r) \rightarrow \D(\l-r)$$
which categorify the elements $e^{(r)}= \frac{e^r}{r!}$ and $f^{(r)}=\frac{f^r}{r!}$ along with natural transformations $ X, T$ which are used to rigidify the isomorphisms (\ref{eq:1}) and (\ref{eq:2}).

Even though our axioms are different than those of Chuang-Rouquier, our construction of the complex which induces the functor $\T $ and our proof that it is an equivalence is adapted from \cite{CR}.  Here is a sketch of the construction.
 
If we restrict $t \in SL_2$ to the weight space $V(\l)$ with $\l \ge 0$, then one can write $t$ as 
$$t = f^{(\l)} - f^{(\l+1)}e + f^{(\l+2)}e^{(2)} \pm \dots$$
where the sum is finite since $V(\l)=0$ for $\l \gg 0$. We can lift $f^{(\l+s)} e^{(s)} $ to the composition of functors $\F^{(\l+s)} \circ \E^{(s)}$. We then form a complex of functors 
$\Theta_*$ whose terms are 
$$\Theta_s := \F^{(\l+s)}(s) \circ \E^{(s)}(\l+s)\langle -s\rangle.$$
(the $\langle \cdot\rangle$ represents a shift in the grading). The connecting maps of the complex are defined via various adjunction morhisms (section \ref{sec:complex}).

The main result of this paper, Theorem \ref{th:equiv}, states that if our categories $\D(\l)$ are triangulated then the convolution of this complex is an equivalence $\T: \D(\l) \xrightarrow{\sim} \D(-\l)$.

\subsection{Application to stratified Mukai flops}

Our main application in this paper is to construct equivalences between derived categories of coherent sheaves on cotangent bundles of Grassmannians.  We fix a positive integer $N$ and let $\D(\l) := DCoh(T^\star \bG(k,N))$ be the bounded derived category of coherent sheaves on the cotangent bundle of the Grassmannian where $\l =N-2k$. In \cite{ckl2} we constructed a strong categorical $\sl_2$ action on these categories, where the functors $ \E, \F $ act by natural correspondences.

Hence by the main result of this paper, we get a non-trivial, natural equivalence 
\begin{equation}\label{eq:3}
DCoh(T^\star \bG(k,N)) \xrightarrow{\sim} DCoh(T^\star \bG(N-k,N))
\end{equation}
(see Corollary \ref{cor:mainequiv}). 

When $k=1$, $T^\star \bG(1,N)$ and $T^\star \bG(N-1,N)$ differ by a standard Mukai flop and the equivalence we produce is already known (see \cite{kaw1} or \cite{nam1}). 

When $k > 1$ the varieties $T^\star \bG(k,N)$ and $T^\star \bG(N-k,N)$ are related by a stratified Mukai flop. The problem of constructing such an equivalence (\ref{eq:3}) in these cases was posed by Namikawa in \cite{nam2}.  Our work gives the first solution to this problem with the exception of the case $\bG(2,4)$ where an equivalence was constructed by Kawamata in \cite{kaw2}.   We should note that the idea of using categorical $ \sl_2 $ actions to construct the equivalence (\ref{eq:3}) was first proposed by Rouquier in \cite[section 4.4.2]{RBourbaki}.

Despite the indirect nature of our construction, we are able to give a fairly concrete description of the equivalence, which we do in section \ref{sec:application}.  In particular we prove that it is induced by a Fourier-Mukai kernel which is a Cohen-Macauley sheaf.

Similar geometric settings for equivalences constructed from strong categorical $\sl_2$ actions, including convolutions of affine Schubert varieties and Nakajima quiver varieties, are considered in \cite{ckl1} and \cite{ckl3}, respectively.

\subsection{Acknowledgements}
First and foremost, we would like to thank Joe Chuang and Raphael Rouquier for their generous explanations.  Without this assistance this paper would not have been possible.  We also benefited from conversations with Roman Bezrukavnikov, Christopher Brav, Daniel Huybrechts and Yoshinori Namikawa.

S.C. was partially supported by National Science Foundation Grant 0801939. He also thanks MSRI for its support and hospitality during the spring of 2009.  J.K. was partially supported by a fellowship from the American Institute of Mathematics. A.L. would like to thank the Max Planck Institute in Bonn for support during the 2008-2009 academic year.

\section{Equivalences from strong categorical $\sl_2$ actions}

In this section we begin by reviewing the concept of a strong categorical $\sl_2$ action. Then we describe how to form a complex of functors from such an action. Finally we state our result (Theorem \ref{th:equiv}) which explains when and how this complex of functors gives an equivalence of categories. 

\subsection{Strong categorical $\sl_2$ actions}\label{sec:strongcat}

We begin by reviewing the definition of a strong categorical $\sl_2$ action (of nil affine Hecke type).

Let $\k$ be a field. A {\bf strong categorical $\sl_2$ action} consists of the following data.
\begin{enumerate}
\item A sequence of $\k$-linear $\Z$-graded, additive categories $\D(-N), \dots, \D(N)$. Graded means that each category $\D(\l)$ has a shift functor $\la \cdot \ra$ which is an equivalence.   
\item Functors 
$$\E^{(r)}(\l): \D(\l-r) \rightarrow \D(\l+r) \text{ and } \F^{(r)}(\l): \D(\l+r) \rightarrow \D(\l-r)$$
where $\l \in \Z$. We will usually write $\E(\l)$ for $\E^{(1)}(\l)$ and $\F(\l)$ for $\F^{(1)}(\l)$ while one should think of $\E^{(0)}(\l)$ and $\F^{(0)}(\l)$ as the identity functor $\id$ on $Y(\l)$.
\item Morphisms
$$\eta: \id \rightarrow \F^{(r)}(\l)  \E^{(r)}(\l) \la r \l \ra \text{ and } 
\eta: \id \rightarrow \E^{(r)}(\l)  \F^{(r)}(\l) \la -r\l \ra$$
$$\varepsilon: \F^{(r)}(\l) \E^{(r)}(\l) \rightarrow \id \la r\l \ra \text{ and }
\varepsilon: \E^{(r)}(\l)  \F^{(r)}(\l) \rightarrow \id \la -r\l \ra. $$
\item Morphisms
$$\iota : \F^{(r+1)}(\l) \la r \ra \rightarrow \F(\l+r) \F^{(r)}(\l-1) \text{ and } \pi : \F(\l+r) \F^{(r)}(\l-1) \rightarrow \F^{(r+1)}(\l) \la -r \ra.$$
\item Morphisms
$$X(\l): \F(\l) \la -1 \ra \rightarrow \F(\l) \la 1 \ra \text{ and }
T(\l): \F(\l+1) \F(\l-1) \la 1 \ra \rightarrow \F(\l+1) \F(\l-1) \la -1 \ra.$$
\end{enumerate}
Let $\Hom(\D(\l), \D(\l'))$ be the category of additive functors which commute with the shifts.  This is also an additive category. 


On this data we impose the following additional conditions.

\begin{enumerate}

\item The morphisms $ \eta $ and $ \varepsilon $ are units and counits of adjunctions
\begin{enumerate}
\item $\E^{(r)}(\l)_R = \F^{(r)}(\l) \la r\l \ra$ for $r \ge 0$
\item $\E^{(r)}(\l)_L = \F^{(r)}(\l) \la -r\l \ra$ for $r \ge 0$
\end{enumerate}
\item $\F$'s (and by adjointness the $\E$'s) compose as 
$$\F^{(r_2)}(\l+r_1) \F^{(r_1)}(\l-r_2) \cong \F^{(r_1+r_2)}(\l) \otimes_{\k} H^*(\bG(r_1,r_1+r_2))$$
For example,
$$\F(\l+1) \F(\l-1) \cong \F^{(2)}(\l) \la -1 \ra \oplus \F^{(2)}(\l) \la 1 \ra.$$
In the case $r_1=r$ and $r_2=1$ we also require that the maps
$$\oplus_{i=0}^r (X(\l + r)^i  \id) \circ \iota \la -2i \ra : \F^{(r+1)}(\l) \otimes_\k H^\star(\p^r) \rightarrow \F(\l+r) \F^{(r)}(\l-1)$$
and
$$\oplus_{i=0}^r \pi \la 2i \ra \circ (X(\l+r)^i \id) : \F(\l+r) \F^{(r)}(\l-1) \rightarrow \F^{(r+1)}(\l) \otimes_\k H^\star(\p^r)$$ 
are isomorphisms.  We also have the analogous condition when $r_1=1$ and $r_2=r$. 

\begin{Remark} Intuitively, $\iota$ maps into the ``bottom'' factor of 
$$\F(\l+r)  \F^{(r)}(\l-1) \cong \F^{(r+1)}(\l) \otimes_\k H^\star(\p^r)$$
while $\pi$ maps out of the ``top'' factor. 
\end{Remark}

\item If $ \l \ge 0 $ then
\begin{equation*}
\E(\l-1) \F(\l-1) \cong \F(\l+1) \E(\l+1) \oplus \id \otimes_{\k} H^\star(\p^{\l-1}).
\end{equation*}
The isomorphism is induced by
$$\sigma + \sum_{j=0}^{\l-1} (I X(\l-1)^j) \circ \eta: \F(\l+1)  \E(\l+1) \oplus \id \otimes_{\k} H^\star(\p^{\l-1}) \xrightarrow{\sim} \E(\l-1)  \F(\l-1)$$
where $\sigma$ is the composition of maps
\begin{eqnarray*}
\F(\l+1)  \E(\l+1) & \xrightarrow{\eta  I  I} & \E(\l-1)  \F(\l-1)  \F(\l+1)  \E(\l+1) \la 1-\l \ra \\
& \xrightarrow{I  T({\l})  I} & \E(\l-1)  \F(\l-1)  \F(\l+1)  \E(\l+1) \la -1-\l \ra \\
& \xrightarrow{I I \epsilon} & \E(\l-1)  \F(\l-1).
\end{eqnarray*}
Similarly, if $\l \le 0$ then 
\begin{equation*}
\F(\l+1)  \E(\l+1) \cong \E(\l-1)  \F(\l-1) \oplus \id \otimes_{\k} H^\star(\p^{-\l-1})
\end{equation*}
with the isomorphism induced in the same way as above. 

\item The $X$s and $T$s satisfy the nil affine Hecke relations:
\begin{enumerate}
\item $T(\l)^2 = 0$
\item $(I T(\l-1)) \circ (T(\l+1)  I) \circ (I  T(\l-1)) = (T(\l+1)  I) \circ (I  T(\l-1)) \circ (T(\l+1)  I)$ as endomorphisms of $ \E(\l-2)\E(\l)\E(\l+2)$.
\item $(X(\l+1)  I) \circ T(\l) - T(\l) \circ (I X(\l-1)) = I = - (I  X(\l-1)) \circ T(\l) + T(\l) \circ (X(\l+1)  I)$ as endomorphisms of $ \E(\l-1)\E(\l+1) $.
\end{enumerate}

\item $\Hom(\F^{(r)}(\l), \F^{(r)}(\l) \la i \ra) = 0$ if $i < 0$ while $\End(\F^{(r)}(\l)) = \k \cdot \id$ for $r \ge 0$. 
\end{enumerate}

\begin{Remark}
The above definition is stated in terms of the functors $\F $.  In \cite{ckl2}, the definition was stated in terms of $ \E $, but by adjunction these definitions are equivalent.
\end{Remark}

\subsection{Action of $ U_q(\sl_2)$ on the Grothendieck group}

Let $ V_{\mathrm{sp}}(\l) = K(\D(\l)) $ denote the split complexified Grothendieck group of $ \D(\l) $.  
$V_{\mathrm{sp}}(\l)$ is a $ \C[q,q^{-1}] $ module, where $ q $ acts by the shift operator $ \langle -1 \rangle $. 

The functors $ E^{(r)}(\l) $ induce linear maps $ e^{(r)}(\l) : V(\l-r) \rightarrow V(\l+r) $ and similarly for $ F^{(r)} $.  Immediately from the definition, we see that this defines an action of $ U_q(\sl_2) $ on $ \oplus_\l V(\l) $ where the weight spaces are the $V(\l) $.

Since  $ \oplus_\l V(\l) $ is a locally finite $ U_q(\sl_2) $ module, the quantum Weyl group element $ t \in \widehat{U_q(\sl_2)} $ acts and gives us isomorphisms $ V(\l) \rightarrow V(-\l) $ for each $ \l $.  The action of this reflection element is described by the following result.

\begin{Proposition} \label{th:tact}
For $ \l \ge 0 $, restricted to $ V(\l) $, the element $ t $ acts by
\begin{equation*}
f^{(\l)} - q f^{(\l+1)} e + q^2 f^{(\l+2)} e^{(2)} \pm \dots
\end{equation*}
\end{Proposition}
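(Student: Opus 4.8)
The plan is to work entirely on the level of the integrable $U_q(\sl_2)$-module $V=\oplus_\l V(\l)$ and verify the claimed formula for $t$ by a direct computation on each irreducible summand. Since $V$ is a locally finite $U_q(\sl_2)$-module it decomposes as a direct sum of finite-dimensional irreducibles, and the quantum Weyl group element $t$ acts on each summand compatibly with this decomposition; hence it suffices to prove the identity when $V$ is the irreducible module $V_n$ of highest weight $n$. In that case, for a fixed weight $\l \ge 0$ the weight space $V_n(\l)$ is one-dimensional, spanned by a vector $v$, and I want to check that
\begin{equation*}
t \cdot v = \Bigl( f^{(\l)} - q f^{(\l+1)} e + q^2 f^{(\l+2)} e^{(2)} \mp \dots \Bigr) v .
\end{equation*}
Because $e^{(s)} v$ lives in weight $\l + 2s$, which for $s \ge 1$ is a strictly higher weight, and $n$ is the top weight, all terms with $\l + 2s > n$ vanish automatically, so the sum is finite exactly as asserted in the statement; in fact on the irreducible $V_n$ only the single term $s = 0$ survives when $\l$ is already maximal, and more generally the nonzero terms are those with $\l + 2s \le n$.

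The key computational step is then to recall (or re-derive) the standard formula for the action of the quantum Weyl group element on a finite-dimensional $U_q(\sl_2)$-module. I would use one of the equivalent closed forms for $t$ — e.g. $t = \sum_{a,b,c \ge 0,\ -a+b-c \,=\, \text{shift}} (-1)^b q^{\,b - ac} f^{(a)} e^{(b)} f^{(c)}$ restricted to the appropriate weight, or Lusztig's expression $t = \sum_{a \ge 0} (-1)^a q^{a(a+1)/2}\, e^{(a)} \cdot (\text{something}) \cdot f^{(a)}$ — and specialize it to a vector of weight $\l \ge 0$. On such a vector the terms where the rightmost $f^{(c)}$ acts first must have $c = 0$ (any positive power of $f$ sends the weight $\l$ below the bottom of no—rather, one uses that on a lowest-weight-adapted basis certain summands collapse), so the triple sum collapses to a single sum $\sum_{s \ge 0} (-1)^s q^{?}\, f^{(\l+s)} e^{(s)}$ acting on weight $\l$, since $e^{(s)}$ raises weight $\l$ to $\l+2s$ and $f^{(\l+s)}$ then brings it down to $\l + 2s - 2(\l+s) = -\l$, landing in $V(-\l)$ as it must. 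It remains to pin down the exponent of $q$: one checks inductively on $s$, or by comparing with the $s=0$ term $f^{(\l)}$ and using the known commutation of $e$, $f^{(m)}$, that the coefficient is exactly $(-1)^s q^s$.

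The main obstacle I anticipate is bookkeeping of the $q$-powers and signs: different sources normalize the quantum Weyl group element $t$ (and the divided powers $e^{(r)}, f^{(r)}$) differently, and one must make sure the normalization matches the one implicitly fixed by the categorical action — i.e., by how the shift functor $\la \cdot \ra$ enters the adjunctions $\E^{(r)}(\l)_R = \F^{(r)}(\l)\la r\l \ra$ and the relations in Section \ref{sec:strongcat}. Concretely, I would fix the convention that $t$ satisfies $t\, e\, t^{-1} = -q^{\pm 1} f$ (with the sign/shift dictated by the $\langle -1 \rangle = q$ convention already in force), derive the formula on $V_n$ by induction on $n$ using $t|_{V(\l)}$ in terms of $t|_{V(\l-2)}$ via the $\sl_2$-triple relations, and then observe that since both sides are given by the same universal element of $\widehat{U_q(\sl_2)}$ acting on an arbitrary locally finite module, the identity for all irreducibles yields the identity for $V$. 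A clean alternative, avoiding induction, is to cite the Lusztig formula for $t$ directly and perform the single weight-space specialization; I expect the verification that the surviving terms are precisely $\{f^{(\l+s)} e^{(s)} : s \ge 0\}$ with coefficient $(-1)^s q^s$ to be a short but slightly fiddly reindexing of the $q$-binomial identities governing $e^{(a)} f^{(b)}$ in the Verma module of highest weight $\l$ (where the extra terms vanish because $V(\l)$ is effectively a highest weight space from the point of view of $e$).
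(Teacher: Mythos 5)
Your proposal is a sketch of an attack, not a proof: the key computational content is left as ``I would use one of the equivalent closed forms for $t$ \dots'' and ``one checks inductively \dots'' and ``I expect the verification \dots to be a short but slightly fiddly reindexing,'' none of which is actually carried out. In particular, the placeholders $q^{b-ac}$ and ``something'' in the cited formulas, and the unresolved choice of normalization, are precisely the content the proposition is asserting.

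There is also a concrete gap in the middle of the argument that you yourself noticed and left unresolved. When you try to collapse Lusztig's triple sum $\sum_{a,b,c} (-1)^b q^{*}\, f^{(a)} e^{(b)} f^{(c)}$ on a vector of weight $\l \ge 0$, you begin to assert that all terms with $c>0$ vanish (``any positive power of $f$ sends the weight $\l$ below the bottom of no---rather, one uses that on a lowest-weight-adapted basis certain summands collapse''), and then the sentence breaks off. This is not a typographical slip: it reflects a real difficulty. On a general vector of weight $\l \ge 0$ the terms with $c>0$ do \emph{not} vanish; they become zero only after a nontrivial cancellation that you would need to exhibit. Without that, the reduction of the triple sum to the single sum $\sum_s (-1)^s q^s f^{(\l+s)}e^{(s)}$ is unjustified, and the entire $q$-power and sign normalization --- which you flag as the main obstacle --- is left open.

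For comparison, the paper avoids any appeal to a closed form for $t$. It instead characterizes $t|_{V(\l)}$ by two properties --- (i) $tv=f^{(\l)}v$ on highest weight vectors, and (ii) the conjugation relation $tf=-q^2k^{-1}et$ --- and then verifies directly that the claimed operator $\sum_s (-1)^s q^s f^{(\l+s)}e^{(s)}$ satisfies both, using the commutation $e^{(s)}f = [\l+s+1]e^{(s-1)}+fe^{(s)}$ on weight-$(\l+2)$ vectors and a telescoping sum. This is self-contained, pins down the normalization automatically (both (i) and (ii) are forced by the definition of $t$ and the shift conventions of the paper), and sidesteps the triple-sum collapse entirely. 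Your reduction to irreducibles is harmless but unnecessary, since the paper's argument already works weight-space by weight-space. If you want to salvage your route, you would need to quote Lusztig's exact formula with the correct normalization and do the specialization carefully, at which point it will likely amount to the same kind of $q$-binomial telescoping manipulation the paper performs.
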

\begin{proof}
The action of $ t $ on $ V(\l) $ for $ \l \ge 0 $ is characterized by the following properties:
\begin{enumerate}
\item If $ e v = 0$ and $ v \in V(\l) $, then $ t v = f^{(\l)} v $,
\item $t f = - e k^{-1} t = -q^2 k^{-1} e t $
\end{enumerate}
Property (i) is clear. To check property (ii) we compute both sides on a vector $v$ of weight $ \l + 2 $.  Here $ [k] = \frac{q^k - q^{-k}}{q-q^{-1}} $ will denote a quantum integer. First for the left hand side we get
\begin{align*}
\sum_{s=0}^\infty (-1)^s q^s f^{(\l+s)} e^{(s)} f v &= \sum (-1)^s q^s f^{(\l+s)} ([\l + s +1]e^{(s-1)} + f e^{(s)}) v \\
&= \sum (-1)^s q^s [\l+s+1] (f^{(l+s)}e^{(s-1)} + f^{(\l+s+1)}e^{(s)})v \\
&= - \sum (-1)^s q^{2s+\l+2} f^{(l+s+1)} e^{(s)}v
\end{align*}
where in the last step we have used the telescoping nature of the sum and the equation $$ q^s[\l+s+1] - q^{s+1}[\l+s+2] = -q^{2s+\l+2} .$$

The right hand side gives
\begin{align*}
-q^2 k^{-1} \sum_{s=0}^\infty (-1)^s q^s e f^{(\l+2+s)} e^{(s)} v &= -q^2k^{-1} \sum (-1)^s q^s ([s+1] f^{(\l+s+1)} + f^{(\l+s+2)}e) e^{(s)}  v  \\
&= - q^2 k^{-1} \sum (-1)^s q^s [s+1](f^{(\l+s+1)} e^{(s)} + f^{(\l+s+2)} e^{(s+1)}) v \\
&= -q^2 k^{-1} \sum (-1)^s q^{2s} f^{(\l+s+1)} e^{(s)} v \\
&= - \sum (-1)^s q^{2s+\l +2} f^{(\l+s+1)} e^{(s)} v 
\end{align*}

Hence we see that property (ii) holds and this completes the proof.
\end{proof}

\subsection{The complex}\label{sec:complex}

Given a strong categorical $\sl_2$ action we can construct, for each $ \l \ge 0$, a complex of functors $ \Theta_*(\l): \D(\l) \rightarrow \D(-\l) $.  Note that we use homological indexing for our complex in order to simplify the notation.

The terms in the complex are 
$$ \Theta_s(\l) = \F^{(\l+s)}(s)\E^{(s)}(\l+s)\langle -s\rangle $$
where $ s = 0, \dots, (N - \l)/2$. The differential $d_s: \Theta_s \rightarrow \Theta_{s-1} $ is given by the composition of maps
\begin{equation*}
\F^{(\l+s)}\E^{(s)}\langle -s\rangle 
\xrightarrow{\iota \iota} \F^{(\l+s-1)} \F\E \E^{(s-1)}\langle -(\l+s-1)-(s-1)-s\rangle \\
 \xrightarrow{\varepsilon} \F^{(\l+s-1)}\E^{(s-1)}\langle -s+1\rangle.
\end{equation*}

Note that by Proposition $ \ref{th:tact}$, we have$ t|_{V(\l)} = \sum_s (-1)^s [\Theta_s(\l)] $ on the Grothendieck group.

Chuang-Rouquier defined an analogous complex in \cite{CR}, which they call Rickard's complex.

\begin{Remark}
The above complex gives can be considered as a functor between various categories associated to opposite weight spaces.  In particular, we could consider $\Theta_*(\l)$ as a functor between a category of complexes over $ \D(\l) $ to a category of complexes over $ \D(-\l) $.  This is what is done in \cite{CR}, where they prove that $\Theta_*(\l)$ induces an equivalence of homotopy categories of complexes under the assumption that each weight category $ \D(\l) $ is abelian.  More recently, Rouquier \cite{Rnew} proves a similar statement under the milder assumption that each $ \D(\l) $ is 
$ \k$-linear.  The direction we choose is a bit different, however, in that we use the complex $\Theta_*(\l)$ to define a triangulated equivalence between $ \D(\l) $ and $ \D(-\l)$ under the assumption that our weight categories $\D(\l)$ are triangulated.
\end{Remark}

\subsection{The main Theorem}
In order to use $ \Theta_* $ to make a functor between $ \D(\l) $ and $ \D(-\l) $, we need to be able to take cones of functors.  Unfortunately, this is not immediately easy to do, due to the non-functoriality of cones in triangulated categories.  To avoid this problem, we will make the following assumptions:

\begin{enumerate}
\item Each $ \D(\l) $ is a triangulated category, and all functors $ \E^{(p)}, \F^{(p)} $ are exact.  
Note that in this situation, the categories $ \D(\l) $ have two shift functors, one $ \langle 1 \rangle $ coming from the structure of strong categorical $ \sl_2 $ action and another $ [1] $ coming from the structure of triangulated category.  These shifts need not coincide, and we let $ \{1\} = [1] \langle -1 \rangle $. 

\item For each pair $ \l, \l'$, there exists a graded triangulated category $ \D(\l, \l') $ and an additive functor $\Phi: \D(\l, \l') \rightarrow \Hom(\D(\l), \D(\l')) $, denoted $ \sE \mapsto \Phi_\sE $.  We assume that $ \Phi $ commutes with both shifts $ \{1\} $ and $ [1]$.  We further assume that there is a monoidal structure 
$ * : \D(\l, \l') \times \D(\l', \l'') \rightarrow \D(\l, \l'') $ such that $ \Phi $ intertwines this operation with the composition of functors. Moreover, if $ \sE \rightarrow \sF \rightarrow \sG \rightarrow \sE[ 1] $ is a distinguished triangle in $ \D(\l, \l') $, then for any $ A \in \D(\l)$, we require that $ \Phi_\sE(A)  \rightarrow \Phi_\sF(A) \rightarrow \Phi_\sG(A) \rightarrow \Phi_\sE(A)[ 1] $ be a distinguished triangle in $ \D(\l') $. We assume that $\D(\l,\l')$ satisfies the Krull-Schmidt property (any object decomposes uniquely into irreducibles). 

\begin{Remark} This Krull-Schmidt property can be achieved by assuming that $\D(\l, \l')$ is idempotent complete and that (each graded piece of ) the hom space between any two objects is finite dimensional (see section \ref{sec:infmaps}). 
\end{Remark}

\item For each functor $ \E^{(p)}(\l)$, we choose $\sE^{(p)}(\l) \in \D(\l-p, \l+p) $ such that $\Phi_{\sE^{(p)}}(\l) = \E^{(p)}(\l) $.   Similarly, we choose lifts of $ \F^{(p)}(\l) $ and of all morphisms mentioned in the definition of strong categorical $\sl_2 $ action.  These lifts should satisfy the same conditions as in the definition of strong categorical $ \sl_2 $ action.
We further assume that $ \sF^{(r)}(\l)\la r \l \ra * (\cdot) : \D(\mu, \l +r) \rightarrow \D(\mu, \l -r) $ is the right adjoint to the functor $ \sE^{(r)}(\l) * (\cdot) : \D(\mu, \l -r) \rightarrow \D(\mu, \l+r)  $ (and similarly for the other pairs of adjunctions).
\end{enumerate}

When these assumptions holds, we will say that we have an \textbf{enhanced} strong categorical $ \sl_2 $ action.  

\begin{Remark}
Alternatively, we could have assumed that for each $ \D(\l) $, is the homotopy category of some pre-triangulated DG category $ \widetilde{\D(\l)} $,  in which case we would also have to assume that we have chosen lifts of our functors and morphisms to the DG level. 
\end{Remark}

We will not actually use these assumptions until section \ref{se:mainproof}.  When we do use these assumptions, we will do so implicitly.  That means, we will not switch notation to $ \sE $, etc, but just make use of the fact that we can take cones of morphisms of functors.

The category $ \D(\l) $ also has a complexified Grothendieck group as a triangulated category, denoted $ V_{\mathrm{tr}}(\l) $.  It is naturally a $ \C[q, q^{-1}]$-module where $ -q $ acts by $ \{1\} $.
The direct sum of these Grothendieck groups $ \oplus_{\l} V_{\mathrm{tr}}(\l) $ is a $ U_q(\sl_2) $ module and the natural map $ \oplus_\l V_{\mathrm{sp}}(\l) \rightarrow \oplus_\l V_{\mathrm{tr}}(\l) $ is a $ U_q(\sl_2) $ module map.  

\begin{Remark}
In our main geometrical examples we have $ \D(\l) = D^{\C^\times}(Y(\l)) $ and 
$ \D(\l, \l') = D^{\C^\times}(Y(\l) \times Y(\l')) $, the bounded derived categories of $ \C^\times$-equivariant coherent sheaves on varieties $ Y(\l) $ and their products.  In these cases, $ \{1 \} $ will shift the equivariant structure.  
\end{Remark}

When we have an enhanced strong categorical $ \sl_2$ action, we can consider the complex $ \Theta_* $ as a complex in the triangulated category $ \D(\l, -\l) $.  As a complex in a triangulated category, we can take convolutions of this complex (see section \ref{se:conv}).  Then we have the following main theorem, whose proof occupies the rest of the paper.

\begin{Theorem} \label{th:equiv}
The complex $ \Theta_* $ has a unique convolution $ \T $.  $ \T $ gives an equivalence between $ \D(\l) $ and $ \D(-\l)$.  The map between Grothendieck groups $ V_{\mathrm{tr}}(\l) \rightarrow V_{\mathrm{tr}}(-\l) $ induced by $ \T $ equals the reflection element $ t $.
\end{Theorem}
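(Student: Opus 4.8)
\emph{Overview.} The plan is to treat the three assertions of Theorem \ref{th:equiv} in order: existence and uniqueness of the convolution $\T$, the effect of $\T$ on Grothendieck groups, and the equivalence property, the last being by far the most substantial.

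\emph{Convolution and exactness.} First I would show that $\Theta_*$ admits a convolution and that it is unique, via the standard $\Hom$-vanishing criterion for convolutions in a triangulated category (section \ref{se:conv}): it suffices that $\Hom_{\D(\l,-\l)}(\Theta_i, \Theta_j[k]) = 0$ for all $i > j$ and $k < 0$, a slightly weaker bound being all that is needed for existence alone. To verify this I would compute these $\Hom$ groups by adjunction: using $\E^{(s)}(\l+s)_R = \F^{(s)}(\l+s)\la s(\l+s)\ra$ together with the composition isomorphism $\F^{(s)}\F^{(\l+s)} \cong \F^{(\l+2s)} \otimes_\k H^\star(\bG(s,\l+2s))$ and the $\E$--$\F$ commutation relation, each $\Hom(\Theta_i, \Theta_j[k]\la m\ra)$ reduces to a finite sum of graded pieces $\Hom(\F^{(r)}(\mu), \F^{(r)}(\mu)\la m'\ra)$, which vanish for $m' < 0$ and equal $\k\cdot\id$ for $m'=0$ by the last axiom. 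Bookkeeping the internal shifts $\la -s\ra$ built into $\Theta_s$ against the triangulated shift $\{1\} = [1]\la -1\ra$ then yields exactly the vanishing required. Since every $\E^{(p)},\F^{(p)}$ is exact and $\Phi$ sends distinguished triangles to distinguished triangles, $\T = \Phi_{\operatorname{conv}(\Theta_*)}$ is an exact functor $\D(\l)\to\D(-\l)$.

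\emph{Grothendieck groups.} The class of a convolution is the alternating sum of the classes of its terms, so $[\T] = \sum_s (-1)^s[\Theta_s]$ in $\Hom(V_{\mathrm{tr}}(\l), V_{\mathrm{tr}}(-\l))$, the signs coming from the shifts $[1]$ in the iterated cones. Since $[\Theta_s] = q^s f^{(\l+s)}e^{(s)}$ (the $\la -s\ra$ producing the $q^s$, with the convention $-q = \{1\}$), Proposition \ref{th:tact} identifies $[\T]$ with the action of the reflection element $t$ on $V(\l)$; compatibility with the map from the split Grothendieck group removes any remaining ambiguity.

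\emph{Equivalence.} This is the heart of the matter, adapting the method of Chuang--Rouquier \cite{CR}. Running the same recipe at the weight $-\l \le 0$ (equivalently, exchanging the roles of $\E$ and $\F$, which the axioms permit) produces a complex $\Theta_*(-\l)$ and, by the previous steps, an exact functor $\T(-\l)\colon \D(-\l)\to\D(\l)$ with a unique convolution. I would then prove $\T(-\l)\circ\T(\l)\cong\id_{\D(\l)}\la a\ra[b]$ and, symmetrically, $\T(\l)\circ\T(-\l)\cong\id_{\D(-\l)}\la a\ra[b]$ for the shifts $a,b$ forced by $t^2$; this exhibits $\T(\l)$ as both full and faithful, hence an equivalence. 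The composite $\T(-\l)\circ\T(\l)$ is again a convolution, namely of the total complex with $(s,t)$-term $\Theta_t(-\l)*\Theta_s(\l) = \E^{(\l+t)}\F^{(t)}*\F^{(\l+s)}\E^{(s)}\la\cdots\ra$, so by the uniqueness of convolutions established above it is enough to identify the convolution of that total complex. Expanding each term with the composition axiom and the $\E\F$-versus-$\F\E$ relation, I would build a decreasing filtration of the total complex whose associated graded splits (here the Krull--Schmidt hypothesis on $\D(\l,-\l)$ is used) into subquotient complexes, all but one of which admit a contraction and so convolve to $0$, the surviving one convolving to $\id\la a\ra[b]$. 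The base case is immediate: for $\l=0$ the complex $\Theta_*(0)$ has the single term $\Theta_0(0)=\id$, so $\T(0)=\id$. The principal obstacle is exactly the contractibility claim: verifying that the differentials of the total complex — assembled from $\iota,\pi,\eta,\varepsilon$ and the natural transformations $X,T$ — genuinely cancel the unwanted summands. This is where the nil affine Hecke relations and the explicit description (via $X$ and $\iota,\pi$) of the $\F$-composition isomorphisms come into play, and the combinatorial control of these cancellations is the technical bulk of the proof.
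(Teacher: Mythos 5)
Your handling of the convolution uniqueness and the Grothendieck-group statement matches the paper's argument in spirit, but your strategy for proving the equivalence is genuinely different from the one actually used. You propose to compute $\T(-\l)\circ\T(\l)$ directly as the convolution of a total complex and argue that the unwanted summands contract, in the spirit of Chuang--Rouquier's original abelian-category calculation. The paper instead avoids ever expanding the composite: it verifies $\T_L \cong \T_R$, identifies a spanning class $S$ of objects of the form $\F^{(p)}(A)$ with $A$ a highest weight object, computes $\T$ on such objects explicitly (Theorem \ref{th:Tonhighweight}, whose proof is the technical heart of the paper via the exact-complex machinery of section \ref{sec:exactcomplex} and Theorem \ref{th:Tonhighweight2}), and then invokes a Huybrechts-style criterion (Lemma \ref{th:spanequiv}). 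The spanning-class route buys a significant simplification: one only needs to understand the action of $\T$ on objects on which most of the terms $\Theta_s$ vanish, rather than controlling the entire double complex.

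Your route, as sketched, leaves open precisely the hard points. First, the assertion that the composite $\T(-\l)*\T(\l)$ is computed by a convolution of the double complex with terms $\Theta_t(-\l)*\Theta_s(\l)$ requires care in a triangulated (not DG-enhanced) setting; convolutions of total complexes do not automatically compose this way, and the paper's assumptions on $\D(\l,\l')$ do not hand this to you for free. Second, and more seriously, you acknowledge but do not resolve the ``contractibility claim.'' This is not a routine verification: it is exactly the analogue of the content of section 4 of the paper, and the cancellation bookkeeping would need to be carried out in full detail to constitute a proof. As written, your proposal is a correct high-level roadmap for an alternative argument, but the equivalence step is a sketch with the main work left unfilled, whereas the paper's spanning-class argument is complete given Theorem \ref{th:Tonhighweight}.
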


The last statement of Theorem \ref{th:equiv} follows immediately from Proposition \ref{th:tact}.

\subsection{Relation to spherical twists and outline of the proof of the main theorem}
A special case of this situation is related to the notion of spherical twists.  Let us assume that $ \D(\l) $ is zero except for $ \l = -2, 0,2 $ and also assume that $ \D(-2) = \D(2), \E(-1) = \F(1) = \G, \F(-1) = \E(1) = \H $, so that we have the situation
\begin{equation*}
\D(-2) \underset{\H}{\overset{\G}{\rightleftarrows}} \D(0) \underset{\G}{\overset{\H}{\rightleftarrows}} \D(2)
\end{equation*}

Then the conditions of strong categorical $ \sl_2 $ action imply that $ \G_R = \H\langle-1\rangle, \G_L = \H\la 1\ra $ and $$ \H\G\cong \id_{\D(2)}\langle -1\rangle \oplus \id_{\D(2)}\langle 1\rangle.$$  In particular, this means that $ \G $ is a spherical functor, a notion due (in varying levels of generality) to Seidel-Thomas \cite{ST},  Horja \cite{H}, Anno \cite{A}, and Rouquier \cite{R2}.

The above complex $ \Theta_*(0) : \D(0) \rightarrow \D(0) $ is just 
$$\Theta_*(0) =  \G \H \langle -1\rangle \rightarrow \id $$
in this case.  From the theory of spherical twists, we know that $ \T = \Cone( \G\H\langle -1\rangle \rightarrow \id) $ is an equivalence.  So one way to think about the convolution of the complex $ \Theta_* $ is as a generalized spherical twist.

This perspective gives us the inspiration for the proof the main theorem.  Recall that in the proof that spherical twists give equivalences one considers two sets of objects in $\D(0)$: those killed by $\H $ and those in the image of $ \G$ (see for example \cite{H} proof of Proposition 8.6).  One proves that if $ \H(A) = 0 $, then $ \T(A) = A $ and that $ \T \G(A) = \G(A)[-1]\{2\} $.  All these objects together form a spanning class and since we understand the behaviour of our functor on these objects, we can prove it is an equivalence.

In our much more general case, we will do something similar.  First we consider ``highest-weight'' objects; those objects $ A \in \D(\mu) $ which are killed by $ \E $.  Then we consider those objects in $ \D(\l) $ which are of the form $ \F^{(p)}(A) $ for some highest weight object $ A \in \D(\l+2p) $.  We will show that these objects together form a spanning class and that we can explicitly compute the action of $ \T $ on these objects.

More specifically, we will prove the following.
\begin{Theorem} \label{th:Tonhighweight}
Suppose $\l, p \ge 0$ and let $A \in \D(\l+2p)$ be a highest weight object. Then 
$$\T(\F^{(p)}(A)) \cong A[-p(\l+p)]\{p(\l+p+1)\}.$$
\end{Theorem}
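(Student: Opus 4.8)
The plan is to compute the convolution $\T$ applied to $\F^{(p)}(A)$ term by term, using the ``highest weight'' hypothesis $\E(A) = 0$ (more precisely $\E(\l+2p+1)(A) = 0$) to simplify each term $\Theta_s(\l)(\F^{(p)}(A))$. First I would observe that
$$\Theta_s(\l)(\F^{(p)}(A)) = \F^{(\l+s)}(s)\,\E^{(s)}(\l+s)\,\langle -s\rangle\,\F^{(p)}(A),$$
so I must understand $\E^{(s)} \F^{(p)}(A)$. Using the categorified commutation relations (axioms (2) and (3) in the definition of strong categorical $\sl_2$ action) one can move $\E^{(s)}$ past $\F^{(p)}$, producing a direct sum of terms of the form $\F^{(p-j)}\E^{(s-j)}(A)$ tensored with cohomology of appropriate Grassmannians/projective spaces. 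Since $\E(A) = 0$, every term with $s - j > 0$ vanishes, leaving only the $j = s$ summand: $\E^{(s)}\F^{(p)}(A) \cong \F^{(p-s)}(A) \otimes_\k (\text{graded vector space})$, where the graded multiplicity space is computed from $H^\star$ of a Grassmannian and the relevant weight (this is the $q$-binomial coefficient $\binom{p}{s}$ up to a grading shift, dual on the Grothendieck group to the identity $\sum_s (-1)^s q^s f^{(\l+s)}e^{(s)}$ acting on a highest weight vector, which by the highest-weight case of Proposition \ref{th:tact} gives exactly $f^{(\l)}$ applied — here $f^{(p)}$ — after the dust settles). Composing with the outer $\F^{(\l+s)}$ and using axiom (2) to write $\F^{(\l+s)}\F^{(p-s)} \cong \F^{(\l+p)} \otimes_\k H^\star(\bG(s, \l+p))$ up to shifts, each $\Theta_s(\F^{(p)}(A))$ becomes $\F^{(\l+p)}(A)$ tensored with an explicit graded vector space.

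Next I would assemble these into the convolution. The key point is that after the simplification, the complex $\Theta_*(\l)(\F^{(p)}(A))$ is the complex $\F^{(\l+p)}(A)$ tensored with a complex of graded vector spaces whose differentials are induced by the $\iota$ and $\varepsilon$ maps; by the rigidity built into axiom (2) (the displayed maps involving $X$ and $\iota$, $\pi$ being isomorphisms) and the nil affine Hecke relations, this complex of vector spaces is acyclic except in a single homological degree, where its homology is one-dimensional in a single internal degree. Concretely, on the Grothendieck group the alternating sum is $t \cdot [\F^{(p)}(A)] = t f^{(p)} [A]$, and since $A$ is highest weight and $\F^{(p)}(A)$ has weight $\l$, we compute $t f^{(p)}$ on $V(\l+2p)$'s highest weight line using Proposition \ref{th:tact}'s characterizing property (ii) (the relation $t f = -q^2 k^{-1} e t$) together with $t v = f^{(\l+2p)} v$ on highest weight $v$; this pins down the single surviving term and its shift to be $A[-p(\l+p)]\{p(\l+p+1)\}$. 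To upgrade from Grothendieck groups to an actual isomorphism I would show the complex of graded vector spaces is \emph{formal} and concentrated, i.e. quasi-isomorphic to its homology, using the semisimplicity of graded vector spaces over $\k$, so that its convolution is just the homology placed in the appropriate degree; then tensoring back with $\F^{(\l+p)}(A)$ and invoking uniqueness of convolutions (Theorem \ref{th:equiv}, whose convolution existence/uniqueness I may assume) identifies $\T(\F^{(p)}(A))$.

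I anticipate the main obstacle is controlling the differentials, not merely the terms: it is straightforward to identify each $\Theta_s(\F^{(p)}(A))$ as a shifted sum of copies of $\F^{(\l+p)}(A)$, but proving that the induced complex of multiplicity spaces has homology concentrated in one degree requires carefully tracking how $d_s$ (built from $\iota\iota$ followed by $\varepsilon$) acts on these multiplicity spaces, which in turn forces us to identify the maps $\iota$, $\pi$, $\varepsilon$ explicitly in terms of the $H^\star(\bG(r_1,r_1+r_2))$ module structure — this is exactly where axiom (2)'s isomorphism conditions involving $X$ and the nil Hecke relations of axiom (4) do the work. A clean way to organize this is to first prove the statement for $\l = 0$ (or small $p$) where the complex is short, and then either induct on $p$ using $\F^{(p)} \cong \F\F^{(p-1)}$ up to a known summand, or induct on $\l$ by relating $\Theta_*(\l)$ and $\Theta_*(\l+2)$; the inductive step should reduce the homology-concentration claim to a statement about a single application of the commutation relation, which is then a finite check. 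A secondary, more bookkeeping-heavy obstacle is keeping the two shifts $\langle 1\rangle$ and $[1]$ (equivalently $\{1\}$) consistent throughout, since the final answer mixes both; doing the Grothendieck-group computation first fixes all exponents and serves as a check on the categorical computation.
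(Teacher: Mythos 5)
The strategy you outline — decompose each $\Theta_s(\F^{(p)}(A))$ into shifts of $\F^{(\l+p)}(A)$, then show the induced complex of graded multiplicity spaces is acyclic away from one degree — is indeed the shape of the paper's argument, and your identification of the terms via the categorified commutation relation (Lemma \ref{th:abstractiso} in the paper) is correct. But the proposal has a genuine gap exactly where you flag ``the main obstacle.'' You say one must show the complex of multiplicity spaces has homology concentrated in one degree, and you propose either to verify this by hand for small $\l$ and induct, or to appeal to ``formalness.'' Neither plan as stated actually carries the load.

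First, the formalness argument does not apply directly in a general triangulated category. The complex $\Theta_*(\F^{(p)}(A))$ is a complex of \emph{objects} (not chain complexes of vector spaces), and its convolution is not determined by the quasi-isomorphism type of the associated multiplicity-space complex. What one actually needs is a statement like the paper's Lemma \ref{lem:conezero}: an ``exact'' complex of shifted copies of a fixed $Q$ (with the specific no-negative-degree-maps and degree-zero-entries-linear hypotheses of section \ref{sec:exactcomplex}) has unique convolution equal to zero. Moreover, the paper does not show the complex $\Theta_*(\F^{(p)}(A))$ itself has one-dimensional concentrated homology and then read off the convolution; instead it \emph{extends} the complex by a single term $\F^{(\l+p)}(A)\la -p(\l+p+1)\ra \xrightarrow{\gamma_p} \Theta_p(\F^{(p)}(A))$ so that the extended complex becomes exact, then deduces the distinguished triangle from the resulting Postnikov system. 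This extension trick is not in your proposal, and without it you would still need to prove that the convolution equals the homology object — a nontrivial extra step.

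Second, and more seriously, the heart of the matter is the explicit control of the differentials $d_s$: identifying the complex of multiplicity spaces with a specific exact complex. The paper does this by constructing maps $\phi_s$ from polynomials in $x_1,\dots,x_p$ (via $\iota$, $\eta$, the nil affine Hecke action, and $\pi$) to $\Ext(\F^{(\l+p)}, \Theta_s\F^{(p)})$, picking out subspaces $W_s$ of symmetric-times-antisymmetric polynomials, proving that the differential simply reinterprets a variable from the symmetric to the antisymmetric block (Proposition \ref{th:diff}), and then recognizing the result as a modified Koszul complex $C'_*$ (Proposition \ref{th:ThetaC}). This occupies all of Section 4 and relies on several nontrivial sub-lemmas (\ref{th:firstiso}, \ref{th:secondiso}, \ref{th:zeroTi}, \ref{th:vanish}). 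Your proposed induction on $p$ (via $\F^{(p)} \cong \F\F^{(p-1)}$ up to summands) or on $\l$ is not obviously compatible with how the nil-Hecke action threads through $\phi_s$, and you give no indication of how the inductive step would close. Simply invoking ``axiom (2)'s isomorphism conditions and the nil Hecke relations'' does not produce the Koszul identification; a concrete combinatorial model for the multiplicity complex (the paper's $C'_*$) is needed, and that is the missing idea.

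(A smaller point: as written, the statement's right-hand side $A[\cdots]\{\cdots\}$ has the wrong weight; $\T(\F^{(p)}(A))$ lives in $\D(-\l)$ while $A$ lives in $\D(\l+2p)$, so the correct object is $\F^{(\l+p)}(A)[-p(\l+p)]\{p(\l+p+1)\}$, as the paper's own proof concludes. Your proposal reproduces the typo rather than catching it, which would be a useful sanity check to run.)
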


\section{Preliminaries}

In this section we fix some notation and gather some facts and technical results we will use later on. 

\subsection{Notation}

We work over a fixed field $\k$. 

For any two objects $ A, B $ of a graded additive category, we write $ \Hom^i(A,B) $ for $ \Hom(A, B\langle i\rangle)$ and $ \Ext(A,B) $ for $ \oplus_i \Hom^i(A,B) $. We also write $\End^i(A)$ for $\Hom(A,A\langle i\rangle)$.  

We often have occasion to work with graded vector spaces (for example, $ \Ext(A, B) $ is graded).  If $ V = \oplus_i V_i $, then $ \dim_\bullet(V) $ will denote the graded dimension $ \sum_i \dim(V_i) q^i $.  On the other hand, $ \dim(V) $ will denote the total dimension, which is the evaluation at $q=1$ of the graded dimension.

Recall that $ [k] = \frac{q^k - q^{-k}}{q-q^{-1}} $ denotes a quantum integer, while
$\Bigl[ \begin{smallmatrix} j \\ k \end{smallmatrix} \Bigr] $ will denote the quantum binomial coefficient.

Because notation can get quite cumbersome we will have to abuse it at times. The biggest abuse is that we will often omit the grading shifts $\langle \cdot\rangle$ when they are not relevant. We might also omit the weights of certain functors if this information is redundant. This way, for example, the composition 
$$\iota: \F^{(r+1)}(\l) \rightarrow \F(\l+r) \circ \F^{(r)}(\l-1)\langle -r\rangle$$ 
might become $\iota: \F^{(r+1)} \rightarrow \F \F^{(r)}$. 

When we do want to emphasize the weights of functors,  it will be convenient to introduce the notation $ \F^s(\lambda) := \F(\lambda -s +1) \cdots \F(\lambda + s -1) : \D(\lambda + s) \rightarrow \D(\lambda - s)$.

We can compose the various morphisms $ \iota $ to obtain a morphism also denoted $ \iota : \F^{(s)}(\l) \rightarrow \F^s(\l)\langle -s(s-1)/2\rangle $.  Similarly, we have a morphism $ \pi : \F^s(\l) \rightarrow \F^{(s)}(\l)\langle -s(s-1)/2\rangle $.  

If one has a composition $\F^s$, then we can act by an $X$ on each $\F$ and by a $T$ on each pair of adjacent $\E$s. To simplify the indexing we will denote by $x_i$ the action of $X$ on the $i$th copy of $\F$ starting from the left. Similarly, we will denote by $t_i$ the action of $T$ on the $i$ and $i+1$st copies of $\F$ from the left. 

The $ x_1, \dots, x_s, t_1, \dots, t_{s-1} $ generate an action of the nil affine Hecke algebra on $ \F^s $.  The defining relations of the nil affine Hecke algebra are
\begin{equation*}
\begin{gathered}
x_i x_j = x_j x_i \quad x_i t_i = t_i x_{i+1} + 1 \quad x_i t_{i+1} = t_{i+1} x_i -1 \\
t_i x_j = x_j t_i, \text{ if } j \ne i, i+1 \quad t_i^2 = 0 \quad t_i t_{i+1} t_i = t_{i+1} t_i t_{i+1} \quad t_i t_j = t_j t_i, \text{ if } |i-j| \ge 2
\end{gathered}
\end{equation*}

For any element $ w \in S_s $, we can form a element $ t_w $ in the nil affine Hecke algebra by multiplying together the corresponding generators $ t_i $. In particular, we can form $ t_{w_0} $ using the longest element $w_0 \in S_s$. 

For $ \l \ge 0 $, an object $ A \in \D(\l) $ such that $ \E(A) = 0 $ is called a \emph{highest weight object}.

Let $\bG(k,n)$ denote the Grassmannian of $k$-planes in $\C^n$.  Denote by $H^\star(\bG(k,n))$ its symmetric cohomology, which means that we shift the usual grading on the cohomology of $\bG(k,n)$ so that it is symmetric with respect to degree zero. For example,
$$H^\star(\p^n) \cong \C \la n \ra \oplus \C \la n-2 \ra \oplus \dots \oplus \C \la -n+2 \ra \oplus \C \la -n \ra.$$
Note that if we keep track of the shift $\la \cdot \ra$ using the variable $q^\cdot$ then the graded dimension of $H^\star(\bG(k,n))$ is the quantum binomial coefficient $\Bigl[ \begin{smallmatrix} n \\ k \end{smallmatrix} \Bigr]$.

\subsection{Compatibility of certain maps}

There are some compatibilities between some of the morphisms occurring in the definition of strong $ \sl_2 $ categorification which follow easily from the other axioms. We collect two such compatibilities here for convenience.

\begin{Lemma} As morphisms $ \F^s \rightarrow \F^s $ the maps $t_{w_0}$ and $\iota \circ \pi$ are equal (up to a non-zero multiple).
\end{Lemma}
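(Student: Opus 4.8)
The plan is to reduce the statement to a computation in the nil affine Hecke algebra combined with the defining property of the maps $\iota$ and $\pi$. First I would recall that $\iota : \F^{(s)} \to \F^s\langle -s(s-1)/2\rangle$ and $\pi : \F^s \to \F^{(s)}\langle -s(s-1)/2\rangle$ are built by composing the elementary morphisms $\iota : \F^{(r+1)}(\l) \to \F(\l+r)\F^{(r)}(\l-1)$ and their partners $\pi$, and that by axiom (ii) the map $\iota$ lands in the ``bottom'' copy of $H^\star(\p^r)$ inside $\F(\l+r)\F^{(r)}(\l-1) \cong \F^{(r+1)}(\l)\otimes_\k H^\star(\p^r)$, while $\pi$ projects from the ``top'' copy. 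Iterating, $\F^s \cong \F^{(s)}\otimes_\k H^\star(\bG(1,1+1))\otimes \dots$, i.e.\ $\F^s \cong \F^{(s)} \otimes_\k (\text{nil Hecke algebra of } S_s)$ as a graded bimodule, with the grading arranged so that $\iota$ picks out the lowest-degree line and $\pi$ the projection onto the highest-degree line — and the highest-degree element of the regular representation of the nil affine Hecke algebra (restricted to the finite part generated by the $t_i$) is exactly $t_{w_0}$, of degree $\ell(w_0) = s(s-1)/2$.

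The key steps, in order, would be: (1) Make precise the identification $\End(\F^s) \cong \text{NH}_s$ (the nil Hecke algebra) as graded algebras, using axiom (ii) together with axiom (v) which forces $\End(\F^{(s)}) = \k\cdot\id$ and $\Hom^{<0}(\F^{(s)},\F^{(s)}) = 0$; this last vanishing is what pins down the top and bottom graded pieces. (2) Observe that under this identification $\iota\circ\pi \in \End(\F^s)$ is, up to scalar, the composite $\F^s \to \F^{(s)} \to \F^s$ through the lowest-and-highest degree lines, so it is a rank-one idempotent-like element landing in the top graded component $\k\cdot t_{w_0}$ of $\text{NH}_s$ and $\iota\circ\pi$ is a scalar multiple of $t_{w_0}$. (3) Conversely check $t_{w_0}$ is nonzero and spans the top degree piece of $\End(\F^s)$ — this is a purely algebraic fact about $\text{NH}_s$ (e.g.\ $\partial_{w_0}$ acting on the polynomial representation, or the standard fact that $t_{w_0}$ generates the socle of the regular module) — so both $t_{w_0}$ and $\iota\circ\pi$ span the same one-dimensional space, giving equality up to a nonzero multiple. (4) Verify the degree shift matches: $\iota$ and $\pi$ each carry shift $\langle -s(s-1)/2\rangle$, so $\iota\circ\pi$ has shift $\langle -s(s-1)\rangle$ relative to naive $\F^s$; I would check this is consistent with $t_{w_0}$ having degree $s(s-1)/2$ in $x$-degree convention of $\text{NH}_s$ (the factor of two coming from $q$ versus $\langle 1\rangle$ conventions).

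The main obstacle I expect is step (1)–(2): carefully tracking the grading shifts through the iterated $\iota$'s and $\pi$'s and through the various isomorphisms $\F(\l+r)\F^{(r)}(\l-1) \cong \F^{(r+1)}(\l)\otimes_\k H^\star(\p^r)$, so that ``bottom'' and ``top'' really do correspond to the extreme degrees, and confirming that $\iota\circ\pi$ is genuinely supported in the top degree rather than having lower-order terms. Once the graded pieces of $\End(\F^s)$ are identified with those of $\text{NH}_s$ and one knows the top piece is one-dimensional, the equality $\iota\circ\pi = c\, t_{w_0}$ with $c \ne 0$ is forced; the only real content beyond bookkeeping is showing $c \ne 0$, which follows because $\pi$ is (split) surjective and $\iota$ is (split) injective by axiom (ii), so their composite is nonzero, and being nonzero in a one-dimensional space it must be the claimed multiple of $t_{w_0}$.
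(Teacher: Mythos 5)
Your overall strategy matches the paper's: both reduce the claim to showing that the degree $-s(s-1)$ piece of $\Ext(\F^s,\F^s)$ is one-dimensional (using $\F^s \cong \F^{(s)}\otimes_\k H^\star(\bF l(s))$ together with axiom (v)), that $\iota\circ\pi$ is nonzero because it is a split projection followed by a split injection, and that $t_{w_0}$ is nonzero — from which equality up to nonzero scalar is forced. Steps (1), (2), and (4) of your outline are essentially what the paper does.

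The gap is in step (3). You assert that $t_{w_0}\ne 0$ ``is a purely algebraic fact about $\mathrm{NH}_s$,'' citing the polynomial representation or the socle of the regular module. Both of these establish that $t_{w_0}$ is nonzero \emph{in the abstract nil (affine) Hecke algebra}. What you actually need is that the image of $t_{w_0}$ under the algebra map $\mathrm{NH}_s \to \Ext(\F^s,\F^s)$ is nonzero, and that does not follow automatically — you have not shown this map is injective, and your proposed ``identification $\End(\F^s)\cong\mathrm{NH}_s$'' is precisely the thing in doubt (indeed the injectivity of the map in degree $-s(s-1)$ is equivalent to the nonvanishing of $t_{w_0}$ in $\End(\F^s)$, so the reasoning is circular if left at that level). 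The paper closes this gap with an explicit induction that uses \emph{only} the nil affine Hecke relations (axiom (iv)) together with the fact that $1\ne 0$ in $\End(\F^s)$: the base case is $x_1 t - t x_2 = 1$, hence $t \ne 0$; the inductive step pushes an $x$ across using the relations to deduce $t_i t_{i+1}\cdots t_{s-1} t_w \ne 0$ from $t_{i+1}\cdots t_{s-1} t_w \ne 0$. This argument shows $t_{w_0}\ne 0$ in \emph{any} ring in which the nil affine Hecke relations hold and $1\ne 0$, which is exactly what the categorical setting requires. To repair your proof, replace the appeal to the polynomial representation with either this relation-only induction, or with the observation that $\mathrm{NAH}_s$ is Morita equivalent to its center $\k[x_1,\dots,x_s]^{S_s}$, so every proper two-sided ideal avoids $t_{w_0}$ — either route transfers correctly to $\End(\F^s)$.
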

\begin{proof}
Both $t_{w_0}$ and $\iota \circ \pi$ are morphisms $\F^s \rightarrow \F^s \langle -s(s-1)/2\rangle$. Since $\F^s \cong \F^{(s)} \otimes H^\star({\bF}l(s))$, $\Hom^i(\F^{(s)},\F^{(s)}) = 0$ for $i < 0$ and $\End(\F^{(s)}) = \k \cdot \id$ this means that $t_{w_0}$ and $\iota \circ \pi$ must either be equal up to a non-zero multiple or equal zero. Since $\iota \circ \pi$ is a projection followed by an inclusion it is non-zero. Thus it remains to show that $t_{w_0} \ne 0$. 

We show this by induction on $s$. If $s=2$ (the base case) then $x_1t - tx_2 = 1$, so $t \ne 0$. Now assume $t_w \ne 0$ where $w \in S_{s-1}$ is the longest word. We need to show that $t_{w_0} \ne 0$ where $t_{w_0} = t_1 t_2 \dots t_{s-1} t_w$.

We can also do this by induction. Suppose we know that $t_{i+1} \dots t_{s-1} t_w \ne 0$ and assume that $t_i t_{i+1} \dots t_{s-1} t_w = 0$. Then 
\begin{equation*}
\begin{aligned}
0 = x_i t_i t_{i+1} \dots t_{s-1} t_w 
&= t_ix_{i+1} t_{i+1} \dots t_{s-1} t_w + t_{i+1} \dots t_{s-1} t_w \\
&= t_i t_{i+1} x_{i+2} t_{i+2} \dots t_{s-1} t_w + t_{i+1} \dots t_{s-1} t_w \\
\end{aligned}
\end{equation*}
where we use that $  t_i t_{i+2} \dots t_{s-1} t_w = t_{i+2} \cdots t_{s-1} t_i t_w = 0 $, since $ w$ is the longest word in $ S_{s-1} $.

Now we repeat this argument sliding the $x$ towards to right until we get 
$$0 = t_i \dots t_{s-1} x_s t_w + t_{i+1} \dots t_{s-1} t_w = t_i \dots t_{s-1} t_w x_s + t_{i+1} \dots t_{s-1} t_w$$
(here we used that $x_s$ commutes with $t_i$ for $i < s-1$ and hence with $t_w$). But $t_i \dots t_{s-1} t_w=0$ by assumption so we get $t_{i+1} \dots t_{s-1} t_w = 0$ (contradiction). Thus by induction  
$$t_{w_0} = t_1 \dots t_{s-1} t_w \ne 0$$
and we are done. 

\end{proof}

\begin{Lemma} \label{lem:pieta}
The following diagram commutes (up to a non-zero multiple) 
\begin{equation*}
\begin{CD}
\id @>>\eta> \E^{(s)}\F^{(s)} \\
@VV\eta V @VV\iota V \\
\E^s \F^s  @>\pi>> \E^s \F^{(s)} .
\end{CD}
\end{equation*}
\end{Lemma}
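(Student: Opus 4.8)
The plan is to reduce the claim to a statement about morphisms in a one-dimensional $\Hom$-space, in the spirit of the preceding lemma. Both legs of the square are morphisms $\id_{\D(\mu)}\to\E^s\F^{(s)}\langle m\rangle$ for the weight $\mu$ we work over. The first step is to unwind the grading shifts on the four edges and check that the two legs really land in the same degree $m$: the units $\eta$ along the top and left edges carry the shifts of the adjunctions $(\E^{(s)})_L=\F^{(s)}\langle a\rangle$ and $(\E^s)_L=\F^s\langle c\rangle$, a sum-of-weights computation gives $a=c$, and $\iota$, $\pi$ each contribute $\langle -s(s-1)/2\rangle$, so $m=a-s(s-1)/2$ on both sides. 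Transposing across $(\E^s)_L\dashv\E^s$ then gives
$$\Hom\bigl(\id,\ \E^s\F^{(s)}\langle m\rangle\bigr)\ \cong\ \Hom\bigl(\F^s,\ \F^{(s)}\langle -s(s-1)/2\rangle\bigr),$$
and since $\F^s\cong\F^{(s)}\otimes_\k H^\star(\bF l(s))$, since $H^\star(\bF l(s))$ is symmetric about $0$ with extreme degrees $\pm s(s-1)/2$, and since $\Hom^i(\F^{(s)},\F^{(s)})=0$ for $i<0$ with $\End(\F^{(s)})=\k\cdot\id$, this target is one-dimensional. Hence the two legs are automatically proportional, and the only thing left to rule out is that one of them vanishes.

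For the leg obtained by going down the left edge and then along the bottom, I would run the usual triangle-identity computation for the adjunction $\F^s\dashv\E^s$ to see that, under the isomorphism above, it corresponds to $\pi\colon\F^s\to\F^{(s)}\langle -s(s-1)/2\rangle$ itself, which is a split epimorphism (projection onto an extreme summand of $\F^s$) and hence nonzero. For the leg obtained by going along the top edge and then down the right, I would argue directly: by the $\E$-version of the composition isomorphism, $\iota\colon\E^{(s)}\to\E^s\langle -s(s-1)/2\rangle$ is (up to a scalar forced to be nonzero by that isomorphism) the inclusion of a direct summand, so it is a split monomorphism, and therefore so is $\iota\,\id_{\F^{(s)}}$; precomposing a split monomorphism with a morphism produces zero only if that morphism was zero, so it is enough that the unit $\eta\colon\id\to\E^{(s)}\F^{(s)}\langle a\rangle$ be nonzero — which it is, since otherwise the triangle identity would force $\E^{(s)}$ to be the zero functor and hence $\D(\mu)=0$, making the statement empty. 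Both legs are then nonzero elements of a one-dimensional space, so they agree up to a nonzero scalar.

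The part I expect to take the most care is the grading bookkeeping in the first step: one has to be sure not only that the two legs are comparable at all but that their common degree $m$ is exactly the extreme one, so that the governing $\Hom$-space is one-dimensional rather than zero (in which case the assertion would be vacuous). This is where the symmetry of $H^\star(\bF l(s))$ and the coincidence $a=c$ of the two adjunction shifts get used; everything after that is a short triangle-identity manipulation on one side and the split-monomorphism observation on the other.
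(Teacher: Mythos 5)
Your proposal is correct and is essentially the paper's argument. The paper likewise tracks the grading shifts to see that the only contribution to $\Hom(\id,\E^s\F^{(s)}\langle m\rangle)$ comes from the extreme summand $\E^{(s)}\langle s(s-1)/2\rangle$ of $\E^s$ (yielding $\End^0(\F^{(s)})=\k$), deduces that $\pi\eta$ factors through $\iota$ and hence equals $c\,\iota\eta$, and finishes by observing $\pi\eta\neq 0$ so $c\neq 0$; your version phrases this as ``both legs lie in a one-dimensional $\Hom$-space and are nonzero,'' and your explanation of why $\pi\eta\neq 0$ (its adjunction transpose is the split epi $\pi$) fills in a step the paper leaves implicit, but the degree argument at the core is identical.
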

\begin{proof}
Let's consider the degree shifts more carefully. The map $\eta: \id \rightarrow \E^s \F^s(\l)$ is equal to the composition 
$$\id \xrightarrow{\eta} \E\F(\l+s-1)\langle \l+s-1\rangle \xrightarrow{\eta} \E\E\F(\l+s-3)\F\langle -2\l-2s+4\rangle \xrightarrow{\eta} \dots \xrightarrow{\eta} \E^s\F^s\langle- s\l\rangle.$$
So $\pi \eta: \id \rightarrow \E^s\F^{(s)}\langle -s\l - s(s-1)/2\rangle$. But now 
$$\E^s \cong \E^{(s)}\langle s(s-1)/2\rangle \oplus \bigoplus_i \E^{(s)}\langle k_i\rangle$$
where all the $k_i < s(s-1)/2$. This means that 
\begin{equation*}
\Hom(\id,\E^{(s)}(\l)\F^{(s)}\langle -s\l - s(s-1)/2+k_i\rangle) 
\cong \Hom(\F^{(s)}\la -s\l \ra, \F^{(s)}\langle -s\l - s(s-1)/2+k_i\rangle) 
\end{equation*}
which is zero since $k_i-s(s-1)/2 < 0$. 

So, for degree reasons, $\pi \eta$ factors through $\E^{(s)}\F^{(s)}\langle s(s-1)/2\rangle \rightarrow \E^s \F^{(s)}$. Since $\iota$ is the only such map (up to a multiple) this means $\pi \eta$ factors through $\iota$. Now the only map $\id \rightarrow \E^{(s)} \F^{(s)}\langle -s(s-1)/2\rangle$ is $\eta$ (up to a multiple). This means that $\pi \eta$ is equal to $\iota \eta$ up to a multiple. Since $\pi \eta$ is non-zero this multiple is non-zero and we are done.  
\end{proof}

\subsection{Krull-Schmidt and infinitesimal maps}\label{sec:infmaps}

Let $\D$ be a graded additive category over $\k$ which is idempotent complete (meaning that every idempotent splits). For example, the (derived) category of coherent sheaves on any variety is graded and idempotent complete. 

Suppose that (each graded piece of) the space of homs between two objects is finite dimensional. Then every object in $\D$ has a unique, up to isomorphism, direct sum decomposition into indecomposables (see section 2.2 of \cite{Rin}). In particular, this means that if $A,B,C \in {\mathcal C}$ then we have the
following cancellation laws:
\begin{eqnarray}\label{eq:A}A \oplus B \cong A \oplus C \Rightarrow B \cong C
\end{eqnarray}
\begin{eqnarray}\label{eq:B}A \otimes_\k \k^n \cong B \otimes_\k \k^n \Rightarrow A \cong B.
\end{eqnarray}

From now on suppose $ Q $ is an object with $\End^0(Q) = \k \cdot \id$ and $\End^i(Q) = 0$ for $i < 0$. Suppose $A, B$ are two objects in $\D$. We say that a map $f: A \rightarrow B$ is a {\bf $Q$-infinitesimal map} if for any map $\alpha: Q \la i \ra \rightarrow A$ and $\beta: B \rightarrow Q \la i \ra$ the composition $\beta \circ f \circ \alpha: Q \la i \ra \rightarrow Q \la i \ra$ is zero. We will let $\Inf_Q(A,B)$ denote the space of $Q$-infinitesimal maps $f: A \rightarrow \oplus_k B\langle k\rangle$. Sometimes we will write $\Inf(Q,A)$ to denote $\Inf_Q(Q,A)$. If every map $f: Q \la i \ra \rightarrow A$ is $Q$-infinitesimal then we say that {\bf $A$ contains no copies of $Q$}. 

Suppose $ f \in \Inf_Q(A,B) $ and $ g \in \Ext(B,C)$. It follows directly from the definition that $ gf \in \Inf_Q(A,C) $.  The same holds for composition on the other side. In particular $ \Inf_Q(A,A) $ is a two-sided ideal in $\Ext(A,A)$.  

\begin{Lemma} \label{lem:inf} Any object $A \in \D$ decomposes uniquely (up to permuting summands) as 
$$A \cong \bigoplus Q \la i \ra^{\oplus a_i} \oplus B$$
where $B$ contains no copies of $Q$. 
\end{Lemma}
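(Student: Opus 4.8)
The plan is to bootstrap everything off the Krull--Schmidt property recorded just above, using only two elementary observations about $ Q $. First, since $ \End^0(Q) = \k\cdot\id $ has no idempotents besides $ 0 $ and $ \id $, the object $ Q $ (and likewise each shift $ Q\la k\ra $) is indecomposable, and any nonzero honest endomorphism of $ Q\la k\ra $ is automatically an isomorphism. Second, since $ \End^i(Q) = 0 $ for $ i < 0 $, the shifts $ \{Q\la i\ra\}_{i\in\Z} $ are pairwise non-isomorphic: an isomorphism $ Q\la i\ra \xrightarrow{\sim} Q\la j\ra $ together with its inverse would give nonzero elements of $ \End^{j-i}(Q) $ and $ \End^{i-j}(Q) $, forcing $ i = j $.

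For existence, write $ A \cong \bigoplus_m A_m $ with each $ A_m $ indecomposable (Krull--Schmidt) and separate the summands into those isomorphic to a shift of $ Q $ and the rest. Collecting the former according to which shift they realize gives $ \bigoplus_i Q\la i\ra^{\oplus a_i} $, with the $ a_i $ well-defined because distinct shifts of $ Q $ are non-isomorphic; taking $ B $ to be the direct sum of the remaining $ A_m $ then produces a decomposition $ A \cong \bigoplus_i Q\la i\ra^{\oplus a_i} \oplus B $. To see that $ B $ contains no copies of $ Q $, take arbitrary honest maps $ g\colon Q\la k\ra \to B $ and $ \beta\colon B \to Q\la k\ra $; decomposing along $ B = \bigoplus_m A_m $ as $ g = (g_m) $ and $ \beta = (\beta_m) $, we have $ \beta\circ g = \sum_m \beta_m\circ g_m $ with each $ \beta_m\circ g_m \in \End^0(Q\la k\ra) = \k\cdot\id $. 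Were some $ \beta_m\circ g_m $ nonzero it would be an isomorphism, so $ g_m $ would be a split monomorphism and $ Q\la k\ra $ a direct summand of the indecomposable object $ A_m $, whence $ A_m \cong Q\la k\ra $, a contradiction. Hence $ \beta\circ g = 0 $, and since $ g,\beta,k $ were arbitrary every map from a shift of $ Q $ into $ B $ is $ Q $-infinitesimal.

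For uniqueness, note the converse principle: if $ Q\la i\ra $ is a direct summand of an object $ C $, then the structural inclusion $ Q\la i\ra \to C $ composed with the projection $ C \to Q\la i\ra $ equals $ \id \ne 0 $, so this inclusion is not $ Q $-infinitesimal and $ C $ contains a copy of $ Q $. Consequently, in any decomposition $ A \cong \bigoplus_i Q\la i\ra^{\oplus a_i'} \oplus B' $ with $ B' $ containing no copies of $ Q $, no indecomposable summand of $ B' $ is a shift of $ Q $. Comparing with the uniqueness half of Krull--Schmidt applied to $ A $: the indecomposable summands of $ A $ which are shifts of $ Q $ must be accounted for entirely by the first factor in either decomposition, so $ a_i = a_i' $ for all $ i $, and then the cancellation law $ (\ref{eq:A}) $ gives $ B' \cong B $.

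I expect the only step carrying any content to be the verification that $ B $ contains no copies of $ Q $: this is where one must translate the structural statement ``no indecomposable summand of $ B $ is a shift of $ Q $'' into the $ \Hom $-theoretic definition, and it is exactly there that the hypothesis $ \End^0(Q) = \k\cdot\id $ is used --- it forces any honest self-map of $ Q\la k\ra $ that factors through an indecomposable summand $ A_m \not\cong Q\la k\ra $ to be a non-invertible scalar, hence zero. Everything else is bookkeeping with Krull--Schmidt.
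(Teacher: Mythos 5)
Your proof is correct, and it takes a genuinely different route from the paper's. The paper argues constructively: starting from any non-$Q$-infinitesimal $\alpha\colon Q\la i\ra\to A$, it produces $\beta$ with $\beta\alpha=\id$, notes $\alpha\beta$ is an idempotent, splits it by idempotent completeness to peel off one copy of $Q\la i\ra$, and iterates; uniqueness is then quoted from the Krull--Schmidt property. You instead invoke Krull--Schmidt up front to decompose $A$ into indecomposables, observe that each $Q\la i\ra$ is itself indecomposable and that distinct shifts are non-isomorphic, and then sort the indecomposable summands into those isomorphic to a shift of $Q$ and the rest. Your approach buys a cleaner handling of the iteration/termination issue (there is no open-ended ``repeating this way''), and it makes explicit the two small facts about $Q$ (indecomposability, pairwise non-isomorphic shifts) that the paper uses only implicitly. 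The paper's approach is slightly more economical in that it never needs to know $Q$ is indecomposable; the idempotent-splitting step does the work directly. Both ultimately rest on the same Krull--Schmidt hypothesis recorded just before the lemma. One small point you handled well and the paper glosses over: translating ``no indecomposable summand of $B$ is a shift of $Q$'' into the $\Hom$-theoretic definition of ``contains no copies of $Q$'' requires checking compositions $\beta\circ g$ at each shift, and your componentwise argument via $\End^0(Q\la k\ra)=\k\cdot\id$ does exactly that.
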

\begin{proof}
If $\alpha: Q \la i \ra \rightarrow A$ is not $Q$-infinitesimal then we can find a map $\beta: A \rightarrow Q \la i \ra$ so that the composition $\beta \circ \alpha: Q \la i \ra \rightarrow  Q \la i \ra$ is the identity (this is where we use that $\End^0(Q) = \k \cdot \id$). This means that $ \alpha \beta \alpha \beta = \alpha \beta $. Hence $ P = \alpha \beta $ is an idempotent and by idempotent completeness it is a projection onto a direct summand which is isomorphic to $ Q \la i \ra$. Thus $A \cong Q \la i \ra \oplus A'$. Repeating this way we get that $A \cong \bigoplus Q \la i \ra^{\oplus a_i} \oplus B$ where $B$ contains no copies of $Q$. This decomposition is unique because every direct sum decomposition is unique (up to permuting summands). 
\end{proof}

In the decomposition $A \cong \bigoplus Q \la i \ra^{\oplus a_i} \oplus B$ of Lemma \ref{lem:inf} we call $\bigoplus Q \la i \ra^{\oplus a_i}$ the $Q$-part of $A$. Also, we let
$$ \dim_Q(A) := \dim(\Ext(Q,A)/\Inf(Q,A)) = \sum_i a_i.$$
The second equality follows from the fact that $ \dim_Q(Q\la i \ra) = 1 $ for all $ i $.

\begin{Lemma} \label{lem:infzero} Suppose $f : \bigoplus Q \la i \ra^{\oplus a_i} \rightarrow \bigoplus Q \la i \ra^{\oplus a_i}$ is an isomorphism. Then $f + \delta$ is an isomorphism for any infinitesimal map $ \delta $.
\end{Lemma}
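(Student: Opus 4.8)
The plan is to reduce to the case $f = \id_M$ and then use the positivity assumption $\End^i(Q) = 0$ for $i < 0$ to see that $\id_M + \delta$ is invertible for every infinitesimal $\delta$. Write $M := \bigoplus_i Q\la i\ra^{\oplus a_i}$. Since $\Inf_Q(M,M)$ is a two-sided ideal of $\Ext(M,M)$, the map $f^{-1}\delta$ is again infinitesimal, and $f + \delta = f\circ(\id_M + f^{-1}\delta)$, so it is enough to treat $f = \id_M$. As $M$ is a finite direct sum of indecomposables, only finitely many $a_i$ are nonzero; let the occurring shifts be $i_1 < \dots < i_m$, set $M_j := Q\la i_j\ra^{\oplus a_{i_j}}$ and write $M = \bigoplus_{j=1}^m M_j$. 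I would then induct on $m$.

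For $m \le 1$ one has $\End(M) = \mathrm{Mat}_{a_{i_1}}(\End^0(Q)) = \mathrm{Mat}_{a_{i_1}}(\k)$, and feeding the coordinate inclusions $Q\la i_1\ra \hookrightarrow M$ and projections $M \twoheadrightarrow Q\la i_1\ra$ into the definition of $Q$-infinitesimal forces every matrix entry of $\delta$ to vanish; so $\delta = 0$ and there is nothing to prove. For the inductive step, put $P := M_m$ and $N := \bigoplus_{j<m} M_j$, so $M = N \oplus P$, and write $\delta = \left(\begin{smallmatrix} \delta_{NN} & \delta_{NP} \\ \delta_{PN} & \delta_{PP}\end{smallmatrix}\right)$ accordingly. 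The block $\delta_{NP}\colon P \to N$ vanishes automatically, since each of its components lies in $\Hom(Q\la i_m\ra, Q\la i_j\ra) = \End^{i_j - i_m}(Q) = 0$ for $j < m$ — this is the one spot where the vanishing $\End^i(Q)=0$ for $i<0$ is used. The block $\delta_{PP}$ lies in $\mathrm{Mat}_{a_{i_m}}(\End^0(Q)) = \mathrm{Mat}_{a_{i_m}}(\k)$ and vanishes for the same reason as in the base case (apply infinitesimality of $\delta$ to coordinate inclusions and projections of the copies of $Q\la i_m\ra$). Hence
$$\id_M + \delta = \begin{pmatrix} \id_N + \delta_{NN} & 0 \\ \delta_{PN} & \id_P \end{pmatrix}$$
is block lower-triangular. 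Its block $\delta_{NN} = \pi_N\circ\delta\circ\iota_N$ is again infinitesimal: for $\alpha\colon Q\la i\ra \to N$ and $\beta\colon N\to Q\la i\ra$ the composite $\beta\,\delta_{NN}\,\alpha$ equals $(\beta\pi_N)\circ\delta\circ(\iota_N\alpha)$, which is $0$. So by the inductive hypothesis $\id_N + \delta_{NN}$ is invertible, and therefore so is the displayed block lower-triangular matrix (with the evident inverse). Combined with the reduction step, this proves the lemma.

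The main obstacle — such as it is — is the structural observation that an infinitesimal endomorphism of $M$ annihilates the top graded summand $P$, equivalently that $\delta$ strictly raises the grading shift; this is precisely where the hypotheses $\End^i(Q) = 0$ for $i < 0$ and $\End^0(Q) = \k$ are used, together with the Krull–Schmidt property so that ``the shifts occurring in $M$'' makes sense. One could equally skip the induction: the same two observations show $\delta$ is nilpotent (in fact $\delta^m = 0$), so that $\id_M + \delta$ is invertible with inverse $\sum_{j\ge 0}(-\delta)^j$, a finite sum. Everything else is routine bookkeeping with block matrices.
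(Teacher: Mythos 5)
Your proof is correct and rests on exactly the same structural facts the paper uses: $\End^{i}(Q)=0$ for $i<0$ forces any endomorphism of $M$ to be (block) triangular with respect to the shift filtration, $\End^0(Q)=\k$ makes the diagonal blocks ordinary matrices over $\k$, and infinitesimality kills those diagonal blocks. The paper simply observes directly that $f$ is then triangular with invertible diagonal while $\delta$ is strictly triangular, so $f+\delta$ has the same invertible diagonal and is therefore invertible; you repackage the very same observations as a reduction to $f=\id$ (legitimate, since $f^{-1}\delta$ is again infinitesimal by the two-sided-ideal property) followed by an induction on the number of shifts, and your closing remark that $\delta$ is nilpotent, so $\id+\delta=\sum_{j\geq 0}(-\delta)^j$, is really the cleanest one-line phrasing of the paper's ``strictly upper triangular'' step.
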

\begin{proof}
Since there are no negative degree maps between the $Q$'s the matrix representing $f$ is upper triangular. Since $\End(Q) = \k \cdot \id$ the diagonal entries are matrices with values in $\k$. Since $f$ is invertible each of these matrices is invertible. Now if $\delta$ is infinitesimal then it is strictly upper triangular so that the diagonal of $f + \delta$ still contains invertible matrices. Thus $f + \delta$ is invertible. 
\end{proof}

Often in this paper we will consider an object $ A $ together with a subspace $ V \subset \Ext(Q, A) $, and we will want to know if the canonical map $ Q \otimes V \rightarrow A $ is an isomorphism onto the $ Q $ part of $ A$.  
This question is studied in the following Lemma, which we use frequently. 

\begin{Lemma} \label{th:modinf}
Suppose that $ V \subset \Ext(Q, A) $ is a subspace.  

If $ Q \otimes V \rightarrow A $ is an isomorphism onto the $ Q $ part of $ A$, then the following hold
\begin{enumerate}
\item $ [V] = \Ext(Q,A)/\Inf(Q,A) $.
\item $ \dim V = \dim_Q(A) $.
\item No non-zero elements of $ V $ are $ Q $-infinitesimal.
\end{enumerate}
Conversely, if any two of the above three things hold, then the third holds and $ Q \otimes V \rightarrow A $ is an isomorphism onto the $ Q $ summand of $ A $.
\end{Lemma}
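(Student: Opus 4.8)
The plan is to set up a dictionary between the three conditions and the structure of the canonical map $\varphi: Q \otimes V \to A$, then argue by a counting/linear-algebra pigeonhole. First I would fix notation: by Lemma~\ref{lem:inf} write $A \cong \bigoplus Q\la i \ra^{\oplus a_i} \oplus B$ with $B$ containing no copies of $Q$, let $P$ be the $Q$-part, and let $p: A \to P$, $j: P \hookrightarrow A$ be the associated projection and inclusion. Composing $\varphi$ with $p$ gives a map $\bar\varphi = p\circ\varphi: Q\otimes V \to P$, and composing with the projection $A \to B$ gives the ``infinitesimal component'' of $\varphi$. The key observation is that a vector $v \in V$, viewed as a map $Q\la i\ra \to A$, is $Q$-infinitesimal precisely when $p\circ v = 0$, i.e.\ when $v \in \ker \bar\varphi$; so condition (iii) is exactly the statement that $\bar\varphi$ is injective. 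Meanwhile, the image of $\bar\varphi$ in $P = \bigoplus Q\la i\ra^{\oplus a_i}$ together with the earlier remark that $\Inf(Q,Q\la i\ra) = 0$ means $[V]$ maps isomorphically onto its image in $\Ext(Q,A)/\Inf(Q,A) \cong \Ext(Q,P)$, so condition (i) says $\bar\varphi$ is surjective, and condition (ii) says $\dim V = \dim_Q(A) = \dim \Ext(Q,P)/(\text{everything})$, i.e.\ the source and target of $\bar\varphi$ have equal dimension.

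The forward direction is then easy: if $\varphi$ is an isomorphism onto the $Q$-part, then $\bar\varphi$ is an isomorphism onto $P$, hence (i), (ii), (iii) all hold. For the converse, I would observe that among the three statements, any two of ``$\bar\varphi$ injective,'' ``$\bar\varphi$ surjective,'' ``$\dim(\text{source}) = \dim(\text{target})$'' imply the third and imply $\bar\varphi$ is an isomorphism onto $P$ --- this is elementary linear algebra over $\k$, using finite-dimensionality of the graded Hom spaces (which is part of the standing Krull--Schmidt hypothesis). So under any two of (i)--(iii) we conclude $\bar\varphi = p\circ\varphi$ is an isomorphism onto $P$.

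The remaining point, and the place that needs a little care, is upgrading ``$p \circ \varphi$ is an isomorphism onto $P$'' to ``$\varphi$ itself is an isomorphism onto the $Q$-summand of $A$'': a priori $\varphi$ could have a nonzero component $Q\otimes V \to B$. Here I would use that $Q\otimes V$ is a direct sum of shifts of $Q$ and $B$ contains no copies of $Q$, so every map $Q\otimes V \to B$ is $Q$-infinitesimal, hence $\varphi = j\circ\bar\varphi + \delta$ with $\delta$ infinitesimal (landing in $B$, or more precisely $p\circ\delta = 0$). Since $\bar\varphi: Q\otimes V \to P$ is an isomorphism, I can change basis on the target so that $\varphi$ becomes $\mathrm{id}_P \oplus 0$ plus an infinitesimal correction, and then invoke Lemma~\ref{lem:infzero} (applied to the identity on $P = \bigoplus Q\la i\ra^{\oplus a_i}$, perturbed by $\delta$) to conclude that $\varphi$ is a split injection whose image is a complement to $B$, hence is precisely a copy of the $Q$-part. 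I expect this last identification --- making sure the infinitesimal perturbation doesn't move the image out of ``being the $Q$-summand,'' rather than just ``being \emph{a} direct summand isomorphic to the $Q$-part'' --- to be the main obstacle, and the resolution is exactly Lemma~\ref{lem:infzero} together with the uniqueness of the decomposition in Lemma~\ref{lem:inf}.
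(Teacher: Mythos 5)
Your overall architecture — set up a dictionary between (i)--(iii) and properties of a map, run a pigeonhole argument, then invoke Lemma~\ref{lem:infzero} to pass between ``modulo infinitesimals'' and ``on the nose'' — is the right way to unwind the paper's one-line proof. But the dictionary itself is wrong at its central joint, and the error propagates.

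You assert that $v\in V$ is $Q$-infinitesimal \emph{precisely when} $p\circ v=0$, i.e.\ when $v$ has no component in $P$. Only the direction ``$p\circ v=0\Rightarrow v$ infinitesimal'' is correct. For the converse, note that the standing hypotheses on $Q$ say nothing about $\End^i(Q)$ for $i>0$ (and in the paper's geometric applications these positive-degree endomorphisms are genuinely nonzero). Take $A=Q\la 1\ra$, so $P=A$, $B=0$, and let $v=\epsilon\in\End^1(Q)\subset\Hom(Q,A)$ be nonzero. For any $\alpha\colon Q\la i\ra\to Q$ and $\beta\colon A\to Q\la i\ra$ one has $\alpha\in\End^{-i}(Q)$ and $\beta\in\End^{i-1}(Q)$; these cannot both be nonzero, so $\beta v\alpha=0$ always and $v$ is $Q$-infinitesimal, yet $p\circ v=v\neq 0$. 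Thus (iii) is strictly stronger than ``$\ker\bar\varphi=0$.'' Relatedly, $\Ext(Q,A)/\Inf(Q,A)$ is not $\Ext(Q,P)$ but $\Ext(Q,P)/\Inf(Q,P)$, which is smaller whenever $\End^{>0}(Q)\neq 0$.

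Because of this, the ``elementary linear algebra over $\k$'' pigeonhole cannot be applied directly to $\bar\varphi=p\circ\varphi$, which is a morphism in $\D$ rather than a linear map. The object it should be applied to is the induced linear map $\psi\colon V\to\Ext(Q,A)/\Inf(Q,A)$: by definition $\ker\psi=V\cap\Inf(Q,A)$, so (iii) is exactly injectivity of $\psi$, (i) is exactly surjectivity of $\psi$, and (ii) says $\dim V$ equals the dimension of the target. Any two give $\psi$ bijective. One then needs a \emph{separate} step to upgrade ``$\psi$ is an isomorphism'' to ``$p\circ\varphi\colon Q\otimes V\to P$ is an isomorphism'': choosing a basis of $V$ and writing $p\circ\varphi$ as a matrix of endomorphisms of shifts of $Q$, the degree-zero diagonal blocks are the matrix of $\psi$ (hence invertible) while the remaining entries lie in $\End^{>0}(Q)$; upper triangularity, which is exactly Lemma~\ref{lem:infzero}, gives invertibility of $p\circ\varphi$. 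Your proposal does invoke Lemma~\ref{lem:infzero}, but only for the later transition from $p\circ\varphi$ to $\varphi$; it is actually needed for the transition from $\psi$ to $p\circ\varphi$, which you currently try to get for free from the faulty dictionary.
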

\begin{proof}
This follows immediately from the definition of $Q$-infinitesimal and Lemma \ref{lem:infzero}. 
\end{proof}

\subsection{Convolutions of complexes} \label{se:conv}

Now suppose $\D$ is also a triangulated category with cohomological shift $[\cdot]$.  We recall the concept of convolution in $\D$ (see \cite{GM} section IV, exercise 1). The idea is to generalize the Cone of a morphism to the case of a complex with more than one morphism (or equivalently with more than two objects). 

Let $ (A_\bullet, f_\bullet) = A_n \xrightarrow{f_n} A_{n-1} \rightarrow \cdots \xrightarrow{f_1} A_0 $ be a sequence of objects and morphisms in $ \mathcal{D}$ such that $ f_i \circ f_{i+1} = 0 $. Such a sequence is called a \textbf{complex}.

A \textbf{(right) convolution} of a complex $ (A_\bullet, f_\bullet) $ is any object $ B $ such that there exist
\begin{enumerate}
\item objects $ A_0 = B_0, B_1, \dots, B_{n-1}, B_n = B $ and
\item morphisms $ g_i : B_i [-i] \rightarrow A_i $, $ h_i : A_i \rightarrow B_{i-1}[-(i-1)] $ (with $ h_0 = id $)
\end{enumerate}
such that
\begin{equation} \label{eq:distB}
B_i[-i] \xrightarrow{g_i} A_i \xrightarrow{h_i} B_{i-1}[-(i-1)]
\end{equation}
is a distinguished triangle for each $ i $ and $ g_{i-1} \circ h_{i} = f_i $. Such a collection of data is called a \textbf{Postnikov system}. Notice that in a Postnikov system we also have $f_{i+1} \circ g_i = (g_{i+1} \circ h_i) \circ g_i = 0 $ since $h_i \circ g_i = 0$. 
The following result is a sharper version of Lemma 1.5 from \cite{O}.

\begin{Proposition} \label{th:uniquecone}
Let $ (A_\bullet, f_\bullet) $ be a complex. The following existence and uniqueness results hold.
\begin{enumerate}
\item If $\Hom(A_{i+k+1} [k], A_i) = 0 $ for all $ i \ge 0, k \ge 1 $, then any two convolutions of $ (A_\bullet, f_\bullet) $ are isomorphic.
\item If $ \Hom(A_{i+k+2} [k], A_i) = 0 $ for all $ i \ge 0, k \ge 1 $, then $ (A_\bullet, f_\bullet) $ has a convolution.
\end{enumerate}
\end{Proposition}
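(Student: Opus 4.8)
The plan is to prove both statements by induction on the length $ n $ of the complex, realizing a convolution as the top of a Postnikov tower built one triangle at a time, and controlling all the $ \Hom $ groups that arise through a single filtration argument. The basic observation is that in any Postnikov system, rotating the distinguished triangle $ (\ref{eq:distB}) $ and shifting by $ [i-1] $ produces a distinguished triangle $ B_{i-1} \to B_i \to A_i[i] \to B_{i-1}[1] $. Iterating, each partial convolution $ B_j $ carries a finite filtration with associated graded pieces $ A_0, A_1[1], \dots, A_j[j] $; hence for any object $ Z $, if $ \Hom(Z, A_m[m]) = 0 $ for all $ 0 \le m \le j $, then $ \Hom(Z, B_j) = 0 $. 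This is the technical engine for both parts, and the two hypotheses in the statement are precisely what is needed to apply it.

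For existence (part (ii)) I would build the Postnikov data recursively: put $ B_0 = A_0 $, $ h_1 = f_1 $, and, given $ B_0, \dots, B_{i-1} $ together with $ g_{i-1} : B_{i-1}[-(i-1)] \to A_{i-1} $ and $ h_{i-1} $, lift $ f_i : A_i \to A_{i-1} $ through $ g_{i-1} $ to a map $ h_i : A_i \to B_{i-1}[-(i-1)] $, then let $ B_i $ be the appropriate shift of $ \Cone(h_i) $ and read off $ g_i $ from the triangle. Applying $ \Hom(A_i, -) $ to the triangle defining $ g_{i-1} $, the obstruction to the lift is $ h_{i-1} \circ f_i \in \Hom(A_i, B_{i-2}[-(i-2)]) $; since $ g_{i-2} \circ h_{i-1} \circ f_i = f_{i-1} \circ f_i = 0 $, this class lies in the image of $ \Hom(A_i, B_{i-3}[-(i-2)]) $, and the filtration of $ B_{i-3} $ shows this group vanishes because each relevant summand $ \Hom(A_i, A_m[m - i + 2]) = \Hom(A_{m+k+2}[k], A_m) $ with $ k = i - m - 2 \ge 1 $ vanishes by hypothesis. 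So $ h_i $ exists, the recursion continues, and $ B_n $ is a convolution.

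For uniqueness (part (i)) the induction hypothesis has to be strengthened, since a convolution as a mere object is too weak to propagate: I would prove that any two Postnikov systems $ (B_j, g_j, h_j) $, $ (B_j', g_j', h_j') $ over $ (A_\bullet, f_\bullet) $ admit isomorphisms $ \phi_j : B_j \to B_j' $ with $ \phi_0 = \mathrm{id} $ and $ g_j' \circ \phi_j[-j] = g_j $, $ \phi_{j-1}[-(j-1)] \circ h_j = h_j' $. In the inductive step the already established identity $ \phi_{j-1}[-(j-1)] \circ h_j = h_j' $ makes $ (\mathrm{id}_{A_j}, \phi_{j-1}[-(j-1)]) $ a morphism between the triangle $ B_j[-j] \to A_j \xrightarrow{h_j} B_{j-1}[-(j-1)] $ and its primed analogue, so by the morphism-of-triangles axiom it extends to $ \phi_j[-j] $ completing a morphism of triangles (an isomorphism, since the other two components are), automatically satisfying $ g_j' \circ \phi_j[-j] = g_j $. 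To feed the next step one must check $ \phi_j[-j] \circ h_{j+1} = h_{j+1}' $: applying $ g_j' \circ (-) $ gives $ f_{j+1} - f_{j+1} = 0 $, so the difference lies in the image of $ \Hom(A_{j+1}, B_{j-1}'[-j]) $, which vanishes because the filtration of $ B_{j-1}' $ reduces it to groups $ \Hom(A_{m+k+1}[k], A_m) $ with $ k = j - m \ge 1 $ — here the sharper hypothesis of part (i), with index $ i + k + 1 $ rather than $ i + k + 2 $, is exactly what is used. Taking $ j = n $ gives $ B \cong B' $.

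The base cases $ n = 0 $ ($ B = A_0 $) and $ n = 1 $ ($ B = \Cone(f_1) $, with the compatible isomorphism supplied directly by the morphism-of-triangles axiom) are immediate and carry no hypotheses, consistent with cones being unique only up to non-canonical isomorphism. I expect the only real difficulty to be organizational: getting the degree shifts in the filtration of $ B_j $ exactly right, and recognizing for part (i) that one must carry the compatibility data for the $ g $'s and $ h $'s — i.e., an isomorphism of Postnikov systems, not just of the final objects — through the induction.
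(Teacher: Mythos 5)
Your argument is correct, and it is the standard Postnikov-tower proof of this statement. The paper itself does not reproduce the proof, deferring to Proposition 8.3 of \cite{ck1}; the mechanism you describe — build the system one triangle at a time, feeding the lifting obstruction $h_{i-1}\circ f_i$ (for existence), respectively the compatibility defect $\phi_j[-j]\circ h_{j+1}-h'_{j+1}$ (for uniqueness), through the finite filtration of the partial cones $B_j$ with subquotients $A_m[m]$, which is exactly where the two different index shifts $i+k+2$ versus $i+k+1$ in the vanishing hypotheses enter — is the proof one finds there.
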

\begin{proof}
See Proposition 8.3 of \cite{ck1}. 
\end{proof}

Whenever both conditions of Proposition \ref{th:uniquecone} are satisfied we will denote the convolution by $\Cone(f)$. The reason for this is that when the complex $A_\bullet$ has only two terms then the two conditions are automatically satisfied and the convolution is just the usual cone. 

\subsection{Exact complexes} \label{sec:exactcomplex}
Fix an object $Q \in \D$, without any conditions on $\End^*(A)$ (in particular we do not require the assumptions of section \ref{sec:infmaps} ).

Consider a complex $(A_\bullet, f_\bullet)$ as above where for each $ i$, we have a fixed isomorphism $A_i = Q \otimes V_i$ for some graded vector space $ V_i = \oplus_j V_{ij}$, so that 
$ A_i = \oplus_j Q \la j \ra \otimes V_{ij}$.
We insist that the $f_i$'s contain no negative degree maps, so that the matrix entries
$$ Q \la j \ra  \otimes V_{ij} \rightarrow Q \la j' \ra \otimes V_{i-1 j'}$$
of $f_i$ are zero if $j' < j$. We also require that the degree zero maps are multiples of the identity, so that the matrix entries
$$Q \la j \ra \otimes V_{ij} \rightarrow Q \la j \ra \otimes V_{i-1 j}$$
of $f_i$ are of the form $I \otimes M_{ij}$ for some linear maps $M_{ij} : V_{ij} \rightarrow V_{i-1 j}$.  Note that these notions depend on the particular isomorphisms $ A_i = Q \otimes V_i $ which we have fixed.

Because there are no negative degree maps the fact that $f_i \circ f_{i+1} = 0$ means that $M_{ij} \circ M_{i+1 j} = 0$ for all $j$.  Thus from $(A_\bullet, f_\bullet)$ together with fixed isomorphisms as above, we get a complex of graded vector spaces $ (V_\bullet, M_\bullet) $.

We say that $(A_\bullet, f_\bullet)$ is {\bf exact} if $(V_\bullet, M_\bullet) $ is exact as a complex of graded vector spaces.  

\begin{Lemma}\label{lem:conezero}
If $(A_\bullet, f_\bullet)$ is an exact complex then it has a unique right convolution which is zero.  More precisely, every partial Postnikov system $ B_i, \dots, B_0, g_i, \dots, g_0, f_i, \dots, f_0 $ extends to a full Postnikov system with $ B_n = 0 $.
\end{Lemma}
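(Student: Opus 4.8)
The plan is to build the Postnikov system from the left, peeling off one term at a time, and to prove by induction on $i$ that for \emph{any} partial Postnikov system the object $B_i[-i]$ is isomorphic to $Q \otimes Z_i$, where $Z_i := \ker(M_i : V_i \to V_{i-1})$ (with the convention $Z_0 := V_0$), via an isomorphism under which $g_i : B_i[-i] \to A_i$ has no negative-degree matrix entries and has degree-zero part the inclusion $Q\otimes Z_i \hookrightarrow Q\otimes V_i$ coming from a chosen graded splitting $V_i = Z_i \oplus V_i'$ of $M_i$. Morally this is the statement that the convolution of $Q$ tensored with an exact complex of graded vector spaces vanishes, perturbed by the positive-degree parts of the $f_i$; the role of the induction is to check the perturbation changes nothing. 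The base case $i = 0$ is immediate since $B_0 = A_0 = Q\otimes V_0$ and $g_0 = \id$.

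For the inductive step, the compatibility $g_{i-1}\circ h_i = f_i$ forces the shape of $h_i$. By induction $g_{i-1}$ is the inclusion $Q\otimes Z_{i-1}\hookrightarrow Q\otimes V_{i-1}$ post-composed with an automorphism of $A_{i-1}$ which is the identity in degree zero; in particular $g_{i-1}$ is a split monomorphism whose retraction $r_{i-1}$ again has no negative-degree entries and has the projection as its degree-zero part. Hence $h_i = r_{i-1}\circ g_{i-1}\circ h_i = r_{i-1}\circ f_i$ has no negative-degree entries and has degree-zero part $I\otimes M_i$, where $M_i$ is legitimately regarded as a map $V_i \to Z_{i-1}$ since $M_{i-1}M_i = 0$. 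Exactness of $(V_\bullet, M_\bullet)$ at $V_{i-1}$ says precisely that this map is surjective with kernel $Z_i$. A Gaussian-elimination computation of the cone now finishes the step: split $V_i = Z_i \oplus V_i'$ so that the block $Q\otimes V_i' \to Q\otimes Z_{i-1}$ of $h_i$ has invertible degree-zero part, hence is invertible; post-composing with its inverse on the target and changing bases on the source turns $h_i$ into the projection $Q\otimes(Z_i\oplus V_i') \to Q\otimes V_i'$, whose cone is $Q\otimes Z_i[1]$ with connecting map the inclusion. Rotating the triangle gives $B_i[-i] \cong Q\otimes Z_i$ with $g_i$ of the asserted form.

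It remains to see that a prescribed partial Postnikov system extends and that the process terminates at $B_n = 0$. Since $g_i$ is a split monomorphism, $f_{i+1}$ factors through $g_i$ in the triangle $B_i[-i]\xrightarrow{g_i}A_i\xrightarrow{h_i}B_{i-1}[-(i-1)]$: from $g_{i-1}\circ(h_i\circ f_{i+1}) = f_i\circ f_{i+1} = 0$ and $g_{i-1}$ split mono we get $h_i\circ f_{i+1} = 0$, which is the obstruction to factoring; moreover the lift $h_{i+1}$ with $g_i\circ h_{i+1} = f_{i+1}$ is \emph{unique}, again because $g_i$ is split mono. Completing $h_{i+1}$ to a distinguished triangle defines $B_{i+1}$ and continues the system. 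When $i = n$, exactness at the top term $V_n$ is injectivity of $M_n$, so $Z_n = 0$ and hence $B_n[-n] \cong Q\otimes Z_n = 0$. Because at every stage the map $h$ is forced and each $B$ is a cone, hence determined up to isomorphism, every Postnikov system of the complex — in particular every extension of a prescribed partial one — terminates with $B_n = 0$; this is simultaneously the existence of a convolution, its uniqueness, and its vanishing.

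The genuinely mechanical ingredients are two: that a map $Q\otimes U \to Q\otimes U'$ with no negative-degree entries and invertible degree-zero part is invertible (its positive-degree correction is a locally nilpotent endomorphism, as the $V_i$ are supported in finitely many degrees, so the Neumann series for the inverse terminates), and the careful propagation of the "no negative-degree entries, prescribed degree-zero part" conditions on $h_i$ and $g_i$ through the successive cone constructions. I expect this bookkeeping, rather than any homological subtlety, to be the only real work; once the identification $B_i[-i]\cong Q\otimes Z_i$ is in hand, the rest of the argument is forced.
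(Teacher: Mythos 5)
Your proof is correct and takes essentially the same approach as the paper: both peel off one term at a time via a Gaussian-elimination computation of the cone, using the upper-triangular-with-invertible-degree-zero-part structure to identify a complementary block of $h_i$ as an isomorphism, and the exactness of $(V_\bullet, M_\bullet)$ to identify the fiber with $Q \otimes Z_i$. The paper phrases the induction as a reduction to a strictly shorter complex while you phrase it as a forward construction of the Postnikov system maintaining the invariant $B_i[-i] \cong Q \otimes Z_i$, but these are the same mechanism, and both proofs use the paper's Lemma \ref{th:simpletri} (or its content) and deduce uniqueness from the split-monomorphism property of $g_i$.
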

\begin{proof}
We cannot apply Proposition \ref{th:uniquecone} directly, so instead we proceed by induction on the length of the complex $n$. The base case $ n=0 $ follows immediately.

Now consider a general $n$. Let's look at the map $f_1: A_1 \rightarrow A_0$.  The corresponding map $ M_1 : V_1 \rightarrow V_0 $ is surjective, so we can split $ V_1 = W_1 \oplus U_1 $ where $ W_1 = \ker(M_1) $ and the restriction $ M_1 : U_1 \rightarrow V_0 $ is an isomorphism.  This gives us a splitting 
\begin{equation} \label{eq:mainsplit}
A_1 = Q \otimes W_1 \oplus Q \otimes U_1 
\end{equation}
With respect to this direct sum decomposition, let us write $ f_1 = [r \ s]$.

Consider the matrix entries of $ s$ with respect to the direct sum decompositions
\begin{equation*}
Q \otimes U_1 = \oplus_j Q\la j \ra \otimes U_{1j} \quad A_0 = \oplus_j Q \la j \ra \otimes V_{0j} 
\end{equation*}
By assumption, this will be an upper triangular matrix with diagonal entries given by $ I \otimes M_{1j}$.  These diagonal entries are isomorphisms, so by upper triangularity, $ s : Q\otimes U_1 \rightarrow A_0 $ is an isomorphism.

Hence we can define $ B_1 = Q \otimes W_1 $ and define $ g_1 $ by $\left[ \begin{smallmatrix} I \\ -rs^{-1} \end{smallmatrix} \right] $ as in Lemma \ref{th:simpletri}.

With respect to the direct sum decomposition (\ref{eq:mainsplit}), let us write $ f_2 $ as $ \left[ \begin{smallmatrix} p \\ q \end{smallmatrix} \right] $.  We can define $ h_2 := p$.  It is immediate that $ f_2 = g_1 \circ h_2 $.

Now consider the new complex $ A_n \xrightarrow{f_n} \dots \xrightarrow{f_3} A_2 \xrightarrow{h_2} B_1 $.  We claim that this complex is exact.  Since $ h_2 $ is a matrix entry of $ f_2 $ with respect to (\ref{eq:mainsplit}), we see that it has no negative degree entries.  Note that $ M_2 : V_2 \rightarrow V_1 = W_1 \oplus U_1 $ has image $ W_1 = \ker(M_1) $ by the exactness of the complex $ M_\bullet $.  So we see that the diagonal entries of $ h_2 $ are exactly $ I \otimes M_{2j} $.  The resulting complex of vector spaces
\begin{equation*}
V_n \xrightarrow{M_n} \cdots \xrightarrow{M_3} V_2 \xrightarrow{M_2} W_1 
\end{equation*}
is exact.  Hence we are in the setup of this Lemma, but with a complex with fewer terms.  Hence by induction, the complex $ A_\bullet $ has a Postnikov system with convolution 0.

To show uniqueness, just note that in any Postnikov system $ B_1 $ must be isomorphic to $Q \otimes W_1$.  Moreover, $ g_1 $ is the inclusion of a direct summand.  Thus there is no choice for $ h_2 $ and the resulting complex $ A_n \xrightarrow{f_n} \dots \xrightarrow{f_3} A_2 \xrightarrow{h_2} B_1 $ must satisfy the hypotheses of this Lemma.
\end{proof}

\begin{Lemma} \label{th:simpletri}
Let $ B, C, D $ be objects in a triangulated category with maps $ r : B \rightarrow D $ and $ s : C \rightarrow D $, such that $ s $ is an isomorphism.  Then there is a distinguished triangle
\begin{equation*}
B \xrightarrow{\left[ \begin{smallmatrix} I \\ -s^{-1}r \end{smallmatrix} \right]} B \oplus C \xrightarrow{[r \ s]} D
\end{equation*}
\end{Lemma}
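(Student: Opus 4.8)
The plan is to avoid computing any cone from scratch and instead obtain the asserted triangle as the image of a split distinguished triangle under an isomorphism of triangles.

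First I would write down the split distinguished triangle
$$ B \xrightarrow{\left[\begin{smallmatrix} I \\ 0 \end{smallmatrix}\right]} B \oplus C \xrightarrow{\left[\begin{smallmatrix} 0 & I \end{smallmatrix}\right]} C \xrightarrow{0} B[1], $$
which exists in any triangulated category. The key observation is that the map $\left[\begin{smallmatrix} I \\ -s^{-1}r \end{smallmatrix}\right]$ of the statement differs from $\left[\begin{smallmatrix} I \\ 0 \end{smallmatrix}\right]$ only by post-composition with the endomorphism $u := \left[\begin{smallmatrix} I & 0 \\ -s^{-1}r & I \end{smallmatrix}\right]$ of $B \oplus C$. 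Here $u$ is an automorphism precisely because $s$ is invertible (its inverse is $\left[\begin{smallmatrix} I & 0 \\ s^{-1}r & I \end{smallmatrix}\right]$, which makes sense once $s^{-1}$ does), and one checks directly that $u \circ \left[\begin{smallmatrix} I \\ 0 \end{smallmatrix}\right] = \left[\begin{smallmatrix} I \\ -s^{-1}r \end{smallmatrix}\right]$.

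Next I would invoke the standard fact that composing the three maps of a distinguished triangle with isomorphisms at the vertices again yields a distinguished triangle. Conjugating the split triangle by $u$ at the middle vertex (and the identity at $B$ and $B[1]$), and then applying the isomorphism $s : C \xrightarrow{\sim} D$ at the third vertex, produces the distinguished triangle
$$ B \xrightarrow{\left[\begin{smallmatrix} I \\ -s^{-1}r \end{smallmatrix}\right]} B \oplus C \xrightarrow{s \,\circ\, \left[\begin{smallmatrix} 0 & I \end{smallmatrix}\right] \,\circ\, u^{-1}} D \xrightarrow{0} B[1]. $$
A one-line matrix computation gives $\left[\begin{smallmatrix} 0 & I \end{smallmatrix}\right] \circ u^{-1} = \left[\begin{smallmatrix} s^{-1}r & I \end{smallmatrix}\right]$, hence $s \circ \left[\begin{smallmatrix} 0 & I \end{smallmatrix}\right] \circ u^{-1} = \left[\begin{smallmatrix} r & s \end{smallmatrix}\right]$, which is exactly the second map in the statement. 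Since the lemma only asserts that the first two arrows fit into a distinguished triangle, the fact that the connecting map came out to be $0$ is immaterial.

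I do not expect any real obstacle: the only thing that needs care is the bookkeeping of matrix conventions (row versus column vectors, and the sign in $-s^{-1}r$) so that all composites come out as claimed. This lemma is a small technical convenience, used to make the construction of the morphisms $g_i$ in the proof of Lemma \ref{lem:conezero} transparent.
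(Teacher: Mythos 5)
Your proof is correct and is essentially identical in substance to the paper's: both arguments exhibit an isomorphism of triangles between the asserted triangle and the split triangle $B \xrightarrow{\left[\begin{smallmatrix} I \\ 0 \end{smallmatrix}\right]} B \oplus C \xrightarrow{[0 \ I]} C$, using the identity on $B$, the unipotent automorphism $\left[\begin{smallmatrix} I & 0 \\ \pm s^{-1}r & I \end{smallmatrix}\right]$ on $B \oplus C$, and $s$ (or $s^{-1}$) on the third vertex. The paper simply writes the vertical maps from the asserted triangle down to the split one, whereas you transport the split triangle forward, but this is the same calculation read in the opposite direction.
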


\begin{proof}
The given triangle $ B \rightarrow B \oplus C \rightarrow D $ is isomorphic to the triangle \begin{equation} \label{eq:newtri}
 B \xrightarrow{\left[ \begin{smallmatrix} I \\ 0 \end{smallmatrix} \right]} B \oplus C \xrightarrow{[0 \ I]} C
\end{equation}
via the vertical maps $ I, \left[ \begin{smallmatrix} 1 \ 0 \\ s^{-1}r \ I \end{smallmatrix} \right], s^{-1} $.

The triangle (\ref{eq:newtri}) is distinguished, hence so is the original triangle.
\end{proof}

\section{Computation of the complex on images of highest weight objects}

The purpose of this section is to prove the following result which will be the key tool used to prove Theorem \ref{th:Tonhighweight}.

\begin{Theorem} \label{th:Tonhighweight2}
Suppose $\l,p \ge 0$ and let $A \in \D(\l+2p)$ be a highest weight object (i.e. $\E(A)=0$). Then there exists some $v_p \in \End^{2p(\l+p)}(\F^{(\l+p)} \E^{(p)} \F^{(p)})$ such that 
$$\F^{(\l+p)}(A) \la -p(\l+p+1) \ra \xrightarrow{\gamma_p} \Theta_p \F^{(p)}(A) \xrightarrow{d_p} \Theta_{p-1} \F^{(p)}(A) \xrightarrow{d_{p-1}} \dots \xrightarrow{d_1} \Theta_0 \F^{(p)}(A)$$
is an exact complex in the sense of section \ref{sec:exactcomplex} and Lemma \ref{lem:conezero}. The map $\gamma_p$ is induced by the composition
\begin{eqnarray*}
\F^{(\l+p)}(p) \la -p(\l+p+1) \ra &\xrightarrow{\eta}& \F^{(\l+p)} \E^{(p)} \F^{(p)}(\l+p) \la -p(\l+p+1)-p(\l+p) \ra \\
&\xrightarrow{v_p}& \F^{(\l+p)} \E^{(p)} \F^{(p)} \la -p \ra \cong \Theta_p \F^{(p)}.
\end{eqnarray*}
\end{Theorem}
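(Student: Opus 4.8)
The plan is to reduce Theorem~\ref{th:Tonhighweight2} to a computation with multiplicity spaces: after evaluating on the highest weight object $A$, every object in the complex is canonically of the form $\F^{(\l+p)}(A)$ tensored with an explicit graded vector space, the differentials acquire the rigid shape demanded by Section~\ref{sec:exactcomplex}, and exactness of the resulting complex of vector spaces is a separate combinatorial statement.

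I would start by computing $\E^{(s)}\F^{(p)}$ on highest weight objects. Iterating the categorified commutation relation (condition~(3) of the definition) and using $\E(A)=0$ to kill every term involving a positive power of $\E$ applied to $A$, one obtains a natural isomorphism
$$\E^{(s)}\F^{(p)}(A)\ \cong\ \F^{(p-s)}(A)\otimes_{\k}H^\star(\bG(s,\l+p+s)),\qquad 0\le s\le p,$$
the graded multiplicity space being pinned down by the action of $t$ on a highest weight vector of weight $\l+2p$, as in Proposition~\ref{th:tact}. Plugging this into the divided-power composition rule (condition~(2)), $\F^{(\l+s)}\F^{(p-s)}\cong\F^{(\l+p)}\otimes H^\star(\bG(p-s,\l+p))$, gives
$$\Theta_s\F^{(p)}(A)\ \cong\ Q\otimes V_s,\qquad Q:=\F^{(\l+p)}(A),\quad V_s:=H^\star(\bG(p-s,\l+p))\otimes H^\star(\bG(s,\l+p+s))\la -s\ra,$$
and the leftmost term of the complex is $Q\otimes V_{p+1}$ with $V_{p+1}:=\k\la -p(\l+p+1)\ra$. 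For $\gamma_p$ I would take $v_p\in\End^{2p(\l+p)}(\F^{(\l+p)}\E^{(p)}\F^{(p)})$ to be the natural transformation — built from the available structure maps (the operators $X$ and the maps $\iota,\pi$) — which under the $s=p$ case of the first isomorphism induces the (essentially unique) isomorphism between the two extreme graded lines of the multiplicity space $H^\star(\bG(p,\l+2p))$; since $\eta$ lands in one of these lines, $\gamma_p=v_p\circ\eta$ then becomes precisely the split inclusion of the summand $V_{p+1}$ into $V_p$.

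The crux is to identify the differentials $d_s$ — the composite of $\iota\iota$ with $\varepsilon$ — under the identifications just described. Here one must transport \emph{all} of $\eta,\varepsilon,\iota,\pi$ and the nil affine Hecke generators $x_i,t_i$ through the commutation isomorphisms simultaneously, using the two compatibility lemmas established earlier (the equality $\iota\circ\pi=t_{w_0}$ up to a scalar, and Lemma~\ref{lem:pieta}) to keep the bookkeeping tractable. The target is to show that, with respect to the chosen isomorphisms $\Theta_s\F^{(p)}(A)=Q\otimes V_s$, each $d_s$ (and $\gamma_p$) has no negative-degree matrix entries and has degree-zero part of the form $I\otimes M_s$ for an explicit $M_s:V_s\to V_{s-1}$, so that we are literally in the setting of Section~\ref{sec:exactcomplex}. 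I expect this to be the main obstacle: matching the \emph{objects} is forced by representation theory, but putting the \emph{maps} into this precise upper-triangular, identity-on-the-diagonal form — rather than merely up to automorphisms of the $Q$-modules — is where the real work sits. (Note that the notion of exact complex does not require $\End^0(Q)=\k\cdot\id$, which matters since $\End^{<0}(\F^{(\l+p)}(A))$ need not vanish for a general highest weight $A$; hence this normalization has to be engineered via the explicit isomorphisms, not read off abstractly.)

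Finally one must check that the complex of graded vector spaces $V_{p+1}\xrightarrow{M_{p+1}}V_p\xrightarrow{M_p}\cdots\xrightarrow{M_1}V_0$ is exact. Its Euler characteristic vanishes — $\sum_{s=0}^p(-1)^s\dim_\bullet V_s=\dim_\bullet V_{p+1}$, which at $q=1$ is the identity $\sum_{s=0}^p(-1)^s\binom{\l+p}{p-s}\binom{\l+p+s}{s}=(-1)^p$; abstractly this is the statement that $\sum_s(-1)^s[\Theta_s\F^{(p)}(A)]=t\cdot[\F^{(p)}(A)]$ from Proposition~\ref{th:tact}. For exactness itself I would either exhibit an explicit contracting homotopy after recognizing $V_\bullet$ as a Koszul-type resolution, or argue by induction on $p$, using $\F\F^{(p-1)}\cong\F^{(p)}\otimes H^\star(\p^{p-1})$ to realize $\F^{(p)}(A)$ as a direct summand of $\F\F^{(p-1)}(A)$ and thereby relate the complex for $(\l,p)$ to the shorter one for $(\l+2,p-1)$; in either case the Euler-characteristic count forces exactness at every term, which by Lemma~\ref{lem:conezero} is exactly what is needed to compute $\T(\F^{(p)}(A))$ in Theorem~\ref{th:Tonhighweight}.
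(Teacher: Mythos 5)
Your term-by-term decomposition $\Theta_s\F^{(p)}(A)\cong\F^{(\l+p)}(A)\otimes H^\star(\bG(p-s,\l+p))\otimes H^\star(\bG(s,\l+p+s))\la-s\ra$ is correct (and in fact the paper's Corollary~\ref{th:abstractiso2} has a typo in the first Grassmannian, where $\l+p-s$ should read $\l+p+s$), and your overall strategy --- write every term as $Q\otimes V_s$, show the differentials take the rigid upper-triangular form demanded by Section~\ref{sec:exactcomplex}, then prove exactness of the underlying complex of vector spaces --- is indeed the shape of the paper's argument. But you have not filled in the part you yourself flag as ``where the real work sits,'' and that part is essentially the entire content of Section~4. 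The isomorphisms $Q\otimes V_s\to\Theta_s\F^{(p)}(A)$ cannot be produced abstractly from the $\sl_2$-representation theory: they have to be manufactured from specific natural transformations $\phi_s(f)=\pi\circ f\circ\eta\circ\iota$ indexed by symmetric polynomials $f$ in the nil affine Hecke generators (Propositions~\ref{th:firstiso}, \ref{th:secondiso}, Theorem~\ref{th:vertiso}), precisely so that Proposition~\ref{th:diff} can identify $d_s$ as ``the same polynomial fed through $\phi_{s-1}$ instead of $\phi_s$,'' which is what makes the complex match the modified Koszul complex $C'_\bullet$ of Proposition~\ref{th:ThetaC}. None of that engineering appears in your proposal; you merely describe the constraint the maps must satisfy.

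There is also a genuine error at the end: the claim that ``the Euler-characteristic count forces exactness at every term'' is false --- a complex of graded vector spaces can have $\sum(-1)^s\dim_\bullet V_s=0$ without being exact, so the $q$-binomial identity you record is a sanity check, not a proof. The paper gets exactness by an actual isomorphism with the Koszul-type complex $Sym^s M\otimes\Lambda^{p-s}M\wedge e_{\l+p}$ and a direct homology computation for that complex. Finally, a structural point worth internalizing: the paper deliberately works at the level of functors $\F^{(\l+p)}$ (where $\End^0=\k\cdot\id$ is an axiom and infinitesimal maps can be controlled) throughout Section~4, and only evaluates on the object $A$ at the very last step; your plan of computing $\E^{(s)}\F^{(p)}(A)$ on objects from the start discards exactly the rigidity you need, which is why you then find yourself unable to normalize the maps. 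So the proposal is a correct outline of what must be proved, but it does not constitute a proof of the theorem.
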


In the remainder of this section we prove Theorem \ref{th:Tonhighweight2}. To do this we have to analyze the terms in the complex $\Theta_\bullet \F^{(p)} $ carefully. A particular term in this complex is $$ \Theta_s \F^{(p)} = \F^{(\l+s)}(s) \E^{(s)}(\l+s) \F^{(p)}(\l+p)\la-s\ra. $$  As we will see below this is isomorphic to a direct sum of shifts of copies of $ \F^{(\l + p)} $ direct sum with $ \U $ where $ \U $ will always denote some functor which is a direct sum of terms $ \F^{(a)} \E^{(b)} $ for $ b \ne 0 $. We will not need to know the precise form of $\U$ because $\U(A)=0$ for any highest weight object $A$. 

Using the data on the strong categorical $\sl_2 $ action, we construct a particular isomorphism onto the $ \F^{(\l+p)} $ part of $ \Theta_s \F^{(p)} $ for each $ s $.  Then we use these isomorphisms in order to compute $\Theta_* \F^{(p)}$. 

\subsection{Abstract isomorphisms}

We begin by examining abstractly the pieces in the complex $\Theta$.

\begin{Lemma} \label{th:abstractiso}
If $ \l-2a\ge 0 $ then there exists an isomorphism
\begin{equation*}
\E^{(b)}(\l-2a+b) \F^{(a)}(\l-a) \cong \bigoplus_{j = 0}^b \F^{(a-j)} \E^{(b-j)} \otimes_{\k} H^\star(\bG(j, \l-a+b)).
\end{equation*}
\end{Lemma}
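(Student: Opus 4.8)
The plan is to prove the identity $\E^{(b)}(\l-2a+b)\,\F^{(a)}(\l-a) \cong \bigoplus_{j=0}^b \F^{(a-j)}\E^{(b-j)} \otimes_\k H^\star(\bG(j,\l-a+b))$ by induction on $b$. The base case $b=0$ is trivial (both sides are $\F^{(a)}$), so assume the formula holds for $b-1$. The idea is to write $\E^{(b)} \cong \E^{(b-1)}\E / H^\star(\p^{b-1})$-type decomposition coming from the composition axiom (item (2) of the conditions), apply the inductive hypothesis to move $\F^{(a)}$ past $\E^{(b-1)}$, and then handle the single remaining $\E$ passing a single $\F$ using the commutation relations (item (3): $\E(\mu-1)\F(\mu-1) \cong \F(\mu+1)\E(\mu+1) \oplus \id \otimes H^\star(\p^{\mu-1})$, valid when $\mu \ge 0$, with the analogous statement for $\mu \le 0$).

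Concretely, first I would use $\E^{(b)}(\l-2a+b) \otimes_\k H^\star(\p^{b-1}) \cong \E(\cdot)\E^{(b-1)}(\cdot)$ to reduce to computing $\E\,\E^{(b-1)}\,\F^{(a)}$. By the inductive hypothesis, $\E^{(b-1)}\F^{(a)} \cong \bigoplus_{j=0}^{b-1} \F^{(a-j)}\E^{(b-1-j)} \otimes_\k H^\star(\bG(j,\l-a+b-1))$. Now I apply $\E$ on the left of each summand $\F^{(a-j)}\E^{(b-1-j)}$. Using $\F^{(a-j)} \cong \F\,\F^{(a-j-1)}/H^\star(\p^{a-j-1})$ I peel off one $\F$ so that I only need to understand $\E\F$ on the relevant weight space, where I invoke item (3). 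The condition $\l-2a\ge 0$ guarantees that the weights appearing stay in the range where item (3) applies (the relevant weight before applying $\E$ is of the form $\l-2a+2j-(b-1-j)+\cdots$, which I would check is $\ge 0$ in each case), so each $\E\F$ splits as $\F\E$ plus an identity-times-cohomology term. Collecting all the resulting pieces and reindexing, the binomial-type identity $[\,b-1\,] \cdot \bigl[\begin{smallmatrix} \l-a+b-1 \\ j \end{smallmatrix}\bigr]$-style sums should reorganize into $\bigl[\begin{smallmatrix} \l-a+b \\ j \end{smallmatrix}\bigr]$ by the quantum Vandermonde / Pascal identity, matching $H^\star(\bG(j,\l-a+b))$ on the right.

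The main obstacle I anticipate is the bookkeeping of grading shifts and the combinatorial reorganization: when $\E$ is pushed past $\F^{(a-j)}\E^{(b-1-j)}$, each application of item (3) produces two families of terms (the $\F\E$ term and the $\id\otimes H^\star(\p^\star)$ term), and these must be collected across all $j$ and across the factor $H^\star(\p^{b-1})$ that was introduced, then matched — with correct $q$-degree shifts — against the single clean expression $\bigoplus_{j=0}^b \F^{(a-j)}\E^{(b-j)}\otimes H^\star(\bG(j,\l-a+b))$. This is a Grothendieck-group (or graded-dimension) computation at heart: verifying the multiplicities agree amounts to a $q$-binomial identity, and then one promotes it to an isomorphism of functors using Krull-Schmidt (the assumptions of section \ref{sec:infmaps}) together with the fact that all the constituent isomorphisms are canonical. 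I would also need to separately track that the ``extra'' terms $\F^{(a')}\E^{(b')}$ with $b' \ne 0$ are exactly the ones that get absorbed into what will later be called $\U$, though for the statement of this lemma no such separation is needed — only the clean direct-sum decomposition, which the induction delivers directly.

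Alternatively, if the inductive pushing-past argument becomes too unwieldy, a cleaner route is to reduce to the $a=1$ or $b=1$ cases (single divided power on one side) and iterate: the $b=1$ case is $\E(\l-2a+1)\F^{(a)}(\l-a) \cong \bigoplus_{j=0}^1 \F^{(a-j)}\E^{(1-j)}\otimes H^\star(\bG(j,\l-a+1))$, i.e. $\F^{(a)}\E \oplus \F^{(a-1)}\otimes H^\star(\p^{\l-a})$, which follows from item (3) after peeling $\F^{(a)} \cong \F\F^{(a-1)}/H^\star$; then compose $b$ such moves and use the $\F$-composition axiom to reassemble the divided powers. I expect the first approach to be the one written up, with the $q$-binomial identity being the only genuinely nontrivial computational input.
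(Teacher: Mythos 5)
Your proposal is correct in spirit and shares the paper's two main ingredients (the categorified commutation relation and Krull--Schmidt), but it organizes the argument differently, and the paper's route is shorter. You induct on $b$, peel off single $\E$'s and $\F$'s, and plan to track the multiplicities by hand, ultimately reducing to a $q$-Pascal identity that you would need to verify. The paper instead does no induction at all: it first applies the commutation relation repeatedly to $\E^b\F^a$ (not divided powers) to conclude \emph{abstractly} that $\E^b\F^a$ is a direct sum of terms $\F^{a-j}\E^{b-j}$, then uses $\E^b \cong (\E^{(b)})^{\oplus b!}$, $\F^a \cong (\F^{(a)})^{\oplus a!}$ together with Krull--Schmidt to deduce that $\E^{(b)}\F^{(a)} \cong \bigoplus_j \F^{(a-j)}\E^{(b-j)}\otimes V_j$ for \emph{some} graded vector spaces $V_j$, and finally reads off $\dim_\bullet V_j = \bigl[\begin{smallmatrix}\l-a+b\\j\end{smallmatrix}\bigr]$ by comparing on the Grothendieck group with Lusztig's identity $e^{(b)}f^{(a)}v = \sum_j \bigl[\begin{smallmatrix}\l-a+b\\j\end{smallmatrix}\bigr] f^{(a-j)}e^{(b-j)}v$ (\cite{Lu}, 23.1.3). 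That last step is precisely the $q$-binomial bookkeeping you anticipate needing, but outsourced to known $U_q(\sl_2)$ representation theory rather than re-derived. So the tradeoff is: your version is self-contained but computationally heavier; the paper's version separates the structural claim (which $\F^{(a')}\E^{(b')}$ appear, and \emph{that} they appear with vector-space multiplicities, courtesy of Krull--Schmidt) from the numerical one (what those multiplicities are, courtesy of Lusztig), which is cleaner and avoids the induction on $b$ entirely. One small caution on your weight check: the quantity you should be tracking is the weight of the \emph{source/target} of each $\E\F$ pair you decompose, which starts at $\l-2a+2$ (or, in your induction, at $\l-2a+2b$) and only increases, so $\l-2a\ge 0$ does suffice; but your expression ``$\l-2a+2j-(b-1-j)+\cdots$'' as written doesn't match that, and you should pin it down before asserting the axiom applies throughout.
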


\begin{Remark} The isomorphism in Lemma \ref{th:abstractiso} actually holds whenever $\l-a+b \ge 0$. Since the proof of this more general statement is tedious we only prove the weaker statement (which satisfies our needs in the rest of the paper).  
\end{Remark}

\begin{proof}
Using the categorified commutation relation, we find that 
$$ \E^b \F^a = \E^{b-1} \E \F \F^{a-1} \cong \E^{b-1} \F \E \F^{a-1} \oplus \E^{b-1} \F^{a-1} \oplus \cdots \oplus \E^{b-1} \F^{a-1}.$$  Continuing in this way we deduce an isomorphism between $ \E^b \F^a $ and a direct sum of terms of the form $ \E^{a-j} \F^{b-j} $ for some $ j$.  

Now, we know that $ \E^b \cong {\E^{(b)}}^{\oplus b!} $ and $ \F^a \cong {\F^{(a)}}^{\oplus a!} $.  So by uniqueness of direct sum decompositions, we see that
\begin{equation*}
\E^{(b)}(\l-2a+b) \F^{(a)}(\l-a) \cong \bigoplus_{j = 0}^b \F^{(a-j)} \E^{(b-j)} \otimes_{\k} V_j
\end{equation*}
for some graded vector spaces $ V_j $.

The graded dimension of the vector spaces $ V_j $ may be determined using the ordinary representation theory of $ U_q(\sl_2) $.  From \cite{Lu}, 23.1.3 we see that if $ v $ is a vector in a representation of $ U_q(\sl_2) $ and $ v $ has weight $ \l$, then
\begin{equation*}
e^{(b)} f^{(a)} v = \sum_j \Bigl[ \begin{smallmatrix} \l - a+ b \\ j \end{smallmatrix} \Bigr] f^{(a-j)} e^{(b-j)} v
\end{equation*}
From this we deduce that $ \dim_\bullet(V_j) = \Bigl[ \begin{smallmatrix}\l - a + b \\ j \end{smallmatrix} \Bigr] $.  Hence the result follows.
\end{proof}

Combining this result with the hypothesis about composition of $ \F$s, we obtain the following corollary.
\begin{Corollary} \label{th:abstractiso2}
There exists an isomorphism
\begin{equation*}
\Theta_s(\F^{(p)}(\l +p)) \cong \bigoplus_{j \ge 0} \F^{(\l +s +p-j)}(p+s-j) \E^{(s-j)}(\l +2p +s -j) \otimes_{\k} V_j \la -s \ra
\end{equation*}
where $V_j$ is the graded vector space $ V_j \cong H^\star(\bG(j, \l+p - s)) \otimes H^\star(\bG(p-j, \l+s +p-j)) $.

\end{Corollary}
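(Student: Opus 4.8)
The plan is to substitute the definition $\Theta_s(\F^{(p)}(\l+p)) = \F^{(\l+s)}(s)\,\E^{(s)}(\l+s)\,\F^{(p)}(\l+p)\la -s\ra$ and to simplify the middle composition $\E^{(s)}(\l+s)\,\F^{(p)}(\l+p)$ using Lemma \ref{th:abstractiso}. To apply that lemma we set $b = s$, $a = p$, and we need the source weight to be $\l+s$; writing this weight in the form $\mu - 2a + b$ with $b = s$, $a = p$ forces $\mu = \l + 2p$, so that $\mu - a = \l + p \ge 0$ and indeed $\mu - 2a \ge 0$ since $\l \ge 0$. (We are of course using the hypothesis $\l, p \ge 0$ from the ambient Theorem \ref{th:Tonhighweight2}.) Lemma \ref{th:abstractiso} then yields
\begin{equation*}
\E^{(s)}(\l+s)\,\F^{(p)}(\l+p) \cong \bigoplus_{j=0}^{s} \F^{(p-j)}\,\E^{(s-j)} \otimes_\k H^\star(\bG(j, \l+p - s)).
\end{equation*}

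Next I would precompose with $\F^{(\l+s)}(s)$ on the left. Each summand becomes $\F^{(\l+s)}\,\F^{(p-j)}\,\E^{(s-j)} \otimes_\k H^\star(\bG(j,\l+p-s))$, and I would apply the composition axiom for the $\F$'s (condition (2) on the data of a strong categorical $\sl_2$ action, with $r_1 = p-j$ and $r_2 = \l+s$, so $r_1 + r_2 = \l + s + p - j$) to rewrite $\F^{(\l+s)}\,\F^{(p-j)} \cong \F^{(\l+s+p-j)} \otimes_\k H^\star(\bG(p-j, \l+s+p-j))$. Here one must check that the weights of the functors line up so that the composition makes sense and lands on the weight spaces claimed: the innermost $\E^{(s-j)}$ takes $\D(\l+2p)$ to $\D(\l+2p + s - j)$, which matches the displayed $\E^{(s-j)}(\l+2p+s-j)$; then $\F^{(p-j)}$ takes this to $\D(\l+2p+s-j - 2(p-j)) = \D(\l + s + j)$, and finally $\F^{(\l+s)}$ (i.e. the composite $\F^s(\cdot)$ bookkept via the notation of the preliminaries) lands in $\D(-\l-s+j)$; combining the two nested $\F$'s as $\F^{(\l+s+p-j)}(p+s-j)$ gives the same target, which is consistent with $\D(-\l-s+j)$. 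Collecting the two cohomology tensor factors into a single graded vector space $V_j := H^\star(\bG(j,\l+p-s)) \otimes_\k H^\star(\bG(p-j,\l+s+p-j))$, and carrying along the overall shift $\la -s\ra$, gives exactly the asserted isomorphism, after re-indexing the sum over $j \ge 0$ (the terms with $j > s$ or $j > p$ contribute a zero cohomology ring and drop out automatically).

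The only genuinely delicate point is the bookkeeping of weights and grading shifts: Lemma \ref{th:abstractiso} and the $\F$-composition axiom are each stated up to specific internal shifts $\la\cdot\ra$, and one must verify that the shifts absorbed into $H^\star(\bG(j,\l+p-s))$ and $H^\star(\bG(p-j,\l+s+p-j))$ combine correctly with the ambient $\la -s\ra$ — this is exactly the sort of shift-tracking the authors warn they will suppress. Since the statement only claims "there exists an isomorphism" (not a canonical one), I do not need to track which particular isomorphism is produced, so the argument reduces to: (i) the existence statements in Lemma \ref{th:abstractiso} and in axiom (2), and (ii) associativity of composition of functors, which lets me nest the applications in either order. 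I expect no real obstacle beyond careful indexing.
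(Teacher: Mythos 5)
Your approach is exactly the paper's: the paper deduces the corollary in one line by combining Lemma \ref{th:abstractiso} (applied to the inner composite $\E^{(s)}(\l+s)\F^{(p)}(\l+p)$) with the $\F$-composition axiom, and you have spelled out that same argument in detail, correctly identifying the substitution $\mu = \l+2p$, $a=p$, $b=s$ and the role of $r_1 = p-j$, $r_2 = \l+s$ in the second step.

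However, there is a sign slip at the very place where you apply Lemma \ref{th:abstractiso}. You correctly derived $\mu = \l+2p$, but the Lemma produces $H^\star\bigl(\bG(j,\mu - a + b)\bigr)$, and with your values this is $\mu - a + b = (\l+2p) - p + s = \l + p + s$, not $\l + p - s$. Your displayed equation
$\E^{(s)}(\l+s)\F^{(p)}(\l+p)\cong\bigoplus_j \F^{(p-j)}\E^{(s-j)}\otimes H^\star(\bG(j,\l+p-s))$
is therefore not what the Lemma yields; it should read $H^\star(\bG(j,\l+p+s))$. One can see directly that the $-s$ version cannot be right: take $\l=0$, $s=p=1$, $j=1$; then $\bG(j,\l+p-s)=\bG(1,0)$ is empty and would force $V_1 = 0$, whereas the categorified relation gives $\E(1)\F(1)\cong\F(3)\E(3)\oplus\id\otimes H^\star(\p^1)$, so the $j=1$ coefficient must be $H^\star(\bG(1,2))=H^\star(\p^1)\neq 0$. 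Note that the statement of Corollary \ref{th:abstractiso2} in the paper carries the same $\l+p-s$, so the target you were asked to prove appears itself to contain a typo; you seem to have reproduced the stated index rather than carried through the substitution you set up. The second tensor factor $H^\star(\bG(p-j,\l+s+p-j))$ is correct, as you computed.

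In summary: the route (Lemma \ref{th:abstractiso} followed by the $\F$-composition axiom) is right and matches the paper, but the first factor of $V_j$ should be $H^\star(\bG(j,\l+p+s))$. Since your own setup ($\mu=\l+2p$) makes this immediate, the fix is a one-character change; I'd encourage you to flag the discrepancy with the printed corollary rather than silently match it.
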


We will now concentrate only on the $ \F^{(\l +p)} $ part of $ \Theta_s \F^{(p)} $.  This is precisely the last direct summand appearing above, by the following lemma.

\begin{Lemma} \label{th:notdirectsummand}
If $ \l + r+a > 0 $, then $ \F^{(r+ a)}(\l +a + r) \E^{(a)}(\l+a+2r) $ contains no copies of $ \F^{(r)}(\l + r)$. 
\end{Lemma}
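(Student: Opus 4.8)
The plan is to prove this by computing graded dimensions of Hom-spaces and using the Krull-Schmidt property together with axiom (5), which says $\Hom^i(\F^{(r)},\F^{(r)})=0$ for $i<0$ and $\End(\F^{(r)})=\k\cdot\id$. By Lemma \ref{th:abstractiso} (applicable since $\l+a+r>0$ forces $\l-2a\ge 0$ in the relevant indexing, or more precisely since the more general form noted in the Remark applies whenever $\l-a+b\ge 0$), the functor $\F^{(r+a)}(\l+a+r)\E^{(a)}(\l+a+2r)$ decomposes as a direct sum of terms $\F^{(r+a-j)}\E^{(a-j)}\otimes_\k H^\star(\bG(j,\ast))$ for $j=0,\dots,a$. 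By Krull-Schmidt, $\F^{(r)}(\l+r)$ occurs as a summand if and only if $\F^{(r)}$ is (a shift of) a summand of one of the $\F^{(r+a-j)}\E^{(a-j)}$; since each such term again decomposes (via the composition axiom and the commutation relation) into pieces of the form $\F^{(c)}\E^{(d)}$ with $c-d$ fixed, and the only way to land on $\F^{(r)}$ with no $\E$ is $d=0$, i.e. $j=a$, giving the term $\F^{(r)}(\l+r)\otimes_\k H^\star(\bG(a,\l+r))$, we must rule out precisely that possibility if it has nonzero cohomology — but $H^\star(\bG(a,\l+r))$ is generally nonzero, so a pure dimension count via this route is not enough.

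Therefore I would instead argue via $\Hom$-spaces directly, as in the proof strategy of Lemma \ref{lem:pieta}. Write $G := \F^{(r+a)}(\l+a+r)\E^{(a)}(\l+a+2r)$. To show $G$ contains no copies of $\F^{(r)}(\l+r)$, it suffices (by Lemma \ref{th:modinf} and the definition preceding it) to show that every map $\F^{(r)}(\l+r)\la i\ra\to G$ is $\F^{(r)}$-infinitesimal, equivalently that for every $\alpha:\F^{(r)}\la i\ra\to G$ and $\beta: G\to \F^{(r)}\la i\ra$ the composite $\beta\circ\alpha\in\End(\F^{(r)}\la i\ra)=\k\cdot\id$ is zero. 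By adjunction, $\Hom(\F^{(r)}\la i\ra, \F^{(r+a)}\E^{(a)})\cong \Hom(\E^{(a)}\F^{(r)}\la i\ra\la?\ra, \E^{(r+a)}\cdots)$ — more efficiently, using the adjunctions $(\E^{(a)})_L$, $(\E^{(a)})_R$ and $(\F^{(r+a)})$-adjunctions from axiom (1), reduce $\beta\circ\alpha$ to a map between smaller functors and track the grading shifts carefully. The key point will be that the total grading shift accumulated (governed by the weights $\l+a+r$ and $\l+a+2r$, which are strictly positive by hypothesis) forces the relevant Hom-space between copies of $\F^{(r)}$, or between $\id$ and $\E^{(a)}\F^{(a)}\la\text{shift}\ra$, into strictly negative degree, where it vanishes by axiom (5) (exactly as the vanishing $k_i - s(s-1)/2<0$ was used in Lemma \ref{lem:pieta}).

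Concretely, the steps are: (1) apply Lemma \ref{th:abstractiso}/Corollary-style decomposition to isolate the unique candidate summand $\F^{(r)}(\l+r)\otimes_\k H^\star(\bG(a,\l+a+r))$ inside $G$; (2) observe that any map $\F^{(r)}\la i\ra\to G$ whose composite back to $\F^{(r)}\la i\ra$ is nonzero must factor through this candidate summand, and compute the internal degree shift of that summand relative to $\F^{(r)}(\l+r)$; (3) show this shift is strictly negative when $\l+a+r>0$ (the summand sits in degrees $\le -\text{something positive}$, via the symmetric grading convention on $H^\star(\bG(a,\l+a+r))$ combined with the weight-dependent shifts $\la -r(\l+a+r)\cdots\ra$ coming from the $\eta,\varepsilon$ of axiom (1)), so that $\Hom^i(\F^{(r)},\F^{(r)}\la\text{neg}\ra)=0$ forces the composite to vanish; (4) conclude via Lemma \ref{th:modinf} that $G$ contains no copies of $\F^{(r)}(\l+r)$. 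The main obstacle will be step (3): bookkeeping all the grading shifts from the adjunction units/counits (each $\eta$ or $\varepsilon$ for $\E^{(r)},\F^{(r)}$ contributes a $\la\pm r\l\ra$) and from the binomial/Grassmannian cohomology factors, and verifying that their sum is strictly negative precisely under the hypothesis $\l+r+a>0$ — this is the analogue of the inequality $k_i<s(s-1)/2$ that did the work in Lemma \ref{lem:pieta}, and getting the sign right is where the hypothesis is used in an essential way.
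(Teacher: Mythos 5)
Your instinct that the argument comes down to adjunction plus the degree vanishing of axiom (5), with the hypothesis $\l+r+a>0$ controlling a sign, is the right one, and the paper does proceed this way. However, there are two substantive problems with the proposal.

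First, step (1) of your concrete plan misapplies Lemma \ref{th:abstractiso}. That lemma decomposes $\E^{(b)}\F^{(a)}$, with $\E$ on the \emph{left}; here the composite is $\F^{(r+a)}\E^{(a)}$, with $\F$ on the left. These two orders behave very differently: in the positive-weight regime at hand (all relevant weight parameters are $\ge 0$ since $\l+r+a>0$), the categorified commutation relation makes $\F\E$ a \emph{summand} of $\E\F$, not the other way around, and the decategorified operator $f^{(r+a)}e^{(a)}$ has no $f^{(r)}$ component at all (it kills highest-weight vectors, while $f^{(r)}$ does not). So there is no ``candidate summand'' $\F^{(r)}\otimes_\k H^\star(\bG(a,\cdot))$ inside $\F^{(r+a)}\E^{(a)}$ whose internal degree you could then estimate; trying to locate that summand and argue it sits in negative degree is circular, since its absence is exactly what the Lemma asserts. (Your parenthetical that $\l+a+r>0$ forces $\l-2a\ge 0$ is also false: $\l=0,a=1,r=1$ gives a counterexample.)

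Second, the degree bookkeeping you flag as ``the main obstacle'' is the entire content of the proof, and the proposal leaves it undone. The paper's argument is also organized more efficiently than your steps (2)--(4): there is no need to decompose $G:=\F^{(r+a)}(\l+a+r)\E^{(a)}(\l+a+2r)$ or to talk about factoring through a summand. One simply observes that if $\F^{(r)}(\l+r)\la k\ra$ were a direct summand of $G$ for some $k$, then \emph{both} $\Hom(\F^{(r)}\la k\ra,G)$ and $\Hom(G,\F^{(r)}\la k\ra)$ would be nonzero. Applying the adjunction $\E^{(a)}(\l+a+2r)_R=\F^{(a)}\la a(\l+a+2r)\ra$ (resp.\ the left adjoint) to each, then using $\F^{(r)}\F^{(a)}\cong \F^{(r+a)}\otimes_\k H^\star(\bG(a,a+r))$ (supported in degrees $-ar$ to $ar$) and axiom (5), the first nonvanishing forces $k\le -a(\l+r+a)$ while the second forces $k\ge a(\l+r+a)$; these are incompatible once $\l+r+a>0$. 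So the contradiction on the shift $k$ does all the work, with no decomposition of $G$ required. If you want to salvage your outline, you should drop step (1) entirely and carry out exactly this two-sided $\Hom$-space computation.
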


\begin{proof}
It suffices to show that at least one of the $\Hom$ spaces
\begin{equation*} 
\Hom(\F^{(r)}(\l + r)\langle k\rangle, \F^{(r+a)}(\l + a+ r) \E^{(a)}(\l+a+2r)) ,\
\Hom(\F^{(r+a)}(\l +a +r) \E^{(a)}(\l+a+2r), \F^{(r)}(\l + r)\langle k\rangle) 
\end{equation*}
is equal to 0.
For assume that $\Hom(\F^{(r)}\langle k\rangle, \F^{(r+a)} \E^{(a)}(\l+a+2r)) $ is non-zero. By adjunction we have 
\begin{equation*}
\Hom(\F^{(r)}\langle k\rangle, \F^{(r+a)} \E^{(a)}(\l+a+2r)) = \Hom(\F^{(r)} \F^{(a)}\langle k+a(\l+a+2r)\rangle, \F^{(a+r)})
\end{equation*}
Then by composition of $\F$s, we have 
\begin{equation*}
\F^{(r)} \F^{(a)} \cong \F^{(r+a)} \otimes H^\star \bG(a, a+r) = \F^{(r+a)}\langle ar\rangle \oplus \cdots \oplus \F^{(r+a)}\langle -ar\rangle 
\end{equation*}
By assumption the Hom space is non-zero. Since there are no negative degree endomorphisms of $ \F^{(a+r)}$, we see that \begin{equation*}
k+a(\l +a+2r) - ar \le 0 \Leftrightarrow k \le -a(\lambda + r +a).
\end{equation*}

Repeating the same analysis under the assumption that $ \Hom(\F^{(r+a)} \E^{(a)}(\l-a), \F^{(r)}\langle k\rangle) $ is non-zero, we see that
\begin{equation*}
k \ge a (\lambda + r +a).
\end{equation*}

Thus if both $\Hom$ spaces are non-zero and $ \lambda + r + a > 0 $, we reach a contradiction.
\end{proof}

Our goal now is to find a specific subspace of $ \Ext(\F^{(\l + p)}, \Theta_s \F^{(p)}) $ which maps isomorphically onto the $ \F^{(\l+p)} $ part.

\subsection{Rephrasing of the definition}

We start by rephrasing the statement of some isomorphisms from the definition of a strong categorical $ \sl_2 $ action.

\begin{Proposition} \label{th:decomp2}
We have an isomorphism $ \F^{(r)} \otimes V \iota \rightarrow \F^{(r-1)}\F $ where $ V \iota \subset  \Ext(\F^{(r)}, \F^{(r-1)}\F) $ is the subspace obtained by composing the subspace $ V = \Span\{1,x, \dots, x^{r-1}\} \subset \Ext(\F,\F) $ with the map $\iota$. To summarize, the isomorphism is given by
\begin{equation*}
\F^{(r)} \xrightarrow{\iota} \F^{(r-1)}\F \langle -(r-1)\rangle \xrightarrow{V} \F^{(r-1)}\F
\end{equation*}
\end{Proposition}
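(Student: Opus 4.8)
The plan is to recognize Proposition~\ref{th:decomp2} as essentially a repackaging of the ``composition of $\F$'s'' axiom (condition (ii) in the definition of a strong categorical $\sl_2$ action), in its right-handed form. That axiom, in the version stated ``analogously'' for $r_1=1$ and applied to $\F^{(r-1)}\F$ (i.e. with the divided power factor $\F^{(r-1)}$ on the left and a single $\F$ on the right), supplies a specific isomorphism
$$
\mu \;:=\; \bigoplus_{i=0}^{r-1} \bigl(\id\, X^i\bigr)\circ\iota\,\la -2i\ra \;:\; \F^{(r)}\otimes_\k H^\star(\p^{r-1}) \;\xrightarrow{\ \sim\ }\; \F^{(r-1)}\F ,
$$
where $\iota:\F^{(r)}\to\F^{(r-1)}\F\la -(r-1)\ra$ is the structure map from the definition and $X$ acts on the single $\F$ factor on the right. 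The first step is to write $\mu$ down carefully, keeping track of the grading shifts $\la\cdot\ra$, so that it matches the composite $\F^{(r)}\xrightarrow{\iota}\F^{(r-1)}\F\la -(r-1)\ra\xrightarrow{V}\F^{(r-1)}\F$ in the statement; this is just bookkeeping, using that the graded basis of $H^\star(\p^{r-1})$ sits in the degrees $r-1,r-3,\dots,-(r-1)$, matching the powers $1,x,\dots,x^{r-1}$ of $x\in\End^2(\F)$ after composing with $\iota$.

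Next I would factor $\mu$ through the tautological evaluation map. By definition $V\iota=\Span\{\,(\id\, X^i)\circ\iota : 0\le i\le r-1\,\}\subset\Ext(\F^{(r)},\F^{(r-1)}\F)$, so $\mu$ is the composite
$$
\F^{(r)}\otimes H^\star(\p^{r-1}) \xrightarrow{\ g\ } \F^{(r)}\otimes V\iota \xrightarrow{\ \mathrm{ev}\ } \F^{(r-1)}\F ,
$$
where $g$ sends the basis vector of the $i$-th graded line of $H^\star(\p^{r-1})$ to $(\id\, X^i)\circ\iota$, and $\mathrm{ev}$ is the canonical evaluation map. Since $\mu$ is an isomorphism, $g$ is a split monomorphism; in particular no $(\id\, X^i)\circ\iota$ with $i\le r-1$ vanishes. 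Because these elements lie in pairwise distinct internal degrees (namely $\deg\iota + 2i$, using $X\in\End^2(\F)$ and $\End^{<0}(\F)=0$), a nonzero collection of them is automatically linearly independent, hence $\dim_\bullet(V\iota)=[r]=\dim_\bullet H^\star(\p^{r-1})$ and $g$ is an isomorphism. Then $\mathrm{ev}=\mu\circ g^{-1}$ is an isomorphism, which is exactly the assertion of the proposition.

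There is no serious obstacle here beyond care with conventions. The only points requiring attention are: (a) confirming we are entitled to invoke the right-handed ($r_1=1$) form of condition (ii), which the definition grants as the analogous statement; and (b) the elementary grading computation showing the powers of $x$ occupy distinct degrees, which is precisely what lets us upgrade $g$ from a split monomorphism to an isomorphism without appealing to any Krull--Schmidt or cancellation argument. Everything else is formal manipulation of the data already present in the axioms.
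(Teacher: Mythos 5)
The paper gives no proof of Proposition~\ref{th:decomp2}; it is stated, under the heading ``Rephrasing of the definition,'' as an immediate restatement of the $r_1=1$, $r_2=r-1$ form of the composition axiom (condition (ii)), which is exactly what you invoke. Your unpacking is sound and matches the intended reading: you check that the grading shifts of $(\id\,X^i)\circ\iota$ for $i=0,\dots,r-1$ line up with $H^\star(\p^{r-1})$, factor the axiom's isomorphism $\mu$ through the evaluation map, and observe that $\mu$ being an isomorphism forces those compositions to be nonzero and hence (living in distinct degrees) linearly independent, so that the evaluation $\F^{(r)}\otimes V\iota\to\F^{(r-1)}\F$ is an isomorphism.
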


\begin{Proposition} \label{th:decomp1}
Assume that $ \lambda \ge 0$.
We have an isomorphism $ \F(\lambda -1)\E(\lambda -1) \oplus \id \otimes V \eta \rightarrow \E(\lambda+1)\F(\lambda+1) $ where the map on the first factor is given by 
$$ \F\E \xrightarrow{\eta} \E\F\F\E\langle -\lambda+1\rangle \xrightarrow{t} \E\F\F\E\langle -\lambda-1\rangle \xrightarrow{\varepsilon} \E\F $$ 
and where $ V \eta \subset \Ext(\id, \E\F) $ which is obtained by composing the subspace $ V = \Span\{1, x, \dots, x^{\lambda-1}\} $ of $ \Ext(\F,\F) $ with the map $ \eta$: 
\begin{equation*}
\id \xrightarrow{\eta} \E\F\langle -\lambda+1\rangle \xrightarrow{V} \E\F
\end{equation*}
\end{Proposition}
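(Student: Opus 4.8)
\textbf{Plan of proof for Proposition \ref{th:decomp1}.}
The plan is to mirror the structure of the rephrasing of the ``lower'' isomorphism (Proposition \ref{th:decomp2}) but now applied to the categorified commutation relation for $\l \ge 0$. Recall that the definition of a strong categorical $\sl_2$ action gives us an explicit isomorphism
$$\sigma + \sum_{j=0}^{\l-1} (I X(\l-1)^j) \circ \eta : \F(\l+1)\E(\l+1) \oplus \id \otimes_\k H^\star(\p^{\l-1}) \xrightarrow{\sim} \E(\l-1)\F(\l-1),$$
where $\sigma$ is the triple composition through $\eta$, $I T(\l) I$, $I I \varepsilon$. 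The statement we want is the ``same'' isomorphism but with the roles of $\E(\l-1)\F(\l-1)$ and $\E(\l+1)\F(\l+1)$ swapped — i.e. we want the categorified commutation relation read with $\F\E$ in the source at the other weight. So the first step is to observe that replacing $\l$ by $-\l$ in relations of the definition (or equivalently applying the duality/adjunction symmetry of the axioms, cf.\ the Remark following the definition) yields an isomorphism of exactly the desired shape, with $\sigma$ becoming the composition $\F\E \xrightarrow{\eta} \E\F\F\E \xrightarrow{t} \E\F\F\E \xrightarrow{\varepsilon} \E\F$ and the second factor built from $\Span\{1,x,\dots,x^{\l-1}\}$ composed with $\eta : \id \to \E\F$.

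Concretely, I would carry this out in three steps. First, spell out the degree bookkeeping: check that the map $\F(\l-1)\E(\l-1) \to \E(\l+1)\F(\l+1)$ obtained from $\eta$, one application of $t$, then $\varepsilon$, has the correct grading shift, and that $\id \xrightarrow{\eta} \E\F\la -\l+1\ra \xrightarrow{x^j} \E\F\la -\l+1+2j\ra$ for $j = 0,\dots,\l-1$ lands in the right degrees so that the total map is degree-preserving; this is the same computation as in Lemma \ref{lem:pieta} and is routine. Second, invoke the isomorphism coming from the (reflected) axiom to conclude that the combined map is an isomorphism; since the axiom asserts precisely that a map of this form is an isomorphism, there is nothing to prove beyond matching up the two descriptions. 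Third, identify the subspace: the map $\bigoplus_j (IX^j)\circ\eta$ through which the $\id$-summand enters is exactly the composition of $\eta : \id \to \E\F$ with the span of $1, x, \dots, x^{\l-1}$ acting on the $\F$ factor, which is the subspace $V\eta \subset \Ext(\id, \E\F)$ in the statement.

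The main obstacle, such as it is, is purely notational: keeping straight which weight each $\E$, $\F$, $X$, $T$, $\eta$, $\varepsilon$ lives at when one passes from the form of the axiom stated in the definition (which is written with $\E(\l-1)\F(\l-1)$ on the right) to the form needed here, and verifying that the $\sigma$ appearing in the axiom genuinely matches the three-step composition $\eta$, $t$, $\varepsilon$ written in the proposition after the reindexing. There is no genuine mathematical difficulty; the content is entirely contained in the axioms of Section \ref{sec:strongcat}, and the proof amounts to a faithful translation together with the degree check. For this reason I expect the proof to be short — essentially ``this is a restatement of the categorified commutation relation, with the subspace made explicit as in Proposition \ref{th:decomp2}.''
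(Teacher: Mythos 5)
The paper gives no proof of Proposition~\ref{th:decomp1} at all: it appears under the heading ``Rephrasing of the definition,'' and the authors regard it, like Proposition~\ref{th:decomp2}, as literally a restatement of axiom~(iii) of a strong categorical $\sl_2$ action, with the summand $\id\otimes_\k H^\star(\p^{\l-1})$ replaced by $\id\otimes V$ via the explicit map $\sum_{j=0}^{\l-1}(I\,X^j)\circ\eta$ already written in the axiom. So the core of your plan --- that there is nothing to prove beyond reading off the axiom and identifying the subspace $V\eta$ --- matches what the paper intends.

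Where you go astray is the middle step. You propose to obtain the statement by ``replacing $\l$ by $-\l$'' or by applying a duality/reflection of the axioms, on the grounds that the proposition appears to have the roles of $\E(\l-1)\F(\l-1)$ and $\E(\l+1)\F(\l+1)$ swapped relative to the axiom. No such reflection is needed or correct: the $\l\ge 0$ form of axiom~(iii), as stated, already supplies a map $\F(\l+1)\E(\l+1)\oplus\id\otimes H^\star(\p^{\l-1})\to\E(\l-1)\F(\l-1)$ whose first component is exactly the three-step composition $\eta$, $T$, $\varepsilon$ in the proposition, and whose second component is exactly $V\eta$. The apparent index reversal you noticed is a misprint in the proposition's statement rather than a different mathematical content: as written, $\F(\l-1)\E(\l-1)$ is an endofunctor of $\D(\l-2)$ while $\E(\l+1)\F(\l+1)$ is an endofunctor of $\D(\l+2)$, so they cannot be compared, and the $\la 1-\l\ra$, $\la-1-\l\ra$ shifts appearing in the map visibly match $\sigma$ in the $\l\ge0$ axiom and not any $\l\mapsto-\l$ variant. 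If you instead take the $\l\le 0$ case of the axiom (the only ``reflected'' version the paper actually supplies) you land in the wrong range of $\l$ for the hypothesis $\l\ge 0$. So drop the reflection argument, read the weight labels as in axiom~(iii), and the remaining content --- the degree bookkeeping and the identification of $H^\star(\p^{\l-1})$ with $\operatorname{Span}\{1,x,\dots,x^{\l-1}\}$ --- is exactly what you describe and is all that the paper intends.
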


\subsection{Decomposition of $ \F^{(r)} \F^{(s)}$}

Here is the first of the isomorphisms that we will need.

\begin{Proposition} \label{th:firstiso}
The map $ \pi V \iota \otimes \F^{(s+r)} \rightarrow \F^{(s)} \F^{(r)} $ is an isomorphism. Here $ \pi V \iota $ is the space of maps given by
\begin{equation*}
\F^{(s+r)} \xrightarrow{\iota} \F^{(s)}\F^r \xrightarrow{V} \F^{(s)}\F^r \xrightarrow{\pi} \F^{(s)}\F^{(r)}
\end{equation*}
where $ V \subset \Ext(\F^r, \F^r) $ is the span of $ x_1^{a_1} \cdots x_r^{a_r} $ for $ 0 \le a_1 < \dots < a_r < r+s $. 
\end{Proposition}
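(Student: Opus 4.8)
The plan is to recognise this as an instance of Lemma~\ref{th:modinf} (Section~\ref{sec:infmaps}). Set $Q = \F^{(s+r)}$ and $A = \F^{(s)}\F^{(r)}$. By the composition axiom for the $\F$'s, $A \cong \F^{(s+r)}\otimes_\k H^\star(\bG(r,r+s))$; in particular $A$ coincides with its own $Q$-part, $\dim_Q(A)$ is the rank $\binom{r+s}{r}$ of $H^\star(\bG(r,r+s))$, and the conditions $\End^i(Q)=0$ for $i<0$ and $\End(Q)=\k\cdot\id$ needed to run the infinitesimal-maps machinery hold by definition of a strong categorical $\sl_2$ action. So by Lemma~\ref{th:modinf} it suffices to verify two of its three conditions for the subspace $\pi V\iota\subset\Ext(Q,A)$. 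I would verify that $\dim V = \dim_Q(A)$ and that no nonzero element of $\pi V\iota$ is $Q$-infinitesimal; both follow from a single \emph{pairing claim}: there is a family of morphisms $\beta_{\mathbf b}\colon \F^{(s)}\F^{(r)}\to\F^{(s+r)}\la j_{\mathbf b}\ra$, indexed by the sequences $0\le b_1<\dots<b_r<r+s$, for which the scalars $\beta_{\mathbf b}\circ\pi\circ(x_1^{a_1}\cdots x_r^{a_r})\circ\iota\in\End(\F^{(s+r)})=\k$ form an invertible matrix --- indeed triangular with nonzero diagonal, once the index sequences are suitably ordered (by $\sum_i a_i$, say).

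Granting the pairing claim, a nonzero linear combination $\sum_{\mathbf a}c_{\mathbf a}(\pi\circ x^{\mathbf a}\circ\iota)$ pairs nontrivially with some $\beta_{\mathbf b}$, so it is nonzero and not $Q$-infinitesimal; hence the $\binom{r+s}{r}$ maps $\pi\circ x^{\mathbf a}\circ\iota$ are linearly independent (so $\dim V = \binom{r+s}{r} = \dim_Q(A)$) and no nonzero element of $\pi V\iota$ is $Q$-infinitesimal. A short bookkeeping with the degrees of $X$, $\iota$ and $\pi$ --- using the bijection $a_i\mapsto a_i-(i-1)$ between these sequences and partitions in an $r\times s$ box, hence with the Schubert basis of $H^\star(\bG(r,r+s))$ --- confirms that the grading is consistent. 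Lemma~\ref{th:modinf} then gives that $Q\otimes V\to A$ is an isomorphism onto the $Q$-part of $A$, which is all of $A$.

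To prove the pairing claim I would induct on $r$. The base case $r=1$ is exactly the requirement built into the axioms that $\bigoplus_i(\id\, X^i)\circ\iota \colon \F^{(s+1)}\otimes H^\star(\p^{s})\to\F^{(s)}\F$ (and its companion $\bigoplus_i\pi\circ(\id\, X^i)$ in the other direction) be isomorphisms; composing the two gives the invertible ``anti-triangular'' pairing $\pi\, X^{i+i'}\iota$ on the span of $1,X,\dots,X^{s}$. For the inductive step I would peel one copy of $\F$ off the right end of $\F^{(r)}$ using Proposition~\ref{th:decomp2} and absorb it into $\F^{(s)}$ using the base case, reducing the statement about $\F^{(s)}\F^{(r)}$ to one about $\F^{(s+1)}\F^{(r-1)}$; tracking which monomials $x_1^{a_1}\cdots x_r^{a_r}$ survive this reduction is a straightening computation in the nil affine Hecke algebra, and the survivors are exactly those with strictly increasing exponents bounded by $r+s-1$, i.e.\ the span $V$. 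To exhibit the $\beta_{\mathbf b}$ I would take them of the analogous shape $\pi\circ x^{\mathbf b}\circ\iota$ in the opposite direction and rewrite the composite $\pi\circ x^{\mathbf b}\circ\iota\circ\pi\circ x^{\mathbf a}\circ\iota$: the inner $\iota\circ\pi$ equals $t_{w_0}$ on $\F^r$ up to a nonzero scalar (by the compatibility lemma identifying $\iota\circ\pi$ with $t_{w_0}$), so the composite becomes a single nil affine Hecke element acting on $\F^r$ and then squeezed between the ``bottom'' map $\iota$ and ``top'' map $\pi$; the surviving scalar is the Poincar\'e pairing of the two corresponding Schubert classes, and choosing $\mathbf b$ to be the appropriate complementary Schubert index makes the matrix triangular with nonzero diagonal. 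That a triangular matrix with invertible diagonal is invertible is the content of Lemma~\ref{lem:infzero} in this setting.

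The main obstacle is precisely this last step: carrying the exact span of monomials --- in particular the ``strictly increasing exponents'' constraint --- together with all the grading shifts attached to $\iota$, $\pi$ and the $X$'s correctly through the inductive peeling, and then identifying the surviving scalar with a non-degenerate (triangular) Poincar\'e pairing on the cohomology of the relevant partial flag variety. The reduction to the pairing claim via Lemma~\ref{th:modinf} and the degree and counting bookkeeping are routine once this combinatorial core is in hand.
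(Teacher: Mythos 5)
Your reduction to Lemma~\ref{th:modinf} is exactly the right framework, and you correctly note that $\F^{(s)}\F^{(r)} \cong \F^{(s+r)}\otimes_\k H^\star(\bG(r,r+s))$ so that the whole object is its $Q$-part and $\dim_Q(A) = \binom{r+s}{r}$. The divergence from the paper is in \emph{which} two of the three conditions you verify. You aim for (ii) + (iii) --- linear independence via a triangular pairing --- and you candidly flag the pairing computation (peeling off $\F$'s, tracking the strictly-increasing-exponents constraint and the grading shifts, identifying the scalars with a triangular Poincar\'e pairing) as the ``main obstacle.'' That obstacle is, in fact, the entire content: without it the proposal does not prove the proposition. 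So as written there is a genuine gap, and it is a large one, since the straightening-plus-Schubert-pairing analysis you sketch is at least as hard as the original statement.

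The paper avoids this entirely by going through condition (i) instead, in two short steps you don't use. First, iterating Proposition~\ref{th:decomp2} gives \emph{directly} that $\F^{(s+r)}\otimes V'\iota \to \F^{(s)}\F^r$ is an isomorphism, where $V'$ is the larger span $\{x_1^{t_1}\cdots x_r^{t_r} : 0 \le t_i < i+s\}$; since $\F^{(s)}\F^r$ is all $Q$-part, this yields $[V'\iota] = \Ext(\F^{(s+r)},\F^{(s)}\F^r)/\Inf$, and applying the split surjection $\pi$ gives $[\pi V'\iota] = \Ext(\F^{(s+r)},\F^{(s)}\F^{(r)})/\Inf$ for free --- no pairing needed. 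Second, Corollary~\ref{th:sym2c} (symmetric-in-adjacent-variables polynomials are killed by $\pi(\cdot)\iota$) shows $\pi V\iota = \pi V'\iota$, cutting $V'$ down to the strictly-increasing span $V$. Now condition (i) holds for $\pi V\iota$, and since $\dim V = \binom{r+s}{r} = \dim_Q(A)$ and $\dim(\pi V\iota) \le \dim V$, condition (ii) follows by counting; Lemma~\ref{th:modinf} then concludes. So the symmetry lemma is precisely the tool that lets you bypass the combinatorial pairing you were stuck on. It is worth internalizing: when you already have a \emph{spanning} family (here from Proposition~\ref{th:decomp2}), a dimension count replaces the need to exhibit a dual family, and Corollary~\ref{th:sym2c} is what trims the spanning family to the right size.
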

\begin{proof}
We start with the isomorphism $ \F^{(s+r)} \otimes V' \iota \rightarrow \F^{(s)}\F^r $ which comes from iterating Proposition \ref{th:decomp2}.  Here $ V' $ is the span of $ x_1^{t_1} \cdots x_r^{t_r} $ for $ 0 \le t_i < i+s $.  Now we would like to compute $ \pi V' \iota $.

By Corollary \ref{th:sym2c}, we see that if $ f \in V' $ is symmetric in any two variables, then $ \pi f \iota = 0 $.  From this it is immediate that $ \pi V \iota = \pi V' \iota $.  

Since $[ V' \iota ] = \Ext(\F^{(s+r)}, \F^{(s)}\F^r)/\Inf $, we see that $[ \pi V \iota ] = [ \pi V' \iota ] = \Ext(\F^{(s+r)}, \F^{(s)}\F^{(r)})/\Inf $.  

Note that $ (\F^{(s)}\F^{(r)})^{\oplus r!} \cong \F^{(s)}\F^r$.  Hence $ \dim_{\F^{(s+r)}}(\F^{(s)}\F^{(r)}) = \frac{(r+s)!}{s!}/r! $.  On the other hand, $\dim V = \frac{(r+s)!}{s!}/r!$ and $\dim \pi V i \le \dim V$, so by Lemma \ref{th:modinf}, the result follows. 
\end{proof}

\begin{Lemma} \label{th:sym2}
If $ f $ is a polynomial in $ x_1, \dots, x_s $ which is symmetric in $ x_i, x_{i+1} $, then 
\begin{enumerate}
\item $t_i f = f t_i $, 
\item $ t_i f t_i = 0 $, and
\item $t_{w_0} f t_{w_0} = 0 $.
\end{enumerate}
\end{Lemma}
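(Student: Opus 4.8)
The plan is to derive all three statements from the nil affine Hecke relations together with the defining feature of $t_{w_0}$: it is the longest product of $t_i$'s, so any occurrence of a repeated $t_i$ inside a reduced expression for $w_0$ vanishes (since $t_i^2 = 0$), and multiplying $t_{w_0}$ on either side by another $t_i$ gives zero (because $w_0$ is maximal, $t_i t_{w_0}$ and $t_{w_0} t_i$ are products of length $\ell(w_0)+1$ which must contain a repeated generator after applying braid relations, hence are zero). These facts, plus the commutation relations $t_i x_j = x_j t_i$ for $j \ne i,i+1$ and $x_i t_i = t_i x_{i+1} + 1$, $x_i t_{i+1} = t_{i+1}x_i - 1$, are the only inputs.

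For (i), I would first reduce to the case that $f$ is a monomial $x_1^{a_1}\cdots x_s^{a_s}$ with $a_i = a_{i+1}$, since a polynomial symmetric in $x_i,x_{i+1}$ is a linear combination of such monomials times arbitrary powers of the elementary symmetric functions $x_i + x_{i+1}$ and $x_i x_{i+1}$ in those two slots, and both $x_i + x_{i+1}$ and $x_i x_{i+1}$ commute with $t_i$ by a direct computation from $x_i t_i = t_i x_{i+1}+1$ (the $+1$ terms cancel). Variables $x_j$ with $j \ne i, i+1$ pass through $t_i$ freely. So it suffices to check $t_i(x_i+x_{i+1}) = (x_i+x_{i+1})t_i$ and $t_i x_i x_{i+1} = x_i x_{i+1} t_i$, both short calculations with the Hecke relations. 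This gives (i).

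For (ii): using (i), $t_i f t_i = f t_i^2 = 0$ (alternatively $t_i f t_i = t_i t_i f = 0$), immediately by $t_i^2 = 0$. For (iii): write $t_{w_0}$ as a reduced word ending (or beginning, as convenient) in $t_i$; more carefully, for any simple reflection $s_i$ we have $\ell(s_i w_0) < \ell(w_0)$, so there is a reduced expression for $w_0$ of the form $s_i w_0'$, giving $t_{w_0} = t_i t_{w_0'}$ for a suitable choice; symmetrically $t_{w_0} = t_{w_0''} t_i$. Then $t_{w_0} f t_{w_0} = t_{w_0''} t_i f t_i t_{w_0'} = t_{w_0''} \cdot 0 \cdot t_{w_0'} = 0$ by (ii). One must be slightly careful that the element $t_{w_0}$ of the nil affine Hecke algebra is well-defined independent of reduced word — this is exactly the statement that the braid relations $t_i t_{i+1} t_i = t_{i+1} t_i t_{i+1}$ hold and that all reduced words for $w_0$ are connected by braid moves (Matsumoto/Tits), so $t_{w_0}$ admits a reduced expression starting with any prescribed $t_i$, which is all we need.

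The main obstacle, such as it is, is purely bookkeeping in step (i): verifying that symmetric polynomials in $x_i, x_{i+1}$ commute with $t_i$ despite the inhomogeneous $\pm 1$ terms in the Hecke relations. The clean way is to prove by induction on degree that $t_i p(x_i, x_{i+1}) - p(x_i, x_{i+1}) t_i = t_i \cdot \frac{p(x_i,x_{i+1}) - p(x_{i+1},x_i)}{x_i - x_{i+1}}$ (a "Demazure-operator" identity valid in the nil Hecke algebra), so the difference vanishes precisely when $p$ is symmetric. Everything else is a formal consequence of $t_i^2 = 0$ and the braid relations, so no genuine difficulty remains.
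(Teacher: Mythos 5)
Your proof is correct and follows essentially the same route as the paper: for (i) you reduce to showing $t_i$ commutes with the elementary symmetric functions $x_i + x_{i+1}$ and $x_i x_{i+1}$ (and passes freely past the other variables), for (ii) you use $t_i^2 = 0$, and for (iii) you choose reduced words for $w_0$ beginning and ending with $s_i$ and invoke (ii). The extra care you take over well-definedness of $t_{w_0}$ via Matsumoto--Tits and the alternative Demazure-operator phrasing of (i) are both sound but not needed beyond what the paper does.
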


\begin{proof}
Any polynomial which is symmetric in $ x_i, x_{i+1} $ can be written as a sum of terms of the form $ g (x_i + x_{i+1})^k (x_1 x_2)^l $ where $ g $ is a polynomial in $ x_1, \dots, x_{i-1}, x_{i+2}, \dots, x_n $.  Hence to prove (i), it suffices to check that $ t_i g = g t_i $, $ t_i(x_i + x_{i+1}) = (x_i + x_{i+1}) t_i $ and $ t_i x_i x_{i+1} = x_i x_{i+1} t_i $.  But these are all easy consequence of the nil affine Hecke relations.

(ii) follows from (i) and the fact that $ t_i^2 = 0$.

For (iii), pick one reduced word for $w_0 $ that begins with $i$ and another which ends with $i$.  Thus $ t_{w_0} = t_w t_i $ and $ t_{w_0} = t_i t_{w'}$.  Hence (iii) follows from (ii).
\end{proof}

\begin{Corollary} \label{th:sym2c}
If $ f $ is any polynomial in $ x_1, \dots, x_r $ which is symmetric in any two adjacent variables, then the composition $ \pi \circ f \circ \iota $ is zero.
\begin{equation*}
\F^{(r)} \xrightarrow{\iota} \F^r \xrightarrow{f} \F^r \xrightarrow{\pi} \F^{(r)}.
\end{equation*}
\end{Corollary}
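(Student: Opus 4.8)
The statement to prove is Corollary \ref{th:sym2c}: if $f$ is a polynomial in $x_1,\dots,x_r$ symmetric in some pair of adjacent variables $x_i, x_{i+1}$, then the composition $\pi \circ f \circ \iota : \F^{(r)} \to \F^r \to \F^r \to \F^{(r)}$ vanishes.

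The plan is to reduce the statement about the natural transformations $\iota$ and $\pi$ to the algebraic identity (iii) of Lemma \ref{th:sym2}, namely $t_{w_0} \circ f \circ t_{w_0} = 0$ for $f$ symmetric in two adjacent variables. The bridge between the two is the first Lemma of the ``Compatibility of certain maps'' subsection, which identifies $\iota \circ \pi : \F^s \to \F^s$ with $t_{w_0}$ up to a nonzero scalar. More precisely, here is how I would run the argument.

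First, recall from Proposition \ref{th:decomp2} (iterated) that $\iota : \F^{(r)} \to \F^r$ realizes $\F^{(r)}$ as a direct summand of $\F^r$, with a complementary splitting provided by $\pi$; in particular $\pi \circ \iota$ is a nonzero scalar times the identity of $\F^{(r)}$ (both are the projection-then-inclusion onto the top summand, and the relevant $\Hom$ space is one-dimensional by condition (v), $\End(\F^{(r)}) = \k\cdot\id$). Now consider the composite $\iota \circ \pi \circ f \circ \iota \circ \pi : \F^r \to \F^r$. On one hand this equals $\iota \circ (\pi f \iota) \circ \pi$, which is a scalar multiple of $\iota \circ \pi$ precisely when $\pi f \iota$ is a scalar multiple of $\id_{\F^{(r)}}$ — but a priori $\pi f \iota$ could be any element of $\End(\F^{(r)})\la\cdot\ra$, which by condition (v) is spanned by the identity (in degree $0$) and is zero in negative degrees; since $f$ need not be homogeneous of degree $0$ I would instead argue degree by degree: writing $f = \sum_d f_d$ with $f_d$ homogeneous of degree $d$, each $\pi f_d \iota$ lands in $\End^{d - \text{(shift)}}(\F^{(r)})$, which is zero unless the degree is $\ge 0$, hence $\pi f_d \iota$ is a scalar times $\id$, and for the symmetric $f$ it suffices to kill the degree-zero part. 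On the other hand, by the first compatibility Lemma, $\iota \circ \pi = c\, t_{w_0}$ for a nonzero scalar $c$, so $\iota \circ \pi \circ f \circ \iota \circ \pi = c^2\, t_{w_0} \circ f \circ t_{w_0}$, which is zero by Lemma \ref{th:sym2}(iii). Therefore $\iota \circ (\pi f \iota) \circ \pi = 0$. Finally, precompose with $\pi$ and postcompose with $\iota$, using $\pi \iota = (\text{nonzero scalar})\cdot \id_{\F^{(r)}}$ twice, to conclude $(\text{nonzero scalar})^2 \cdot (\pi f \iota) = 0$, hence $\pi f \iota = 0$, as desired.

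The main obstacle — and the step requiring the most care — is handling the grading shifts and the inhomogeneity of $f$ cleanly: one must check that $\pi f \iota$ really is forced to be a scalar multiple of the identity (so that the manipulation $\iota (\pi f \iota) \pi = (\text{scalar}) \iota \pi$ is legitimate), which uses condition (v) that $\Hom^i(\F^{(r)}, \F^{(r)}) = 0$ for $i<0$ and $\End^0(\F^{(r)}) = \k\cdot\id$, together with the fact that $\iota$ and $\pi$ each carry a shift of $-r(r-1)/2$, so the composite $\F^{(r)} \xrightarrow{\iota} \F^r \xrightarrow{f} \F^r \xrightarrow{\pi} \F^{(r)}$ is a map $\F^{(r)} \to \F^{(r)}\la \deg f - r(r-1)\ra$ — and one wants this to be forced to vanish. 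Since the conclusion only needs to be checked on the summand $f$ actually hits, it is enough to observe that $\iota\pi f \iota\pi = 0$ forces $\pi f \iota = 0$ after cancelling the invertible scalars $\pi\iota$; alternatively, one can simply invoke that $\pi\iota$ is an isomorphism of $\F^{(r)}$ and cancel it directly. I expect the rest to be entirely formal given Lemma \ref{th:sym2} and the $\iota\pi = c\,t_{w_0}$ identity.
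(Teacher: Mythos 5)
Your overall architecture matches the paper's proof exactly: reduce the claim to $\iota\pi f\iota\pi = 0$, use the identity $\iota\pi = c\,t_{w_0}$ from the first compatibility Lemma, and invoke Lemma \ref{th:sym2}(iii). But the justification you give for the cancellation step is wrong, and the error is worth flagging because it contradicts the grading axioms rather than merely eliding a detail.

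You assert that $\pi\circ\iota$ is a nonzero scalar times $\id_{\F^{(r)}}$, on the grounds that $\iota$ and $\pi$ are ``the projection-then-inclusion onto the top summand.'' This is not what the axioms say. The Remark after condition (ii) states that $\iota$ maps into the \emph{bottom} factor of $\F\F^{(r)}\cong \F^{(r+1)}\otimes H^\star(\p^r)$ while $\pi$ maps out of the \emph{top} factor. Tracking shifts: iterating gives $\iota : \F^{(s)} \to \F^s\langle -s(s-1)/2\rangle$ and $\pi : \F^s \to \F^{(s)}\langle -s(s-1)/2\rangle$, so $\pi\circ\iota$ lands in $\Hom\bigl(\F^{(s)}, \F^{(s)}\langle -s(s-1)\rangle\bigr)$, which \emph{vanishes} by condition (v) whenever $s\ge 2$. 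So $\pi\iota=0$, not a nonzero scalar, and your final cancellation step — pre- and post-composing with $\pi$ and $\iota$ and invoking invertibility of $\pi\iota$ — only recovers $0=0$.

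The correct justification for passing from $\iota(\pi f\iota)\pi=0$ to $\pi f\iota=0$ is that $\iota$ is \emph{split injective} and $\pi$ is \emph{split surjective}, by condition (ii) (the displayed maps built from $X^i\iota$ and $\pi X^i$ are isomorphisms, exhibiting $\iota$ as a direct-summand inclusion and $\pi$ as a direct-summand projection). The left inverse of $\iota$ and the right inverse of $\pi$ exist, but they are \emph{not} $\pi$ and $\iota$ themselves — they are different components of those isomorphisms (morally, $\pi X^{r-1}$ rather than $\pi$). Once you use the correct retraction and section, $\iota\alpha\pi = 0 \Rightarrow \alpha = 0$ follows and the rest of your argument goes through as written. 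The digression about $\pi f\iota$ being forced to be a scalar and the worries about inhomogeneity of $f$ are unnecessary: one never needs $\pi f\iota$ to be anything in particular, only that it is conjugated inside a split mono/epi sandwich.
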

\begin{proof}
It suffices to show that $ \iota \circ \pi \circ f \circ \iota \circ \pi  = 0$   But $ \iota \circ \pi = t_{w_0} $, so it suffices to show that $ t_{w_0} f t_{w_0} = 0 $ which follows from Lemma \ref{th:sym2} part (iii).
\end{proof}

\subsection{Decomposition of $ \E^{(s)}\F^{(s+r)}$}

In this section we will analyse the $ \F^{(r)}(\l) $ part of $ \E^{(s)}(\l-r-s) \F^{(s+r)}(\l -s) $ under the assumption that $ \l - 2s - r \ge 0 $.  This will turn out to be a bit difficult, so we will do it in three steps.  First, we will consider the $ I $ part of $ \E^s\F^{(s)} $.  Then we will consider the $ \F^{(r)} $ part of $ \E^s \F^{(s+r)} $.  Finally, we will consider the $ \F^{(r)} $ part of $ \E^{(s)} \F^{(s+r)}$.

\subsubsection{}

\begin{Lemma} \label{th:simpleiso}
If $\l \ge 2s \ge 0$ then there is an isomorphism
\begin{equation*}
\id(\lambda) \otimes W_1 \eta \oplus \U \rightarrow \E^s(\lambda - s) \F^s(\lambda - s)
\end{equation*}
where $ \eta : \id \rightarrow \E^s \F^s $ and $W_1 \subset \Ext(\F^s \F^s, \F^s \F^s) $ is a subspace spanned by $ t$s and $ x$s.
\end{Lemma}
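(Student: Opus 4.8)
The plan is to identify $\E^s(\l-s)\F^s(\l-s)$ up to isomorphism first, then to realize its $\id$-summand by an explicit map built from $\eta$ and the nil affine Hecke action, which I would check with the infinitesimal criterion of Lemma \ref{th:modinf}. For the abstract shape, recall that $\E^s\cong\E^{(s)}\otimes_\k H^\star(\bF l(s))$ and $\F^s\cong\F^{(s)}\otimes_\k H^\star(\bF l(s))$, so $\E^s(\l-s)\F^s(\l-s)\cong(\E^{(s)}(\l-s)\F^{(s)}(\l-s))\otimes_\k H^\star(\bF l(s))^{\otimes 2}$. Lemma \ref{th:abstractiso} with $a=b=s$ --- legitimate precisely because $\l-2s\ge 0$ --- gives $\E^{(s)}(\l-s)\F^{(s)}(\l-s)\cong\bigoplus_{j=0}^{s}\F^{(s-j)}\E^{(s-j)}\otimes_\k H^\star(\bG(j,\l))$. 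The $j=s$ summand is $\id\otimes_\k H^\star(\bG(s,\l))$; the summands with $j<s$ have $b=s-j\ge 1$, hence kill highest weight objects and together form a functor $\U$, and by Lemma \ref{th:notdirectsummand} none of them contains a copy of $\id$. So the $\id$-part of $\E^s\F^s$ is $\id\otimes_\k H^\star(\bG(s,\l))\otimes_\k H^\star(\bF l(s))^{\otimes 2}$, and in particular $\dim_{\id}(\E^s(\l-s)\F^s(\l-s))=\binom{\l}{s}(s!)^2$.

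Next I would write down the map. Take $\eta:\id\to\E^s\F^s$ to be the unit assembled from the elementary units $\id\to\E\F$, as in the proof of Lemma \ref{lem:pieta}, and let $W_1\eta$ be the span of the maps $\id\to\E^s\F^s$ obtained from $\eta$ by acting with the endomorphisms built from the nil affine Hecke generators $X,T$ on the $\E$- and on the $\F$-strands. Here $W_1$ must be chosen to be a span of monomials in the $x_i$'s and $t_i$'s of total dimension $\binom{\l}{s}(s!)^2$: one factor of $s!$ comes from the symmetrizers $t_w$, $w\in S_s$, acting on each of the two blocks of strands, and the factor $\binom{\l}{s}$ from a suitable family of ``dotted'' monomials, chosen in analogy with the subspace $V$ of Proposition \ref{th:firstiso} (e.g. the $x_1^{a_1}\cdots x_s^{a_s}$ with $0\le a_1<\cdots<a_s<\l$). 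By construction the canonical map $\id\otimes W_1\eta\to\E^s\F^s$ lands in the $\id$-part, and $\dim W_1=\dim_{\id}(\E^s\F^s)$, so by Lemma \ref{th:modinf} it remains only to show that no non-zero element of $W_1\eta$ is $\id$-infinitesimal; the remaining summand of $\E^s\F^s$ is then automatically a copy of $\U$.

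This last point is the crux. By the adjunctions, $w\eta$ is $\id$-infinitesimal exactly when a certain composite built from $\iota$, $\pi$, $\eta$, $\varepsilon$ and $w$ --- a map of the sort $\F^s\F^s\to\F^s\F^s$ appearing in the statement --- vanishes. Any monomial symmetric in two adjacent variables is killed here, because $\iota\pi=t_{w_0}$ up to a scalar and $t_{w_0}ft_{w_0}=0$ for such $f$ by Lemma \ref{th:sym2} and Corollary \ref{th:sym2c}; this is precisely why $W_1$ has to be spanned by a ``non-symmetric'' family of the correct cardinality. For the surviving monomials one shows the composite is non-zero by induction on $s$, sliding the $x$'s past the $t$'s via $x_it_i=t_ix_{i+1}+1$ and peeling off one strand at a time, exactly as in the proof that $t_{w_0}\ne 0$ in the unlabelled Lemma just before Lemma \ref{lem:pieta}.

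I expect this non-degeneracy step to be the main obstacle: fixing the precise monomial basis of $W_1$, and controlling the ``curl''-type interaction of the nil affine Hecke action with the unit $\eta$ so that the surviving monomials stay linearly independent modulo $\id$-infinitesimal maps. The hypothesis $\l\ge 2s$ is used throughout --- it is what makes Lemma \ref{th:abstractiso} applicable, what lets Lemma \ref{th:notdirectsummand} isolate the $\id$-summand, and what forces the ``dotted'' directions to span a space of full dimension $\binom{\l}{s}$ rather than collapsing.
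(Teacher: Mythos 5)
Your proposal takes a genuinely different, more computational route from the paper's, and it has a real gap at exactly the step you flag as ``the crux.'' Your plan --- compute $\dim_{\id}(\E^s\F^s) = \binom{\l}{s}(s!)^2$ via Lemma \ref{th:abstractiso}, pick a concrete monomial family $W_1$ of that cardinality, and invoke the Krull-Schmidt criterion of Lemma \ref{th:modinf} --- is the hands-on strategy the paper does use for Propositions \ref{th:firstiso} and \ref{th:secondiso1}. But to make it run here you must actually prove that your proposed $W_1\eta$ meets $\Inf(\id, \E^s\F^s)$ only in zero, and you never do this. You correctly note that monomials symmetric in adjacent variables die (Corollary \ref{th:sym2c}) and then sketch an induction ``sliding the $x$'s past the $t$'s,'' but that argument, as you describe it, only reproves $t_{w_0}\ne 0$; it does not establish linear independence of a $\binom{\l}{s}(s!)^2$-dimensional family modulo the two-sided ideal of infinitesimal maps. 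That requires substantial additional work of the sort done in Lemma \ref{th:linearcomblem}, and even there the paper only handles a single block of strands with a space of pure powers of $x$'s and no $t$'s.

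The paper avoids this obstacle entirely by a structural argument. It iterates Proposition \ref{th:decomp1} --- the rigidified $\sl_2$ relation --- to obtain, for free, \emph{some} isomorphism $\bigoplus\id \oplus \U \to \E^s\F^s$ in which the maps on the $\id$ summands are compositions of $\eta$, $\varepsilon$, $X$, $T$ using only the adjunction $\E = \F_R$. The remaining work is purely syntactic: the straightening Lemma \ref{th:straighten} lets one cancel paired $\eta$--$\varepsilon$ insertions, so any such map can be rewritten in the normalized form $\id\xrightarrow{\eta}\E^s\F^s\xrightarrow{f}\E^s\F^s$ with $f$ a word in $x$'s and $t$'s. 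No dimension count, no non-degeneracy check, and no need to exhibit a particular monomial basis for $W_1$ --- the subspace $W_1\eta$ is simply whatever the straightened maps span. So the Lemma is much softer than a statement like Proposition \ref{th:secondiso1}; it is better to take the decomposition from the axioms and normalize the maps afterwards than to rebuild the decomposition from scratch, which is where your attempt gets stuck.
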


Here $ \U $ is a direct sum of $ \F^{(a)}\E^{(a)}$ with $ a > 0 $.  It is given in Lemma \ref{th:abstractiso}.  We do not care about the precise description of the space $ W_1$, though it is possible to write down an explicit basis of monomials.

The proof of this Lemma is inspired by the pictorial diagrams of Lauda \cite{l1}.

\begin{proof}
We will ignore shifts in this proof for ease of notation.

By repeatedly applying Proposition \ref{th:decomp1}, we see that there is an isomorphism
$ \oplus \id \oplus \U \stackrel{\sim}{\rightarrow} \E^s \F^s $ where on each direct summand the map is given by a composition of \begin{equation*} \label{eq:building}
 \eta : \id \rightarrow \E\F,\ \varepsilon : \F\E \rightarrow \id, \ T : \F\F \rightarrow \F\F, \ X : \F \rightarrow \F 
\end{equation*} 
in particular, we are only using the adjuction $ \E = \F_R $.

So it suffices to show that any map $ \id \rightarrow \E^s \F^s $ which can be written as a composition of the maps in (\ref{eq:building}) can actually be rewritten as $ \id \xrightarrow{\eta} \E^s \F^s \xrightarrow{f} \E^s \F^s $ where $ f $ is a composition of $ X$s and $T$s acting on $ \F^s $.

Let $ g : \id \rightarrow \E^s\F^s $ be one of the above maps built as a composition of (\ref{eq:building}).  Then the map $ h = \varepsilon I \circ I g  : \F^s \rightarrow \F^s $
\begin{equation*}
\F^s \xrightarrow{I g} \F^s \E^s \F^s \xrightarrow{\varepsilon I} \F^s 
\end{equation*}
satisfies the hypotheses of Lemma \ref{th:straighten}.  Hence it can be written as a composition of $ X$s and $T$s.

Now, we have that our original map $ g $ can be rewritten as 
\begin{equation*}
\id \xrightarrow{g} \E^s\F^s \xrightarrow{\eta I} \E^s \F^s \E^s \F^s \xrightarrow{ I \varepsilon I} \E^s \F^s
\end{equation*}
which in turn equals the composition
\begin{equation*}
\id \xrightarrow{\eta} \E^s \F^s \xrightarrow{I h} \E^s \F^s
\end{equation*}
Since $ h $ is a composition of $ X$s and $T$s, the result follows.
\end{proof}

\begin{Lemma} \label{th:straighten}
Let $ (\L,\R) $ be a pair of adjoint functors.  Let $ \eta: \id \rightarrow \R \L, \varepsilon: \L \R \rightarrow \id $ be the unit and co-unit of adjunction.  Let $ \{f_1, \dots, f_k \} $ be a set of morphisms with each $ f_i \in \End(\L^j) $ for some $j $.

Then any map $ \L^s \rightarrow \L^s $ which is the composition of $ \eta, \varepsilon, \{f_i\} $ can be expressed as a composition of the $ \{f_i \} $ alone.
\end{Lemma}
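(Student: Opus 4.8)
The plan is to prove Lemma~\ref{th:straighten} by a string‑diagram ``straightening'' argument. Present any composite of whiskered copies of $\eta$, $\varepsilon$ and the $f_i$ as a planar diagram in which strands are labelled by $\L$ or $\R$, the unit $\eta\colon \id \to \R\L$ is a cup creating an adjacent pair of strands (an $\R$ with an $\L$ to its right), the counit $\varepsilon\colon \L\R \to \id$ is a cap annihilating an adjacent $\L$--$\R$ pair, and each $f_i$ is a box sitting on a block of consecutive $\L$-strands. The interchange law for composition of functors, together with the two triangle identities $(\varepsilon\L)\circ(\L\eta) = \id_\L$ and $(\R\varepsilon)\circ(\eta\R) = \id_\R$, all hold identically among these morphisms, so we are free to slide boxes and cups/caps past one another and to cancel an opposed cup--cap pair whenever it appears in ``zigzag position''. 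It then suffices to show that any such diagram whose source and target are words in $\L$ alone can be rewritten using the $f_i$ (whiskered by identities) and no cups or caps.

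I would argue by induction on the number $n$ of cups in the diagram. Since the source and target contain no $\R$, every $\R$-strand is born at a cup and must die at a cap, so there are also exactly $n$ caps; when $n=0$ no $\R$-strand occurs and the diagram is already a composite of (whiskered) $f_i$'s. For the inductive step one locates a cup and a cap that, after a sequence of interchange moves, sit consecutively and form exactly one side of a triangle identity; cancelling them replaces the diagram by one computing the same morphism with only $n-1$ cups, and the induction closes. The structural fact that makes this possible is that \emph{nothing ever crosses an $\R$-strand}: an $f_i$ acts on a block of consecutive $\L$-strands, so any given $\R$-strand lies entirely to its left or entirely to its right, and a cup or cap likewise inserts or deletes a two-strand block on one side of a given $\R$-strand. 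Tracing a single $\R$-strand from the cup that creates it to the cap that consumes it, the ``rainbow'' nesting of the intervening cups and caps is finite, so some cup--cap pair is innermost --- its $\R$-strand and the $\L$-strand produced by the same cup remain adjacent throughout --- and for that pair the relevant triangle identity applies directly.

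The only genuine work is this last combinatorial step: verifying that an innermost, zigzag-positioned cup--cap pair always exists and that the interchange moves needed to bring it into canonical position are all legitimate. Everything else --- the induction, the base case, and the appeals to the triangle identities --- is purely formal, and the hypotheses on the $f_i$ (merely that each is an endomorphism of some $\L^j$) enter only through the fact that an $f_i$-box does not straddle an $\R$-strand. One could alternatively deduce the statement from the known combinatorial description of the hom-categories of the free $2$-category on an adjunction (the ``walking adjunction''), in which a $1$-morphism $\L^s \to \L^s$ has a normal form involving no $\eta$ or $\varepsilon$; but the direct argument above is self-contained and is the one I would write out.
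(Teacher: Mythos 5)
Your proposal is correct and takes essentially the same approach as the paper's proof: both trace an $\R$-strand from the unit $\eta$ that creates it to the counit $\varepsilon$ that consumes it, observe that all intervening morphisms lie entirely to one side of that $\R$ (nothing crosses it) and so can be slid past $\eta$ or $\varepsilon$ by interchange, and then cancel the resulting adjacent $\eta$--$\varepsilon$ pair via a triangle identity, inducting on the number of $\eta$'s. The paper's version is slightly tighter in that it shows \emph{any} $\eta$ works — the intermediate morphisms split into those left of $\R$ (commuting past $\eta$) and those right of $\R$ (commuting past $\varepsilon$) — so the search for an innermost cup--cap pair, which is the step you flag as needing verification, is not actually needed.
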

\begin{proof}
Note that the number of $ \eta $ and $ \varepsilon $ in the composition must be the same.  Hence it suffices to show that we can rewrite our composition as a composition with one less $ \eta $ and $ \varepsilon $.

Fix any $ \eta $ in the composition.  This creates an $ \R $ which is then killed by a later $ \varepsilon $.  In between, all the morphisms $g$ in the composition do not affect $ \R $.
\begin{equation*}
\cdots \xrightarrow{\eta} \ldots \R \L \ldots \xrightarrow{g_1} \cdots \xrightarrow{g_2} \ldots \L \R \ldots \xrightarrow{\varepsilon} \cdots
\end{equation*}
Hence each $ g $ must be either of the form $ hI_{\R} $ or $ I_{\R}h $.  These two sorts of morphisms commute.  The first type also commute with the $ \eta $ which created the $ R $ and the second type commute with the $ \varepsilon $ with kills the $ \R $.

So we can rewrite our composition so that the $ \eta $ and the $\varepsilon $ are adjacent.  Since their composition $\L \xrightarrow{I \eta} \L\R\L \xrightarrow{\varepsilon I} \L$ is the identity, we are done.
\end{proof}

\begin{Proposition} \label{th:secondiso1}
If $ \l -2s \ge 0 $, then the following map is an isomorphism 
\begin{equation*}
\id(\lambda) \otimes \pi W_2 \eta  \oplus \U \rightarrow \E^s \F^{(s)}(\l-s)
\end{equation*}
where $ W_2 $ is the subspace of $ \Ext(\E^s \F^s, \E^s \F^s) $ spanned by the monomials $ x_1^{a_1} \cdots x_s^{a_s} $ (acting on the $\F^s$ term) such that $ a_i < \lambda-s+i$. To summarize, the isomorphism is given by maps
\begin{equation*}
\id \xrightarrow{\eta} \E^s \F^s \xrightarrow{W_2} \E^s \F^s \xrightarrow{\pi} \E^s \F^{(s)}.
\end{equation*}
\end{Proposition}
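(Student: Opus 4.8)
The plan is to apply Lemma~\ref{th:modinf} with $Q=\id$, $A=\E^s(\l-s)\F^{(s)}(\l-s)$ and $V=\pi W_2\eta\subseteq\Ext(\id,A)$, so that it suffices to verify two of its three conditions: that $[\pi W_2\eta]$ spans $\Ext(\id,A)/\Inf$, that $\dim(\pi W_2\eta)=\dim_\id(A)$, and that no nonzero element of $\pi W_2\eta$ is $\id$-infinitesimal. I would verify the first two; the third then holds automatically, as does the assertion that $\id(\l)\otimes\pi W_2\eta$ maps isomorphically onto the $\id$-part of $A$. The complementary summand $\U$ is moreover forced on us by Lemma~\ref{th:abstractiso}: applied with $a=b=s$ (legitimate since $\l-2s\ge 0$), it gives $\E^s\F^{(s)}(\l-s)\cong \id\otimes_\k H^\star(\bG(s,\l))\otimes_\k H^\star(\bF l(s))\ \oplus\ \U$, where $\U$ is a direct sum of terms $\F^{(a)}\E^{(a)}$ with $a>0$.

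The dimension count is immediate from the previous decomposition (alternatively, evaluate the operator $e^sf^{(s)}$ induced by $A$ on $V(\l)$ at a highest weight vector and use $e^{(b)}f^{(a)}v=\sum_j \Bigl[\begin{smallmatrix}\l-a+b\\ j\end{smallmatrix}\Bigr]f^{(a-j)}e^{(b-j)}v$ from \cite{Lu}, only the $j=s$ term surviving): the total multiplicity of $\id$-shifts in $A$ is $s!\binom{\l}{s}=\l!/(\l-s)!$. On the other hand $\dim W_2=\prod_{i=1}^{s}(\l-s+i)=\l!/(\l-s)!$. Hence $\dim W_2=\dim_\id(A)$; once the spanning step (below) shows that $f\mapsto\pi f\eta$ is injective on $W_2$, condition (ii) follows.

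For the spanning, I would bootstrap from Lemma~\ref{th:simpleiso}, which provides an isomorphism $\id(\l)\otimes W_1\eta\oplus\U_1\xrightarrow{\sim}\E^s(\l-s)\F^s(\l-s)$ with $W_1$ spanned by words in the $x_i$ and $t_i$ acting on $\F^s$. Since $\F^s(\l-s)\cong\F^{(s)}(\l-s)\otimes_\k H^\star(\bF l(s))$, there is a split surjection $I\pi\colon \E^s\F^s(\l-s)\to\E^s\F^{(s)}(\l-s)$ onto the bottom $\F^{(s)}$-summand (up to a grading shift). As $\U_1$ is a sum of $\F^{(a)}\E^{(a)}$ with $a>0$, its image under $I\pi$ has no $\id$-summand, so the $\id$-part of $\E^s\F^{(s)}(\l-s)$ is already hit by the image of $\id(\l)\otimes W_1\eta$, i.e.\ $[\pi W_1\eta]=\Ext(\id,A)/\Inf$. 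It then remains to prove the inclusion $\pi W_1\eta\subseteq\pi W_2\eta+\Inf$. This is a computation inside the nil affine Hecke algebra: using $\iota\circ\pi=t_{w_0}$ and Corollary~\ref{th:sym2c} one absorbs all the $t_i$'s and reduces $\pi(\text{word in }x,t)\eta$ to $\pi(\text{polynomial in }x)\eta$; and a monomial $x_1^{a_1}\cdots x_s^{a_s}$ with some $a_i\ge\l-s+i$ becomes, after $\pi(-)\eta$, either $\id$-infinitesimal or a combination of lower monomials, using the explicit shifts in $\F^s\cong\F^{(s)}\otimes_\k H^\star(\bF l(s))$ together with $\Hom^i(\F^{(s)},\F^{(s)})=0$ for $i<0$. (It is here, as in Lemma~\ref{th:simpleiso}, that $\l-2s\ge 0$ is used.)

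The step I expect to be the main obstacle is this last reduction --- establishing that the ``staircase'' monomials $W_2$ exhaust $\pi(-)\eta$ applied to the whole nil affine Hecke algebra, modulo $\id$-infinitesimal maps. The cleanest way I see to organize it is via adjunction: identify $\Ext(\id,\E^s\F^{(s)}(\l-s))$ with $\pi\circ(\text{nil affine Hecke algebra})\circ\eta$, a quotient of the polynomial representation $\k[x_1,\dots,x_s]$ on which $\pi$ acts through the longest divided-difference operator and the $\id$-infinitesimal maps form an ideal, and then check that the staircase monomials descend to a basis of the resulting finite-dimensional quotient. With the spanning and the dimension count already in hand this is forced by Lemma~\ref{th:modinf}, but a direct verification seems to require a genuine nil-Hecke computation.
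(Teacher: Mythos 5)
Your overall plan matches the paper's: reduce to Lemma~\ref{th:modinf} with $Q=\id$, bootstrap the spanning assertion from Lemma~\ref{th:simpleiso}, and close with the dimension count $\dim W_2 = \l!/(\l-s)! = \dim_{\id}(\E^s\F^{(s)}(\l-s))$. The dimension count is correct and the logic of invoking Lemma~\ref{th:modinf} (spanning plus the bound $\dim(\pi W_2\eta)\le\dim W_2$ forces equality and hence (i), (ii), (iii)) is sound.

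However, there is a genuine gap at exactly the step you flag as ``the main obstacle'': proving that $[\pi W_1\eta]\subseteq[\pi W_2\eta]$, i.e.\ that the staircase monomials $W_2$ exhaust $\pi(-)\eta$ applied to the whole nil affine Hecke algebra modulo $\id$-infinitesimals. You gesture at a direct nil-Hecke verification but do not carry it out, and this is the real technical content of the proposition. The paper does it in two steps. First, eliminating the $t_i$'s is handled more directly than you suggest: any word in $W_1$ can be rewritten (via the nil affine Hecke relations) with all $t_i$'s on the outer side, and since $\pi t_i = 0$ for all $i$ (as $\pi$ projects onto the top $\F^{(s)}$-summand), those terms vanish, leaving $\pi W_1\eta$ spanned by $\pi f\eta$ for polynomials $f\in\k[x_1,\dots,x_s]$; you propose using $\iota\pi=t_{w_0}$ and Corollary~\ref{th:sym2c}, which is workable but not what the paper does. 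Second — and this is the missing piece — the paper proves a dedicated Lemma~\ref{th:linearcomblem} and Corollary~\ref{lem:linearcomb} showing $[\pi\k[x_1,\dots,x_s]\eta]=[\pi W_2\eta]$, by a nontrivial induction: the base case is that $[\pi x_s^{\l}\eta]$ is $\id$-infinitesimal, which is established by passing through $\E\F(\l+1)\cong\id\otimes H^\star(\p^{\l-1})\oplus\F\E(\l-1)$ and a degree argument (this is precisely where $\l-2s\ge 0$ enters); the inductive step uses $t_{w_0}t_i=0$ together with the nil-Hecke commutation relations to trade $t_{w_0}x_i^{\l-s+i}$ for a combination of lower monomials. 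Without supplying this lemma, the proposal does not establish the spanning condition, so the appeal to Lemma~\ref{th:modinf} is not yet justified.
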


\begin{proof}
We start by applying Lemma \ref{th:simpleiso}, to obtain the isomorphism $ \id \otimes W_1 \eta \oplus \U \rightarrow \E^s \F^s$.  

Note that $ \E^s \F^{(s)} $ is a direct sum of shifts of $\id $ and $ \U $ by Lemma \ref{th:abstractiso}.  By Lemma \ref{th:notdirectsummand}, $ \id $ is not a direct summand of $ \U $.

Now, we want to compute $ \pi W_1 \eta $ where $ \pi $ is the projection $ \E^s \F^s \rightarrow \E^s\F^{(s)}\langle s(s-1)/2\rangle $.  Any basis vector in $ W_1 $ as above can be rewritten using the nil affine Hecke relations as a sum of terms with all $ t_i $ on the LHS.  However any terms with any $ t $ on left will become 0 when composed with $ \pi $ (since $ \pi t_i = 0 $ for all $i $).   Hence we see that $ \pi W_1 \eta$ is spanned by polynomials in the $x$s.  

By Lemma \ref{th:modinf}, we see that $ \Ext(\id, \E^s \F^s)/\Inf = [W_1 \eta] $.  Hence $ \Ext(\id, \E^s \F^{(s)})/\Inf = [\pi W_1 \eta] $.

Let $ W_2 $ be the subspace of $ \Ext(\F^s, \F^s) $ spanned which is spanned by monomials of the form $ x_1^{a_1} \cdots x_s^{a_s} $ where $ 0 \le a_i < \lambda- s+i $. Combining Corollary \ref{th:linearcomblem} below and the above observations, we see that 
$$ [ \pi W_2 \eta] = [\pi \k[x_1, \dots, x_s] \eta] \supset [\pi W_1 \eta] = \Ext(\id, \E^s \F^{(s)})/\Inf.$$  

To complete the proof, note that $ \dim(\pi W_2 \eta) \le \dim W_2 \le \lambda!/(\lambda - s)!$.  

Also, $ \dim_{\id}(\E^s \F^s) = \lambda! s! / (\l+s)! $.  This can be see by considering an irreducible representation of highest weight $ \l $ and applying $ \E^s \F^s $ to this highest weight vector.  

Since,  ${\F^{(s)}}^{\oplus s!} = \F^s $, we obtain that $ \dim_{\id}(\E^s \F^{(s)}) = (\lambda)!/(\lambda s)!$. Hence by Lemma \ref{th:modinf} the result follows.
\end{proof}

\begin{Lemma} \label{th:linearcomblem}
Suppose $\l \ge 2s$ and let  $ i \in \{ 1, \dots, s \} $.  Let $ W_2^i $ be the subspace of  $ \Ext(\E^s \F^s, \E^s \F^{s}(\l-s))$ spanned by $ x_i^{a_i} \cdots x_s^{a_s} $ where $ a_j < \l -s +j $.  Then
\begin{equation*}
[ \pi x_i^{\l-s+i}] \in [ \pi W_2^i]  \subset \Ext(\E^s \F^s, \E^s \F^{(s)}(\l-s))/\Inf.
\end{equation*}
\end{Lemma}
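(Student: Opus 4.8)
The plan is to deduce the relation from a single ``cyclotomic'' relation attached to one of the two extreme strands of $\F^s$, and then to transport it inward using the divided‑difference relations of the nil affine Hecke algebra; the induction runs downward on $i$, with $i=s$ (the strand whose bound equals $\l$) as the base case.

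For the base case I would exhibit the edge relation directly from the categorified commutation relation. Writing the block $\F^s$ occurring in $\E^s\F^s$ as $\F^{(s-1)}$ joined to a single remaining copy of $\F$ --- via the decomposition $\F^{s-1}\cong\F^{(s-1)}\otimes_\k H^\star({\bF}l(s-1))$, which commutes with the dot $x_s$ on that remaining strand --- the composition axiom realizes $\F^{(s-1)}\F\cong\F^{(s)}\otimes_\k H^\star(\p^{s-1})$ through the maps $\pi\circ x_s^{\,j}$ for $j=0,\dots,s-1$. Feeding this into Proposition \ref{th:decomp1}, whose identity summand $\id\otimes H^\star(\p^{\l-1})$ is realized by the dotted units $x_s^{\,j}\eta$ for $j=0,\dots,\l-1$, forces $\pi\circ x_s^{\,\l}$ to be congruent, modulo $\Inf$, to a $\k$-linear combination of the $\pi\circ x_s^{\,j}$ with $j<\l$. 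Since all of these lie in $\pi W_2^s$ (here $W_2^s=\Span\{1,x_s,\dots,x_s^{\l-1}\}$), this is exactly the assertion for $i=s$.

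For the inductive step ($i<s$, assuming the case $i+1$) I would push the surplus power off strand $i$ and onto strand $i+1$. Iterating $x_i t_i=t_i x_{i+1}+1$ and $t_i x_i=x_{i+1}t_i+1$ rewrites $x_i^{\,\l-s+i}$ as a scalar multiple of $x_{i+1}^{\,\l-s+i}$ modulo (a) terms carrying a $t_i$ in leftmost position, which die after composition with $\pi$ because $\pi\circ t_i=0$ --- a consequence of $\iota\circ\pi=t_{w_0}$ together with $t_{w_0}t_i=0$ in the nil affine Hecke algebra --- and (b) terms of strictly smaller $x_i$-degree. Because $\l-s+i<\l-s+(i+1)$, the power $x_{i+1}^{\,\l-s+i}$ already respects the bound defining $W_2^i$, so every surviving term lies in $\pi W_2^i$ except possibly those in which the exponent of $x_{i+1}$ (or of a strand still further right) has been pushed past its own bound; those are brought back into range by the inductive hypothesis, and a monomial ordering with the left strands most significant ensures termination of the reduction.

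The main obstacle is the base case: extracting the edge relation cleanly requires reconciling the projection $\pi\colon\F^s\to\F^{(s)}$ in the statement with the elementary projections $\F^{(s-1)}\F\to\F^{(s)}$ (and $\F\F^{(s-1)}\to\F^{(s)}$) supplied by the composition axiom, which in turn uses the associativity and compatibility of the $\iota$ and $\pi$ maps and careful tracking of the grading shifts. Once the edge relation is in hand, the downward induction is essentially bookkeeping with the nil affine Hecke relations; the only delicate point is to confine every correction term to $W_2^i$ and to guarantee that the reduction process stops.
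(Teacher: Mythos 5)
Your overall strategy is the same as the paper's: establish the relation at the extreme strand $i=s$ using the categorified $\E\F$-commutation relation, then descend to lower $i$ using the nil affine Hecke relations together with the vanishing $t_{w_0}t_i=0$ (via $\iota\circ\pi = t_{w_0}$). You have correctly identified all the ingredients. However, both halves of your argument have genuine gaps in execution.

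In the base case you decompose the $\F^s$ block of $\E^s\F^s$, but you never touch the $\E^s$ block, so you never actually arrive at a configuration where Proposition \ref{th:decomp1} (a statement about a single adjacent $\E\F$) applies. The copies of $\E$ sit between the prospective identity summand and the right-most $\F$ on which $x_s$ acts, and nothing you describe moves them out of the way. The paper bridges this by first decomposing $\E^s\F^{s-1}\cong\bigoplus_{i\ge 0}\F^{(i)}\E^{(i+1)}\otimes V_i$, then invoking Lemma \ref{th:notdirectsummand} to discard the $i>0$ summands (after composing with the remaining $\F$ these contribute only $\F^{(j)}\E^{(j)}$ with $j\ge 1$, which carry no $\id$), and only then applying the commutation relation $\E\F(\l+1)\cong\id\otimes H^\star(\p^{\l-1})\oplus\F\E(\l-1)$ together with a degree count. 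That $\E^s\F^{s-1}$ decomposition is the missing step, and it is the real crux of the base case, not the $\pi$-versus-elementary-projections reconciliation that you single out as the main obstacle.

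In the inductive step, the reduction you describe --- iterating the dot--crossing relations to ``rewrite $x_i^{\l-s+i}$ as a scalar multiple of $x_{i+1}^{\l-s+i}$ modulo terms carrying a leading $t_i$'' --- is not a valid nil affine Hecke manipulation. The monomial $x_i^{\l-s+i}$ contains no $t$'s, so the relations $x_it_i = t_ix_{i+1}+1$ give you nothing to slide; a pure polynomial in the $x$'s cannot be rewritten this way. What actually works, and what the paper does, is to take the already-established level-$(i+1)$ relation, multiply it on the \emph{right} by $t_i$, expand $x_{i+1}^n t_i = t_i x_i^n - \sum_{a+b=n-1} x_{i+1}^a x_i^b$, and then kill the $t_ix_i^n$ term against $t_{w_0}t_i=0$ (remembering that each relation is an identity of the form $[t_{w_0}\cdot]$). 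After commuting the $x_j$ with $j>i+1$ past $t_i$, every surviving monomial already lies in $W_2^i$, so each level-$i$ relation is produced from the level-$(i+1)$ one in a single pass; there is no need for a monomial ordering or a separate termination argument.
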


\begin{proof}
Notice that it suffices to show that for $w_0 \in S_s$
$$[ t_{w_0} x_i^{\l -s +i}] \in [ t_{w_0} W_2^i] \subset \Ext(\E^s \F^s, \E^s \F^s(\l-s))/\Inf.$$ 

We start with $i=s$. Now $\E^s \F^s = \E^{s} \F^{s-1} \F(\l-1)$ and $x_s^{\l-s+i}$ acts by $X^{\l-s+i}$ on the right-most term $\F(\l+1)$. Rewriting $\E^{s} \F^{s-1} \cong \bigoplus_{i \ge 0} \F^{(i)} \E^{(i+1)} \otimes V_i$ for some vector spaces $V_i$ we see that $x_s^b$ acts by $X^b$ on the right-most factor in 
$$\E^s \F^s(\l+s) \cong \bigoplus_{i \ge 0} \F^{(i)} \E^{(i+1)} \F(\l+1) \otimes V_i.$$
Now, for $i > 0$, $\F^{(i)} \E^{(i+1)} \F(\l+1) \cong \bigoplus_{j \ge 1} \F^{(j)} \E^{(j)} \otimes V_j'$ for some vector spaces $V_j'$. By Lemma \ref{th:notdirectsummand} every map from $ \id $ to  $\F^{(j)} \E^{(j)}$ is infinitesimal. So to check that $[ x_s^{\l}]$ is zero it suffices to check that $[ X^{\l}]$ is zero acting on the right factor in $\E \F(\l+1) \otimes V_0$. But 
$$\E\F(\l+1) \cong \id \otimes H^\star(\p^{\l-1}) \oplus \F\E(\l-1)$$ 
so $[ X^{\l}] = 0$ just by considering degrees.

Now suppose $i=s-1$. We know that $[ x_s^{\l}] = 0$ so that $[ x_s^{\l} t_{s-1}] = 0$. Using the nil affine Hecke relations we have 
\begin{eqnarray*}
x_s^{\l}t_{s-1}
&=& x_s^{\l-1}(t_{s-1} x_{s-1} - \id) \\
&=& x_s^{\l-2}(t_{s-1} x_{s-1} - \id) x_{s-1} - x_s^{\l-1} \\
&=& \dots \\
&=& t_{s-1} x_{s-1}^{\l} - (x_{s-1}^{\l-1} + x_{s-1}^{\l-2} x_s + \dots + x_s^{\l-1}).
\end{eqnarray*}
Since $t_{w_0}t_{s-1} = 0$ we get that 
\begin{equation}\label{eq:rec1}
[ t_{w_0}x_{s-1}^{\l-1}] = [ -t_{w_0}(x_{s-1}^{\l-2}x_s + \dots x_s^{\l-1})] \in [ t_{w_0} W_2^{s-2}].
\end{equation}

Now we multiply equation (\ref{eq:rec1}) on the right by $t_{s-2}$. Using the nil affine Hecke relations the left side becomes 
\begin{equation}\label{eq:rec2}
-t_{w_0}(x_{s-1}^{\l-2}+x_{s-1}^{\l-3}x_{s-2} + \dots + x_{s-2}^{\l-2}).
\end{equation}
The right side, since $x_s$ commutes with $t_{s-2}$, is a sum of terms of the form $t_{w_0}x_{s-2}^{a_{s-2}} x_{s-1}^{a_{s-1}} x_{s}^{a_s}$ where $a_i < \l-s+i$. Thus, isolating $[ t_{w_0}x_{s-2}^{\l-2}]$ from equation (\ref{eq:rec2}) we find that it must also be a sum of such terms and hence lies in $[ t_{w_0} W_2^{s-2}]$. 

Now we repeat in this way to get that $[ t_{w_0} x_i^{\l-s+i}] \in [ t_{w_0}W_2]$ for all $i$. At each step we make use of the fact that if $\alpha = t_{w_0}x_j^{a_j} \dots x_s^{a_s}$ where $a_k < \l-s+k$ then $\alpha t_{j-1}$ is a sum of terms of the form $t_{w_0}x_{j-1}^{a'_j} \dots x_s^{a'_s}$ where $a_k' < \l-s+k$. 
\end{proof}

\begin{Corollary} \label{lem:linearcomb}
Suppose $\l-2s \ge 0$. Let $W_2 \subset \Ext(\E^s \F^s(\l-s), \E^s \F^s(\l-s)) $ denote the subspace spanned by monomials $x_1^{a_1} \cdots x_s^{a_s}$ (acting on the $\F^s$ term) where $a_i < \l-s+i$. Then,
$$[ \pi \k[ x_1, \dots, x_s]] = [ \pi W_2]  \subset \Ext(\E^s \F^s, \E^s \F^{(s)}(\l-s))/\Inf.$$
\end{Corollary}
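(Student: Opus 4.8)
The inclusion $[\pi W_2] \subseteq [\pi\k[x_1,\dots,x_s]]$ is trivial since $W_2 \subseteq \k[x_1,\dots,x_s]$, so the plan is to prove the reverse inclusion, i.e. to show that $[\pi m] \in [\pi W_2]$ in $\Ext(\E^s\F^s,\E^s\F^{(s)})/\Inf$ for every monomial $m = x_1^{a_1}\cdots x_s^{a_s}$ (acting on the $\F^s$ term). This will follow from Lemma \ref{th:linearcomblem} by a straightforward induction along a well-founded order.

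To set up that order, I would attach to each such $m$ the \emph{defect vector} $d(m) = (d_1,\dots,d_s) \in \Z_{\ge 0}^s$ with $d_i = \max(0,\,a_i - (\l-s+i) + 1)$, so that $d(m) = 0$ exactly when $m \in W_2$, and order the monomials by the lexicographic order on defect vectors, which is well-founded (induct on $s$). The base case $d(m) = 0$ is immediate.

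For the inductive step, suppose $d(m) \ne 0$ and let $i$ be the smallest index with $a_i \ge \l-s+i$. Factor $m = x_i^{\l-s+i} \cdot p$ with $p = x_1^{a_1}\cdots x_{i-1}^{a_{i-1}}\,x_i^{a_i-(\l-s+i)}\,x_{i+1}^{a_{i+1}}\cdots x_s^{a_s}$. By Lemma \ref{th:linearcomblem} there are scalars $c_k$ and monomials $n_k \in W_2^i$ (so each $n_k$ involves only $x_i,\dots,x_s$, with $x_j$-exponent $b_j^{(k)} < \l-s+j$) such that $\pi x_i^{\l-s+i} - \sum_k c_k\,\pi n_k \in \Inf$. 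Precomposing with the action of $p$ on $\F^s$ and using that $\Inf$ is a two-sided ideal, we get $\pi m - \sum_k c_k\,\pi(n_k p) \in \Inf$, hence $[\pi m] = \sum_k c_k\,[\pi(n_k p)]$. Each product monomial $n_k p$ satisfies $d(n_k p) <_{\mathrm{lex}} d(m)$: for $j < i$ its $x_j$-exponent is $a_j < \l-s+j$ by minimality of $i$, so the first $i-1$ coordinates of $d(n_k p)$ vanish as they do for $d(m)$; and its $x_i$-exponent is $a_i - (\l-s+i) + b_i^{(k)} < a_i$ because $b_i^{(k)} < \l-s+i$, whence $d_i(n_k p) < d_i(m)$. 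By the inductive hypothesis $[\pi(n_k p)] \in [\pi W_2]$ modulo $\Inf$ for each $k$, and therefore $[\pi m] \in [\pi W_2]$ modulo $\Inf$, completing the induction.

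The only points that require care are the two checks in the last paragraph — that lowering the exponent of the smallest ``bad'' variable $x_i$ creates no bad variable of smaller index (minimality of $i$) and that the $x_i$-exponent itself strictly drops (the bound $b_i^{(k)} < \l-s+i$ built into $W_2^i$), so that the induction measure genuinely decreases — together with the observation that the relation from Lemma \ref{th:linearcomblem} is stable under precomposition with $p$ because infinitesimal maps form a two-sided ideal. I expect this bookkeeping to be the main obstacle, though it is entirely routine; the substantive content has already been established in Lemma \ref{th:linearcomblem}.
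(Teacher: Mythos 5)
Your proof is correct and takes essentially the same approach as the paper, whose proof consists of the single sentence ``We repeatedly apply the above Lemma~\ref{th:linearcomblem}.'' Your lexicographic induction on defect vectors --- choosing the \emph{smallest} bad index so that lower-indexed coordinates are untouched while the $i$-th strictly drops --- is precisely the bookkeeping needed to make ``repeatedly apply'' well-founded and terminating.
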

\begin{proof}
We repeatedly apply the above Lemma \ref{th:linearcomblem}.
 \end{proof}

\subsubsection{Case $r > 0$}

Now that we have done the $ r = 0 $ case, we will bootstrap up to $ r\ne 0 $.

\begin{Proposition} \label{cor:secondiso1}
Suppose $\l \ge 2r+s$. Then following map is an isomorphism 
\begin{equation*}
\F^{(r)}(\l) \otimes \pi W_2 \eta  \oplus \U \rightarrow \E^s \F^{(s+r)}(\l-s)
\end{equation*}
where $ W_2 \subset \End(\E^s \F^s \F^{(r)})$ is the subspace spanned by monomials $ x_1^{a_1} \cdots x_s^{a_s} $ acting on the $\F^s$ factor where $ a_i < \l-s+i$. More precisely, the map $\pi W_2 \eta$ is the composition
\begin{equation*}
\F^{(r)} \xrightarrow{\eta I} \E^s \F^s \F^{(r)} \xrightarrow{I W_2 I} \E^s \F^s \F^{(r)} \xrightarrow{I \pi} \E^s \F^{(s+r)}.
\end{equation*}
\end{Proposition}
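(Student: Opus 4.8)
The plan is to deduce this from the $ r = 0 $ case, Proposition \ref{th:secondiso1}, by carrying $ \F^{(r)} $ along as a passive factor and then extracting the $ \F^{(r)} $-part with Lemma \ref{th:modinf}. For the numerology of the target, Lemma \ref{th:abstractiso} writes $ \E^s \F^{(s+r)} $ as a direct sum of shifts of functors $ \F^{(a)} \E^{(b)} $, and by Lemma \ref{th:notdirectsummand} the only summand that can carry a copy of $ \F^{(r)} $ is the $ (a,b) = (r,0) $ one. Hence $ \dim_{\F^{(r)}}(\E^s \F^{(s+r)}) $ is the multiplicity of $ [\F^{(r)}] $ in $ [\E^s \F^{(s+r)}] $, which I compute in the Grothendieck group exactly as in the proof of Lemma \ref{th:abstractiso}: from $ e^{(b)} f^{(a)} v = \sum_j \Bigl[ \begin{smallmatrix} \mu - a + b \\ j \end{smallmatrix} \Bigr] f^{(a-j)} e^{(b-j)} v $ (\cite{Lu}) with $ a = s+r,\ b = s,\ \mu = \l + r $ applied to a highest weight vector, only the $ j = s $ term survives, so the multiplicity evaluated at $ q = 1 $ is $ s! \binom{\l}{s} = \l!/(\l-s)! $. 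On the source side, $ \dim(\pi W_2 \eta) \le \dim W_2 \le \#\{(a_1, \dots, a_s) : 0 \le a_i < \l - s + i\} = \prod_{i=1}^{s}(\l - s + i) = \l!/(\l-s)! $.

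By Lemma \ref{th:modinf} it now suffices to establish the single spanning statement $ [\pi W_2 \eta] = \Ext(\F^{(r)}, \E^s \F^{(s+r)})/\Inf $: this forces the two displayed inequalities to be equalities and supplies the remaining conditions of Lemma \ref{th:modinf}, so that $ \F^{(r)} \otimes \pi W_2 \eta \to \E^s \F^{(s+r)} $ is an isomorphism onto the $ \F^{(r)} $-part, the complement being automatically a sum of $ \F^{(a)} \E^{(b)} $ with $ b > 0 $ (this is the $ \U $ of the statement). To prove the spanning I would factor the defining composition of $ \pi W_2 \eta $ through $ \E^s \F^{(s)} \F^{(r)} $: writing the composed $ \pi : \F^s \F^{(r)} \to \F^{(s+r)} $ as $ \F^s \F^{(r)} \xrightarrow{\pi I} \F^{(s)} \F^{(r)} \xrightarrow{\pi} \F^{(s+r)} $, the map $ \pi W_2 \eta $ equals the isomorphism of Proposition \ref{th:secondiso1} whiskered on the right by $ \F^{(r)} $, followed by the projection $ p : \E^s \F^{(s)} \F^{(r)} \twoheadrightarrow \E^s \F^{(s+r)} $ coming from $ \F^{(s)} \F^{(r)} \cong \F^{(s+r)} \otimes H^\star(\bG(s, s+r)) $. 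Proposition \ref{th:secondiso1} gives that $ \F^{(r)} \otimes W_2 $ maps isomorphically onto a direct summand of $ \E^s \F^{(s)} \F^{(r)} $ (and that the spanning monomials of $ W_2 $ are linearly independent, so $ \dim W_2 = \l!/(\l-s)! $). Using Lemma \ref{th:notdirectsummand} once more, the $ \F^{(r)} $-part of $ \E^s \F^{(s)} \F^{(r)} \cong \E^s \F^{(s+r)} \otimes H^\star(\bG(s,s+r)) $ is precisely $ \big(\F^{(r)}\text{-part of }\E^s \F^{(s+r)}\big) \otimes H^\star(\bG(s,s+r)) $; combined with the nil affine Hecke straightening of Corollary \ref{lem:linearcomb} (to identify $ [\pi W_2 \eta] $ with $ [\pi\,\k[x_1,\dots,x_s]\,\eta] $ modulo infinitesimals) and the dimension count above, this yields the spanning.

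The step I expect to be the main obstacle is this last one. The ambient functor $ \E^s \F^{(s)} \F^{(r)} $ picks up copies of $ \F^{(r)} $ not only from the $ \id $-part of $ \E^s \F^{(s)} $ but also from the $ \U $-part (for instance $ \F \E \F^{(r)} $ already contains $ \F^{(r)} $, via $ \F \F^{(r-1)} \cong \F^{(r)} \otimes H^\star(\p^{r-1}) $), so one cannot simply ``divide by $ H^\star(\bG(s,s+r)) $''. One must check that the particular subspace $ \pi W_2 \eta $ cut out by Proposition \ref{th:secondiso1} meets $ \ker p $ in zero, equivalently that no nonzero element of it is $ \F^{(r)} $-infinitesimal; this is exactly where the degree bookkeeping built into $ \iota,\ \pi,\ W_2 $ and the telescoping relations among the $ x_i $ in the nil affine Hecke algebra must be pushed through, just as in the proof of Proposition \ref{th:secondiso1}.
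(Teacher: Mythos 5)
You set up the problem correctly: the target dimension count $\dim_{\F^{(r)}}(\E^s\F^{(s+r)}) = \l!/(\l-s)!$ via Lemma \ref{th:abstractiso}, \ref{th:notdirectsummand} and Lusztig's formula is right (indeed more carefully stated than the paper, which has a typo $\l!/(\l+s)!$), and Lemma \ref{th:modinf} reduces the proposition to showing that $[\pi W_2\eta]$ spans $\Ext(\F^{(r)},\E^s\F^{(s+r)})/\Inf$. You also correctly identify the obstacle: after whiskering Proposition \ref{th:secondiso1} with $\F^{(r)}$ and projecting $p : \E^s\F^{(s)}\F^{(r)} \twoheadrightarrow \E^s\F^{(s+r)}$, the $\U$-part of $\E^s\F^{(s)}$ contributes further copies of $\F^{(r)}$ to $\E^s\F^{(s)}\F^{(r)}$, so injectivity of $\pi W_2\eta$ modulo infinitesimals does not follow from the $r=0$ case by ``dividing by $H^\star(\bG(s,s+r))$.''

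The gap is that you name this obstacle but do not overcome it. Saying ``the degree bookkeeping $\ldots$ must be pushed through, just as in the proof of Proposition \ref{th:secondiso1}'' does not work, because Corollary \ref{lem:linearcomb} controls $\id$-infinitesimals in $\Ext(\id,\E^s\F^{(s)})$, whereas the thing that must be ruled out here is $\F^{(r)}$-infinitesimality of a composite $\F^{(r)} \to \E^s\F^{(s+r)}$, and the two notions of infinitesimal live in different ambient spaces. The paper resolves this with a reduction you do not have: it argues by contradiction, then whiskers the would-be infinitesimal composition on the left by $\E^r$, precomposes with $\eta : \id \to \E^r\F^r$, post-composes with a projection $p' : \E^{(r)}\F^{(r)} \to \id$ onto the top-degree $\id$-summand, and observes that this operation sends positive-degree maps $\F^{(r)} \to \F^{(r)}\la k\ra$ to $\id$-infinitesimal maps $\id \to \id$. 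The net composition then lands in $\Ext(\id,\E^{r+s}\F^{(s+r)})$, where Corollary \ref{lem:linearcomb} (the $r=0$ result, applied with $s$ replaced by $s+r$) shows the map cannot be infinitesimal; this gives the contradiction. So the essential missing idea is the $\E^r$-conjugation that converts the $\F^{(r)}$-infinitesimal question into an $\id$-infinitesimal one where Corollary \ref{lem:linearcomb} genuinely applies. Without it, your argument via the intermediate $\E^s\F^{(s)}\F^{(r)}$ and the projection $p$ stalls exactly at the step you flagged.
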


\begin{proof}
We know that 
$$\E^s \F^{(s+r)}(\l + s) \cong \bigoplus_{i \ge 0} \F^{(i+r)}(\l-i) \E^{(i)} \otimes V_i$$
for some vector spaces $V_i$. By Lemma \ref{th:abstractiso},  we have that 
$$\dim(V_0) = s! \binom{\l}{s} = \frac{(\l)!}{(\l+s)!} \ge  \dim(W_2).$$

So if $\F^{(r)}(\l) \otimes \pi W_2 \eta \oplus \U \rightarrow \E^s \F^{(s+r)}(\l+s)$ is not an isomorphism then
there must be some projection map 
$$p: \E^s \F^{(s+r)}(\l+s) \rightarrow \F^{(r)}(\l)\langle k\rangle$$
onto some summand (for some $k \in \Z$) so that the composition 
$$\F^{(r)}(\l) \xrightarrow{\eta} \E^s \F^s \F^{(r)}(\l) \xrightarrow{\alpha} \E^s \F^s \F^{(r)}(\l) \xrightarrow{\pi} \E^s \F^{(s+r)} \xrightarrow{p} \F^{(r)}(\l)\langle k\rangle$$
is infinitesimal for any $\alpha \in W_2$. To simplify notation we will omit the $k$ as our convention dictates. 

But then, since $\F^r(\l) \cong \F^{(r)}(\l) \otimes H^\star({\mathbb{F}}l_r)$, the composition 
$$\F^r(\l) \xrightarrow{\eta} \E^s \F^s \F^r(\l) \xrightarrow{\alpha} \E^s \F^s \F^r(\l) \xrightarrow{\pi} \E^s \F^{(s+r)} \xrightarrow{p} \F^{(r)}(\l)$$
must also be infinitesimal for any $\alpha \in W_2$. Now let's precompose with a map $\F^r(\l) \xrightarrow{\beta} \F^r(\l)$ where $\beta = x_{s+1}^{a_{s+1}} \dots x_{s+r}^{a_{s+r}}$ and $a_i < \l-s+i$. Since this map commutes with $\eta$, we have 
$$\F^r(\l) \xrightarrow{\eta} \E^s \F^s \F^r(\l) \xrightarrow{\beta \circ \alpha} \E^s \F^s \F^r(\l) \xrightarrow{\pi} \E^s \F^{(s+r)} \xrightarrow{p} \F^{(r)}(\l)$$
which is also infinitesimal for any $\alpha \in W_2$ and $\beta$ as above. Notice that the fact this composition is $\F^{(r)}$-infinitesimal means that it is a combination of maps $\F^{(r)} \rightarrow \F^{(r)}\la k \ra$ where $k > 0$. 

Finally, let's precompose all the terms in this composition sequence with $\E^r$ to get 
$$\E^r \F^r(\l) \xrightarrow{\eta} \E^r \E^s \F^s \F^r(\l) \xrightarrow{\beta \circ \alpha} \E^r \E^s \F^s \F^r(\l) \xrightarrow{\pi} \E^r \E^s \F^{(s+r)} \xrightarrow{p} \E^r \F^{(r)}(\l).$$
Now both the left-most and right-most terms are isomorphic to a direct sum of terms $\E^{(r)} \F^{(r)}$ and the composition is made up of maps $\E^{(r)}\F^{(r)} \rightarrow \E^{(r)} \F^{(r)} \la k \ra$ with $k > 0$. This is because, as noted in the previous paragraph, the original maps were of the form $\F^{(r)} \rightarrow \F^{(r)} \la k \ra$ with $k > 0$. Now let
$$p': \E^{(r)}\F^{(r)} \rightarrow \id$$
be a projection map onto the $\id$ summand of highest graded degree. Then for $k > 0$ any map $\E^{(r)} \F^{(r)} \la k \ra \xrightarrow{p'} \id$ is $\id$-infinitesimal. Thus the composition 
$$\E^{(r)} \F^{(r)} \rightarrow \E^{(r)} \F^{(r)} \la k \ra \xrightarrow{p'} \id$$
must be $\id$-infinitesimal. 

We conclude that the composition 
$$\id \xrightarrow{\eta} \E^r \F^r(\l) \xrightarrow{\eta} \E^r \E^s \F^s \F^r(\l) \xrightarrow{\beta \circ \alpha} \E^r \E^s \F^s \F^r(\l) \xrightarrow{\pi} \E^r \E^s \F^{(s+r)} \xrightarrow{p} \E^r \F^{(r)}(\l) \xrightarrow{p'} \id$$
must be infinitesimal for any $\alpha \in W_2$ and $\beta$ as above. But now we can apply Lemma \ref{lem:linearcomb} which says that the space $\Ext(\id, \E^r \F^{(r)}(\l))/\Inf$ is spanned by maps of the form $\pi \gamma \eta$ where $\gamma = x_1^{a_1} \cdots x_{s+r}^{a_{s+r}}$ with $a_i < -(\l-r) - (s+r-i) = \l+s+i$. Since $\beta \circ \alpha$ is an example of such a $\gamma$, the map above could not have been infinitesimal, which is a contradiction.
\end{proof}

Note that a priori it is not clear that the map $ \k[x_1, \dots, x_s] \rightarrow \Ext(\F^s, \F^s) $ is injective.  However, consider the subspace $ W_2' \subset \k[x_1,\dots, x_s] $ spanned by 
$ x_1^{a_1} \dots x_s^{a_s} $ with $ a_i < \l - s + i$.  
We have a map $W_2 \rightarrow \Ext(\F^{(r)}(\l), \E^s \F^{(s+r)}(\l-s))/\Inf$ which takes $f \in W_2$ to the composition
\begin{equation*}
\F^{(r)} \xrightarrow{\eta I} \E^s \F^s \F^{(r)} \xrightarrow{I f I} \E^s \F^s \F^{(r)} \xrightarrow{I \pi} \E^s \F^{(s+r)}.
\end{equation*}
We denote this map by $f\mapsto  [\pi f \eta]$.
Then the preceding proposition immediately implies the following corollary.

\begin{Corollary} \label{th:injf}
The map $W_2 \rightarrow \Ext(\F^{(r)}(\l), \E^s \F^{(s+r)}(\l-s))/\Inf$ given by $f \mapsto [\pi f \eta]$ is injective.
\end{Corollary}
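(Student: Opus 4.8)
The plan is to read off the Corollary from Proposition \ref{cor:secondiso1} by a pure dimension count; no new geometric or categorical input is needed. Throughout we keep the standing hypothesis $\l \ge 2r+s$ of Proposition \ref{cor:secondiso1}, and we regard $W_2$ as the abstract graded vector space with basis the monomials $x_1^{a_1}\cdots x_s^{a_s}$, $0 \le a_i < \l - s + i$, so that the linear map $f \mapsto [\pi f \eta]$ makes sense before injectivity is known.

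First I would record surjectivity. Proposition \ref{cor:secondiso1} says $\F^{(r)}(\l) \otimes \pi W_2 \eta \oplus \U \to \E^s\F^{(s+r)}(\l - s)$ is an isomorphism, and $\U$ contains no copies of $\F^{(r)}(\l)$ by Lemma \ref{th:notdirectsummand}, so this is an isomorphism onto the $\F^{(r)}(\l)$-part. By Lemma \ref{th:modinf}, the image of $W_2$ spans $\Ext(\F^{(r)}(\l), \E^s\F^{(s+r)}(\l-s))/\Inf$, i.e. $f \mapsto [\pi f \eta]$ is surjective, and moreover $\dim \bigl( \Ext(\F^{(r)}(\l), \E^s\F^{(s+r)}(\l-s))/\Inf \bigr) = \dim_{\F^{(r)}}(\E^s\F^{(s+r)})$.

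Next I would compare dimensions. The number of monomials $x_1^{a_1}\cdots x_s^{a_s}$ with $0 \le a_i < \l - s + i$ is $\prod_{i=1}^{s}(\l - s + i) = \l!/(\l-s)!$, so $\dim W_2 = \l!/(\l-s)!$. On the other side, writing $\E^s\F^{(s+r)}(\l-s) \cong \bigoplus_{i \ge 0} \F^{(i+r)}\E^{(i)} \otimes V_i$ as in the proof of Proposition \ref{cor:secondiso1}, the $\F^{(r)}(\l)$-part is $\F^{(r)}(\l) \otimes V_0$ (all other summands contribute no copies of $\F^{(r)}(\l)$, again by Lemma \ref{th:notdirectsummand}), and Lemma \ref{th:abstractiso} gives $\dim V_0 = s!\binom{\l}{s} = \l!/(\l - s)!$. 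Hence $f \mapsto [\pi f \eta]$ is a surjection between finite-dimensional vector spaces of equal dimension, so it is an isomorphism, in particular injective. I do not expect any genuine obstacle: all the real work lives in Proposition \ref{cor:secondiso1} (and, underneath it, Corollary \ref{lem:linearcomb} and Lemma \ref{th:linearcomblem}), and the Corollary is simply the observation that the spanning family of monomials we exhibited has exactly the right cardinality, so it cannot collapse.
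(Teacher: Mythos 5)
Your proof is correct and matches what the paper intends by ``the preceding proposition immediately implies'' the corollary: you unpack the dimension count ($\dim W_2 = \prod_{i=1}^s(\l-s+i) = \l!/(\l-s)! = s!\binom{\l}{s} = \dim V_0$) that makes the surjection from Proposition~\ref{cor:secondiso1} and Lemma~\ref{th:modinf} into an isomorphism.
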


\begin{Proposition} \label{th:secondiso}
Suppose $\l-2s-r \ge 0$. Then there is an isomorphism 
\begin{equation*}
\F^{(r)}(\l) \otimes V  \oplus \U \rightarrow \E^{(s)}\F^{(s+r)}(\l + s)
\end{equation*}
where $ V \subset \Ext(\F^{(r)}, \E^{(s)} \F^{(s+r)})$ is a linear subspace such that the following diagram commutes
\begin{equation*}
\begin{CD}
\F^{(r)} @>>V> \E^{(s)}\F^{(s+r)} @>>\iota> \E^s\F^{(s+r)} \\
@| @. @AA\pi A \\
\F^{(r)} @>\eta>> \E^s \F^s \F^{(r)} @>W>> \E^s\F^s\F^{(r)}.
\end{CD}
\end{equation*}
Here $ W \subset \Ext(\F^s, \F^s) $ is the linear subspace spanned by those symmetric functions in $ x_1, \dots, x_s $ with no power of $ x_i $ greater than $ \lambda+s$, and $\U$ is the  direct sum of the ``other'' terms in the RHS of Proposition \ref{th:abstractiso} (recall it is a direct sum of $ \F^{(r+s)}\E^{(a)}$ with $ a > 0 $).

In particular, for each element $ f \in W$,  the composition $ \pi f \eta $ factors as $ \iota g $ for some $ g \in V$.  

\end{Proposition}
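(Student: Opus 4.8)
The plan is to reduce this to the ``ordinary power'' results already established, namely Proposition \ref{cor:secondiso1} and Corollary \ref{th:injf}, by passing to the divided-power summand $\E^{(s)}\hookrightarrow\E^s$. First I would record the target and a count. By the quantum $\sl_2$ computation used in the proof of Lemma \ref{th:abstractiso}, the coefficient of $f^{(r)}$ in $e^{(s)}f^{(s+r)}$ applied to a weight vector is a quantum binomial coefficient, so $\dim_{\F^{(r)}}(\E^{(s)}\F^{(s+r)})$ is its value at $q=1$; one checks this also equals $\dim W$, since $W$ --- the symmetric polynomials in $x_1,\dots,x_s$ with no variable raised above $\l+s$ --- is spanned by the monomial symmetric functions $m_\mu$ indexed by partitions $\mu$ with at most $s$ parts each $\le\l+s$, matching the Schur basis of the Grassmannian cohomology in Lemma \ref{th:abstractiso}. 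Moreover the summands of $\U$ all carry a nontrivial $\E^{(a)}$ with $a>0$, so $\U(A)=0$ for highest weight $A$, and by Lemma \ref{th:notdirectsummand} none of them contains a copy of $\F^{(r)}$.

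The heart of the argument is to show that for symmetric $f\in\k[x_1,\dots,x_s]$ the composite $\pi f\eta\colon\F^{(r)}\to\E^s\F^{(s+r)}$ factors through $\iota\colon\E^{(s)}\F^{(s+r)}\to\E^s\F^{(s+r)}$; since $\iota$ is a split monomorphism the factoring map $g_f$ is unique, and we put $V:=\Span\{g_f:f\in W\}$, which makes the displayed square commute by construction. I would argue in two moves. First, just after the unit $\eta\colon\id\to\E^s\F^s$ a dot on an $\F$-strand equals the dot on the matching $\E$-strand, so $\pi f\eta$ can be rewritten as $f(x^\E)\circ(\pi\eta)$, where $\pi\eta\colon\F^{(r)}\to\E^s\F^{(s+r)}$ and $f(x^\E)$ acts on the $\E^s$-legs. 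Second, by a degree count in the spirit of Lemma \ref{lem:pieta}, the bare map $\pi\eta$ already factors through $\iota$ --- up to a nonzero scalar it is $\iota$ composed with the divided-power unit $\F^{(r)}\to\E^{(s)}\F^{(s)}\F^{(r)}\to\E^{(s)}\F^{(s+r)}$ --- while for symmetric $f$ the endomorphism $f(x^\E)$ preserves the summand $\E^{(s)}$ of $\E^s$ (symmetric functions in the $x_i$ commute with the nil-affine-Hecke idempotent cutting out $\E^{(s)}$). Composing, $\pi f\eta=\iota\circ g_f$ with $g_f=f|_{\E^{(s)}}\circ(\text{divided-power unit})$.

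With $V$ in hand, I would conclude by Lemma \ref{th:modinf} applied to $Q=\F^{(r)}$: it suffices to know that $\dim V=\dim_{\F^{(r)}}(\E^{(s)}\F^{(s+r)})$ and that no nonzero element of $V$ is $\F^{(r)}$-infinitesimal, after which the lemma gives that $\F^{(r)}\otimes V$ maps isomorphically onto the $\F^{(r)}$-part of $\E^{(s)}\F^{(s+r)}$; since $\U$ carries no $\F^{(r)}$-summand, the cancellation laws of section \ref{sec:infmaps} identify the displayed complement with $\U$. Both inputs follow from the injectivity of $f\mapsto[\pi f\eta]=[\iota g_f]\in\Ext(\F^{(r)},\E^s\F^{(s+r)})/\Inf$ restricted to $W$: an infinitesimal $g_f$ would make $\pi f\eta$ infinitesimal (infinitesimal maps form a two-sided ideal), forcing $f=0$; and injectivity gives $\dim V\ge\dim W$, while $\dim V\le\dim W$ is automatic, so $\dim V=\dim W$, which by the first paragraph equals $\dim_{\F^{(r)}}(\E^{(s)}\F^{(s+r)})$. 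This injectivity on $W$ is the symmetric-subspace analogue of Corollary \ref{th:injf} (which treats the monomial set $W_2$); I would deduce it by combining Corollary \ref{th:injf} and Proposition \ref{cor:secondiso1} --- which identify $\Ext(\F^{(r)},\E^s\F^{(s+r)})/\Inf$ with $[\pi W_2\eta]$ --- with the reduction of arbitrary monomials to $W_2$ modulo $\Inf$ from Corollary \ref{lem:linearcomb}, checking that $W$ meets the relevant kernel only in $0$.

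The step I expect to be the main obstacle is the factoring in the second paragraph: in particular getting the grading exactly right, so that $\pi\eta$ lands in the \emph{top} graded copy of $\E^{(s)}$ inside $\E^s$ and not merely modulo infinitesimal maps (the delicate degree bookkeeping is of precisely the type carried out in Lemma \ref{lem:pieta}), and showing that symmetric --- and essentially only symmetric --- dot-polynomials descend to the divided-power strand. Establishing the injectivity of $f\mapsto[\pi f\eta]$ on all of $W$, beyond the monomials $W_2$ handled by Corollary \ref{th:injf}, is the other place where genuine work is required.
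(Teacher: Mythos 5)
Your plan is structurally sound and it gets the overall logic right --- factor $\pi f\eta$ through $\iota$, then invoke Lemma \ref{th:modinf} after checking the dimension count and the absence of infinitesimals --- but the central factoring step is handled by a genuinely different mechanism than the paper's, and as you yourself flag, it is left as a gap.

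You propose to realize the factoring by sliding the dot-polynomial $f$ from the $\F^s$-strands to the $\E^s$-strands through the unit $\eta\colon\id\to\E^s\F^s$, and then argue that a symmetric polynomial in the $\E$-dots preserves the divided-power summand $\E^{(s)}\subset\E^s$. Neither of these is free in the framework of the paper. There is no axiom defining an action of $X$ on $\E$-strands, and the compatibility of the adjoint dot $\check{X}$ with \emph{this particular} unit (as opposed to the other one) is precisely the sort of cyclicity statement one normally has to prove or assume; and ``symmetric dot-polynomials descend to the divided-power $\E$-strand'' is nothing but the $\E$-version of Lemma \ref{th:zeroTi}, which you would need to re-establish (it would follow by adjunction from the $\F$-version, but the argument has to be made). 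The paper sidesteps both of these: it applies Lemma \ref{th:zeroTi} directly on the $\F$-side to obtain a descended map $h\colon\F^{(s)}\F^{(r)}\to\F^{(s)}\F^{(r)}$ with $\pi f=\pi h\pi$, transports this through $\E^s(-)\circ\eta$ by adjunction, and then replaces $\pi\eta$ by $\iota\eta$ via Lemma \ref{lem:pieta}; since $h$ touches only the $\F^{(s)}$ slot it commutes past the remaining $\iota$, and the factoring through $\E^{(s)}\F^{(s+r)}$ drops out with no dot-slides or $\E$-dots at all. That is the cleaner path, and it is the step where your sketch would need the most work to close.

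On the dimension/injectivity side your instinct is actually more careful than the paper's exposition. The paper asserts $W\subset W_2$ and cites Corollary \ref{th:injf}, but the symmetric polynomials in $W$ allow exponents up to $\l+s$ while $W_2$ bounds the $i$-th exponent by $\l-s+i$, so the inclusion cannot be one of polynomial spaces on the nose; the real content is an identification of images modulo $\Inf$, exactly what Corollary \ref{lem:linearcomb} supplies, which is what you propose to use. You should still check that the resulting class $[\pi W\eta]$ has dimension $\dim W$ (equivalently, that $W$ meets the kernel of $f\mapsto[\pi f\eta]$ only in $0$), but your plan identifies the right inputs. The representation-theoretic count $\dim_{\F^{(r)}}(\E^{(s)}\F^{(s+r)})=\binom{\l}{s}=\dim W$ matches the paper. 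So: correct overall architecture, correct diagnosis of where the effort lies, but the factoring route you chose is heavier than the paper's and remains an open step in your write-up.
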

\begin{proof}
Suppose $f \in \Ext(\F^s \F^{(r)}, \F^{s}\F^{(r)})$ is a symmetric function in $x_1, \dots, x_s$. By Lemma \ref{th:zeroTi}, the composition 
$$\F^s \F^{(r)} \xrightarrow{f} \F^s \F^{(r)} \xrightarrow{\pi} \F^{(s+r)}$$
factors as 
$$\F^s \F^{(r)} \xrightarrow{\pi} \F^{(s)} \F^{(r)} \xrightarrow{h} \F^{(s)} \F^{(r)} \xrightarrow{\pi} \F^{(s+r)}$$
for some $g$. By adjunction (or equivalently, by applying $\E^s$ and precomposing with $\eta: \id \rightarrow \E^s \F^s$) this means that the map
$$\F^{(r)} \xrightarrow{\eta} \E^s \F^s \F^{(r)} \xrightarrow{f} \E^s \F^s \F^{(r)} \xrightarrow{\pi} \E^s \F^{(s+r)}$$
factors as 
$$\F^{(r)} \xrightarrow{\eta} \E^s \F^s \F^{(r)} \xrightarrow{\pi} \E^s \F^{(s)} \F^{(r)} \xrightarrow{h} \E^s \F^{(s)} \F^{(r)} \xrightarrow{\pi} \E^s \F^{(s+r)}.$$
Now, by Lemma \ref{lem:pieta} this is equal to 
$$\F^{(r)} \xrightarrow{\eta} \E^{(s)} \F^{(s)} \F^{(r)} \xrightarrow{\iota} \E^s \F^{(s)} \F^{(r)} \xrightarrow{h} \E^s \F^{(s)} \F^{(r)} \xrightarrow{\pi} \E^s \F^{(s+r)}.$$
Since $h$ acts only on the $\F^{(s)}$ term this means $h$ and $\iota$ commute so we get
$$\F^{(r)} \xrightarrow{\pi \circ h\circ \eta} \E^{(s)} \F^{(s+r)} \xrightarrow{\iota} \E^s \F^{(s+r)}.$$
Hence each element $ \pi f \eta $ factors as $ \iota g $ for $ g = \pi h \eta $.

It remains to show that the vector space $V$ of such maps induces an isomorphism (modulo $\U$). First notice that by Corollary \ref{th:injf} the only $\F^{(r)}$-infinitesimal map in $\pi W \eta$ is zero. Hence the only $\F^{(r)}$-infinitesimal map in $V$ is zero. It remains to show that we have enough maps in $V$ to give us an isomorphism. To do this we evaluate dimensions. 

On the one hand we have
$$\dim(V) = \dim(\pi W \eta) = \dim(W) = \binom{\l}{s}$$
since, from Corollary \ref{th:injf}, $\dim(\pi W_2 \eta) = \dim(W_2) $ and $W \subset W_2$. 

On the other hand, using the representation theory of $ \sl_2 $, we have
$$\dim_{\F^{(r)}}(\E^{(s)}\F^{(s+r)}(\l+s)) = \dim H^*(\bG(s,-(\l-r)-(s+r)+s)) = \dim H^*(\bG(s,\l)) = \binom{\l}{s}.$$
Thus $V$ induces an isomorphism.
\end{proof}

\begin{Lemma} \label{th:zeroTi}
Let $ f \in \Ext(\F^s, \F^s)$ be a symmetric polynomial in $ x_1, \dots, x_s $.  Then there exists a commutative diagram
\begin{equation*}
\begin{CD}
\F^s @>>f> \F^s \\
@VV\pi V @VV\pi V \\
\F^{(s)} @>h>> \F^{(s)}.
\end{CD}
\end{equation*}
\end{Lemma}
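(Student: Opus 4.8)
The plan is to deduce the factorization from three ingredients already in place: that a symmetric polynomial in the $x_i$ is central in the nil affine Hecke action on $\F^s$, that $\iota\circ\pi$ agrees with $t_{w_0}$ up to a nonzero scalar, and that $\iota$ exhibits $\F^{(s)}$ as a direct summand of $\F^s$. Granting these, the argument is a two-line diagram chase: choose a retraction $p$ of $\iota$, so that $p\circ\iota=\id_{\F^{(s)}}$ (after the evident shift); then
\[
\pi\circ f \;=\; p\circ(\iota\circ\pi)\circ f \;=\; p\circ f\circ(\iota\circ\pi) \;=\; (p\circ f\circ\iota)\circ\pi ,
\]
so the square commutes with $h:=p\circ f\circ\iota\colon\F^{(s)}\to\F^{(s)}$ (read with the grading shift that makes the square commute, which we suppress as usual). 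Note that the internal structure of $\pi$ never enters.

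To supply the ingredients: since $f$ is symmetric in $x_1,\dots,x_s$ it is in particular symmetric in each adjacent pair $x_i,x_{i+1}$, so Lemma \ref{th:sym2}(i) gives $t_i f=f t_i$ for every $i$; writing $t_{w_0}$ as a product of the $t_i$ and sliding $f$ across yields $t_{w_0}\circ f=f\circ t_{w_0}$, hence $(\iota\circ\pi)\circ f=f\circ(\iota\circ\pi)$ by the compatibility lemma identifying $\iota\circ\pi$ with a nonzero multiple of $t_{w_0}$. For the splitting of $\iota$, iterate the composition axiom for the $\F$'s to obtain $\F^s\cong\F^{(s)}\otimes_\k H^\star({\bF}l(s))$; the top graded piece of $H^\star({\bF}l(s))$, in degree $s(s-1)/2$, is one-dimensional, and since $\Hom^i(\F^{(s)},\F^{(s)})=0$ for $i<0$ and $\End(\F^{(s)})=\k\cdot\id$, the space $\Hom(\F^{(s)}\la s(s-1)/2\ra,\F^s)$ is one-dimensional, spanned by the inclusion of that summand. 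Thus $\iota$ is a nonzero multiple of this inclusion (nonzero because $\iota\circ\pi$ is a projection followed by an inclusion, hence nonzero), so it admits a retraction $p$ with $p\circ\iota=\id$ after rescaling.

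The only delicate point is this identification of $\iota$ with the inclusion of the multiplicity-one top summand $\F^{(s)}\la s(s-1)/2\ra$ of $\F^s$; this is exactly where the hypotheses $\Hom^i(\F^{(s)},\F^{(s)})=0$ for $i<0$ and $\End(\F^{(s)})=\k\cdot\id$ are used, and everything else is formal. (When $s\le 1$ the statement is vacuous, since $\F^s=\F^{(s)}$ and $\pi$ is invertible, so one may take $h=f$.)
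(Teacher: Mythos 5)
Your proof is correct, and it takes a genuinely slicker route than the paper's. The paper's argument invokes Proposition \ref{th:firstiso} to see that the compositions $x_1^{a_1}\cdots x_s^{a_s}\circ\iota$ with $\sum a_i < s(s-1)/2$ exhaust all but the top summand of $\F^{(s)}$ in $\F^s$, then reduces the claim to $\pi f x^a \iota = 0$, rewrites this as $t_{w_0}fx^a t_{w_0}=0$, and uses the nil affine Hecke relations to slide $x^a$ past the right-hand $t_{w_0}$ before finally applying centrality of $f$ together with $t_{w_0}t_i=0$. Your argument bypasses all of that monomial bookkeeping. It isolates exactly the three facts needed --- the retraction $p$ of $\iota$, the identification $\iota\circ\pi = c\,t_{w_0}$ from the unnumbered compatibility lemma, and the centrality $t_{w_0}f = ft_{w_0}$ from Lemma \ref{th:sym2}(i) --- and then the chain $\pi f = (p\iota)\pi f = p(\iota\pi)f = pf(\iota\pi) = (pf\iota)\pi$ exhibits $h=pf\iota$ outright. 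What the paper's proof buys, implicitly, is a more concrete description of the kernel of $\pi$ in terms of the monomial basis, which is useful elsewhere (e.g.\ in Proposition \ref{th:ThetaC}); what your argument buys is brevity and transparency, since it never looks inside $\ker\pi$ at all. One small remark: you derive the retraction $p$ by a Hom-space dimension count plus nonvanishing of $\iota$, but this can be short-circuited: the composition axiom already exhibits $\iota$ (the $i=0$ term of $\oplus_i(X^i\id)\circ\iota\la-2i\ra$) as a summand of an isomorphism, hence as the inclusion of a direct summand, and composing such inclusions shows the iterated $\iota:\F^{(s)}\to\F^s$ admits a retraction without any counting.
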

\begin{proof}
By Proposition \ref{th:firstiso}, if we consider compositions of the form 
$$\F^{(s)} \xrightarrow{\iota} \F^s \xrightarrow{x_1^{a_1} \cdots x_s^{a_s}} \F^s$$
then we pick out every direct summand of $\F^{(s)}$ inside $\F^s$. In fact, if we restrict to $\sum_i a_i < s(s-1)/2$ then we pick out all the copies except the ``top'' one (i.e. the one in degree $s(s-1)/2$). Since $\pi$ is the projection from this top direct summand we see that it suffices to prove that 
$$\pi f x_1^{a_1} \cdots x_s^{a_s} \iota = 0$$
whenever $\sum_i a_i < s(s-1)/2$. 

This is equivalent to showing that 
$$t_{w_0} f x_1^{a_1} \cdots x_s^{a_s} t_{w_0} = 0$$
where $w_0 \in S_s$ is the longest word which has length $s(s-1)/2$. On the other hand, if we slide the $x$s past the right hand $t_{w_0}$ using nil affine Hecke relation (iii) then we find that 
$$ x_1^{a_1} \cdots x_s^{a_s} t_{w_0} = \sum t_w x_1^{e_1} \cdot x_s^{e_s}$$
over some finite sum where $w \in S_s$. Since $\sum_i a_i < s(s-1)/2$ we conclude that $t_w \ne 1$. Thus it suffices to show that 
$$t_{w_0} f t_i = 0$$
for all $i=1, \dots, s$. 

Now if $ f $ is symmetric in the two variables $ x_i, x_{i+1} $, then $ ft_i = t_if $ by Lemma \ref{th:sym2}. So if $f$ is symmetric in all variables then $f t_i = t_i f$.  The result now follows since $t_{w_0} t_i = 0$ for all $i$. 
\end{proof}

\subsection{The terms in the complex}

Now we are in a position to examine carefully the terms in the complex $ \Theta_* \F^{(p)}(\lambda +p)$.  The basic ideas of this section were suggested to us by Joe Chuang.

Fix $ p $ such that $ \lambda -2p \ge N $. 

For each $ s = 0, \dots, p$, let us define a map 
\begin{equation*} 
\phi_s : \k[ x_1, \dots, x_p] \rightarrow \Ext(\F^{(\lambda+p)}(p), \F^{(\lambda+s)}(s)\E^s(\l + s)\F^{(p)}(\lambda +p) )
\end{equation*}
as the compostion $ \phi_s(f) = \pi \circ f \circ \eta  \circ \iota $ where we regard $ f $ as an element of $ \End(\F^{(\l+s)}\E^s \F^p) $,
\begin{equation*}
\F^{(\lambda + p)} \xrightarrow{\iota} \F^{(\lambda + s)} \F^{p-s} \xrightarrow{\eta} \F^{(\lambda +s)} \E^s \F^s \F^{p-s} \xrightarrow{f} \F^{(\lambda +s)} \E^s \F^s \F^{p-s} \xrightarrow{\pi} \F^{(\lambda +s)} \E^s \F^{(p)}
\end{equation*} 
If $ f= f_1 f_2 $, where $ f_1 \in \k[ x_1, \dots, x_s] $ and $ f_2 \in \k[x_{s+1}, \dots, x_p]$, then we can rewrite $ \phi_s(f) $ as a longer composition
\begin{equation} \label{eq:longcomp}
\begin{gathered}
\F^{(\l+p)} \xrightarrow{\iota} \F^{(\l+s)} \F^{p - s} \xrightarrow{f_2} \F^{(\l+s)} \F^{p - s} \xrightarrow{\pi} \F^{(\l+s)} \F^{(p - s)} \\ 
\xrightarrow{\eta} \F^{(\l+s)} \E^s \F^s \F^{(p- s)} \xrightarrow{f_1} \F^{(\l+s)} \E^s \F^s \F^{(p - s)} \xrightarrow{\pi} \F^{(\l+s)} \E^s \F^{(p)} 
\end{gathered}
\end{equation}
We will write this composition as $ \phi_s(f) = \phi_s^2(f_2) \phi_s^1(f_1) $, where $ \phi_s^1, \phi_s^2 $ denote the first and second lines of (\ref{eq:longcomp}) respectively.

Now, for each $ s$, let $ W_s $ be the subspace of $ \k[ x_1, \dots, x_p] $ which is symmetric functions in $ x_1, \dots, x_s $, each power of $x_i $ of degree less than or equal $ \lambda+p $, tensor with the span of monomials $ x_{s+1}^{t_{s+1}} \cdots x_p^{t_p} $ with $ 0 \le t_{s+1} < \dots < t_p < \lambda + p $.

\begin{Theorem} \label{th:vertiso}
There is an isomorphism $ \F^{(\lambda+p)} \otimes V_s \oplus \U \rightarrow \F^{(\lambda+s)}\E^{(s)}\F^{(p)}\langle- s\rangle=\Theta_s\F^{(p)} $  such that the diagram commutes
\begin{equation*}
\begin{CD}
\F^{(\lambda +p)}  @>V_s>>  \F^{(\lambda+s)}\E^{(s)}\F^{(p)}\langle -s\rangle \\
@|  @VV\iota V \\
\F^{(\lambda +p)} @>>\phi_s(W_s)> \F^{(\lambda+s)}\E^s\F^{(p)} \\
\end{CD}
\end{equation*}
Moreover, if $ f \in W_s $ and $ g \in V_s $ correspond to each other (i.e. $ \phi_s(f) = \iota g $), then $ \deg(g) = \deg(f) - p(\l+p-1) $. 
\end{Theorem}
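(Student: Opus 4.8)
The plan is to recognize $\phi_s$ as the composite of the two decompositions already in hand — Proposition~\ref{th:firstiso}, which handles the ``outer'' $p-s$ strands, and Proposition~\ref{th:secondiso}, which handles the ``inner'' $s$ strands — and then assemble the resulting isomorphisms. For $f=f_1f_2$ with $f_1\in\k[x_1,\dots,x_s]$ symmetric and $f_2\in\k[x_{s+1},\dots,x_p]$, I would use the factorization of $\phi_s(f)$ exhibited in (\ref{eq:longcomp}) into an outer step carrying $\F^{(\lambda+p)}$ into $\F^{(\lambda+s)}\F^{(p-s)}$ (apply $\iota$, then $f_2$, then $\pi$) followed by an inner step carrying $\F^{(\lambda+s)}\F^{(p-s)}$ into $\F^{(\lambda+s)}\E^s\F^{(p)}$ (apply $\eta$, then $f_1$, then $\pi$). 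After relabelling $x_{s+1},\dots,x_p$ as $x_1,\dots,x_{p-s}$, the outer step is precisely the map $\pi V\iota$ of Proposition~\ref{th:firstiso} with that proposition's parameters equal to $(\lambda+s,\,p-s)$, so as $f_2$ runs over the monomials $x_{s+1}^{t_{s+1}}\cdots x_p^{t_p}$ with $0\le t_{s+1}<\cdots<t_p<\lambda+p$ it induces an honest isomorphism from $\F^{(\lambda+p)}$, tensored with the span of these monomials, onto $\F^{(\lambda+s)}\F^{(p-s)}$. The inner step is $\F^{(\lambda+s)}\circ(-)$ applied to the composition of Proposition~\ref{th:secondiso} (for suitable parameters there, with $r=p-s$); by that proposition, provided $f_1$ lies in the symmetric subspace $W$ appearing there, this composition factors as $\iota\circ g$, and as $f_1$ ranges over $W$ the maps $g$ give an isomorphism $\F^{(p-s)}\otimes V'\oplus\U'\xrightarrow{\ \sim\ }\E^{(s)}\F^{(p)}$. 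By construction $W_s$ is exactly the product of this $W$ (in $x_1,\dots,x_s$) with the monomial basis above (in $x_{s+1},\dots,x_p$).

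Combining the two steps, I would conclude that $\phi_s(W_s)$ factors through $\iota\colon\F^{(\lambda+s)}\E^{(s)}\F^{(p)}\langle-s\rangle\to\F^{(\lambda+s)}\E^s\F^{(p)}$ — this is what defines $V_s$ — and that
$$\F^{(\lambda+s)}\E^{(s)}\F^{(p)}\;\cong\;\F^{(\lambda+s)}\F^{(p-s)}\otimes V'\ \oplus\ \F^{(\lambda+s)}\U'\;\cong\;\F^{(\lambda+p)}\otimes\bigl(H^\star(\bG(p-s,\lambda+p))\otimes V'\bigr)\ \oplus\ \F^{(\lambda+s)}\U'.$$
By the composition law for the $\F$'s together with Lemma~\ref{th:notdirectsummand}, the term $\F^{(\lambda+s)}\U'$ — a direct sum of shifts of $\F^{(\lambda+p+a)}\E^{(a)}$ with $a>0$ — contains no copies of $\F^{(\lambda+p)}$; hence the $\F^{(\lambda+p)}$-part of $\Theta_s\F^{(p)}=\F^{(\lambda+s)}\E^{(s)}\F^{(p)}\langle-s\rangle$ is exactly $\F^{(\lambda+p)}\otimes\bigl(H^\star(\bG(p-s,\lambda+p))\otimes V'\bigr)$ up to the shift $\langle-s\rangle$. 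That $V_s$ realizes this part then follows from Lemma~\ref{th:modinf}: the graded dimension of $W_s$ matches that of the $\F^{(\lambda+p)}$-part — combine the dimension counts in the proofs of Propositions~\ref{th:firstiso} and~\ref{th:secondiso} with Corollary~\ref{th:abstractiso2} — and no nonzero element of $W_s$ maps to an $\F^{(\lambda+p)}$-infinitesimal map, the $f_1$-factor being controlled by Corollary~\ref{th:injf}, the $f_2$-factor by the honest isomorphism of Proposition~\ref{th:firstiso}, and the two contributions being independent since $f_1$ and $f_2$ involve disjoint variables.

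Finally, the degree identity $\deg(g)=\deg(f)-p(\lambda+p-1)$ is a bookkeeping computation: starting from $\deg(f)$ one adds the grading shifts carried by $\iota$, $\eta$, and $\pi$ along (\ref{eq:longcomp}) and subtracts the shift $\langle-s\rangle$ in $\Theta_s$ together with the shift carried by $\iota\colon\F^{(\lambda+s)}\E^{(s)}\F^{(p)}\langle-s\rangle\to\F^{(\lambda+s)}\E^s\F^{(p)}$; the $s$-dependent contributions cancel and leave the stated value. The step I expect to be the real obstacle is the combination in the second paragraph: one must verify that composing the genuine isomorphism of Proposition~\ref{th:firstiso} with the $\F^{(\lambda+s)}$-extended maps of Proposition~\ref{th:secondiso} yields an isomorphism \emph{onto} the $\F^{(\lambda+p)}$-part, not merely a map of the correct graded dimension — this is where the independence of the two variable sets and the careful use of Lemma~\ref{th:modinf} and Corollary~\ref{th:injf} must be exploited. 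The degree count, though tedious, is conceptually routine.
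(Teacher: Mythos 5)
Your proposal follows the paper's proof: combine Propositions \ref{th:firstiso} and \ref{th:secondiso} via the factorization (\ref{eq:longcomp}) (with Proposition \ref{th:secondiso} decomposing $\E^{(s)}\F^{(p)}$ into copies of $\F^{(p-s)}$ modulo $\U'$, Proposition \ref{th:firstiso} decomposing $\F^{(\lambda+s)}\F^{(p-s)}$ into copies of $\F^{(\lambda+p)}$), then track the degree shifts. The final concern you raise is a non-issue: since Proposition \ref{th:firstiso} already gives a genuine isomorphism and applying $\F^{(\lambda+s)}$ to the isomorphism of Proposition \ref{th:secondiso} is again an isomorphism, the composite is automatically an isomorphism onto the $\F^{(\lambda+p)}$-part once one observes (as you do) via Lemma \ref{th:notdirectsummand} that $\F^{(\lambda+s)}\U'$ contributes no $\F^{(\lambda+p)}$ summands, so the separate dimension-plus-infinitesimal check via Lemma \ref{th:modinf} and Corollary \ref{th:injf} is unnecessary.
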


\begin{proof}
This follows by combining Propositions \ref{th:firstiso} and \ref{th:secondiso} and the composition (\ref{eq:longcomp}).  Proposition \ref{th:secondiso} gives the decomposition of $ \E^{(s)}\F^{(p)} $ into $ \F^{(p - s)} $.  Then Proposition \ref{th:firstiso} gives the decomposition of $ \F^{(\lambda+s)} \F^{(p - s)} $ into $ \F^{(\lambda+p)}$. The composition (\ref{eq:longcomp}) shows that the two decompositions can be melded into the map $ \phi_s $.

For the computation of degrees, note that  
\begin{eqnarray*}
\deg(\phi_s(f)) 
&=& \deg(\pi) + \deg(f) + \deg(\eta) + \deg(\iota) \\
&=& -p(p-1)/2 + \deg(f) - s(\l + s) -(p-s)(2\l + p + s -1)/2 \\
&=& \deg(f) -p(\l+p-1) - s(s+1)/2.
\end{eqnarray*}
This also equals $\deg(g) + \deg(\iota \la -s \ra) = \deg(g) - s(s-1)/2 - s$ from which we get $\deg(g) = \deg(f) - p(\l+p-1)$.
\end{proof}

\subsection{The differential in the complex}

\begin{Proposition} \label{th:diff}
Let $ f \in W_s $.  Then the following diagram commutes
\begin{equation*}
\begin{CD}
\Theta_s \F^{(p)} @>>d_s> \Theta_{s-1} \F^{(p)}\\
@VV\iota V @VV\iota V \\
\F^{(\lambda+s)}\E^s \F^{(p)}\langle -s\rangle @>>\eta \circ \varepsilon>  \F^{(\lambda+s-1)}\E^{s-1} \F^{(p)}\langle -(s-1)\rangle \\
@AA\phi_s(f)A @AA\phi_{s-1}(f)A \\
\F^{(\lambda+p)} @= \F^{(\lambda+p)} 
\end{CD}
\end{equation*}
\end{Proposition}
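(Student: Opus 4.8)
The plan is to observe that the displayed diagram decomposes into two squares stacked vertically — the upper one relating the differential $d_s$ to the middle-row map $\eta\varepsilon$ through the conversions $\iota\colon\E^{(s)}\to\E^s$, and the lower one asserting that $\eta\varepsilon$ intertwines $\phi_s(f)$ with $\phi_{s-1}(f)$ — and to verify each separately.

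For the upper square we would unwind the definition of $d_s$ from Section~\ref{sec:complex}. Both composites around this square split off a single $\E$-strand, split off a single $\F$-strand from $\F^{(\l+s)}$, and contract the resulting adjacent $\F\E$ pair by $\varepsilon$; the only difference is whether the remaining $\E^{(s-1)}$ is expanded to $\E^{s-1}$ before or after this step. Going down and then across we first expand $\E^{(s)}\to\E^s=\E\,\E^{s-1}$ and then split $\F^{(\l+s)}$ and apply $\varepsilon$; going across and then down we split $\E^{(s)}\to\E\,\E^{(s-1)}$, split $\F^{(\l+s)}$, apply $\varepsilon$, and only then expand $\E^{(s-1)}\to\E^{s-1}$. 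These agree because the iterated morphism $\iota\colon\E^{(s)}\to\E^s$ factors through $\E\,\E^{(s-1)}$ as $(\E\otimes\iota)\circ\iota$ — this is exactly the coherence built into the definition of the composite $\iota$ — while the $\F^{(\l+s)}$-splitting and the $\varepsilon$-contraction act on tensor factors disjoint from the one being expanded, so they commute with it by the interchange law. The grading shifts match by a routine bookkeeping of the shifts carried by $\iota$ and $\varepsilon$.

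The lower square is the substantive part. Here we would use the two-step factorization of $\phi_s$ recorded in~(\ref{eq:longcomp}): writing $f=f_1f_2$ with $f_1$ symmetric in $x_1,\dots,x_s$ and $f_2\in\k[x_{s+1},\dots,x_p]$, one has $\phi_s(f)=\phi_s^2(f_2)\circ\phi_s^1(f_1)$. The key observation is that the $\E$-strand which $\eta\varepsilon$ contracts — the leading factor of $\E^s$ — is precisely the outermost strand produced by the unit $\eta\colon\id\to\E^s\F^s$ appearing inside $\phi_s^1(f_1)$, so that postcomposition with $\eta\varepsilon$ sets up a zig-zag for the adjunction $\E=\F_R$: the $\F$-strand split off $\F^{(\l+s)}$, the leading $\E$, and the $\F$-strand it is cupped with form a snake which straightens out. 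Carrying out this straightening lowers the unit to $\eta\colon\id\to\E^{s-1}\F^{s-1}$, pushes the $\F$-strand split off $\F^{(\l+s)}$ down into the $\F^{(p-s)}$ block, and — after re-merging by the coherence of the iterated $\pi$ together with Lemma~\ref{lem:pieta} — converts $\phi_s^1$ into $\phi_{s-1}^1$ and the outer part $\phi_s^2$ into $\phi_{s-1}^2$. At this stage the symmetry of $f_1$ in $x_1,\dots,x_s$ is what is needed to see that the variable on the contracted strand can be eliminated so as to return exactly $f$ (its symmetric part now read as a function of $s-1$ variables, the remaining factor re-expanded onto the enlarged $\F^{(p-s+1)}$ block), using the nil affine Hecke relations together with the fact, from the definition of a strong categorical $\sl_2$ action, that the maps $(X^j\,\id)\circ\eta$ assemble into an isomorphism onto the $\id$-summand of $\E\F$. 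The degrees on the two sides match by the computation $\deg\phi_s(f)=\deg f-p(\l+p-1)-s(s+1)/2$ carried out in the proof of Theorem~\ref{th:vertiso}.

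The main obstacle is this bookkeeping in the lower square: tracking precisely how the symmetric function $f_1$ and the auxiliary $X$'s transform under the $\varepsilon$-contraction so as to land on $\phi_{s-1}(f)$ on the nose (equivalently, up to the nonzero scalars implicit throughout this paper), and checking that the re-merging of $\F^{(\l+s)}=\F^{(\l+s-1)}\F$ into $\F^{(p)}$ is compatible with the corresponding step in $\phi_{s-1}$. Once the zig-zag is in place the remaining manipulations are applications of the nil affine Hecke relations and the coherence identities for composite $\iota$ and $\pi$, which are routine but must be invoked with care.
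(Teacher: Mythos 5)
Your handling of the upper square is fine — it matches the paper's remark that it ``obviously commutes,'' via the coherence of the composite $\iota$ and the interchange law.

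For the lower square you correctly identify the crucial ingredient: the adjunction zig-zag, i.e.\ the triangle identity $\varepsilon\circ\eta=\id_{\F}$, applied to the snake formed by the $\F$-strand split off $\F^{(\l+s)}$, the leading $\E$, and the $\F$ it is cupped with. That is exactly the paper's ``key point.'' However, what you then claim is \emph{not} needed: you say ``the symmetry of $f_1$ in $x_1,\dots,x_s$ is what is needed to see that the variable on the contracted strand can be eliminated so as to return exactly $f$ \dots using the nil affine Hecke relations.'' This is a misconception. The contracted strands (the outer $\F$ from $\iota:\F^{(\l+s)}\to\F^{(\l+s-1)}\F$ and the outermost $\E$ of $\E^s$) are disjoint from the $\F^s\F^{p-s}$ strands on which $f$ acts. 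By the interchange law $\varepsilon$ commutes freely past $f$ and past the inner $\eta$; after the snake straightens, the $p$ strands carrying the $x_i$ are, positionally, exactly the $p$ strands $\F^{s-1}\F^{p-s+1}$ in $\phi_{s-1}(f)$, and $f$ acts identically since the $x_i$ are indexed by position. No symmetry of $f_1$ is used, no nil affine Hecke manipulation is used, and there is nothing for the isomorphism $\id\otimes V\eta\oplus\U\to\E\F$ to do here. Your introduction of the two-step factorization $\phi_s=\phi_s^2\phi_s^1$ is also not needed for this proposition — that factorization and the symmetry of $f_1$ are genuinely used later, in the proof of Proposition~\ref{th:ThetaC}, where one must match the differential against the modified Koszul differential, not in the proof of Proposition~\ref{th:diff} itself.

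So the main obstacle you identify at the end (``tracking precisely how the symmetric function $f_1$ and the auxiliary $X$'s transform under the $\varepsilon$-contraction'') is a non-issue: there is no such transformation. If you carried out your plan you would be fighting a complication that is not there, and the vague appeal to nil affine Hecke relations would not resolve into a concrete argument. The actual proof is just the large interchange/naturality diagram in the paper, whose only nontrivial content is the single zig-zag you already spotted.
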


\begin{proof}
As usual, to simplify notation we will drop all shifts. The top square above obviously commutes. So we must prove the commutativity of the bottom square.

The key point is that because $ (\eta, \varepsilon) $ are the unit and co-unit of adjunction the composition $\F \xrightarrow{\eta} \F \E \F \xrightarrow{\varepsilon} \F$ is the identity. In the large diagram below, all the squares obviously commute except for the larger rectangle on the right. Going around one side of the rectangle we have the composition of maps
$$\F \xrightarrow{\eta} \F \E \F \xrightarrow{\eta} \F \E (\E^{s-1} \F^{s-1}) \F \xrightarrow{f} (\F \E) \E^{s-1} \F^{s-1} \F \xrightarrow{\varepsilon} \E^{s-1} \F^{s-1} \F$$
which we can simplify since we can commute the map $\varepsilon$ from the right all the way to the left.  This gives
$$\F \xrightarrow{\eta} \F \E \F \xrightarrow{\varepsilon} \F \xrightarrow{\eta} \E^{s-1} \F^{s-1} \F \xrightarrow{f} \E^{s-1} \F^{s-1} \F,$$
which is the same as the composition $\F \xrightarrow{\eta} \E^{s-1} \F^{s-1} \F \xrightarrow{f} \E^{s-1} \F^{s-1} \F$ coming from going around the other side of the rectangle. Thus the large diagram below commutes.

\begin{equation*}
\begin{CD}
\F^{(\lambda+s)}\E^s \F^{(p)} @>>\iota> \F^{(\lambda +s-1)} \F \E^s \F^{(p)} @>>\varepsilon>  \F^{(\lambda+s-1)}\E^{s-1} \F^{(p)} \\
@AA\pi A @AA\pi A @AA\pi A \\
\F^{(\lambda +s)} \E^s \F^s \F^{p+s} @>>\iota> \F^{(\lambda +s-1)}\F \E^s \F^s\F^{p - s} @>>\varepsilon> 
\F^{(\lambda +s-1)}\E^{s-1}\F^{s-1}\F^{p - s+1} \\
@AAfA @AAfA @AAfA \\
\F^{(\lambda +s)} \E^s \F^s \F^{p+s} @>>\iota> \F^{(\lambda +s-1)}\F \E^s \F^s\F^{p - s} @. 
\F^{(\lambda +s-1)}\E^{s-1}\F^{s-1}\F^{p - s+1} \\
@AA\eta A @AA\eta A @AA\eta A \\
\F^{(\l +s)} \F^{p - s} @>>\iota> \F^{(\l +s-1)} \F^{p - s+1} @= \F^{(\l +s-1)}\F^{p-s+1} \\
@AA\iota A @AA\iota A @AA\iota A \\
\F^{(\lambda+p)} @= \F^{(\lambda+p)} @= \F^{(\lambda+p)} 
\end{CD}
\end{equation*}

The commutativity of the whole diagram gives the commutativity of the bottom square in the statement of the proposition.
\end{proof}

We consider the modified Koszul complex following Chuang and Rouquier.  Fix a graded vector space $ M $ and an integer $ l \le \dim M$.  Then, there is a complex of graded vector spaces $ C_s := Sym^s M \otimes \Lambda^{l-s} M $, where $ s $ varies from $ 0, \dots, l$.  We have maps $ C_{s+1} \rightarrow C_s $, taking parts of the sym and sending them over to the wedge.  This complex is exact.  

Now, fix a vector $ v \in M$, and consider the subcomplex $ C'_s := Sym^s M \otimes\Lambda^{l-s-1} M \wedge v \subset Sym^s M \otimes \Lambda^{l-s} M = C_s $ with $ s = 0, \dots, l-1 $. This complex has homology in precisely one degree, namely $ s = l-1$, since $ v^{l-1} \otimes v $ is sent to zero.  

Let $ M = \k^{\l + p +1} $ denote a graded vector space of dimension $ \lambda + p + 1 $ with basis $ e_0, e_1, \dots e_{\lambda +p} $ where $ e_i $ has degree $ 2i $.  Let $ v = e_{\lambda + p}$.  Let $ l = p +1 $ and let $ C'_s $ be the complex as above with $ s $ varying from $ 0 $ to $ p $. 

\begin{Proposition} \label{th:ThetaC}
The $\F^{(\l+p)}$-part of the complex $\Theta_* \F^{(p)}(\l+p) $ modulo $\F^{(\l+p)}$ infinitesimal maps is isomorphic to the complex $ C'_* \la p(\l+p-1)+2(\l+p) \ra$ for the vector space $M$ as above.
\end{Proposition}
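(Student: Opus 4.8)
I would prove the statement in two stages: first match the terms, then match the differentials.

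\emph{Stage 1: the terms.} By Theorem \ref{th:vertiso}, together with Lemma \ref{th:notdirectsummand} (so that $\U$ contributes no copies of $\F^{(\l+p)}$) and Lemma \ref{th:modinf}, the $\F^{(\l+p)}$-part of $\Theta_s\F^{(p)}(\l+p)$ modulo $\F^{(\l+p)}$-infinitesimal maps is isomorphic, as a graded vector space, to $W_s$ up to the degree shift recorded in Theorem \ref{th:vertiso} (the relation $\deg(g)=\deg(f)-p(\l+p-1)$); equivalently $W_{s-1}\xrightarrow{\sim}\Ext(\F^{(\l+p)},\Theta_{s-1}\F^{(p)})/\Inf$ via $f\mapsto[\phi_{s-1}(f)]$, and likewise for every $s$. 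I would then write down an explicit graded isomorphism $W_s\cong C'_s$ (up to a shift $\langle 2(\l+p)\rangle$): the symmetric-polynomial tensor factor of $W_s$ (symmetric functions in $x_1,\dots,x_s$ with each $x_i$-power $\le\l+p$) is identified with $Sym^sM$ on the monomial symmetric function basis $m_\mu\leftrightarrow e_{\mu_1}\cdots e_{\mu_s}$, and the strictly-increasing-monomial tensor factor ($x_{s+1}^{t_{s+1}}\cdots x_p^{t_p}$ with $0\le t_{s+1}<\dots<t_p<\l+p$) is identified with $\Lambda^{p-s}M\wedge v$ via $e_{t_{s+1}}\wedge\dots\wedge e_{t_p}\wedge v$, the factor $v$ accounting for the extra shift since $\deg v=2(\l+p)$. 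Dimensions and gradings match on both factors ($\dim Sym^sM=\binom{\l+p+s}{s}$ equals the number of admissible $\mu$, and $\dim(\Lambda^{p-s}M\wedge v)=\binom{\l+p}{p-s}$ equals the number of admissible monomials), and the two shifts combine to $\langle p(\l+p-1)+2(\l+p)\rangle$.

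\emph{Stage 2: the differentials.} By Theorem \ref{th:vertiso} and Proposition \ref{th:diff}, once the terms of the $\F^{(\l+p)}$-part mod infinitesimals are parametrized by $W_\bullet$, the differential $d_s$ becomes the linear map $W_s\to W_{s-1}$ sending $f$ to the unique $\Pi_{s-1}(f)\in W_{s-1}$ with $\phi_{s-1}(f)\equiv\phi_{s-1}(\Pi_{s-1}(f))$ modulo $\F^{(\l+p)}$-infinitesimal maps. So it remains to compute $\Pi_{s-1}$ on a basis vector $f=m_\mu(x_1,\dots,x_s)\,x_{s+1}^{t_{s+1}}\cdots x_p^{t_p}$ of $W_s$ and check it agrees with the modified Koszul differential. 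Expanding $m_\mu(x_1,\dots,x_s)=\sum_c m_{\mu\setminus c}(x_1,\dots,x_{s-1})\,x_s^c$ over the distinct parts $c$ of $\mu$, the monomial $x_s^c x_{s+1}^{t_{s+1}}\cdots x_p^{t_p}$ in the variables $x_s,\dots,x_p$ must be straightened inside $\phi_{s-1}$; this is legitimate because $\phi_{s-1}$ handles these variables first, through the composite $\F^{(\l+s-1)}\F^{(p-s+1)}\xrightarrow{\iota}\F^{(\l+s-1)}\F^{p-s+1}\xrightarrow{\mathrm{mon}}\F^{(\l+s-1)}\F^{p-s+1}\xrightarrow{\pi}\F^{(\l+s-1)}\F^{(p-s+1)}$, which is (tensored with the identity) the situation of Corollary \ref{th:sym2c} and Proposition \ref{th:firstiso}. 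Three cases arise. If $c<\l+p$ and $c\notin\{t_{s+1},\dots,t_p\}$, then iterating $\pi(x_i^a x_{i+1}^b+x_i^b x_{i+1}^a)\iota=0$ (Corollary \ref{th:sym2c}) rewrites the monomial as $\operatorname{sgn}(w_c)\,x_s^{t'_s}\cdots x_p^{t'_p}$, where $(t'_s<\dots<t'_p)$ is the increasing rearrangement of $(c,t_{s+1},\dots,t_p)$ and $w_c$ is the sorting permutation; the summand then lies in $W_{s-1}$. If $c\in\{t_{s+1},\dots,t_p\}$, the monomial has a repeated exponent, which after the same sign-rearrangement so that the two equal exponents sit on adjacent strands becomes symmetric in those two variables and hence vanishes by Corollary \ref{th:sym2c}. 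If $c=\l+p$, the monomial is $\F^{(\l+p)}$-infinitesimal: after rearrangement the power $\l+p$ lands on the rightmost strand $x_p$ of $\F^{p-s+1}$, exceeds the range $0,\dots,\l+p-1$ of surviving powers on that strand supplied by Proposition \ref{th:decomp2} (and used in Proposition \ref{th:firstiso}), and therefore reduces to a combination of lower powers with coefficients in $\End^{>0}(\F^{(\l+p)})$, each of which is infinitesimal -- a degree argument of the same flavour as the vanishing $[X^\l]=0$ on $\E\F(\l+1)$ in the proof of Lemma \ref{th:linearcomblem}. Collecting the surviving terms, $\Pi_{s-1}(f)=\sum_{c}\operatorname{sgn}(w_c)\,m_{\mu\setminus c}(x_1,\dots,x_{s-1})\,x_s^{t'_s}\cdots x_p^{t'_p}$, the sum over distinct parts $c$ of $\mu$ with $c<\l+p$ and $c\notin\{t_{s+1},\dots,t_p\}$; under the identifications of Stage 1 this is exactly the modified Koszul image of $e_{\mu_1}\cdots e_{\mu_s}\otimes e_{t_{s+1}}\wedge\dots\wedge e_{t_p}\wedge v$, the discarded terms $c=\l+p$ and $c\in\{t_j\}$ being precisely those for which $e_{\l+p}\wedge\dots\wedge v=0$, resp. $e_c$ already appears in the wedge, and after the innocuous rescaling of the basis vectors of $Sym^\bullet M$ by inverse products of multiplicity factorials that reconciles the coefficient $1$ in the expansion of $m_\mu$ with the integer coefficients of the Koszul differential. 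Thus the Stage 1 identification intertwines the differentials.

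\emph{Main obstacle.} The heart of the argument is Stage 2, and within it the ``overflow'' vanishing of the case $c=\l+p$: showing cleanly that a power $\l+p$ migrating onto the wedge block contributes nothing modulo infinitesimals, while simultaneously tracking the signs and multiplicities so that the resulting map is literally the modified Koszul differential (and so that the shift comes out to $\langle p(\l+p-1)+2(\l+p)\rangle$). Each ingredient -- the sign rule, the vanishing on a repeated exponent, and the overflow vanishing -- is an easy consequence of Corollary \ref{th:sym2c} or of the degree bounds behind Propositions \ref{th:decomp2} and \ref{th:firstiso}; assembling them into an exact match with $C'_*$ is where the real work lies.
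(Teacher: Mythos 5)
Your proposal is correct and follows essentially the same strategy as the paper's proof: identify $W_s$ with $C_s'\langle 2(\l+p)\rangle$ via the monomial symmetric function basis, then use Proposition \ref{th:diff} to reduce the comparison of differentials to the three-case straightening computation (sign rule, repeated exponent, overflow). The only local divergence is in the overflow case $c = \l+p$: the paper cites Lemma \ref{th:vanish} directly (with $k = p-s+1$, acting by $x_1$ on the leftmost strand), whereas you give an equivalent degree argument phrased in terms of Proposition \ref{th:decomp2} with the power sorted onto the rightmost strand; your extra care about the multiplicity factorials in the $Sym^\bullet M$ normalization is a point the paper glosses over but does not change the substance.
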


Before giving the proof, let us explain the intuition behind this statement.  From Theorem \ref{th:vertiso}, we have an isomorphism between the terms in the complex and direct sums of $ \F^{(\l + p)} $.  Each isomorphism is encoded by the map $ \phi_s $ and a space of polynomials $ W_s$.  Each space of polynomials $ W_s $ is the tensor product of a ``symmetric part'' in the variables $ x_1, \dots, x_s $ and an ``anti-symmetric'' part in the variables $ x_{s+1}, \dots, x_p $.  When we apply the differential, we use the same polynomials, but we use the map $ \phi_{s-1} $ instead of $ \phi_s $ (this is the content of Proposition \ref{th:diff}).  The effect of this is that the variable $ x_s $ is now regarded in the anti-symmetric part instead of the symmetric part.  This is analogous to the differential in the complex $ C' $.  Finally, the presence of the $ e_{\l + p} \wedge $ in the complex $ C'$ is due to the fact that in the symmetric part $ W_s $ we allow the variables to have degree less than or equal to $ \l + p $ while in the anti-symmetric part they have degree less than $ \l + p $. 

The proof of this Proposition is adapted from the proof of Theorem 6.6 in \cite{CR}.

\begin{proof}
In this proof infinitesimal always means $\F^{(\l+p)}$-infinitesimal. 

First we will define an isomorphism $ C'_s\la 2(\l + p)\ra \rightarrow W_s$ by
\begin{equation} \label{eq:symwedge}
\begin{aligned}
 Sym^s \k^{\lambda +p + 1} \otimes \Lambda^{p - s} \k^{\lambda+p +1} \wedge e_{\l + p} \rightarrow W_s \\
e_{a_1} \cdots e_{a_s} \otimes  e_{b_{s+1}} \wedge \cdots \wedge e_{b_p} \wedge e_{\l +p}\mapsto  m_{a_1 \cdots a_s} x_{s+1}^{b_{s+1}} \cdots x_p^{b_r}
\end{aligned}
\end{equation}
where $ m_{a_1 \cdots a_s} $ denotes the monomial symmetric function in the variables $ x_1, \dots, x_s $ (i.e. the symmetrization of $ x_1^{a_1} \cdots x_s^{a_s} $) and where $ a_1 \le \cdots \le a_s \le \l + p $ and $ b_{s+1} < \cdots < b_p < \l +p $.

From the statement of Theorem \ref{th:vertiso}, there is an isomorphism $ V_s \cong W_s\la p(\l + p-1) \ra $.  Combining this with the above isomorphism, we get an isomorphism $ V_s \cong C'_s \la 2(\l+p) + p(\l+p -1) \ra $.

So now we just have to calculate what happens to basis vectors.  Let us pick $f =  m_{a_1, \dots, a_s} x_{s+1}^{b_{s+1}} \cdots x_p^{b_p} \in W_s $.  Here $ m_{a_1, \dots, a_s} $ denotes a monomial symmetric function in $ x_1, \dots, x_s $.  This basis element corresponds to $ e_{a_1} \cdots e_{a_s} \otimes e_{b_{s+1}} \wedge \cdots \wedge e_{b_p} \wedge e_{\lambda + p} \in C'_s$.  

Let $ g $ denote the corresponding element of $ V_s $.  Then by Proposition \ref{th:diff}, $ \iota d_s g = \phi_{s_1}(f) $.

Now, 
\begin{equation*}
f = m_{a_1, \dots, a_s} x_{s+1}^{b_{s+1}} \cdots x_p^{b_p}  = \sum_{i=1}^s m_{a_1, \dots, \hat{a_i}, \dots, a_s} x_s^{a_i} x_{s+1}^{b_{s+1} }\cdots x_p^{b_p}
\end{equation*}

Hence $$ \phi_{s-1}(f) = \sum_{i=1}^s  \phi_{s-1}^2( x_s^{a_i} x_{s+1}^{b_{s+1}} \cdots x_p^{b_p})\phi_{s-1}^1(m_{a_1, \dots, \hat{a_i}, \dots, a_s}). $$
Comparing with the differential in the modified Koszul complex, we see that to prove the desired result, we must show that 
\begin{equation} \label{eq:mustshow}
 \phi_{s-1}^2( x_s^{a_i} x_{s+1}^{b_{s+1}} \cdots x_p^{b_p}) = \begin{cases}
 0 &\text{ if } a_i \in \{b_{s+1}, \dots, b_p, \l +p\} \\
 \pm \phi_{s-1}^2(x_s^{c_1} \cdots x_p^{c_s}) &\text{ otherwise, where } c_1 <  \dots < c_s \text{ and } \{c_1, \dots, c_s \} = \{a_i, b_{s+1}, \dots, b_p \} 
 \end{cases}
 \end{equation}
(The above equalities are considered modulo infinitesimals.)
 
To establish (\ref{eq:mustshow}), we first apply Lemma \ref{th:vanish} with $k=p-s+1$ to see that $ \phi_{s-1}^2(x_s^{a_i} x_{s+1}^{b_{s+1}} \cdots x_p^{b_p}) $ is infinitesimal if $ a_i = \l+p $. More precisely, $\phi_{s-1}^2(x_s^{p-s+1})$ is the composition 
$$\F^{(\l+p)} \xrightarrow{\iota} \F^{(\l+s-1)} \F^{p-s+1} \xrightarrow{I x_1^{p-s+1} I} \F^{(\l+s-1)} \F^{p-s+1} \xrightarrow{\pi} \F^{(\l+s-1)} \F^{(p-s+1)}$$
which is infinitesima by Lemma \ref{th:vanish}. Thus $\phi_{s-1}^2(x_s^{\l+p})$ is also infinitesimal since $\l+p \ge p-s+1$ (we have $s-1 \ge 0$).  

Then we apply Corollary \ref{th:sym2c} to see that $ \phi_{s_1}^2(\cdots x_j^a x_{j+1}^b \cdots) =  -\phi_{s_1}^2(\cdots x_j^b x_{j+1}^a \cdots)$. This allows us to see that the LHS of (\ref{eq:mustshow}) vanishes when $ a_i \in \{b_{s+1}, \dots, b_p \} $ and also allows us to reorder the exponents as in the second case of (\ref{eq:mustshow}). 
\end{proof}

\begin{Lemma} \label{th:vanish}
The map $ x_1^k \iota : \F^{(k)} \rightarrow \F^k\langle -k(k-1)\rangle \rightarrow \F^k\langle 2k-k(k-1)\rangle $ is $\F^{(k)}$-infinitesimal.
\end{Lemma}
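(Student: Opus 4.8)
The plan is to factor the inclusion $\iota$ through $\F\F^{(k-1)}$ and then finish with a short degree count. First I would write the composed morphism $\iota\colon\F^{(k)}\to\F^k$ as
\[
\F^{(k)}\xrightarrow{\ \iota\ }\F\F^{(k-1)}\xrightarrow{\ I\iota\ }\F\F^{k-1}=\F^k,
\]
where the first arrow is the elementary morphism $\iota\colon\F^{(k)}(\l)\langle k-1\rangle\to\F(\l+k-1)\F^{(k-1)}(\l-1)$ from the definition of a strong categorical $\sl_2$ action, and the second arrow is $\id_\F$ applied to the composed $\iota\colon\F^{(k-1)}\to\F^{k-1}$. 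Since $x_1$ is by definition the operator $X$ acting on the leftmost copy of $\F$, it commutes with $I\iota$, which is the identity on that leftmost factor, so $x_1^k\iota=(I\iota)\circ(X^kI)\circ\iota$. Because $\Inf_{\F^{(k)}}(-,-)$ is a two-sided ideal (composition of an infinitesimal morphism with an arbitrary morphism on either side is again infinitesimal), it therefore suffices to prove that $g:=(X^kI)\circ\iota\colon\F^{(k)}\to\F\F^{(k-1)}$ is $\F^{(k)}$-infinitesimal.

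Next I would compute degrees. The elementary $\iota$ lies in $\Hom^{-(k-1)}(\F^{(k)},\F\F^{(k-1)})$ and $X\in\Hom^2(\F,\F)$, so $g\in\Hom^{k+1}(\F^{(k)},\F\F^{(k-1)})$. By the axiom governing composition of the $\F$'s,
\[
\F\F^{(k-1)}\cong\F^{(k)}\otimes_\k H^\star(\p^{k-1})=\bigoplus_{j=0}^{k-1}\F^{(k)}\langle k-1-2j\rangle,
\]
so each component of $g$ relative to this decomposition lies in $\Hom^{(k+1)+(k-1-2j)}(\F^{(k)},\F^{(k)})=\Hom^{2(k-j)}(\F^{(k)},\F^{(k)})$ for some $0\le j\le k-1$, and $2(k-j)>0$ for every such $j$. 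Finally, any endomorphism of $\F^{(k)}$ of strictly positive degree is $\F^{(k)}$-infinitesimal: if $\alpha\colon\F^{(k)}\langle i\rangle\to\F^{(k)}$ and $\beta\colon\F^{(k)}\langle m\rangle\to\F^{(k)}\langle i\rangle$ are such that the composite with a degree $m>0$ endomorphism has degree $0$, then $\alpha\ne 0$ forces $i\le 0$ while $\beta\ne 0$ forces $i\ge m>0$, a contradiction — here we use $\Hom^i(\F^{(k)},\F^{(k)})=0$ for $i<0$ and $\End(\F^{(k)})=\k\cdot\id$. Hence every component of $g$ is infinitesimal, $g$ is infinitesimal, and therefore so is $x_1^k\iota$.

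The only real subtlety is the bookkeeping in the first step: one must check that $x_1$ genuinely commutes with $I\iota$, that the elementary $\iota$ carries the shift $\langle -(k-1)\rangle$, and — crucially — that the window of grading shifts occurring in $\F\F^{(k-1)}\cong\F^{(k)}\otimes H^\star(\p^{k-1})$ has width only $2(k-1)$, which is strictly smaller than the shift $2k$ produced by $X^k$. This last point is exactly why one should factor through $\F\F^{(k-1)}$ rather than argue directly with $\F^k\cong\F^{(k)}\otimes H^\star({\bF}l(k))$, where the window of shifts has width $k(k-1)\ge 2k$ once $k\ge 3$ and the naive degree count no longer closes. Beyond this there is no genuine obstacle; the argument is a short degree chase.
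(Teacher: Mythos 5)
Your proof follows the paper's argument exactly: factor $\iota$ through $\F\F^{(k-1)}$, observe that $x_1^k$ commutes with $I\iota$, show by a degree count on $\F\F^{(k-1)}\cong\F^{(k)}\otimes_\k H^\star(\p^{k-1})$ that $(X^kI)\circ\iota$ is $\F^{(k)}$-infinitesimal, and conclude by the two-sided ideal property of $\Inf$. Your write-up spells out the ``degree considerations'' that the paper compresses into one clause, and silently corrects a typo in the paper's proof (which writes $H^\star(\p^k)$ where it should read $H^\star(\p^{k-1})$); your closing remark explaining why a direct degree count in $\F^k\cong\F^{(k)}\otimes H^\star({\bF}l(k))$ fails for $k\ge 3$ is an accurate and useful observation.
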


\begin{proof}
We can factor the map as
\begin{equation*}
\F^{(k)} \xrightarrow{\iota} \F \F^{(k-1)}\langle -k+1\rangle \xrightarrow{x^k} \F \F^{(k-1)}\langle k+1\rangle \xrightarrow{\iota} \F^k\langle 2k-k(k-1)\rangle
\end{equation*}
Now $\F \F^{(k-1)} \cong \F^{(k)} \otimes_\k H^\star(\p^k)$ so $x^k \iota$ is infinitesimal by degree considerations. Subsequently the whole composition is also infinitesimal.
\end{proof}

\begin{proof}[Proof of \ref{th:Tonhighweight2}]
By Corollary \ref{th:abstractiso2}, the terms in the complex are direct sums of $ \F^{(\l+p)} $ and $ \F^{(a)} \E^{(b)} $ with $ b \ne 0 $.  By Proposition \ref{th:ThetaC}, the $\F^{(\l+p)}$-part of the complex (modulo infinitesimal maps) is isomorphic to $ C'_\bullet\la p(\l+p-1) +2(\l+p) \ra$. 

Now the homology of $ C'_\bullet $ is zero except in homological degree $p$ where it is one dimensional and is represented by $e_{\l+p}^p \otimes e_{\l+p}$.  This element has degree $2(p+1)(\l+p)$. 

Under the isomorphism from Proposition \ref{th:ThetaC}, $e_{\l+p} \dots e_{\l+p} \otimes e_{\l+p}$ corresponds to the following composition
$$\F^{(\l+p)} \la -p(\l+p+1) \ra \xrightarrow{\eta} \F^{(\l+p)} \E^p \F^p \la -2p(\l+p)-p \ra \xrightarrow{(x_1 \dots x_p)^{\l+p}} \F^{\l+p} \E^p \F^p \la -p \ra.$$
More precisely, by Theorem \ref{th:vertiso} we have a commutative diagram 
\begin{equation*}
\begin{CD}
\F^{(\l+p)} \la -p(\l+p+1) \ra @>>\eta> \F^{(\l+p)}\E^p\F^p  \la -2p(\l+p)-p \ra @>>(x_1 \dots x_p)^{\l+p}> \F^{(\l+p)}\E^p\F^p \la -p \ra \\
@| @.  @VV\pi V \\
\F^{(\l+p)} \la -p(\l+p+1) \ra  @>v_p \circ \eta>>  \F^{(\l+p)}\E^{(p)}\F^{(p)} \la -p \ra @>\iota>> \F^{(\l+p)}\E^p\F^{(p)} \la -p-\frac{p(p-1)}{2} \ra \\
\end{CD}
\end{equation*}
for some $v_p \in \End^{2p(\l+p)}(\F^{(\l+p)}\E^{(p)}\F^{(p)})$. Let $ \gamma_p = \eta \circ v_p$.

Now let $ A \in \D(\l+2p) $ be a highest weight object.  Then $ \E(A) = 0 $, so $ \F^{(a)}\E^{(b)}(A) = $ when $ b>0 $.  Thus for each $ s $ by Theorem \ref{th:vertiso}, we have isomorphisms 
\begin{equation*}
\Theta_s(\F^{(p)}(A)) \cong \F^{(\l+p)}(A) \otimes V_s
\end{equation*}

The maps in the complex $ \Theta_\bullet (\F^{(p)}(A)) $ are induced by the maps in the $ \F^{(\l+p)} $ part of the complex $\Theta_\bullet \F^{(p)}$.  Hence with respect to these fixed isomorphisms, there are no negative degree maps and also the 0-degree maps come from maps between the vector spaces $ V_s $.  Thus, we are in the situation of section \ref{sec:exactcomplex}.  

We now consider the extended complex
\begin{equation*}
\F^{(\l+p)}(A)\la -p(\l+p+1) \ra \xrightarrow{\gamma_p} \Theta_p \F^{(p)}(A) \rightarrow \dots \rightarrow \Theta_0 \F^{(p)}(A)
\end{equation*}
Again we have no negative degree maps and the degree 0 maps come from the complex in vector spaces
\begin{equation*}
\k \la -p(\l+p+1) \ra \rightarrow V_p \rightarrow \dots \rightarrow V_0
\end{equation*}

This complex is exact since by Theorem \ref{th:ThetaC}, it is isomorphic to the complex
\begin{equation*}
\k \la -p(\l+p+1) \ra \rightarrow C'_p \rightarrow \dots \rightarrow C'_0
\end{equation*}
which is exact.
\end{proof}

\section{Proof of Theorem \ref{th:equiv}} \label{se:mainproof}

In this section we move to the setting where all the categories are enhanced triangulated categories. In this setting the shift $\la k \ra$ is equal to $[k]\{-k\}$ where $[\cdot]$ is the natural homological shift. 
 
\subsection{Equivalences between triangulated categories}

Recall that a spanning class in a triangulated category $ \D $ is a subset $ S $ of the set of objects which is closed under shifts $[\cdot]$ and such that if $ A \in \D$ is an object and $ Hom(B, A) = 0 $ for all $ B \in S $, then $ A = 0 $.

The following result and its proof are adapted from Huybrechts \cite[Prop 1.49]{H}.

\begin{Lemma}  \label{th:spanequiv}
Let $ \Phi : \D \rightarrow \D' $ be an exact functor between triangulated categories. Suppose that the left and right adjoint of $\Phi$ are isomorphic: $\Phi_R \cong \Phi_L$. Suppose also that $S$ is a spanning class such that for all $B \in S$ the natural adjunction map $\varepsilon : \Phi_L \circ \Phi(B) \rightarrow B $ is an isomorphism.  Then $ \Phi $ is an equivalence.
\end{Lemma}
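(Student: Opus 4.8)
The plan is to follow the argument of Huybrechts and show that $\Phi$ is both fully faithful and essentially surjective. Since $\Phi_L \cong \Phi_R$, the functor $\Phi_L$ is simultaneously a left and a right adjoint of $\Phi$, and it is enough to prove that the unit $\mathrm{id}_{\D'} \to \Phi\Phi_L$ and the counit $\Phi_L\Phi \to \mathrm{id}_{\D}$ of the adjunction $\Phi_L \dashv \Phi$ are both natural isomorphisms; this exhibits $\Phi$ and $\Phi_L$ as mutually inverse equivalences.

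The first step is to prove that the counit $\varepsilon_A : \Phi_L\Phi(A) \to A$ is an isomorphism for \emph{every} $A \in \D$, i.e.\ that $\Phi$ is fully faithful. The hypothesis gives this for $A \in S$, and moreover, for $B \in S$ the triangle identity for $\Phi_L \dashv \Phi$ shows that $\Phi : \Hom(B, A[i]) \to \Hom(\Phi B, \Phi A[i])$ factors as precomposition with the isomorphism $\varepsilon_B$ followed by the adjunction isomorphism, hence is bijective for all $A \in \D$ and all $i$. For arbitrary $A$, complete $\varepsilon_A$ to a triangle $\Phi_L\Phi A \to A \to C_A \to \Phi_L\Phi A[1]$ and apply $\Hom(B,-)$ for $B \in S$. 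Using $\Phi_L \cong \Phi_R$ and the adjunction $\Phi \dashv \Phi_R$ one identifies $\Hom(B, \Phi_L\Phi A[i]) \cong \Hom(B,\Phi_R\Phi A[i]) \cong \Hom(\Phi B, \Phi A[i])$, which by the previous remark is also $\cong \Hom(B, A[i])$, and one checks that under these identifications the map induced by $\varepsilon_A$ is an isomorphism. Hence $\Hom(B, C_A[i]) = 0$ for all $B \in S$ and all $i$, so $C_A = 0$ because $S$ is spanning. Thus $\Phi$ is fully faithful; in particular $\Phi_L\Phi \cong \mathrm{id}_{\D}$, and by $\Phi_L \cong \Phi_R$ also $\Phi_R\Phi \cong \mathrm{id}_{\D}$, so the unit and counit of $\Phi \dashv \Phi_R$ are isomorphisms as well.

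For essential surjectivity, equivalently the invertibility of $\eta_X : X \to \Phi\Phi_L(X)$, note that since $\Phi$ is fully faithful with both adjoints its essential image $\D'_0 \subseteq \D'$ is admissible on both sides, and $\Phi_L \cong \Phi_R$ forces $(\D'_0)^{\perp} = {}^{\perp}(\D'_0) = \ker\Phi_L = \ker\Phi_R =: \mathcal{N}$, so $\D' = \mathcal{N} \times \D'_0$ as triangulated categories. Given $X \in \D'$, set $K = \Cone(\Phi\Phi_R X \to X)$; applying $\Phi_R$ and using that $\Phi_R$ of this counit is an isomorphism (triangle identity together with full faithfulness) gives $\Phi_R K = 0$, hence $\Phi_L K = 0$, so $K \in \mathcal{N}$ while $\Phi\Phi_R X \in \D'_0$. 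To finish, one must show $K = 0$; this is where the spanning class is used again, translating the orthogonality $\Hom(\Phi B, K[i]) = 0 = \Hom(K[i], \Phi B)$ for $B \in S$ back through the adjunctions to $\D$ and applying the spanning property there. Once $K = 0$, the counit $\Phi\Phi_R X \xrightarrow{\sim} X$ is an isomorphism for all $X$, so $\Phi$ is essentially surjective and hence an equivalence.

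The step I expect to be the real obstacle is the vanishing $K = 0$ in the essential-surjectivity argument: knowing merely that $K$ lies in $\mathcal{N} = (\D'_0)^{\perp} \cap {}^{\perp}(\D'_0)$ does not by itself kill $K$, and one genuinely has to exploit that $S$ is a spanning class of $\D$ (and not just that $\Phi(S)$ is orthogonal to $K$), routing this correctly through the two adjunctions. A secondary but unavoidable source of friction is the bookkeeping in the fully faithful step — arranging that the fixed isomorphism $\Phi_L \cong \Phi_R$ interacts correctly with the units and counits of $\Phi_L \dashv \Phi$ and $\Phi \dashv \Phi_R$, so that the map induced by $\varepsilon_A$ on the relevant $\Hom$ groups really is the claimed isomorphism.
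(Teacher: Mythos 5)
Your treatment of full faithfulness is essentially identical to the paper's: reduce to showing $\Hom(B,A)\to\Hom(\Phi B,\Phi A)$ is bijective for $B\in S$, $A$ arbitrary (via the triangle identity together with the hypothesis $\Phi_L\Phi(B)\xrightarrow{\sim}B$), then pass to the cone of the unit $\eta_A:A\to\Phi_R\Phi(A)$ and use the spanning class of $\D$ to conclude it vanishes. No issues there.

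The essential surjectivity step is where there is a genuine gap, and you have correctly diagnosed it yourself. You set up the right framework — $K=\Cone(\Phi\Phi_R X\to X)$ has $\Phi_R K=\Phi_L K=0$, so $\D'=\mathcal{N}\times\D'_0$ with $K\in\mathcal{N}$ — but the route you propose for killing $K$, namely ``routing the orthogonality through the adjunctions back to the spanning class of $\D$,'' cannot succeed. Since $\Phi_L K=\Phi_R K=0$, the adjunctions send \emph{every} $\Hom(\Phi B, K[i])$ and $\Hom(K[i],\Phi B)$ to a Hom into or out of the zero object of $\D$; these vanish for purely formal reasons, so the spanning property of $S$ in $\D$ yields no information about $K$ at all. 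The paper closes this step differently: having established full faithfulness and $\ker\Phi_L=\ker\Phi_R$, it simply cites Proposition 1.54 of Huybrechts, whose proof relies on the indecomposability (and nontriviality) of the target category to force $\mathcal{N}=0$ once $\D'_0\neq 0$ — a hypothesis that is implicit in the paper's geometric setting ($\D'$ is the derived category of a connected variety) but is not derivable from the spanning class of $\D$. In short: the spanning class does all the work in the fully-faithful half and then drops out of the argument entirely; essential surjectivity is handled by a separate structural fact about $\D'$, not by the spanning class.
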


\begin{proof}
We first show that $\Phi$ is fully faithful. By Remark 1.24 from \cite{H} it is enough to show that $\eta: \id \rightarrow \Phi_R \circ \Phi$ is an isomorphism. By the Yoneda lemma it suffices to show that $ \eta_A : A \rightarrow \Phi_R \circ \Phi(A) $ is an isomorphism for all $A \in \D$.

Let $ Z(A) = \Cone(\eta_A) $.  To show that $\eta_A$ is an isomorphism, it suffices to show that $Z(A) = 0$. To do so, we would like to prove that $ \Hom(B, Z(A)) = 0 $ for all $ B \in S$. 

For $ B \in S$, we have a long exact sequence
\begin{equation*}
\cdots \rightarrow \Hom(B, A) \rightarrow \Hom(B, \Phi_R \circ \Phi(A)) \rightarrow \Hom(B, Z(A)) \rightarrow \Hom(B\langle -1\rangle, A) \rightarrow \cdots
\end{equation*}
Hence it suffices to show that $ \Hom(B, A) \rightarrow \Hom(B, \Phi_R \circ \Phi(A) ) $ is an isomorphism for all $ B \in S$.  However, by adjunction $ \Hom(B, \Phi_R \circ \Phi(A)) \cong \Hom(\Phi(B), \Phi(A)) $.  So it suffices to show that $ \Hom(B, A) \rightarrow \Hom(\Phi(B), \Phi(A)) $ is an isomorphism. 

Now applying adjunction again we see that it suffices to show that $ \Hom (B,A) \rightarrow \Hom(\Phi_L \circ \Phi(B), A) $ is an isomorphism.  But since $ \Phi_L \circ \Phi(B) \rightarrow B $ is an isomorphism by hypothesis, we are done.  This prove that $\Phi$ is fully faithful. 

Now, since $\Phi_L \cong \Phi_R$ this means that $\Phi_L(B) = 0$ implies $\Phi_R(B) = 0$ for any $B \in \D'$. Then by Proposition 1.54 of \cite{H} this implies $\Phi$ is an equivalence. 
\end{proof}

\subsection{Proof of the equivalence}

Fix $ \lambda \ge 0$. Recall that the complex $\Theta_s$ has terms 
$$\Theta_s(\l) = \F^{(\l+s)}(s)\E^{(s)}(\l+s) [-s] \{s\}$$ 
where $s = 0, \dots, (N-\l)/2$. 

\begin{Proposition}\label{prop:uniqueconv}
The complex $ \Theta_*$ has a unique convolution.
\end{Proposition}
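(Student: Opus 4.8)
The plan is to apply Proposition~\ref{th:uniquecone} to $\Theta_*$, viewed as a complex of objects of the triangulated category $\D(\l,-\l)$. Since $A_i=\Theta_i$ sits in homological degree $i$, both hypotheses of that Proposition follow from the single assertion
\[
\Hom_{\D(\l,-\l)}(\Theta_b[k],\Theta_a)=0\qquad\text{for all } b>a\ge 0 \text{ and all } 1\le k\le b-a-1,
\]
as the uniqueness hypothesis is the case $k=b-a-1$ and the existence hypothesis the case $k=b-a-2$. So the whole task reduces to this Hom-vanishing.

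To prove it, write $\Theta_s=\F^{(\l+s)}(s)\E^{(s)}(\l+s)\la -s\ra$ and unwind the Hom-space using the adjunctions of the enhanced action. Moving the two outer functors of $\Theta_b[k]$ across to the other side of the Hom — the adjunction data of the enhanced action identifies the relevant adjoints of $\F^{(\l+b)}(b)$ and $\E^{(b)}(\l+b)$ with divided powers of $\E$ and $\F$ up to grading shifts — rewrites the space as $\Hom\big(\id_{\D(\l)},\,G\,\la m\ra[-k]\big)$ for an appropriate integer $m$, where $G=\F^{(b)}(\l+b)\,\E^{(\l+b)}(b)\,\F^{(\l+a)}(a)\,\E^{(a)}(\l+a)\colon\D(\l)\to\D(\l)$ is a composite of four divided-power functors. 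Now simplify $G$: apply Lemma~\ref{th:abstractiso} — in the slightly stronger range noted in the Remark following it, which is available because all weights occurring here are $\ge 0$ — to the interior pair $\E^{(\l+b)}(b)\,\F^{(\l+a)}(a)$, and then the composition rule for the $\F$'s and its $\E$-analogue to the resulting adjacent pairs $\F^{(\cdot)}\F^{(\cdot)}$ and $\E^{(\cdot)}\E^{(\cdot)}$. Because $b>a$, a bookkeeping of weights shows that in every surviving summand the $\F$- and $\E$-superscripts agree and are positive, so $G$ is a direct sum, over finitely many integers $c\ge 1$, of copies of $\F^{(c)}\E^{(c)}$, each tensored with a sum of shifted copies of $\k$ coming from graded pieces of cohomologies of Grassmannians. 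Applying one further adjunction to the outer $\F^{(c)}$ converts each $\Hom(\id_{\D(\l)},\F^{(c)}\E^{(c)}\cdot(\text{shift}))$ into a direct sum of shifted copies of $\End^\bullet(\E^{(c)})$.

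By the last of the conditions in the definition of a strong categorical $\sl_2$ action (and its consequence under adjunction), $\End^i(\E^{(c)})=0$ for $i<0$ and $\End^0(\E^{(c)})=\k$ for every $c\ge 1$, so each of these summands is concentrated in nonnegative internal degree. It therefore remains to collect, summand by summand, the total grading shift — assembled from $\la -b\ra$ and $\la -a\ra$ in the definitions of $\Theta_b$ and $\Theta_a$, the shifts contributed by the various units and counits, the shifts carried by the Grassmannian cohomology factors, and the homological twist $[k]$ — and to check that, once the relation $\la 1\ra=[1]\{-1\}$ is used, this shift is negative enough to annihilate every summand precisely when $1\le k\le b-a-1$. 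This final degree count is the only substantive part of the argument: it is elementary but lengthy, and it is exactly here that the particular shifts $\la -s\ra$ built into the terms $\Theta_s$ — which are dictated by Proposition~\ref{th:tact} — are used in an essential way. The main obstacle is thus not the shape of the argument but the numerology of keeping track of all the grading shifts.
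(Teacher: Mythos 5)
Your setup matches the paper's almost exactly: both proofs reduce to Proposition~\ref{th:uniquecone}, use adjunction to slide divided-power functors across the Hom, invoke Lemma~\ref{th:abstractiso} (in the extended range of the Remark following it — which, it should be noted, the paper explicitly declines to prove) to expand the interior $\E\F$ product, decompose the resulting $\F\F$ and $\E\E$ pairs, and then hope to kill everything by degree using condition (v) that $\End^i(\F^{(r)})$ vanishes for $i<0$ and is $\k$ for $i=0$. The one inessential difference is that you move both outer factors of $\Theta_b$ to the other side to land on $\Hom(\id, G)$, while the paper moves only $\F^{(\l+s)}$ and works with $\Hom(\E^{(s)},\,\cdot\,)$; both decompositions reduce to the same bookkeeping.

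But there is a genuine gap, and you name it yourself: you defer ``the only substantive part of the argument,'' namely the final degree count. That count is not a formality. In the paper's proof one has to bound from above the homological degrees in which the common divided power appears on the source side, bound them from below on the target side, and verify that the difference is strictly positive. After simplification this amounts to showing that a quadratic in the decomposition index $j$,
\[
2j^2 + 2j(k-2b) + k^2 + 2ab - 1,
\]
is everywhere positive (and similarly with $-2$ in place of $-1$ for the existence condition). This is settled by computing the discriminant, which comes out to $8 - 4k^2$ (respectively $16 - 4k^2$); it is negative only for $k\ge 2$ (respectively $k\ge 3$), which is exactly the range of the hypotheses of Proposition~\ref{th:uniquecone} and no more. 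In other words the inequality is genuinely borderline — at $k=1$ the discriminant is positive and the vanishing can fail — so the outcome is not visible from the shape of the argument and must be computed. Your proposal sets up the correct framework but asserts rather than establishes the conclusion; to close it you need to carry the shifts through the two adjunctions, the $H^\star(\bG(j,\l+a+b))$ factor, and the two Grassmannian cohomology factors from the $\F\F$ and $\E\E$ compositions, and then actually solve the resulting inequality as the paper does.
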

\begin{proof}
We just need to check that both conditions in Proposition \ref{th:uniquecone} hold. To simplify the notation we will omit the second grading $\{ \cdot \}$ (notice that by omitting the second grading we are actually proving a stronger, rather than weaker, result).

Switching indices, the first condition of \ref{th:uniquecone} says that $\Hom(A_s[k-1], A_{s-k})=0$ for $k \ge 2$ and $s \ge 0$. This is equivalent to 
$$\Hom(\F^{(\l+s)}(s)\E^{(s)}(\l+s) [-s][k-1], \F^{(\l+s-k)}(s-k)\E^{(s-k)}(\l+s-k)[-s+k]) = 0.$$
By adjunction the left side is equal to 
$$\Hom(\E^{(s)}(\l+s), \F^{(\l+s)}(s)_R \F^{(\l+s-k)} \E^{(s-k)} [1])$$
which is the same as
$$\Hom(\E^{(s)}, \E^{(\l+s)}(s) [-(\l+s)s] \F^{(\l+s-k)}(s-k) \E^{(s-k)} [1]).$$
Now, by Lemma \ref{th:abstractiso} we have 
$$\E^{(\l+s)}(s) \F^{(\l+s-k)}(s-k) \cong \bigoplus_{j \ge 0} \F^{(a-j)}\E^{(b-j)} \otimes_\k H^\star(\bG(j, \l+2s-k))$$
where $a=\l+s-k$ and $b=\l+s$. So we just need to check that 
\begin{equation}\label{eq:4}
\Hom(\E^{(s)}(\l+s), \F^{(a-j)}(s-k+2b-j)\E^{(b-j)} \E^{(s-k)} \otimes_\k H^\star(\bG(j,\l+2s-k)) [-(\l+s)s+1]) = 0
\end{equation}
for $j=0, \dots, \l+s-k$. Now by adjunction the left side is equal to 
$$\Hom(\E^{(a-j)}[(a-j)(s-k+2b-j)] \E^{(s)}, \E^{(b-j)} \E^{(s-k)} \otimes_\k H^\star(\bG(j,\l+2s-k)) [-(\l+s)s+1]).$$
Finally we use that 
$$\E^{(a-j)} \E^{(s)} \cong \E^{(a-j+s)} \otimes_\k H^\star(\bG(s,a-j+s)) \text{ and } \E^{(b-j)} \E^{(s-k)} \cong \E^{(a-j+s)} \otimes_\k H^\star(\bG(s-k,a-j+s)).$$
This means that the copies of $\E^{(a-j+s)}$ inside 
$$\E^{(a-j)} [(a-j)(s-k+2b-j)] \E^{(s)}$$
occur in degrees 
$$d \le s(a-j) - (a-j)(s-k+2b-j) = (a-j)(k-2b+j)$$
while the copies of $\E^{(a-j+s)}$ inside 
$$\E^{(b-j)} \E^{(s-k)} \otimes_\k H^\star(\bG(j,\l+2s-k)) [-(\l+s)s+1]$$
occur in degrees
$$d' \ge -(b-j)(s-k) - j(s-k+b-j) + (\l+s)s - 1 = b(k - j) + j^2 - 1.$$
Here we used that $H^\star(\bG(m,n))$ is supported in degrees $-m(n-m) \le d \le m(n-m)$. 

So to obtain the vanishing in (\ref{eq:4}) it suffices to show that $d < d'$ which is the same as 
$$(a-j)(k-2b+j) < b(k-j) + j^2 -1$$
which simplifies to 
$$2j^2 + 2j(-2b+k) + k^2 + 2ab - 1 > 0.$$
The discriminant of this quadratic in $j$ is 
$$4(4b^2-4bk+k^2) - 8(k^2+2(b-k)b-1) = -4k^2+8$$
which is negative for $k \ge 2$ so we are done. 

The second condition is simila, resulting in an expression which simplifies to 
$$2j^2+2j(-2b+k)+k^2+2ab-2 > 0$$
where this time $k \ge 3$. The discriminant of the left hand side is then $-4k^2+16$, which is negative for $k \ge 3$. 
\end{proof}

We let $ \T : \D(\l) \rightarrow \D(-\l)$ denote the convolution of $\Theta_*$. 

As usual assume $\l \ge 0$. In order to show that $\T$ is an equivalence we use the following class of objects. Let $S$ denote the objects in $\D(\l) $ which are of the form $ \F^{(p)}(A) $ for some highest weight object $A \in \D(\l+2p) $, where $ p $ is allowed to range over $ 0, \dots, (N-\l)/2 $.

Recall that the value of $ \T $ on these objects is given by Theorem \ref{th:Tonhighweight}, which we will now prove.
\begin{proof}[Proof of Theorem\ref{th:Tonhighweight}]
Consider the extended complex
\begin{equation} \label{eq:extendcomp}
\F^{(\l+p)}(A)\la- p(\l + p +1) \ra \xrightarrow{\gamma_p} \Theta_p(\F^{(p)}(A)) \rightarrow \cdots \rightarrow \Theta_0(\F^{(p)}(A)
\end{equation}
By Theorem \ref{th:Tonhighweight2}, this complex is exact in the sense of section \ref{sec:exactcomplex}.  

Consider a Postnikov system $ B_i $ for the complex $ \Theta_\bullet $.  Applying these $ B_i $ to $ F^{(p)}(A) $ gives us a partial Postnikov system for the extended complex (\ref{eq:extendcomp}) (here we are using our axioms of an enhanced strong categorical $ \sl_2 $ action).  By Lemma \ref{lem:conezero}, this partial Postnikov system can be extended to a Postnikov system for (\ref{eq:extendcomp}) which give 0 as the convolution.

In particular, this means that there is a distinguished triangle
\begin{equation*}
0 \rightarrow \F^{(\l+p)}(A)\la -p(\l+p+1) \xrightarrow{\gamma'_p} \T(\F^{(p)}(A))[-p]
\end{equation*}
where $ \gamma'_p $ factors the map $ \gamma_p $ as $ \F^{(\l+p)}(A)\la -p(\l+p+1) \xrightarrow{\gamma'_p} \T(\F^{(p)}(A))[-p] \rightarrow \Theta_p \F^{(p)}(A) $.

This implies that $ \T(\F^{(p)}(A)) \cong \F^{(\l+p)}(A)[-p(\l+p)]\{p(\l+p+1)\} $ (recall that $ \la 1 \ra = [1]\{-1 \} $).
\end{proof}

\begin{Lemma}
$S$ forms a spanning class for $\D(\l)$.
\end{Lemma}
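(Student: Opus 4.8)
The plan is to verify directly the two defining properties of a spanning class. First I would check closure under the homological shift $[\cdot]$, which is immediate: all the functors $\F^{(p)}$ are exact and commute with the grading shift $\la \cdot \ra$, and if $A \in \D(\l+2p)$ is a highest weight object then so are $A[k]$ and $A\la m \ra$ (since $\E$ commutes with both shifts and kills $A$), so $\F^{(p)}(A)[k] \cong \F^{(p)}(A[k])$ and $\F^{(p)}(A)\la m\ra \cong \F^{(p)}(A\la m\ra)$ again lie in $S$. For the second property, suppose $C \in \D(\l)$ satisfies $\Hom(B, C) = 0$ for all $B \in S$; I want to conclude $C = 0$, so assume for contradiction $C \ne 0$. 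Since $\E^{(0)}(C) = C \ne 0$ while $\E^{(p)}(C) \in \D(\l+2p)$ vanishes once $\l + 2p > N$, there is a largest integer $p \ge 0$ with $A' := \E^{(p)}(C) \ne 0$.

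The next step is to observe that $A'$ is a highest weight object in $\D(\l+2p)$: applying the $\E$-version of the composition isomorphism of condition (2) of the definition, $\E \circ \E^{(p)} \cong \E^{(p+1)} \otimes_\k H^\star(\p^p)$, and using maximality of $p$, we get $\E(A') \cong \E^{(p+1)}(C) \otimes_\k H^\star(\p^p) = 0$. Consequently $\F^{(p)}(A') \in S$, and so is every shift $\F^{(p)}(A')\la i \ra[j] = \F^{(p)}(A'\la i\ra[j])$. Then I would invoke the adjunction between $\F^{(p)}$ and $\E^{(p)}$ from condition (1) of the definition (with its grading twist $\la -p(\l+p) \ra$) to get, up to an overall shift of the indexing,
$$\bigoplus_{i,j} \Hom\big(\F^{(p)}(A')\la i\ra[j],\, C\big) \;\cong\; \bigoplus_{i,j} \Hom\big(A'\la i\ra[j],\, \E^{(p)}(C)\big) \;=\; \Ext(A', A'),$$
whose right side is nonzero because it contains $\id_{A'} \ne 0$. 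Hence $\Hom(\F^{(p)}(A')\la i\ra[j], C) \ne 0$ for some $i, j$, contradicting the hypothesis. Therefore $C = 0$ and $S$ is a spanning class.

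The only point requiring any care will be the adjunction bookkeeping in the displayed isomorphism — tracking exactly which of the two adjunctions of condition (1) is used and which grading shift $\la r\l\ra$ converts $\Hom(\F^{(p)}(A'), C)$ into $\Hom(A', \E^{(p)}(C))$ — but this is routine. Otherwise the argument is purely formal, using only the structure of the (enhanced) strong categorical $\sl_2$ action and the vanishing of weight spaces outside $[-N, N]$, so I do not anticipate a real obstacle.
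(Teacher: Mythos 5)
Your proof is correct, but it takes a genuinely different route from the paper's. The paper proceeds by \emph{decreasing induction on} $\l$: assuming the spanning-class property in $\D(\l+2)$, it shows for any $A$ with $\Hom(S, A)=0$ that $\Hom(\F^{(p-1)}(C), \E(A)) = 0$ for all highest-weight $C$ (via $\F\F^{(p-1)} \cong \F^{(p)} \otimes H^\star(\p^{p-1})$ and adjunction), concludes $\E(A)=0$ by the inductive hypothesis, and then $A$ is itself a highest-weight element of $S$ forcing $\Hom(A,A)=0$, hence $A=0$. You instead argue \emph{directly}: pick the largest $p$ with $A' := \E^{(p)}(C) \ne 0$ (which exists since weight spaces vanish beyond $N$), show $A'$ is highest weight using $\E\circ\E^{(p)} \cong \E^{(p+1)}\otimes H^\star(\p^p)$ and maximality, and then produce a nonzero morphism $\F^{(p)}(A'\la -p(\l+p)\ra) \to C$ from $\id_{A'}$ via the $\E^{(p)}\dashv\F^{(p)}\la p(\l+p)\ra$ adjunction. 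This avoids the induction entirely and arguably makes the mechanism more transparent (the nonvanishing Hom is produced explicitly rather than obtained by contradiction through an inductive chain). Both arguments ultimately rest on the same two ingredients: the composition rule for divided powers and the adjunction between $\E^{(p)}$ and $\F^{(p)}$. One cosmetic remark: in your displayed sum the right-hand side is not literally $\Ext(A',A')$ as defined in the paper (which sums only over $\la\cdot\ra$-shifts), but this is harmless since you only need the single nonzero class $\id_{A'}$ in internal degree $0$.
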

\begin{proof}
We proceed by a decreasing induction on $\l$. The result is obvious when $\l = N $.  So now assume it holds for $ \l+2$. We need to show it also holds for $\l$.  Let $ A \in \D(\l) $ and assume that $ \Hom(B, A) = 0 $ for all $ B \in S $.  For highest weight object  $ C \in  \D(\l + 2p) $ we have 
$$ \F \F^{(p-1)}(C) \cong \oplus_{i=0}^{p-1} \F^{(p)}(C) [p-1-2i] $$ 
and so $ \Hom(\F \F^{(p-1)}(C), A) = 0 $. By adjunction, this implies that $ \Hom(\F^{(p-1)}(C), \E(A))=0 $.  Now $ \E(A) \in \D(\l+2) $, so by induction, we conclude that $\E(A) = 0$. Hence $A \in S$ (since $A$ is of highest weight), which means $\Hom(A,A) = 0$, implying that $A = 0$ (as desired).
\end{proof}

The following proposition completes the proof the Theorem \ref{th:equiv}.
\begin{Proposition}
$\T $ is an equivalence of categories.
\end{Proposition}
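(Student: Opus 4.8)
The plan is to apply Lemma~\ref{th:spanequiv} to the functor $\Phi=\T\colon\D(\l)\to\D(-\l)$, with the spanning class $S$ just constructed. Exactness of $\T$ is built into the enhanced structure (it is a convolution of exact functors realised inside $\D(\l,-\l)$), and $S$ has just been shown to be a spanning class for $\D(\l)$, so only two things remain to be checked: first, that the left and right adjoints of $\T$ are isomorphic; and second, that for every $B\in S$ the adjunction counit $\varepsilon\colon\T_L\T(B)\to B$ is an isomorphism.

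For the first point I would compute the adjoints of $\T$ term by term. Applying $(-)_L$ and $(-)_R$ to $\Theta_s=\F^{(\l+s)}(s)\E^{(s)}(\l+s)\la-s\ra$ and using $\E^{(r)}(\mu)_R=\F^{(r)}(\mu)\la r\mu\ra$ together with $\E^{(r)}(\mu)_L=\F^{(r)}(\mu)\la-r\mu\ra$ (and the analogous identities for the $\F$'s), the grading shifts coming from the two adjunctions cancel in the composite, so that $(\Theta_s)_L\cong(\Theta_s)_R$; the differentials of the two adjoint complexes match under these isomorphisms. Since the reversed complex also satisfies the $\Hom$-vanishing hypotheses of Proposition~\ref{th:uniquecone} --- this is the computation of Proposition~\ref{prop:uniqueconv} with the indices reversed --- its convolution is unique, whence $\T_L\cong\T_R=:\T^\vee$. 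I would also record here that $\T^\vee$ is, up to an overall shift, itself a complex of the form $\Theta_*$, now attached to the \emph{mirror} strong categorical $\sl_2$ action obtained by interchanging the roles of $\E$ and $\F$ (and of $\la1\ra$ and $\la-1\ra$); consequently Theorems~\ref{th:Tonhighweight2} and \ref{th:Tonhighweight} apply equally to $\T^\vee\colon\D(-\l)\to\D(\l)$.

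For the second point, fix $B=\F^{(p)}(A)\in S$ with $A\in\D(\l+2p)$ a highest weight object; we may assume $A\ne0$, so that $B\ne0$ and $\T(B)\ne0$. By Theorem~\ref{th:Tonhighweight}, $\T(B)\cong\F^{(\l+p)}(A)[-p(\l+p)]\{p(\l+p+1)\}$; and since $A$ is killed by $\E$, Lemma~\ref{th:abstractiso} identifies $\F^{(\l+p)}(A)$, up to a grading shift, with $\E^{(p)}$ applied to the lowest weight object $\F^{(\l+2p)}(A)\in\D(-\l-2p)$, so $\F^{(\l+p)}(A)$ lies in the mirror spanning class of $\D(-\l)$. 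Applying the mirror form of Theorem~\ref{th:Tonhighweight} to $\T^\vee$ then computes $\T^\vee(\F^{(\l+p)}(A))$, and after tracking the grading shifts (which turn out to be the inverses of the ones above, as they must be) one obtains $\T_L\T(B)\cong B$. The final, and most delicate, step is to check that this isomorphism is realised by the adjunction counit $\varepsilon_B$: I would first observe $\varepsilon_B\ne0$, which follows from the triangle identity $\T(\varepsilon_B)\circ\iota_{\T(B)}=\id_{\T(B)}\ne0$ (where $\iota\colon\id\to\T\T^\vee$ is the unit of $\T^\vee\dashv\T$), and then trace the two isomorphisms through the explicit description of $\T$ on $S$ from the proof of Theorem~\ref{th:Tonhighweight} --- there the isomorphism $\T(\F^{(p)}(A))\cong\F^{(\l+p)}(A)[\cdot]\{\cdot\}$ is induced by the map $\gamma'_p$, which is built from the adjunction unit $\eta$ --- and similarly for $\T^\vee$, using the triangle identities for $\T^\vee\dashv\T$ to recognise the composite as a nonzero multiple of $\varepsilon_B$.

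I expect this last identification to be the real work. Knowing only that $\T_L\T(B)$ and $B$ are abstractly isomorphic is not enough for Lemma~\ref{th:spanequiv}, which needs the specific counit to be an isomorphism; and since the highest weight objects $A$ are not assumed indecomposable, there is no cheap ``nonzero $\Rightarrow$ isomorphism'' argument available for $\varepsilon_B$. So the argument has to pass through the explicit maps $\gamma'_p$ of Theorems~\ref{th:Tonhighweight2} and \ref{th:Tonhighweight} and the compatibility of the adjunction $\T^\vee\dashv\T$ with the elementary adjunctions of the $\E$'s and $\F$'s. Once the two hypotheses are in place, Lemma~\ref{th:spanequiv} gives immediately that $\T$ is an equivalence of categories.
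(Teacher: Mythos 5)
Your overall strategy matches the paper's exactly: apply Lemma~\ref{th:spanequiv}, verify $\T_L\cong\T_R$ by computing the adjoints of each $\Theta_s$ term-by-term (and invoking uniqueness of convolutions), and then compute $\T_L\T$ on $S$ by identifying $\F^{(\l+p)}(A)$ with $\E^{(p)}$ applied to the lowest weight object $\F^{(\l+2p)}(A)$ and invoking a mirror form of Theorem~\ref{th:Tonhighweight}. That much is right and is what the paper does.

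Where you diverge is in the last step, and you have in fact talked yourself out of the paper's own argument. You correctly note that knowing $\T_L\T(B)\cong B$ abstractly is insufficient and that $\varepsilon_B\neq 0$ (from the triangle identity) is also insufficient, and you then conclude that because the $A$ are not assumed indecomposable ``there is no cheap nonzero $\Rightarrow$ isomorphism argument available,'' so one must laboriously trace the adjunctions through the explicit $\gamma'_p$. But the cheap argument \emph{is} available --- just not at the level of the object $B$. The maps $\alpha,\gamma'_p,\beta,\gamma_p$ and the whiskered counit $\varepsilon\F^{(p)}$ are all morphisms of \emph{kernels} in the enhanced setting, so the composite
\begin{equation*}
\F^{(p)}\;\xrightarrow{\;\gamma_p\beta\gamma'_p\alpha\;}\;\T_L\T\,\F^{(p)}\;\xrightarrow{\;\mathrm{adj}\;}\;\F^{(p)}
\end{equation*}
is an endomorphism of the kernel $\F^{(p)}$, and axiom (v) of a strong categorical $\sl_2$ action says $\End(\F^{(p)})=\k\cdot\id$. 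Thus this composite is either zero or an isomorphism of functors. If it were zero, then evaluating at $A$ and using that $\gamma_p\beta\gamma'_p\alpha$ is an isomorphism $\F^{(p)}(A)\to\T_L\T\F^{(p)}(A)$ (Theorem~\ref{th:Tonhighweight} plus its mirror), the counit $\mathrm{adj}_A=\varepsilon_B$ would have to vanish, contradicting the triangle identity you already noted (when $\T(B)\neq 0$; the case $\T(B)=0$ forces $B=0$). So the composite is a nonzero multiple of the identity, and precomposing an isomorphism with an isomorphism shows $\varepsilon_B$ is an isomorphism. This sidesteps entirely the delicate compatibility-chase you anticipated; it is the main shortcut in the paper's proof and the one thing your proposal is missing.
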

\begin{proof}
We are going to use Lemma \ref{th:spanequiv}. First we need to check that $\T_R \cong \T_L$. Recall that $\T$ is the convolution of $\Theta_*$ where $\Theta_s = \F^{(\l+s)}(s) \E^{(s)}(\l+s) [-s]\{s\}$ and $s=0, \dots, (N-\l)/2$. So 
\begin{eqnarray*}
{\Theta_s}_L 
&=& \E^{(s)}(\l+s)_L \F^{(\l+s)}(s)_L [s]\{-s\} \\
&\cong& \F^{(s)}(\l+s) \la -s(\l+s) \ra \E^{(\l+s)}(s) \la (\l+s)s \ra [s]\{-s\} \\
&\cong& \F^{(s)}(\l+s) \E^{(\l+s)}(s) [s]\{-s\}.
\end{eqnarray*}
Similarly, 
\begin{eqnarray*}
{\Theta_s}_R 
&=& \E^{(s)}(\l+s)_R \F^{(\l+s)}(s)_R [s]\{-s\} \\
&\cong& \F^{(s)}(\l+s) \la s(\l+s) \ra \E^{(\l+s)}(s) \la -(\l+s)s \ra [s]\{-s\} \\
&\cong& \F^{(s)}(\l+s) \E^{(\l+s)}(s) [s]\{-s\}.
\end{eqnarray*}
Hence the terms in the complexes for $\T_R$ and $\T_L$ are isomorphic. To see that the connecting maps are the same we (up to a non-zero multiple) we go back to their definition and note that they are made up of compositions involving $\iota$ and $\varepsilon$. Then we just use the fact that the right and left adjoints of the maps $\iota$ and $\varepsilon$ are the same (up to a non-zero multiple). For example, $\iota_R$ and $\iota_L$ are the same because they both represent maps which are unique (up to a non-zero multiple) and similarly with $\varepsilon$. Thus $\T_R \cong \T_L$. 

The second thing is we need to check that $\T_L \circ \T(A) \rightarrow A$ is an isomorphism for any $A \in S$. So let $ A \in \D(\l+2p)$ be a highest weight object. Then $ \T(\F^{(p)}(A)) \cong \F^{(\l+p)}(A)[p(\l+p)]\{-p(\l+p+1)\}$ by Theorem \ref{th:Tonhighweight}. Note that this isomorphism is induced by the map 
$$\gamma_p: \F^{(\l+p)} [-p(\l+p+1)]\{p(\l+p+1)\} \rightarrow \Theta_p \F^{(p)}.$$ 
as described in Theorem \ref{th:Tonhighweight2}.

Now, by Lemma \ref{th:abstractiso}, 
$$\E^{(p)} \F^{(\l+2p)}(0) : \D(\l+2p) \rightarrow \D(-\l)$$ 
is equal to $\F^{(\l+p)}(p) \oplus \U$ where $\U$ is a direct sum of terms $\F^{(b)} \E^{(a)}$ ($a > 0$). So 
$$\F^{(\l+p)}(A) [-p(\l+p+1)]\{p(\l+p+1)\} \cong \E^{(p)} \F^{(\l+2p)}(A) [-p(\l+p+1)]\{p(\l+p+1)\}$$
where $\F^{(\l+2p)}(A) \in \D(-\l-2p)$ is a lowest weight object, meaning $\F \F^{(\l+2p)}(A)=0$ (see Lemma \ref{lem:low-high} below). We will denote by $\beta$ the map 
$$\E^{(p)} \F^{(\l+2p)}(0) \xrightarrow{\beta} \F^{(\l+p)}.$$ 
which induces the isomorphism above when applied to highest weight objects $A$. 

Now the same analysis used in Theorem \ref{th:Tonhighweight} can be used to show that if $B \in \D(-\l-2p)$ is a lowest weight object then $\T_L \E^{(p)} (B) \cong \E^{(\l+p)}(B) [p(\l+p)]\{-p(\l+p+1)\}$ and is induced by some map 
$$\E^{(\l+p)} [p(\l+p+1)]\{-p(\l+p+1)\} \xrightarrow{\gamma_p'} {\Theta_p}_L \E^{(p)}.$$

If we combine all of this then we get 
\begin{eqnarray*}
\T_L \T \F^{(p)}(A) 
&\cong& \T_L \F^{(\l+p)}(A) [-p(\l+p)]\{p(\l+p+1)\} \\
&\cong& \T_L \E^{(p)} \F^{(\l+2p)}(A) [-p(\l+p)]\{p(\l+p+1)\} \\
&\cong& \E^{(\l+p)} \F^{(\l+2p)}(A) \\
&\cong& \F^{(p)}(A).
\end{eqnarray*}
To get the last isomorphism we use Lemma \ref{th:abstractiso} to conclude that $\E^{(\l+p)} \F^{(\l+2p)} \cong \F^{(p)} \oplus \U$ where $\U$ is of the form $\F^b \E^a$ ($a > 0$). As before, we also pick a map $\F^{(p)} \xrightarrow{\alpha} \E^{(\l+p)} \F^{(\l+2p)}$ which induces this last isomorphism. 

It remains to show that the adjunction map $\T_L \T \F^{(p)}(A) \rightarrow \F^{(p)}(A)$ induces this isomorphism. To do this we note, by reading the sequence of isomorphisms backwards, that the isomorphism $\F^{(p)}(A) \rightarrow \T_L \T \F^{(p)}(A)$ is induced by the composition 
\begin{eqnarray*}
\F^{(p)} \xrightarrow{\alpha} \E^{(\l+p)} \F^{(\l+2p)} &\xrightarrow{\gamma_p'}& \T_L \E^{(p)} \F^{(\l+2p)} [-p(\l+p)]\{p(\l+p+1)\} \\
&\xrightarrow{\beta}& \T_L \F^{(\l+p)} [-p(\l+p)]\{p(\l+p+1)\} \xrightarrow{\gamma_p} \T_L \T \F^{(p)}.
\end{eqnarray*} Now the composition $\F^{(p)} \xrightarrow{\gamma_p \beta \gamma_p' \alpha} \T_L \T \F^{(p)} \xrightarrow{\mbox{adj}} \F^{(p)}$ is either zero or (a non-zero multiple of) the identity because $\End(\F^{(p)}) \cong \k \cdot \id$. Now $\gamma_p \beta \gamma_p' \alpha: \F^{(p)}(A) \rightarrow \T_L \T \F^{(p)}(A)$ is an isomorphism so if the composition is zero then the induced map $\T_L \T \F^{(p)}(A) \xrightarrow{\mbox{adj}} \F^{(p)}(A)$  must be zero, which is absurd since it is the counit of an adjunction. Thus the composition must be an isomorphism and hence $\T_L \T \F^{(p)}(A) \xrightarrow{\mbox{adj}} \F^{(p)}(A)$ is also an isomorphism.
\end{proof}

\begin{Lemma}\label{lem:low-high} Let $\mu \ge 0$ and suppose $A \in \D(\mu)$ is a highest weight object. Then $\F^{(\mu)}(A) \in \D(-\mu)$ is a lowest weight object. Similarly, if $B \in \D(-\mu)$ is a lowest weight object then $\E^{(\mu)}(B) \in \D(\mu)$ is a highest weight object. 
\end{Lemma}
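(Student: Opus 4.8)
The plan is to prove the first statement; the second follows verbatim after interchanging the roles of $\E$ and $\F$ (and of $\mu$ and $-\mu$). So fix $\mu \ge 0$ and a highest weight object $A \in \D(\mu)$, i.e. $\E(A) = 0$; we must show that $\F$ applied to $\F^{(\mu)}(A) \in \D(-\mu)$ is zero. The first step is to reduce this to a statement about a divided power: by the composition rule for the $\F$'s, the composite functor $\D(\mu) \xrightarrow{\F^{(\mu)}(0)} \D(-\mu) \xrightarrow{\F(-\mu-1)} \D(-\mu-2)$ is isomorphic to $\F^{(\mu+1)}(-1) \otimes_\k H^\star(\p^\mu)$, so it suffices to prove $\F^{(\mu+1)}(-1)(A) = 0$.

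To do this I would show that the identity endomorphism of $\F^{(\mu+1)}(-1)(A)$ vanishes. Using the adjunction $\F^{(\mu+1)}(-1)_R = \E^{(\mu+1)}(-1)\la \mu+1\ra$ (which follows from condition (1)(b) of the definition), we have
$$\End\bigl(\F^{(\mu+1)}(-1)(A)\bigr) \cong \Hom\bigl(A,\ \E^{(\mu+1)}(-1)\F^{(\mu+1)}(-1)(A)\la \mu+1\ra\bigr),$$
so it is enough to show $\E^{(\mu+1)}(-1)\F^{(\mu+1)}(-1)(A) = 0$. Here I would invoke Lemma \ref{th:abstractiso} in the form valid whenever $\l - a + b \ge 0$ (see the Remark following it): with $\l = \mu$ and $a = b = \mu+1$ one has $\l - a + b = \mu \ge 0$ and
$$\E^{(\mu+1)}(-1)\F^{(\mu+1)}(-1) \cong \bigoplus_{j=0}^{\mu+1} \F^{(\mu+1-j)}\E^{(\mu+1-j)} \otimes_\k H^\star(\bG(j,\mu)).$$
Because $\E(A) = 0$ we get $\E^c(A) = 0$ for all $c$, hence $\E^{(c)}(A) = 0$ for $c \ge 1$ (as $\E^{(c)}$ is a retract of $\E^c$); thus every summand with $0 \le j \le \mu$ kills $A$, while the $j = \mu+1$ summand equals $A \otimes_\k H^\star(\bG(\mu+1,\mu)) = 0$ since $\bG(\mu+1,\mu) = \emptyset$. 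So $\E^{(\mu+1)}(-1)\F^{(\mu+1)}(-1)(A) = 0$, the $\End$-space above vanishes, and therefore $\F^{(\mu+1)}(-1)(A) = 0$; combined with the reduction this shows $\F(\F^{(\mu)}(A)) = 0$, i.e. $\F^{(\mu)}(A)$ is a lowest weight object. For the second statement, with $B \in \D(-\mu)$ and $\F(B) = 0$, the mirror argument gives $\F^{(c)}(B) = 0$ for $c \ge 1$, the composition rule gives $\E(\mu+1)\circ\E^{(\mu)}(0) \cong \E^{(\mu+1)}(1) \otimes_\k H^\star(\p^\mu)$, the $\E \leftrightarrow \F$ analogue of Lemma \ref{th:abstractiso} gives $\F^{(\mu+1)}(1)\E^{(\mu+1)}(1) \cong \bigoplus_j \E^{(\mu+1-j)}\F^{(\mu+1-j)} \otimes_\k H^\star(\bG(j,\mu))$, which annihilates $B$, and the adjunction $\E^{(\mu+1)}(1)_R = \F^{(\mu+1)}(1)\la \mu+1\ra$ then forces $\E^{(\mu+1)}(1)(B) = 0$, hence $\E(\E^{(\mu)}(B)) = 0$.

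The only delicate point is the weight bookkeeping: one has to verify that the composites in the reduction and the composite $\E^{(\mu+1)}(-1)\F^{(\mu+1)}(-1)$ are genuinely the ones produced by the composition rule and by Lemma \ref{th:abstractiso}, and in particular that we land in the range $\l - a + b \ge 0$, where the general form of Lemma \ref{th:abstractiso} applies — the weaker form proved in the text (which requires $\l - 2a \ge 0$) is not available here. Everything else is a formal consequence of the axioms of a strong categorical $\sl_2$ action together with the emptiness of $\bG(\mu+1,\mu)$.
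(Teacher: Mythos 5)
Your proof is correct, and its final step takes a genuinely different route from the paper's.  Both arguments perform the same first reduction: by the composition rule $\F\F^{(\mu)} \cong \F^{(\mu+1)} \otimes_\k H^\star(\p^\mu)$ it suffices to show $\F^{(\mu+1)}(A) = 0$.  At that point the paper looks at the \emph{other} composite $\F^{(\mu+2)}\E$ and decomposes it as $\F^{(\mu+1)} \oplus \U$ with $\U$ a sum of $\E\F$-terms; since $\E(A)=0$ the whole thing vanishes on $A$, so $\F^{(\mu+1)}(A)$, being a direct summand of zero, must be zero.  You instead show that $\End(\F^{(\mu+1)}(A))$ is zero via the adjunction $\F^{(\mu+1)}(-1)_R = \E^{(\mu+1)}(-1)\la\mu+1\ra$ and the decomposition of $\E^{(\mu+1)}\F^{(\mu+1)}$ from Lemma \ref{th:abstractiso}, finishing with the emptiness of $\bG(\mu+1,\mu)$.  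This works, but it is a slightly longer path: the paper reads off $\F^{(\mu+1)}(A)=0$ directly as a summand, without ever invoking the adjunction or the $\End$-space.  Your point about the weight range is accurate as stated — with $a=b=\mu+1$ and $\l=\mu$ you have $\l-2a = -\mu-2 < 0$, so you need the unproven Remark ($\l-a+b\ge 0$) rather than the hypothesis under which the paper actually proves Lemma \ref{th:abstractiso}.  It is worth noting, though, that the paper's own use of the $\F\E$ version of that decomposition for $\F^{(\mu+2)}\E$ acting on weight $\mu$ likewise falls outside the range $-\l-2b\ge 0$, so both arguments implicitly rest on the same extension of the lemma; your honest flagging of this is good practice, but it does not distinguish your argument from the paper's in rigor.
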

\begin{proof}
For the first claim we need to show that $\F \F^{(\mu)}(A) = 0$. Since $\F \F^{(\mu)} = \F^{(\mu+1)} \otimes_k H^\star(\p^{\mu})$ it suffices to show $\F^{(\mu+1)}(A)=0$. Now let's consider 
$$\F^{(\mu+2)}\E: \D(\mu+1) \rightarrow \D(-\mu-1).$$ 
By Lemma \ref{th:abstractiso} this is equal to $\F^{(\mu+1)} \oplus \U$ for some $\U$. But $\E(A)=0$ since $A$ is of highest weight so we get $\F^{(\mu+1)}(A) \oplus \U(A) = 0$. This means $\F^{(\mu+1)}(A) = 0$ and we are done. 

The assertion involving $B$ is proven in the same way. 
\end{proof}

\section{Equivalences $D(T^\star \bG(k,N)) \xrightarrow{\sim} D(T^\star \bG(N-k,N))$}\label{sec:application}

In \cite{ckl2} we described a strong categorical $\sl_2$ action on derived categories of cotangent bundles to Grassmannians. We briefly review that construction and then discuss the induced derived equivalence 
$$\T: D(T^\star \bG(k,N)) \xrightarrow{\sim} D(T^\star \bG(N-k,N))$$
via the construction of $\Theta$ given above. 

If the functors $\E, \F$ or $\T$ are FM tranforms then the corresponding kernels will be written using calligraphic font as $\sE, \sF$ and $\sT$. 

\subsection{Varieties and functors}

Fix $ N > 0 $. For each $ \l = -N, -N+2, \dots, N$, we set $ Y(\l) = T^\star \bG(k,N)$, the total cotangent bundle to the Grassmannian $T^\star \bG(k,N)$ where  $k=(N-\l)/2$.  There is an action of $ \C^\times $ on these varieties, scaling the fibres of these cotangent bundles.  We define our categories $ \D(\l) = D^{\C^\times} Coh(Y(\l)) $ to be the derived categories of $ \C^\times$-equivariant coherent sheaves on these varieties.

Cotangent bundles to Grassmannians have a particularly nice geometric description as
$$T^\star \bG(k,N) \cong \{(X,V): X \in \End(\C^N), 0 \subset V \subset \C^N, \dim(V) = k \text{ and } \C^N \xrightarrow{X} V \xrightarrow{X} 0 \}$$
where the notation $ \C^N \xrightarrow{X} V \xrightarrow{X} 0 $ means that $ X(\C^n) \subset V $ and that $ X (V)= 0$. Forgetting $X$ corresponds to the projection $T^\star \bG(k,N) \rightarrow \bG(k,N)$ while forgetting $V$ gives a resolution 
$$p_k: T^\star \bG(k,N) \xrightarrow{p_k} \{ X \in \End(\C^N) : X^2 =  0 \text{ and } \rank(X)  \le \min(k, N-k)\}.$$
Notice that $Y(\l) = T^\star \bG(k,N)$ we have the tautological vector bundle $V$ as well as the quotient bundle $\C^N/V$. In this picture, the action of $\C^\times$ on the fibres of $T^\star \bG(k,N)$ corresponds to acting by scalar multiplication on $X$. 

To describe the kernels $\sE$ and $\sF$ we use certain correspondences 
$$ W^r(\l) \subset T^\star \bG(k+r/2, N) \times T^\star \bG(k-r/2, N) $$ 
defined by
\begin{align*}
W^r(\l) := \{ (X,V,V') : &X \in \End(\C^N), \dim(V) = k + \frac{r}{2}, \dim(V') = k - \frac{r}{2}, 0 \subset V' \subset V \subset \C^N  \\
& \C^N \xrightarrow{X} V' \text{ and } V \xrightarrow{X} 0 \}
\end{align*}
(here, as before, $\l$ and $k $ are related by the equation $\l=N-2k$).

There are two natural projections $\pi_1: (X,V,V') \mapsto (X,V)$ and $\pi_2: (X,V,V') \mapsto (X,V')$ from $W^r(\l)$ to $Y(\l-r)$ and $Y(\l+r)$ respectively. Together they give us an embedding
$$(\pi_1, \pi_2): W^r(\l) \subset Y(\l-r) \times Y(\l+r).$$

On $W^r(\l)$ we have two natural tautological bundles, namely $V := \pi_1^*(V)$ and $V' := \pi_2^*(V)$ where the prime on the $V'$ indicates that the vector bundle is the pullback of the tautological bundle by the second projection. We also have natural inclusions
$$0 \subset V' \subset V \subset \C^N \cong \O_{W^r(\l)}^{\oplus N}.$$

We now define the kernel $\sE^{(r)}(\l) \in  D(Y(\l-r) \times Y(\l+r))$ by
\begin{equation*}
\sE^{(r)}(\l) := \O_{W^r(\l)} \otimes \det(\C^N/V)^{-r} \det(V')^r \{r(N-\l-r)/2\}.
\end{equation*}
Similarly, the kernel $\sF^{(r)}(\l) \in D(Y(\l+r) \times Y(\l-r))$ is defined by
\begin{equation*}
\sF^{(r)}(\l) := \O_{W^r(\l)} \otimes \det(V/V')^{\l} \{r(N+\l-r)/2\}.
\end{equation*}

These kernels induce functors by the formalism of Fourier-Mukai functors.  In \cite{ckl2}, we proved the following result. 

\begin{Theorem} \label{th:sl2action}
The functors induced by $\sE^{(r)}(\l)$ and $\sF^{(r)}(\l)$ define an enhanced strong categorical $\sl_2$ action. Here we have $ \la 1 \ra = [1] \{-1\} $ where $\{1\}$ is the shift of equivariant structure.
\end{Theorem}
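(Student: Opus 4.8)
The plan is to reduce this to the results of \cite{ckl2} together with a verification of the three ``enhanced'' assumptions that precede Theorem~\ref{th:equiv}. In \cite{ckl2} it is shown that the Fourier--Mukai functors attached to $\sE^{(r)}(\l)$ and $\sF^{(r)}(\l)$, together with natural transformations $\eta,\varepsilon,\iota,\pi,X,T$ each realized by an explicit morphism of kernels on the appropriate product $Y(\mu)\times Y(\mu')$, satisfy conditions (i)--(v) in the definition of a strong categorical $\sl_2$ action, including the nil affine Hecke relations. I would take this as given and concentrate on what ``enhanced'' adds, as well as on fixing the grading convention $\la 1 \ra = [1]\{-1\}$.

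First I would dispatch assumption (i): since each $Y(\l)$ is smooth, every FM functor $\E^{(p)}, \F^{(p)}$ is exact on $\D(\l) = D^{\C^\times}Coh(Y(\l))$, which carries its usual triangulated shift $[1]$; one then \emph{defines} the internal shift of the categorical structure by $\la 1 \ra = [1]\{-1\}$, equivalently $\{1\} = [1]\la -1 \ra$, where $\{1\}$ twists the $\C^\times$-equivariant grading. The point of this identification is that the isomorphisms appearing in the axioms (for instance the splitting $\F\F \cong \F^{(2)}\la -1 \ra \oplus \F^{(2)}\la 1 \ra$) hold on the nose as isomorphisms of kernels on $Y(\mu)\times Y(\mu')$, the two summands being related by a cohomological shift $[\pm 1]$ and a compensating equivariant twist $\{\mp 1\}$. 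Next, for assumption (ii), I would set $\D(\l,\l') = D^{\C^\times}Coh(Y(\l)\times Y(\l'))$, let $\Phi_\sE$ be the FM transform with kernel $\sE$, take $*$ to be convolution of kernels, and invoke the standard formalism of Fourier--Mukai functors for the compatibility of $\Phi$ with $*$, with $[1]$ and $\{1\}$, and with distinguished triangles. For Krull--Schmidt I would use the Remark after assumption (ii): $Y(\l)\times Y(\l')$ is a smooth variety so $D^{\C^\times}Coh$ of it is idempotent complete, and each graded piece $\Hom(\sE, \sF\{k\})$ is finite-dimensional because $Y(\l)$ admits a proper morphism $p_k$ to an affine variety, whence equivariant cohomology of coherent sheaves on the product is finite-dimensional in each $\C^\times$-weight. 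I expect this last finite-dimensionality --- the point where the non-properness of the $Y(\l)$ must be confronted --- to be the main technical obstacle.

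Finally, for assumption (iii) I would verify the kernel-level adjunctions: that convolution with $\sF^{(r)}(\l)\la r\l \ra$ is right adjoint (on either side) to convolution with $\sE^{(r)}(\l)$, and likewise for the left adjunction and the remaining pairs. This is a Grothendieck--Serre duality computation on the correspondence $W^r(\l) \subset Y(\l-r)\times Y(\l+r)$: the right adjoint kernel is the derived dual of $\sE^{(r)}(\l)$ twisted by the relative dualizing complexes of the two projections of $Y(\l-r)\times Y(\l+r)$ to a point, and I would then compare the line bundle and shift this produces --- read off from the (co)normal geometry of $W^r(\l)$ inside the product, which is smooth for $r=1$ and has an explicit dualizing complex coming from its flag-type description in general --- with the definition of $\sF^{(r)}(\l)\la r\l \ra$. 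The twists by $\det(\C^N/V)$, $\det(V')$, $\det(V/V')$ and the equivariant shifts in the definitions of $\sE^{(r)}$ and $\sF^{(r)}$ are precisely set up so that this comparison works out, and once the adjunctions are established the stated natural transformations must be their units and counits since the relevant $\Hom$ spaces are one-dimensional. Combining these three verifications with \cite{ckl2} yields the theorem.
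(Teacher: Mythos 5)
Your plan reproduces the paper's treatment: the paper gives no proof here beyond citing \cite{ckl2} for the strong categorical $\sl_2$ action, and the ``enhanced'' assumptions are indeed just a matter of unwinding the Fourier--Mukai formalism exactly as you describe --- taking $\D(\l,\l')=D^{\C^\times}Coh(Y(\l)\times Y(\l'))$, $*$ to be convolution of kernels, and verifying the kernel-level adjunctions and the Krull--Schmidt hypotheses via the proper maps $p_k$ to the affine nilpotent loci. The only small inaccuracy is attributing exactness of the Fourier--Mukai functors to smoothness of the $Y(\l)$: FM transforms are always exact as triangulated functors, and smoothness is instead what guarantees that the kernels are perfect so that the functors preserve the bounded derived categories, but this does not affect the argument.
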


Combining Theorem \ref{th:sl2action} with the main result of this paper we obtain:

\begin{Corollary} \label{cor:mainequiv} For any $ k \le N/2 $, we have an equivalence
\begin{equation*}
\T : \D(\l) = D(T^\star \bG(k, N)) \rightarrow D(T^\star \bG(N-k, N)) = \D(-\l)
\end{equation*}
induced by the convolution kernel of the complex $\Theta_*$ (here $\l = N - 2k$). 
\end{Corollary}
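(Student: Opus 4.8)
The plan is simply to feed the geometric input of Theorem~\ref{th:sl2action} into the abstract machine of Theorem~\ref{th:equiv}. First I would record that Theorem~\ref{th:sl2action} provides, for the fixed integer $N$, an \emph{enhanced} strong categorical $\sl_2$ action on the sequence $\D(-N),\dots,\D(N)$ with $\D(\l) = D^{\C^\times} Coh(Y(\l))$ and $Y(\l) = T^\star\bG((N-\l)/2, N)$. Here the auxiliary category $\D(\l,\l')$ required in the definition of an enhanced action is $D^{\C^\times} Coh(Y(\l)\times Y(\l'))$, the functor $\Phi$ sends a Fourier--Mukai kernel to the functor it induces, the monoidal operation $*$ is convolution of kernels, and the Krull--Schmidt hypothesis holds because the derived category of coherent sheaves on a variety is idempotent complete with finite-dimensional graded Hom spaces (cf.\ the Remark following assumption (ii) and section~\ref{sec:infmaps}). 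The shift convention $\la 1\ra = [1]\{-1\}$, with $\{1\}$ the shift of equivariant structure, is part of the statement of Theorem~\ref{th:sl2action}.

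Second, fix $k \le N/2$ and set $\l = N - 2k$, so that $\l \ge 0$ and $\D(\l) = D(T^\star\bG(k,N))$; since $-\l = N - 2(N-k)$, the opposite weight category is $\D(-\l) = D(T^\star\bG(N-k,N))$. Now Theorem~\ref{th:equiv}, applied to this enhanced action at the weight $\l \ge 0$, says precisely that the complex $\Theta_*(\l)$ of Fourier--Mukai functors $\D(\l)\to\D(-\l)$, with terms $\Theta_s = \F^{(\l+s)}(s)\E^{(s)}(\l+s)[-s]\{s\}$ for $s = 0,\dots,(N-\l)/2$, has a unique convolution $\T$, and that this $\T$ is an equivalence of categories. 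Because each $\Theta_s$ is a Fourier--Mukai transform and the convolution is formed inside $\D(\l,-\l) = D^{\C^\times} Coh(Y(\l)\times Y(-\l))$, the resulting equivalence $\T$ is itself induced by a convolution kernel $\sT$, namely the convolution in $\D(\l,-\l)$ of the complex $\Theta_*$ of kernels $\sF^{(\l+s)}\sE^{(s)}$. This gives exactly the asserted equivalence.

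I do not expect a genuine obstacle here: the Corollary is a direct specialization, and the only points that need attention are that the hypotheses of Theorem~\ref{th:equiv} are in force. The existence of the enhancement data (the categories $\D(\l,\l')$, the Fourier--Mukai realization $\Phi$, the chosen lifts of all structure maps, the compatible adjunctions, and Krull--Schmidt) is precisely what the phrase ``enhanced strong categorical $\sl_2$ action'' in Theorem~\ref{th:sl2action} packages, and it is verified in \cite{ckl2}; the inequality $\l \ge 0$ needed to even define $\Theta_*(\l)$ is exactly the hypothesis $k \le N/2$. For $k > N/2$ one would simply obtain the equivalence in the reverse direction (or pass to $\T^{-1}$), but the statement is phrased for $k \le N/2$ precisely to remain in the range $\l \ge 0$.
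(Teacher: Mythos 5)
Your proposal is correct and matches the paper's argument exactly: the Corollary is stated immediately after the observation ``Combining Theorem \ref{th:sl2action} with the main result of this paper we obtain,'' which is precisely the substitution you carry out. The extra bookkeeping you supply (identifying $\D(\l,\l')$ with $D^{\C^\times}Coh(Y(\l)\times Y(\l'))$, $\Phi$ with the Fourier--Mukai assignment, $*$ with kernel convolution, and checking $k\le N/2 \Leftrightarrow \l\ge 0$) is all accurate and implicit in what ``enhanced strong categorical $\sl_2$ action'' means in Theorem \ref{th:sl2action}.
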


This equivalence restricts to an equivalence of the non-equivariant derived categories of coherent sheaves and for the remainder of the paper, we will ignore the equivariant structure (in order to simplify notation). Our goal will be to describe the induced equivalence in greater detail.

\subsection{The fibre product $Z(k)$} 

Fix $ k \le N/2 $. For any $ r \le N/2$, let $ B(r) $ be the nilpotent orbit 
\begin{equation*}
B(r) := \{ X \in \End(\C^N) : X^2 = 0, \text{ and } \rank(X) = r \}.
\end{equation*}
Then $ \overline{B(r)} = B(0) \cup \cdots \cup B(r) $.

Recall from above that $  T^\star \bG(k, N) \xrightarrow{p_k} \overline{B(k)} $ and $ T^\star \bG(N-k, N) \xrightarrow{p_{n-k}} \overline{B(k)}  $ are two different resolutions of $ \overline{B(k)} $.  Hence it is instructive to consider the fibre product
\begin{equation*}
Z(k) := T^\star \bG(k,N) \times_{B(k)} T^\star \bG(N-k,N) = \{ (X, V_1, V_2) : \C^N \xrightarrow{X} V_1, V_1 \xrightarrow{X} 0, \C^N \xrightarrow{X} V_2, V_2 \xrightarrow{X} 0 \}
\end{equation*}

This variety $ Z(k) $ is the union of the conormal bundles to the $ GL_N $ orbits on $ T^\star \bG(k, N) \times T^\star \bG(N-k, N) $.  In particular, it consists of $ k +1 $ equi-dimensional components.  Another way to describe the components is as follows. For $ s = 0, \dots, k$, define
\begin{equation*}
Z^s(k) := \overline{p_{k}^{-1}(B(s)) \times_{B(s)} p_{N-k}^{-1}(B(s))} \subset T^\star \bG(k,N) \times T^\star \bG(N-k,N)
\end{equation*}
These $ Z^s(k) $ are the components of $ Z(k) $.

\subsection{The terms in complex $\Theta_s$}
Consider the complex of kernels $\Theta_*$, with
$$\Theta_s:= \sF^{(\l+s)}(s) * \sE^{(s)}(\l+s)[-s]: D(T^\star \bG(k,N) \times T^\star \bG(N-k,N))$$
where $s=0, \dots, k$ (here, as before, $\l = N-2k$). 

The following result relates these kernels to the components of the fibre product described above.
\begin{Proposition}\label{prop:tpieces}
Fix $k,N$ as above. Then 
\begin{equation}\label{eq:tpieces}
\Theta_s \cong \O_{Z^{s}(k)} \otimes \det(\C^N/V)^{s} \det(V')^{s} [- s] \subset D(T^\star \bG(k,N) \times T^\star \bG(N-k,N))
\end{equation}
\end{Proposition}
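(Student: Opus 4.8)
The plan is to compute the Fourier--Mukai convolution $\sF^{(\l+s)}(s) * \sE^{(s)}(\l+s)$ geometrically. Writing it as $R\pi_{13*}\bigl(\pi_{12}^*\,\sE^{(s)}(\l+s) \otimes^{L} \pi_{23}^*\,\sF^{(\l+s)}(s)\bigr)$, where $\pi_{12},\pi_{23},\pi_{13}$ are the projections of the triple product $T^\star\bG(k,N)\times T^\star\bG(k-s,N)\times T^\star\bG(N-k,N)$, the starting point is that each correspondence $W^{r}(\mu)$ is a vector bundle over a partial flag variety, hence smooth, and that $\sE^{(s)}(\l+s)$ and $\sF^{(\l+s)}(s)$ are line bundles on the smooth subvarieties $W^{s}(\l+s)$ and $W^{\l+s}(s)$ (up to equivariant shifts $\{\cdot\}$). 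Hence the object inside $R\pi_{13*}$ is a twisted structure sheaf of the fibre product over the middle factor,
\[
\widetilde{Z}^{s}(k) \;:=\; W^{s}(\l+s) \times_{T^\star\bG(k-s,N)} W^{\l+s}(s) \;=\; \{(X,V_1,V_2,V_3) : V_2 \subseteq V_1\cap V_3,\ X(\C^N)\subseteq V_2,\ XV_1 = XV_3 = 0\},
\]
with $\dim V_1 = k$, $\dim V_2 = k-s$, $\dim V_3 = N-k$.

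The second step is to show this intersection is \emph{clean}, so that $\Theta_s$ is a shifted sheaf rather than a genuine two-term complex. Over the dense open locus where $V_1\cap V_3 = V_2$, the fibre product $\widetilde{Z}^{s}(k)$ is the total space of the bundle with fibre $\Hom(\C^N/(V_1+V_3),V_2)$ over the smooth irreducible variety of chains $V_2\subseteq V_1,\ V_2\subseteq V_3$; this exhibits $\widetilde{Z}^{s}(k)$ as irreducible of dimension $2k(N-k)$, which one checks is also the expected dimension of the intersection inside the triple product. Since $W^{s}(\l+s)$ and $W^{\l+s}(s)$ are Cohen--Macaulay and meet in the expected dimension, the derived tensor product has no higher $\operatorname{Tor}$, and the scheme-theoretic intersection is Cohen--Macaulay and generically reduced, hence reduced. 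So $\pi_{12}^*\sE^{(s)}(\l+s)\otimes^{L}\pi_{23}^*\sF^{(\l+s)}(s)$ is a line bundle in cohomological degree zero on the reduced scheme $\widetilde{Z}^{s}(k)$ (up to a shift $\{\cdot\}$). One also checks that $\widetilde{Z}^{s}(k)$ is normal with rational singularities, e.g. via a flag-refinement resolution.

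The third step is the pushforward along $\pi_{13}$, which forgets $V_2$. Its image is a single component of the fibre product $Z(k)$, and identifying the generic $GL_N$-orbit of $X$ along $\widetilde{Z}^{s}(k)$ pins this component down as $Z^{s}(k)$; over the dense orbit of $Z^{s}(k)$ the subspace $V_2$ is recovered uniquely from $(X,V_1,V_3)$, so $\pi_{13}$ restricts to a proper birational morphism $\widetilde{Z}^{s}(k)\to Z^{s}(k)$. Since $Z^{s}(k)$ is normal with rational singularities --- it is a component of $Z(k)$, built out of (semismall) Springer-type resolutions of nilpotent orbit closures --- and $\widetilde{Z}^{s}(k)$ also has rational singularities, a standard argument through a common resolution gives $R\pi_{13*}\O_{\widetilde{Z}^{s}(k)}\cong\O_{Z^{s}(k)}$ with vanishing higher direct images. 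It remains to carry the twists through: the $\sE$-twist $\det(\C^N/V)^{-s}\det(V')^{s}$ of $\sE^{(s)}(\l+s)$ and the $\sF$-twist $\det(V/V')^{s}$ of $\sF^{(\l+s)}(s)$ restrict to $\widetilde{Z}^{s}(k)$ and multiply --- using $\det V_2\otimes\det(V_3/V_2)=\det V_3$ and the triviality of $\det\C^N$ --- to a line bundle pulled back from $T^\star\bG(k,N)\times T^\star\bG(N-k,N)$; by the projection formula its pushforward is $\O_{Z^{s}(k)}\otimes\det(\C^N/V)^{s}\det(V')^{s}$, and the equivariant shifts built into the two kernels combine with the $\langle -s\rangle = [-s]\{s\}$ appearing in the definition of $\Theta_s$ to leave exactly the homological shift $[-s]$.

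The hardest step will be the second one together with the higher-direct-image vanishing: one must check that the scheme-theoretic intersection of $\pi_{12}^{-1}W^{s}(\l+s)$ and $\pi_{23}^{-1}W^{\l+s}(s)$ is reduced of the expected dimension, and that $\widetilde{Z}^{s}(k)$ and $Z^{s}(k)$ have rational singularities so that $R^{i}\pi_{13*}\O=0$ for $i>0$; only then is $\Theta_s$ literally a shifted sheaf. The line-bundle and equivariant-grading bookkeeping of the last step is routine but must be tracked carefully to land on precisely $\det(\C^N/V)^{s}\det(V')^{s}[-s]$.
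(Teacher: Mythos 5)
Your overall plan---rewrite $\Theta_s$ as a Fourier--Mukai convolution, show the two correspondences meet in the expected dimension so that the intersection is Cohen--Macaulay and the derived tensor product is a sheaf, then push forward along $\pi_{13}$---matches the paper's strategy, but two of your key assertions are not actually proven and one of them is known (or at least acknowledged by the authors) to be potentially false.

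First, the irreducibility/dimension step. You show that the open locus of $\widetilde{Z}^s(k)$ where $V_1\cap V_3 = V_2$ is irreducible of dimension $2k(N-k)$, and then assert that the whole intersection therefore has the expected dimension. That does not follow: the intersection of two smooth subvarieties always has dimension $\ge$ the expected one, and to get the Cohen--Macaulay / no higher Tor conclusion you need the \emph{whole} scheme-theoretic intersection to have expected dimension, not just a dense open of it, and you need to know the open is in fact dense. You gesture at a "flag-refinement resolution" but do not carry it out; that resolution is precisely the content of the paper's argument. The paper introduces the explicit smooth variety $W'$ (adding an auxiliary subspace $\tilde{V}$ of dimension $N-(k-s)$), observes that it fibres in Grassmannian bundles over the smooth irreducible $W^{N-2k+2s}(0)$, and hence that $W'$, and therefore its image $W = \widetilde{Z}^s(k)$, is irreducible of the expected dimension. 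Without some such global argument, your "meets in expected dimension" claim is unsubstantiated.

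Second, the pushforward step. You assert that $Z^s(k)$ is normal with rational singularities and invoke a common-resolution argument to conclude $R\pi_{13*}\O_{\widetilde{Z}^s(k)} \cong \O_{Z^s(k)}$ with vanishing higher direct images. But normality of $Z^s(k)$ is \emph{not known}: the paper explicitly flags this in the Remark after the Proposition and states that $\O_{Z^s(k)}$ in the displayed formula should really be read as the normalization $f_*\O_{\tilde Z^s(k)}$. Your parenthetical justification (that $Z^s(k)$ is built from semismall Springer-type resolutions of nilpotent orbit closures) does not establish normality of the components of the fibre product. The paper sidesteps the issue entirely: it computes $\omega_{W'}$ explicitly (Lemma \ref{lem:can}), checks it is the pullback of a line bundle from $Z^s(k)$, and applies Grauert--Riemenschneider (together with the trick of testing against a relatively ample line bundle over the affine base) to deduce $R^{i}\pi_* \O_{W'} = 0$ for $i>0$ and $R^0\pi_*\O_{W'}$ equal to the normalization of $\O_{Z^s(k)}$. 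This route requires no unverified claims about $Z^s(k)$. As written your proof asserts a sharper conclusion than the paper's, but supports it only with a heuristic; you would either need to genuinely prove normality of $Z^s(k)$ (which would be a new result) or weaken the conclusion to the normalization as the paper does.

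The line-bundle and grading bookkeeping in your last step looks correct and agrees with what the paper does.
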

In particular $ \Theta_s $ is a sheaf shifted into degree $ s $.

\begin{Remark} We do not actually know for sure that $Z^s(k)$ is normal so $\O_{Z^s(k)}$ in equation (\ref{eq:tpieces}) should really be the normalization of $\O_{Z^s(k)}$ (in other words $f_* \O_{\tilde{Z}^s(k)}$ where $f: \tilde{Z}^s(k) \rightarrow Z^s(k)$ is the normalization of $Z^s(k)$). In either case though $\Theta_s$ is a sheaf.
\end{Remark}

\begin{proof}
We will be working on the triple product $Y(\l) \times Y(\l+s) \times Y(-\l)$ where, as before, $\l = N-2k$ (so $Y(\l) = T^\star \bG(k,N)$). 

We need to compute
$$\Theta_s = \pi_{13*}(\pi_{12}^*(\O_{W^s(\l+s)} \otimes \det(\C^N/V)^{-s} \det(V')^s) \otimes \pi_{23}^*(\O_{W^{\l+s}(s)} \otimes \det(V'/V)^s)).$$
Fortunately, the intersection 
\begin{eqnarray*}
W 
&:=& \pi_{12}^{-1}(W^s(\l+s)) \cap \pi_{23}^{-1}(W^{\l+s}(s)) \in Y(\l) \times Y(\l+s) \times Y(-\l) \\
&=& \{0 \xrightarrow{k-s} V' \overset{s}{\underset{N-2k+s}{\rightrightarrows}} \begin{matrix} V \\ V'' \end{matrix} \overset{N-k}{\underset{k}{\rightrightarrows}} \C^N: XV \subset 0 \text{ and } XV'' \subset 0 \text{ and } X\C^N \subset V'\}
\end{eqnarray*}
is irreducible and of the expected dimension. To see this note that $\dim W^s(\l+s) = \frac{1}{2}(\dim Y(\l) + \dim Y(\l+2s))$ and similarly $\dim W^{\l+s}(s) = \frac{1}{2}(\dim Y(\l+2s) + \dim Y(-\l))$. So the expected dimension of the intersection $W$ is $\frac{1}{2}(\dim Y(\l) + \dim Y(-\l))$. On the other hand, $W$ has the resolution
$$W' := \{0 \xrightarrow{k-s} V' \overset{s}{\underset{N-2k+s}{\rightrightarrows}} \begin{matrix} V \\ V'' \end{matrix} \overset{N-2k+s}{\underset{s}{\rightrightarrows}} \tilde{V} \xrightarrow{k-s} \C^N: X \tilde{V} \subset 0 \text{ and } X\C^N \subset V'\}.$$
Now $W'$ fibres over $W^{N-2k+2s}(0)$ (which is smooth) by forgetting $V$ and $V''$. Thus $W'$ is smooth and irreducible of dimension $2s(l-k+s) + \dim W^{N-2k+2s}(0)$. This simplifies to give the same as the expected dimension calculated above. 

Consequently, 
\begin{eqnarray*}
\Theta_s &\cong& \pi_{13*}(\O_W \otimes \det(\C^N/V)^{-s} \det(V')^s \otimes \det(V''/V')^s) \\
&\cong& \pi_{13*}(\O_W \otimes \det(\C^N/V)^{-s} \det(V'')^s) \\
&\cong& \pi_{13*}(\O_W) \otimes \det(\C^N/V)^{-s} \det(V)^s
\end{eqnarray*}
where we obtain the last line by applying the projection formula ($\pi_{13}$ consists of forgetting $L_1'$ so $\det(\C^N/V)^{-s} \det(V)^s$ is a pullback from $Z^s(k)$). Now $W$ is a local complete intersection since it is the intersection of $\pi_{12}^{-1} W^s(\l+s)$ and $\pi_{23}^{-1} W^{\l+s}(s)$ which are both smooth. In particular, this means $W$ is normal. 

Now, in general, consider a map $p: A \rightarrow B$ where $B$ is a projective scheme. If for some very ample line bundle $M$ we have $H^i(p^* M) = 0$ for $i > 0$ then $p_* \O_A$ has no higher cohomology. This is because if $p_* \O_A$ had higher cohomology then $H^i(p^* M) \cong H^i(M \otimes p_* \O_A)$ would be non-zero for some $i>0$. The same argument works if $B$ is not necessarily projective but there is a projective map $f: B \rightarrow B'$ where $B'$ is affine. In the case $M$ can be taken to be relatively very ample with respect to $f$. 

Notice $W$ admits a projective map onto an affine variety by forgetting $V,V',V''$. So we can apply the above paragraph to the map $\pi': W' \rightarrow W$. By Lemma \ref{lem:can} the canonical bundle of $W'$ is the pullback of a line bundle from $W$. For simplicity denote this line bundle $M'$. Then 
$$H^i(\pi'^* M) \cong H^i(\omega_{W'} \otimes \pi'^*(M \otimes M'^\vee)).$$
Now $\pi'^*(M \otimes M'^\vee)$ is big and nef so by the Grauert-Riemenschneider vanishing theorem the right side vanishes. Thus $\pi'_*(\O_{W'})$ has no higher cohomology and we get
$$\pi'_*(\O_{W'}) \cong R^0 \pi'_*(\O_{W'}) \cong \O_W$$
where the second isomorphism follows since $W$ is normal. Thus to compute $\pi_{13*}(\O_W)$ we just need to compute $\pi_* \O_{W'}$ where $\pi: W' \rightarrow Z^s(k)$. Again by Lemma \ref{lem:can} we see that the canonical bundle of $W'$ is the pullback of a line bundle from $Z^s(k)$. Thus $\pi_{\O_{W'}}$ is also a sheaf and we get $\pi_* \O_{W'} \cong R^0 \pi_* \O_{W'}$. If $Z^s(k)$ were normal this would just be $\O_{Z^s(k)}$, otherwise it is the normalization of $\O_{Z^s(k)}$.  

\begin{Lemma}\label{lem:can} The canonical bundle of 
$$W' = \{0 \xrightarrow{k-s} V' \overset{s}{\underset{N-2k+s}{\rightrightarrows}} \begin{matrix} V \\ V'' \end{matrix} \overset{N-2k+s}{\underset{s}{\rightrightarrows}} \tilde{V} \xrightarrow{k-s} \C^N: X \tilde{V} \subset 0 \text{ and } X\C^N \subset V'\}.$$
is $(\det V \otimes \det V'')^{N-2k+2s}$. 
\end{Lemma}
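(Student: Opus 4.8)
The plan is to realize $W'$ as an iterated bundle over a point, each stage being either a Grassmann bundle or the total space of a vector bundle, and then to compute $\omega_{W'}$ as the tensor product of the pullbacks of the successive relative canonical bundles. Reading off a point of $W'$ in the order in which the data may be chosen, the tower is: first pick $V'\in\bG(k-s,N)$; then pick $\tilde V/V'\subset\C^N/V'$, i.e.\ a point of the Grassmann bundle $\bG(N-2k+2s,\C^N/V')$ (whose total space is the two-step flag variety ${\bF}l(k-s,N-k+s;N)$); then pick $V/V'\subset\tilde V/V'$ together with $V''/V'\subset\tilde V/V'$, i.e.\ a point of $\bG(s,\tilde V/V')\times\bG(N-2k+s,\tilde V/V')$; and finally pick the nilpotent $X$, which — because $\operatorname{im} X\subset V'$ and $X\tilde V=0$ force $X^2=0$ automatically — is an arbitrary element of the fibre of the vector bundle $(\C^N/\tilde V)^\vee\otimes V'$. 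This is exactly the fibration $W'\to W^{N-2k+2s}(0)$ used above, with the base $W^{N-2k+2s}(0)$ itself broken into a tower (a vector bundle over a Grassmann bundle over $\bG(k-s,N)$).

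First I would record the two standard formulas that feed every step. For a Grassmann bundle $\bG(a,\mathcal W)\to B$ with tautological sub-bundle $\mathcal S$ of rank $a$ inside $\mathcal W$ of rank $w$, the relative canonical bundle is $(\det\mathcal S)^{\otimes w}\otimes(\det\mathcal W)^{\otimes(-a)}$; for the total space of a vector bundle $E\to B$ it is the pullback of $(\det E)^{\otimes(-1)}$. Applying the first formula to the four Grassmann bundles of the tower and the second to $(\C^N/\tilde V)^\vee\otimes V'$ — using repeatedly that $\det\C^N$ is trivial and that $\det\mathcal W=\det\mathcal A\otimes\det\mathcal B$ for any short exact sequence $0\to\mathcal A\to\mathcal W\to\mathcal B\to 0$ — one writes each of the five relative canonical bundles as a product of powers of $\det V'$, $\det\tilde V$, $\det V$ and $\det V''$.

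What remains is pure bookkeeping: tensor the five contributions and collect exponents. The powers of $\det V'$ sum to $N+(s-k)+(2k-N-s)+(-s)+(s-k)=0$, the powers of $\det\tilde V$ likewise cancel, and the powers of $\det V$ and of $\det V''$ each come from a single stage and total $N-2k+2s$; this gives $\omega_{W'}\cong(\det V\otimes\det V'')^{\otimes(N-2k+2s)}$. As a check on the arithmetic, the fibre dimensions add to $2(k-s)(N-k+s)+2s(N-2k+s)=2k(N-k)$, which agrees with the dimension of $W'$ found in the proof of Proposition~\ref{prop:tpieces}. The only real obstacle is keeping the determinant computation organized; there is no geometric subtlety, since the smoothness of $W'$ and the fibration structure have already been established.
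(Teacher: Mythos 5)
Your computation is correct; the answer $(\det V\otimes\det V'')^{N-2k+2s}$ matches the Lemma (and, incidentally, it exposes a small typo in the final displayed line of the paper's own proof, where the exponent on $\det V$ should be $N-2k+2s$ rather than $N-2k+s$). You and the paper share the first step, the Grassmann-bundle fibration $W'\to W^{N-2k+2s}(0)$ with fibre $\bG(s,\tilde{V}/V')\times\bG(N-2k+s,\tilde{V}/V')$, but you compute the canonical bundle of the base $W^{N-2k+2s}(0)$ by a different mechanism. The paper exhibits $W^{N-2k+2s}(0)$ as a complete intersection inside the cotangent bundle $T^\star Fl(k-s,N-2k+2s,k-s;N)$ of the two-step flag variety, cut out by the two sections $X\colon\C^N/\tilde{V}\to\tilde{V}/V'$ and $X\colon\tilde{V}/V'\to V'$, and then applies adjunction ($\omega_X\cong\omega_Y\otimes\det U$) together with the triviality of the canonical bundle of a cotangent bundle. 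You instead exhibit $W^{N-2k+2s}(0)$ as the total space of the vector bundle $(\C^N/\tilde{V})^\vee\otimes V'$ over the flag variety itself and use $\omega_E\cong\pi^*(\omega_B\otimes\det E^\vee)$. Your route is slightly more elementary, needing nothing beyond the relative-canonical formulas for Grassmann bundles and vector-bundle total spaces, while the paper's is marginally shorter once one takes $\omega_{T^\star Fl}\cong\mathcal{O}$ as known. Both arguments are valid, and the exponent bookkeeping in your proposal checks out.
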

\begin{proof}
Consider the projection map
$$W' \rightarrow \{0 \xrightarrow{k-s} V' \xrightarrow{N-2k+2s} \tilde{V} \xrightarrow{k-s} \C^N: X \tilde{V} \subset 0 \text{ and } X \C^N \subset V' \}$$
given by forgetting $V$ and $V''$. This is a Grassmannian bundle $\bG(s,\tilde{V}/V') \times \bG(N-2k+s, \tilde{V}/V')$ so that the relative canonical bundle is 
$$\left( \det(V/V')^{N-2k+s} \det(\tilde{V}/V)^{-s} \right) \otimes \left( \det(V''/V')^s \det(\tilde{V}/V'')^{-N+2k-s} \right).$$
Now 
$$\{0 \xrightarrow{k-s} V' \xrightarrow{N-2k+2s} \tilde{V} \xrightarrow{k-s} \C^N: X \tilde{V} \subset 0 \text{ and } X \C^N \subset V' \}$$
is carved out of 
$$\{0 \xrightarrow{k-s} V' \xrightarrow{N-2k+2s} \tilde{V} \xrightarrow{k-s} \C^N: X \tilde{V} \subset 0 \text{ and } X \C^N \subset \tilde{V} \}$$
via the section $X: \C^N/\tilde{V} \rightarrow \tilde{V}/V'$ and is in turn cut out of
$$\{0 \xrightarrow{k-s} V' \xrightarrow{N-2k+2s} \tilde{V} \xrightarrow{k-s} \C^N: X V' \subset 0 \text{ and } X \tilde{V} \subset V' \text{ and } X \C^N \subset \tilde{V}\}$$
via the section $X: \tilde{V}/V' \rightarrow V'$. In general, if $X \subset Y$ is cut out by a section of a vector bundle $U$ then 
$$\omega_X \cong \omega_Y \otimes \det U.$$
In our case the last space is the cotangent bundle to the partial flag variety $Fl(k-s,N-2k+2s,k-s;N)$ so the canonical bundle is trivial. So we get:
\begin{eqnarray*}
\omega_{W'} 
&\cong& \left( \det(V/V')^{N-2k+s} \det(\tilde{V}/V)^{-s} \right) \otimes \left( \det(V''/V')^s \det(\tilde{V}/V'')^{-N+2k-s} \right) \\
& & \otimes \left( \det(\C^N/\tilde{V})^{-N+2k-2s} \det(\tilde{V}/V')^{k-s} \right) \otimes \left( \det(\tilde{V}/V')^{-k+s} \det(V')^{N-2k+2s} \right). 
\end{eqnarray*}
Collecting terms and simplifying we are left with 
$$\omega_{W'} \cong \det(V)^{N-2k+s} \det(V'')^{N-2k+2s}.$$
\end{proof}

\end{proof}

\subsection{The equivalence}

By Theorem \ref{th:equiv}, the complex $ \Theta_* $ has a unique convolution and its cone  $\sT := \Cone(\Theta_*) \in D(T^\star \bG(k,N) \times T^\star \bG(N-k,N))$ induces the equivalence $ \T $.

\begin{Proposition}\label{prop:tsheaf}
The kernel $\sT \in D(T^\star \bG(k,N) \times T^\star \bG(N-k,N))$ is a sheaf with support $\operatorname{supp}(\sT) = Z(k)$. 
\end{Proposition}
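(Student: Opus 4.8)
The plan is to realize $\sT$ as an iterated extension of the kernels $\Theta_s$ and to exploit the fact, established in Proposition~\ref{prop:tpieces}, that each $\Theta_s$ is a single sheaf $\mathcal{G}_s$ (a line-bundle twist of $\O_{Z^s(k)}$, or of its normalization) concentrated in cohomological degree exactly $s$. Since $\la k\ra = [k]\{-k\}$, this degree statement is exactly what is needed to keep the construction inside the abelian category of sheaves.

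Concretely, fix a Postnikov system $B_0,\dots,B_k=\sT$ for the complex $\Theta_*$ (it exists by Proposition~\ref{prop:uniqueconv}), so that $B_s[-s] \xrightarrow{g_s} \Theta_s \xrightarrow{h_s} B_{s-1}[-(s-1)]$ is a distinguished triangle for each $s$. I would prove by induction on $s$ that $B_s$ is a sheaf and that there is a short exact sequence $0 \to B_{s-1} \to B_s \to \mathcal{G}_s \to 0$. The base case is $B_0 = \Theta_0 = \mathcal{G}_0$. For the inductive step, rotating the triangle and shifting by $[s-1]$ gives $B_s \cong \Cone(\mathcal{G}_s[-1] \xrightarrow{h_s} B_{s-1})$, using $\Theta_s = \mathcal{G}_s[-s]$. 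By induction $B_{s-1}$ is a sheaf, so $h_s$ lies in $\Hom(\mathcal{G}_s[-1], B_{s-1}) = \Hom(\mathcal{G}_s, B_{s-1}[1])$, an $\Ext^1$-group between sheaves; the long exact sequence of cohomology sheaves of the cone then shows that $B_s$ is precisely the sheaf extension $0 \to B_{s-1} \to B_s \to \mathcal{G}_s \to 0$ classified by $h_s$. Hence $\sT = B_k$ is a sheaf, and it carries a filtration $0 = B_{-1} \subset B_0 \subset \cdots \subset B_k = \sT$ with successive quotients $B_s/B_{s-1} \cong \mathcal{G}_s$.

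The support statement is then immediate from this filtration: for a finitely filtered sheaf the support is the union of the supports of the graded pieces (a stalk vanishes iff every graded stalk does), so $\operatorname{supp}(\sT) = \bigcup_{s=0}^{k}\operatorname{supp}(\mathcal{G}_s) = \bigcup_{s=0}^{k} Z^s(k) = Z(k)$, where the second equality holds because $\mathcal{G}_s$ is a line-bundle twist of (the normalization of) $\O_{Z^s(k)}$ and is therefore supported exactly on $Z^s(k)$, and the last equality is the decomposition of $Z(k)$ into its components recalled earlier in this section. The only point requiring real care is the cohomological-degree bookkeeping in the inductive step: one must know that $\Theta_s$ is concentrated in degree exactly $s$, which is precisely the content of Proposition~\ref{prop:tpieces}; granting that, the argument here is purely formal, all of the geometric input (irreducibility and dimension of the relevant intersections, the vanishing theorems used to compute the pushforwards) having already been dispatched in the proof of that proposition.
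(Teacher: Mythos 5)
Your proof is correct and follows essentially the same strategy as the paper: both arguments observe that each $\Theta_s$ is a sheaf concentrated in cohomological degree $s$, run through the Postnikov system for the convolution, and use the long exact sequence of cohomology sheaves of the triangles $B_{s-1}\to B_s\to \Theta_s[s]$ to conclude inductively that every $B_s$ is a sheaf. Your write-up is a bit more explicit than the paper's: you record the short exact sequences $0\to B_{s-1}\to B_s\to\mathcal{G}_s\to 0$ and from them the filtration of $\sT$, which in particular makes the support identity $\operatorname{supp}(\sT)=\bigcup_s Z^s(k)=Z(k)$ a clean formal consequence (stalkwise, a finitely filtered sheaf vanishes iff all graded pieces do), whereas the paper states this identity without elaboration; but this is a presentational refinement rather than a different method.
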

\begin{proof}
Since $Z(k) = \bigcup_s Z^s(k)$ and $\operatorname{supp}(\Theta_s) = Z^s(k)$ it follows that $\operatorname{supp}(\sT) = Z(k)$. 

The less obvious part is that $\sT$ is a sheaf. To see this recall that $\sT$ is the convolution of the complex $\Theta_\bullet $.  Let $ B_\bullet $ be a Postnikov system with $ \sT = B_p$. 

We have a distiguished triangle 
\begin{equation*}
 B_0 = \Theta_0 \rightarrow B_1 \rightarrow \Theta_1[1]
\end{equation*}
and $\Theta_0$, $\Theta_1[1] $ are both sheaves.  Hence by the long exact sequence in cohomology $ B_1 $ is a sheaf.  

By the same argument we see that all $ B_i $ are sheaves and in particular $\sT = B_p $ is a sheaf.
\end{proof}

\subsection{Mukai flops}
We consider now the case $ k = 1 $. The varieties $T^\star \bG(1,N)$ and $T^\star \bG(N-1,N)$ are related by a standard Mukai flop via the diagram 
\begin{equation*}
\xymatrix{
& W^{N-2}(0) \ar[dl]_{\pi_1} \ar[dr]^{\pi_2} & \\ 
T^\star \bG(N-1,N) & & T^\star \bG(1,N) }
\end{equation*}
where both $\pi_1$ and $\pi_2$ are blowups in the zero sections. It was shown by Kawamata in \cite{kaw1} and Namikawa in \cite{nam1} that 
$$\pi_{2*} \circ \pi_1^*: D(T^\star \bG(1,N)) \rightarrow D(T^\star \bG(N-1,N))$$
does {\em not} induce an equivalence. But also in those papers they show that replacing $W^{N-2}(0)$ by the fibre product to get 
\begin{equation*}
\xymatrix{
& Z(1) = T^\star \bG(N-1,N) \times_{B(1)} T^\star \bG(1,N) \ar[dl]_{\pi'_1} \ar[dr]^{\pi'_2} & \\
T^\star \bG(N-1,N) \ar[dr] & & T^\star \bG(1,N) \ar[dl] \\
& B(1) & }
\end{equation*}
gives that $\pi'_{2*} \circ {\pi'}_1^*$ {\em is} an equivalence. In other words, the kernel $\O_{Z(1)}$ is an equivalence. 

Now, as noted above, our kernel $\sT = \Cone(\Theta_*)$ is also supported on $Z(1)$.  In this case, it is isomorphic to $\O_{Z(1)}$ (up to tensoring by pullbacks of line bundles from the two factors).  So we recover the same equivalence studied by Namikawa and Kawamata.

\subsection{The case of $ T^\star \bG(2,4)$}
Consider now $ N = 4 $ and $ k = 2 $.  This is the simplest situation not covered in the above section.

In contrast to the Mukai flop situation above, Namikawa showed in \cite{nam2} that the kernel 
$$\O_{Z(2)} \in D(T^\star \bG(2,4) \times T^\star \bG(2,4))$$
does {\em not} induce an equivalence.

 Subsequently, Kawamata in \cite{kaw2} was able to tweak the kernel $\O_{Z(2)}$ to obtain a new kernel (which is still a sheaf) and which does give an auto-equivalence of $D(T^\star \bG(2,4))$. However, we could not directly generalize his technique to more general $T^\star \bG(k,N)$. 

In \cite{C} we identify our kernel 
$$\sT \in D(T^\star \bG(k,N) \times T^\star \bG(N-k,N))$$ 
as the pushforward of an explicit line bundle from an open subset. Using this description we check that we get the same kernel as Kawamata's in the $T^\star \bG(2,4)$ case.

On the other hand, it is not hard to see that $\sT$ is {\em not} isomorphic to $\O_{Z(k)}$ for $k \ne 0, 1$ even after tensoring with pullbacks of line bundles from the two factors. This is because our kernels $\sT$ are always Cohen-Macaulay (CM). What this means is that their (derived) dual remains a sheaf (although it might be shifted in cohomological degree). The reason for this is that each component $\Theta_s$ is CM because it is (up to tensoring by a line bundle) the structure sheaf of some $Z^s(k)$ which is CM. Thus $(\Theta_*)^\vee$ is a complex made up of terms $(\Theta_s)^\vee$ which are sheaves (in the appropriate degrees) so that like in the proof of \ref{prop:tsheaf} its cone $\sT^\vee = \Cone((\Theta_*)^\vee)$ is also a sheaf. 

However, by Lemma \ref{lem:notCM} below, $Z(k)$ is not CM if $k \ne 0,1$. This perhaps explains (at least philosophically) why $\O_{Z(2)}$ did not induce an equivalence in the $T^\star \bG(2,4)$ example above. For this same reason, we suspect that $\O_{Z(k)}$ induces an equivalence {\em only} when $k=0,1$. 

\begin{Lemma}\label{lem:notCM} $Z(k)$ is not CM if $k \ne 0,1$. 
\end{Lemma}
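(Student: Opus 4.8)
The plan is to show that $Z(k)$ fails to satisfy Serre's condition $S_2$, which for a variety that is generically reduced (indeed generically smooth along each component) is equivalent to failing to be Cohen--Macaulay. The key structural fact, established above, is that $Z(k)$ has $k+1$ equidimensional irreducible components $Z^0(k), \dots, Z^k(k)$. The idea is to analyze how these components meet: I would show that the union $Z^s(k) \cup Z^{s+1}(k)$ of two consecutive components intersects in codimension one (inside each), and more importantly that the ``local picture'' near a generic point of a suitable intersection stratum looks like two (or more) smooth branches of the same dimension crossing along a locus of codimension $\geq 2$ in the total space $Z(k)$. Concretely, one stratifies $Z(k)$ by the ranks of $X$ restricted appropriately (equivalently by the relative position of $V_1, V_2$ and $\ker X$, $\operatorname{im} X$), and checks that over the locus where $X = 0$ --- which is $\bG(k,N) \times \bG(N-k,N)$, sitting inside every component $Z^s(k)$ --- the components all pass through with distinct tangent directions.

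The main step is the local computation at a generic point $z$ of the ``most degenerate'' stratum (say $X=0$, or more precisely the smallest stratum through which many components pass). Here one wants to show that the completed local ring $\widehat{\mathcal{O}}_{Z(k),z}$ is not $S_2$. The cleanest route is: (i) identify the formal neighborhood of $Z(k)$ at $z$ with a product of a smooth factor and a ``model'' singularity built from the $k+1$ conormal components meeting along a common smooth locus; (ii) observe that this model, being a union of $k+1 \geq 3$ smooth equidimensional pieces whose pairwise intersections have the wrong codimension (codimension $2$ rather than $1$ in each piece, once $k \geq 2$), has a local cohomology group $H^1_z$ that is nonzero, violating $S_2$. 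The comparison with the $k=1$ case is instructive and should be invoked: there only two components meet, they meet in codimension one, and $Z(1)$ is in fact Cohen--Macaulay (indeed it is the expected resolution locus of the Mukai flop), so the argument must genuinely use $k \geq 2$.

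Alternatively --- and this may be the more practical execution --- I would compute $\omega_{Z(k)}$ or the dualizing complex directly. Each $Z^s(k)$ is Cohen--Macaulay (it is, up to a line bundle twist, the convolution component $\Theta_s$, shown above to be a sheaf given by the structure sheaf of $Z^s(k)$, and being a local complete intersection image it is CM), so $\omega_{Z^s(k)}$ is a sheaf. The dualizing complex of the reducible scheme $Z(k)$ is then assembled from these via a Mayer--Vietoris / gluing spectral sequence along the intersections $Z^s(k) \cap Z^{s'}(k)$. I would show that the failure of this complex to be concentrated in a single degree --- equivalently, the nonvanishing of some $\mathcal{E}xt^i(\mathcal{O}_{Z(k)}, \omega)$ off the top degree --- is forced precisely when there are at least three components (i.e. $k \geq 2$) meeting along a high-codimension locus, by a local cohomology count at the deepest stratum.

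The hard part will be making the local model at the deepest stratum completely explicit: one must pin down the conormal directions of each $Z^s(k)$ at a point of $\bG(k,N)\times\bG(N-k,N)$ and verify that the resulting arrangement of $k+1$ linear (formally, smooth) subspaces inside the ambient cotangent-bundle product genuinely has the non-$S_2$ behavior. This is a concrete but slightly intricate linear-algebra computation with the tautological subbundles $V_1, V_2, \ker X, \operatorname{im} X$; once it is in place, the failure of Cohen--Macaulayness follows from the standard fact that a union of $\geq 3$ equidimensional smooth pieces crossing along a codimension-$\geq 2$ locus is not $S_2$.
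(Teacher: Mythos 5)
Your overall strategy---show that $S_2$ fails because of the way the components $Z^s(k)$ intersect---is in the right spirit, and you correctly note that the component structure is the key input. But the execution has a genuine gap, and the paper resolves it with a trick you don't anticipate.

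The problem is the intersection pattern. As you observe in your first paragraph, \emph{consecutive} components $Z^s(k)$ and $Z^{s+1}(k)$ meet in codimension one. Only \emph{non-consecutive} pairs (those with $|s-s'|>1$) meet in codimension $\geq 2$. But then your later claim that ``the pairwise intersections have the wrong codimension (codimension $2$ rather than $1$ in each piece, once $k\geq 2$)'' contradicts this, and the ``standard fact'' you invoke---that a union of $\geq 3$ equidimensional smooth pieces crossing along a codimension-$\geq 2$ locus is not $S_2$---does not apply to $Z(k)$, precisely because $Z(k)$ \emph{is} connected in codimension one through the chain $Z^0\supset\cdots$ via the consecutive overlaps. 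Hartshorne's connectedness theorem (the obstruction to $S_2$ you are implicitly invoking) is therefore not violated on $Z(k)$ itself, and your proposed local computation at the deepest stratum would not by itself detect the failure: a point in $Z^0\cap Z^2$ could also lie in $Z^1$, or $Z^2$ could be connected to $Z^0$ in codimension one through other components passing through that point. You would need to argue carefully about which components pass through a chosen point and how, which is exactly the ``intricate'' part you flag as hard.

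The paper sidesteps all of this with a short argument you don't propose: pass to the partial normalization $\tilde{Z}(k)$ that separates all the components except $Z^0(k)$ and $Z^2(k)$. This kills the codimension-one gluings between consecutive pieces. If $Z(k)$ were CM then so would be $\tilde{Z}(k)$, but $\tilde{Z}(k)$ is regular in codimension one (each $Z^s(k)$ is, and the only remaining gluing is $Z^0\cap Z^2$, which has codimension $\geq 2$); by Serre's criterion $R_1+S_2\Rightarrow$ normal, $\tilde{Z}(k)$ would have to be normal---contradicting the fact that $Z^0$ and $Z^2$ are still glued. The partial normalization is the essential device that converts the codimension-$\geq 2$ intersection of a \emph{single} non-consecutive pair into a usable violation of $S_2$; without it, as above, the consecutive codimension-one overlaps block the argument.
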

\begin{proof}
The key point is that any two components $Z^s(k)$ and $Z^{s'}(k)$ (which have the same dimension) intersect inside $Z(k)$ in a locus which is codimension $\ge 2$ if $|s-s'| > 1$. 

So suppose $Z(k)$ contains more than two components (which is the case if $k \ne 0,1$) but is CM. We can take a partial normalization $\tilde{Z}(k)$ of $Z(k)$ so that all the components are disjoint except (say) $Z^0(k)$ and $Z^2(k)$. Since $Z(k)$ is CM then so is $\tilde{Z}(k)$. 

On the other hand, $\tilde{Z}(k)$ is smooth in codimension one (every $Z^s(k)$ is smooth in codimension one) but not normal (since we can still separate the components $Z^0(k)$ and $Z^2(k)$). Thus $\tilde{Z}(k)$ cannot satisfy Serre's S2 condition and thus cannot be CM (contradiction). 
\end{proof}


\begin{thebibliography}{E-G-S}

\bibitem[A]{A}
R. Anno, Spherical functors; \textsf{math.CT/0711.4409}.

\bibitem[C]{C}
S. Cautis, Equivalences and stratified flops, \textsf{arXiv:0909.0817}. 

\bibitem[CKL1]{ckl1} S. Cautis, J. Kamnitzer and A. Licata, Categorical geometric skew Howe duality, \textsf{arXiv:0902.1795}.

\bibitem[CKL2]{ckl2} S. Cautis, J. Kamnitzer and A. Licata, Coherent sheaves and categorical $\sl_2$ actions, \textsf{arXiv:0902.1796}.

\bibitem[CKL3]{ckl3}
S. Cautis, J. Kamnitzer and A. Licata, Quiver varieties and categorification, to appear. 

\bibitem[CK1]{ck1} S. Cautis and J. Kamnitzer, Knot homology via derived categories of coherent sheaves I, sl(2) case, \textit{Duke Math. J.} \textbf{142} (2008), no. 3, 511--588. \textsf{math.AG/0701194}

\bibitem[CR]{CR}
J. Chuang and R. Rouquier, Derived equivalences for symmetric groups and $\sl_2$-categorification, \textit{Ann. of Math.} \textbf{167} (2008), no. 1, 245--298; \textsf{math.RT/0407205}.

\bibitem[GM]{GM}
S. Gelfand and Y. Manin, \textit{Methods of homological algebra, second edition}, Springer-Verlag, 2003.

\bibitem[Ho]{Ho} 
R.P. Horja, Derived Category Automorphisms from Mirror Symmetry, \textit{Duke Math. J.} \textbf{127} (2005), 1--34; \textsf{math.AG/0103231}. 

\bibitem[Hu]{H}
D. Huybrechts, \textit{Fourier-Mukai Transforms in Algebraic Geometry}, Oxford University Press, 2006.

\bibitem[HT]{HT}
D. Huybrechts and R. Thomas, {$\p$-objects and autoequivalences of derived categories}; \textsf{math.AG/0507040}.

\bibitem[Ka1]{kaw1}
Y. Kawamata, D-equivalence and K-equivalence, \textit{J. Diff. Geom.}, \textbf{61} (2002), 147-171; \textsf{math.AG/0205287}.

\bibitem[Ka2]{kaw2}
Y. Kawamata, Derived equivalence for stratified Mukai flop on $\bG(2,4)$, \textit{Mirror symmetry. V}, AMS/IP Stud. Adv. Math., \textbf{38}, Amer. Math. Soc. (2006), 285--294; \textsf{math.AG/0503101}.

\bibitem[KT]{KT}
M. Khovanov and R. Thomas, Braid cobordisms, triangulated categories, and flag varieties, \textit{HHA} \textbf{9} (2007), 19--94; \textsf{math.QA/0609335}.

\bibitem[L]{l1}
A. Lauda, A categorification of quantum $\sl_2$, \textsf{arXiv:0803.3652v2}.

\bibitem[Lu]{Lu}
 G. Lusztig, \textit{Introduction to quantum groups.} Progress in Mathematics, 110. Birkh\"auser Boston, Inc., Boston, MA, 1993. xii+341 pp.

\bibitem[N1]{nam1}
Y. Namikawa, Mukai flops and derived categories. \textit{J. Reine. Angew. Math.} \textbf{560} (2003), 65--76; \textsf{math.AG/0203287}.

\bibitem[N2]{nam2}
Y. Namikawa, Mukai flops and derived categories II. \textit{Algebraic structures and moduli spaces}, CRM Proc. Lecture Notes, \textbf{38}, Amer. Math. Soc. (2004), 149--175; \textsf{math.AG/0305086}.

\bibitem[O]{O}
D. Orlov, Equivalences of derived categories and K3 surfaces, \textit{J. Math. Sci. (New York)} \textbf{84} (1997), no. 5, 1361--1381; \textsf{alg-geom/9606006}.

\bibitem[Rin]{Rin}
C. M. Ringel, Tame Algebras and Integral Quadratic Forms, \textit{Lecture Notes in Mathematics}, \textbf{1099} (1984), Springer Berlin.

\bibitem[R1]{RBourbaki}
R. Rouquier, Cat\'egories d\'eriv\'ee et g\'eometrie birationelles, \textit{Seminare Bourbaki}, 57eme ann\'ee, 2004--2005, no. 947.

\bibitem[R2]{R2}
R. Rouquier, Categorification of the braid groups; \textsf{math.RT/0409593}.

\bibitem[R3]{Rnew}
R. Rouquier, 2-Kac-Moody algebras; \textsf{math.RT/0812.5023}.

\bibitem[ST]{ST}
P. Seidel and R. Thomas, Braid group actions on derived categories of coherent sheaves, \textit{Duke Math J.}, \textbf{108}, (2001), 37--108; \textsf{math.AG/0001043}.

\bibitem[TL]{TL}
V. Toledano Laredo, A Kohno-Drinfeld theorem for quantum Weyl groups, \textit{Duke Math. J.} \textbf{112}, (2002), 421--451.
\end{thebibliography}
\end{document}